\documentclass[review]{elsarticle}

\usepackage{a4wide,amssymb,graphics,graphicx,textcomp}
\usepackage{enumerate,textcomp,multirow,amsmath}
\usepackage{algorithm}
\usepackage{algorithmic}
\usepackage{url}
\usepackage{xcolor} 
\usepackage{amsmath, amsthm}
\usepackage{hyperref}

\newtheorem{lemma}{Lemma}
\newtheorem{theorem}{Theorem}

\newtheorem{remark}{Remark}
\newtheorem{problem}{Problem}

\newcommand {\mat}  [1] {\left[\begin{array}{#1}}
	\newcommand {\rix}      {\end{array}\right]}

\newcommand{\eproof}{\space
	{\ \vbox{\hrule\hbox{\vrule height1.3ex\hskip0.8ex\vrule}\hrule}}\par}

\def\rank{\mathop{\mathrm{rank}}}
\def\real{\mathop{\mathrm{Re}}}

\def\trace{\mathop{\mathrm{trace}}}
\newcommand{\pls}{{\dagger}}
\newcommand{\wt}{\widetilde}
\newcommand{\wh}{\widehat}

\newcommand{\C}{{\mathbb C}}
\newcommand{\R}{{\mathbb R}}

\newcommand{\D}{{\Delta}}
%
%

\newcommand{\lS}{{\mathcal S}}

%
%
\newcommand{\lm}{{\lambda}}

%
%

\definecolor{brightpink}{rgb}{1.0, 0.0, 0.5}

\journal{Linear Algebra and its Applications}

\begin{document}
	
	\begin{frontmatter}
		
\title{Doubly structured mapping problems of the form $\Delta x=y$ and $\Delta^*z=w$}

\author{Mohit Kumar Baghel}
\author{Punit Sharma}
\fntext[]{Department of Mathematics, Indian Institute of Technology Delhi, Hauz Khas, 110016, India; \texttt{\{maz188260, punit.sharma\}@maths.iitd.ac.in.}
P.S. acknowledges the support of the DST-Inspire Faculty Award (MI01807-G) by Government of India.
}
		
\begin{abstract}
For a given class of structured matrices $\mathbb S$, we find necessary and sufficient conditions on vectors $x,w\in \C^{n+m}$ and $y,z \in \C^{n}$ for which there exists $\Delta=[\Delta_1~\Delta_2]$ with $\Delta_1 \in \mathbb S$ and $\Delta_2 \in \C^{n,m}$ such that $\Delta x=y$ and $\Delta^*z=w$. We also characterize the set of all such mappings $\Delta$ and provide sufficient conditions on vectors $x,y,z$, and $w$ to investigate a $\Delta$ with minimal Frobenius norm. The structured classes $\mathbb S$ we consider include (skew)-Hermitian, (skew)-symmetric, pseudo(skew)-symmetric, $J$-(skew)-symmetric, pseudo(skew)-Hermitian, positive (semi)definite, and dissipative matrices. These mappings are then used in computing the structured eigenvalue/eigenpair backward errors of matrix pencils arising in optimal control. 
\end{abstract}
		
\begin{keyword}
structured matrix, backward error, minimal Frobenius norm, Hermitian, positive definite, positive semidefinite, Hamiltonian , dissipative matrix
			 
\noindent {\textbf{ AMS subject classification.}} 15A04, 15A60, 15A63, 65F20,  65F35, 
\end{keyword}
		
\end{frontmatter}
	
\section{Introduction}

\begin{problem}[Doubly structured mapping problem]\label{def:dsmprob}For a given class of structured matrices $\mathbb S \subseteq \C^{n,n}$, and vectors $x,w \in \C^{n+m}$ and $y,z \in \C^n$, we consider the following mapping problem:
\begin{itemize}
\item {\it Existence}: Find necessary and sufficient conditions on vectors $x,y,z$, and $w$ for the existence of $\Delta=[\Delta_1~\Delta_2]$, where $\Delta_1 \in \mathbb S$ and $\Delta_2 \in \C^{n,m}$ such that $\Delta x=y$ and $\Delta^*z=w$.

We call such a mapping $\Delta$ as \emph{doubly structured mapping} (DSM) as it has two structures defined on it; (i) conjugate transpose of $\Delta$ satisfies $\Delta^*z=w$, and (ii) $\Delta$ has the form $\Delta=[\Delta_1~\Delta_2]$ with $\Delta_1 \in \mathbb S$. 
\item {\it Characterization}: Determine the set 
\begin{equation}\label{eq:defSSd}
\mathcal S_d^{\mathbb S}:=\{\Delta :~\Delta=[\Delta_1~\Delta_2],\,\Delta_1 \in \mathbb S, \Delta_2 \in \C^{n,m},\ \Delta x=y,\, \Delta^* z=w \}
\end{equation}
of all such doubly structured mappings.
\item {\it Minimal Frobenius norm}: Characterize all solutions to the doubly structured mapping problem that have minimal Frobenius norm.
\end{itemize}
\end{problem}
The structures we consider on $\Delta_1$ in a DSM problem include symmetric, skew-symmetric, pseudosymmetric, pseudoskew-symmetric, Hermitian, skew-Hermitian, pseudo-Hermitian, pseudoskew-Hermitian, positive (negative) semidefinite, and dissipative matrices.

The minimal norm solutions to such doubly structured mappings can be very handy in the perturbation analysis of matrix pencils arising in control systems~\cite{MehMS18}. In particular, for the computation of structured eigenvalue/eigenpair backward errors of matrix pencils $L(z)$ of the form
\begin{equation}\label{eq:defpenciL}
L(z)=M+zN := \mat{ccc} 0 & J-R & B\\(J-R)^* & 0 & 0 \\ B^* & 0 & S \rix +z
\mat{ccc} 0 & E &0 \\ -E^* & 0 & 0 \\ 0& 0& 0 \rix, 
\end{equation}
where $J,R,E,Q \in \C^{n,n}$, $B \in \C^{n,m}$ and $S \in \C^{n,m}$ satisfy 
$J^*=-J$, $R^*=R $ is positive semidefinite, $E^*=E$, and $S^*=S$ is positive definite. The pencil $L(z)$ arises in  $H_{\infty}$ control problems  and in the passivity analysis of dynamical systems~\cite{KunM08,Meh91}. One example of such a system is a port-Hamiltonian descriptor system~\cite{BeaMXZ18,Sch13}.
Our work is motivated by~\cite{MehMS18}, where the eigenpair backward errors have been computed while preserving the block and symmetry structures of pencils of the form $L(z)$, where only the Hermitian structure was considered on $R$. The definiteness structure on $R$ describes the energy dissipation in the system and guarantees the stability of the underlying port-Hamiltonian system~\cite{MehMW18,GilS18}. This makes it essential to preserve the definiteness of $R$ to preserve the system's port-Hamiltonian structure. 

The standard mapping problem for matrices is to find $\Delta \in \C^{n,n}$ for given vectors $x \in \C^{n}$ and $y \in \C^n$, such that $\Delta x=y$. Such mapping problems have been well studied in~\cite{MacMT08}, where authors provide complete, unified, and explicit solutions for structured mappings from Lie and Jordan algebras associated with orthosymmetric scalar products. The minimal norm solutions to the structured mappings provide an important tool in solving nearness problems for control systems, e.g.~\cite{BorKMS14,BorKMS15,MehMS16,MehMS17,BhaGS21}. The DSMs extend the mapping problem of finding $\Delta \in \C^{n,n}$ for given vectors $x,y,z,w \in \C^n$ such that $\Delta x=y$ and $\Delta^*z=w$~\cite{MehMS16,MehMS17}. 


This paper is organized as follows:~In Section~\ref{sec:prelim}, we review some preliminary results on mapping problems. In Section~\ref{sec3:J&Lalgebra}, we present necessary and sufficient conditions for the existence of DSMs with structures belonging to a Jordan or a Lie algebra. In particular, we consider doubly structured Hermitian, skew-Hermitian, symmetric, and skew-symmetric mapping problems. We provide solutions to the doubly structured semidefinite mapping problem in Section~\ref{sec:dssmp}. In Section~\ref{sec:DSDMs}, we introduce two types of doubly structured dissipative mappings. The  minimal norm solutions to the DSM problems are then used in estimating various structured eigenpair  backward errors for the pencil $L(z)$ arising in control systems, see Section~\ref{sec:app_strbacerr}.
		
\paragraph{Notation} In the following, we denote the identity matrix of size $n \times n$ by $I_n$, the spectral norm of a matrix or a vector by $\|\cdot\|$ and the Frobenius norm by ${\|\cdot\|}_F$. The Moore-Penrose pseudoinverse of a matrix or a vector $X$ is denoted by $X^\pls$
	and  $\mathcal P_X=I_n-XX^\pls$ denotes the orthogonal projection onto the null space of $n \times n$ matrix $X^*$. For a square matrix $A$, its Hermitian and skew-Hermitian parts are respectively denoted by $A_H=\frac{A+A^*}{2}$ and $A_S=\frac{A-A^*}{2}$.
	For $A=A^* \in \mathbb F^{n,n}$, where $\mathbb F \in \{\R,\C\}$, we denote
	$A \succ 0$ ($A\prec 0$) and $A\succeq 0$ ($A\preceq$) if $A $ is Hermitian positive definite (negative definite) and Hermitian positive semidefinite (negative semidefinite).  $\Lambda(A)$ denotes the set of all eigenvalues of the matrix $A$. ${\rm  Herm}(n)$, ${\rm  SHerm}(n)$, ${\rm  Sym}(n)$, and ${\rm SSym}(n)$
stand respectively for the set of $n \times n$ Hermitian, skew-Hermitian, symmetric, and skew-symmetric matrices.

\section{Preliminaries } \label{sec:prelim}
In this section, we state some elementary lemmas  and recall some mapping results that will be necessary to solve the DSM problem. 
	
\begin{lemma}{\rm \cite{Alb69}} \label{lem:psd}
Let the integer $s$ be such that $0<s<n$, and $R=R^* \in \C^{n,n}$ be partitioned as $R=\mat{cc}B & C^*\\C & D\rix$ with $B\in \C^{s,s}$, $C\in \C^{n-s,s}$ and $D \in \C^{n-s,n-s}$. Then $R \succeq 0$ if and only if
\begin{enumerate}
\item $B \succeq 0$,
\item $\ker(B) \subseteq \ker(C)$, and
\item $D-CB^{\pls}C^* \succeq 0$, where $B^{\pls}$ denotes the Moore-Penrose pseudoinverse of $B$.
\end{enumerate}
\end{lemma}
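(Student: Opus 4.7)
The plan is to establish the equivalence via a generalized Schur-complement $*$-congruence. The key algebraic identity I would aim to verify is
\[
\begin{bmatrix} I & 0 \\ -C B^{\pls} & I \end{bmatrix} \begin{bmatrix} B & C^* \\ C & D \end{bmatrix} \begin{bmatrix} I & -B^{\pls} C^* \\ 0 & I \end{bmatrix} = \begin{bmatrix} B & 0 \\ 0 & D - C B^{\pls} C^* \end{bmatrix}.
\]
Since $B=B^*$ forces $B^{\pls}=(B^{\pls})^*$, the two outer triangular factors are mutual conjugate transposes, so this is a genuine $*$-congruence and therefore preserves positive semidefiniteness in both directions. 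The task then reduces to (a) justifying that the identity actually holds, which requires the kernel condition (2), and (b) observing that the block-diagonal right-hand side is positive semidefinite if and only if each of its diagonal blocks is.

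For the forward direction, I would first obtain (1) by testing $R\succeq 0$ on vectors of the form $\begin{bmatrix} u \\ 0 \end{bmatrix}$, which gives $u^*Bu\geq 0$. For (2), I would test on $\begin{bmatrix} u \\ tv \end{bmatrix}$ with $u\in\ker(B)$, $t\in\R$, and $v$ arbitrary: the quadratic form collapses to $2t\,\real(v^*Cu) + t^2 v^*Dv$, and non-negativity for both positive and negative $t$ forces $\real(v^*Cu)=0$ for every $v$. Choosing $v=Cu$ yields $\|Cu\|^2=0$, hence $Cu=0$, proving $\ker(B)\subseteq\ker(C)$. With (2) available, I would invoke the standard fact that for Hermitian $B$ the product $B^{\pls}B=BB^{\pls}$ is the orthogonal projector onto the range of $B$, whose orthogonal complement is $\ker(B)$; the kernel inclusion then rewrites as $CB^{\pls}B=C$, and taking conjugate transpose gives $BB^{\pls}C^*=C^*$. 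These are precisely the identities needed to make the off-diagonal blocks in the congruence above collapse to zero, and the positive semidefiniteness of the remaining Schur block yields (3).

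For the converse, I would simply reverse the congruence: given (1)--(3), the matrix $\diag(B,\,D - CB^{\pls}C^*)$ is positive semidefinite, and conjugation by an invertible triangular factor preserves this property, so $R\succeq 0$ follows.

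The main obstacle is the pseudoinverse bookkeeping, namely deriving $CB^{\pls}B=C$ and $BB^{\pls}C^*=C^*$ from the kernel inclusion, since without these two identities the off-diagonal blocks in the congruence do not cancel and the Schur reduction fails. Once the Hermiticity of $B$ is exploited to identify $BB^{\pls}$ with the orthogonal projection onto the range of $B$, the rest of the argument is routine block-matrix arithmetic together with a short positivity test on suitably chosen vectors.
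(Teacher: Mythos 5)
Your proof is correct. The paper states this lemma as a known result cited from Albert's 1969 paper and gives no proof of its own, so there is nothing to compare against; your argument --- the $*$-congruence to $\diag(B,\,D-CB^{\pls}C^*)$, the derivation of $CB^{\pls}B=C$ and $BB^{\pls}C^*=C^*$ from the kernel inclusion via the orthogonal projector $B^{\pls}B=BB^{\pls}$, and the quadratic-form test on $[u^T~\,tv^T]^T$ to force $\ker(B)\subseteq\ker(C)$ --- is the standard and complete proof of this generalized Schur-complement criterion.
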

 
\begin{lemma}\label{lem8} {\rm \cite{BhaGS21}}
Let $X,Y \in \C^{n,m}$. Suppose that $\rank(X)=r$ and consider the reduced singular value decomposition $X=U_1 \Sigma_1 V_1^*$ with $U_1 \in \C^{n,r}, \Sigma_1 \in \C^{r,r}$ and $V_1 \in \C ^{m,r}$. If $X^*Y + Y^*X \succeq 0$, then $U_1^* \left( YX^\pls + (YX^\pls)^*\right) U_1 \succeq 0$.
 \end{lemma}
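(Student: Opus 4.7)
The plan is to reduce everything to a small $r\times r$ identity using the reduced SVD, and then recognize that the desired inequality is congruence-equivalent to the hypothesis.

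First, I would write out the pseudoinverse explicitly: since $X=U_1\Sigma_1 V_1^*$ with $U_1^*U_1=I_r$, $V_1^*V_1=I_r$, and $\Sigma_1$ real diagonal and positive definite, we have $X^\pls = V_1 \Sigma_1^{-1} U_1^*$. Substituting and using $U_1^*U_1=I_r$, a direct calculation gives
\begin{equation*}
U_1^*\bigl(YX^\pls + (YX^\pls)^*\bigr)U_1
= (U_1^*YV_1)\Sigma_1^{-1} + \Sigma_1^{-1}(U_1^*YV_1)^*.
\end{equation*}
Setting $A := U_1^*YV_1 \in \C^{r,r}$, the quantity I need to show is PSD is therefore $A\Sigma_1^{-1} + \Sigma_1^{-1}A^*$.

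Next, I would translate the hypothesis to a statement about $A$ and $\Sigma_1$. Substituting the SVD gives $X^*Y + Y^*X = V_1\Sigma_1 U_1^* Y + Y^*U_1\Sigma_1 V_1^*$. Since PSD is preserved under congruence, I compress by $V_1$ on the right and $V_1^*$ on the left; again using $V_1^*V_1 = I_r$ this yields
\begin{equation*}
V_1^*(X^*Y + Y^*X)V_1 = \Sigma_1 A + A^* \Sigma_1 \succeq 0.
\end{equation*}

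Finally, I would apply the congruence by $\Sigma_1^{-1}$ (which is Hermitian positive definite, so preserves PSD). A one-line computation shows
\begin{equation*}
\Sigma_1^{-1}(\Sigma_1 A + A^* \Sigma_1)\Sigma_1^{-1} = A\Sigma_1^{-1} + \Sigma_1^{-1} A^*,
\end{equation*}
and since the left-hand side is PSD, so is the right-hand side, which is exactly $U_1^*(YX^\pls + (YX^\pls)^*)U_1$. That completes the proof.

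There is no real obstacle here; the only subtlety is bookkeeping with the reduced SVD dimensions and making sure that both sides of the final identity are indeed congruent via the positive definite matrix $\Sigma_1^{-1}$. The argument relies crucially on $\Sigma_1$ being invertible (hence the restriction to the column space of $U_1$ via the $U_1^*(\cdot)U_1$ compression), which is precisely why the reduced SVD, rather than the full SVD, is used in the statement.
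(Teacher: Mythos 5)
Your proof is correct. Note that the paper itself does not prove this lemma --- it is imported verbatim from \cite{BhaGS21} and used as a black box --- so there is no in-paper argument to compare against. Your computation is sound: the identity $U_1^*\bigl(YX^\pls+(YX^\pls)^*\bigr)U_1=A\Sigma_1^{-1}+\Sigma_1^{-1}A^*$ with $A=U_1^*YV_1$ is right, the compression $V_1^*(X^*Y+Y^*X)V_1=\Sigma_1A+A^*\Sigma_1\succeq 0$ is valid (compression by a matrix with orthonormal columns preserves positive semidefiniteness), and the final congruence by the positive definite matrix $\Sigma_1^{-1}$ closes the argument. This is a clean, self-contained derivation of a result the paper only cites.
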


\begin{lemma}\label{lem: mohit}
	Let $X, Y, Z, W \in \C^{n,m}$ with $X^*W=Y^*Z$. Suppose that $\rank(X)=\rank(Z)=r$ and consider the reduced singular value decompositions $X=U_1\Sigma_1 V_1^*$ and $Z=\tilde U_1 \tilde \Sigma_1 \tilde V_1^*$, where $U_1,\tilde U_1 \in \C^{n,r}$, $V_1,\tilde V_1 \in \C^{m,r}$, and $\Sigma_1,\tilde \Sigma_1$ are the diagonal matrices containing the nonzero singular values of $X$ and $Z$, respectively. If $U_1=\tilde U_1$, then 
\begin{eqnarray}
		U_1^* \left( YX^\pls \pm (YX^\pls)^*\right) U_1 = U_1^{*} \left( YX^\pls \pm WZ^\pls\right) U_1.
\end{eqnarray}
\end{lemma}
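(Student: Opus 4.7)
The plan is to simplify what needs to be proved by canceling the common term $U_1^{*} YX^\pls U_1$ that appears (with the same sign $\pm$) on both sides of the asserted equality. After this cancellation, the identity to establish reduces to
\begin{equation*}
U_1^{*}(YX^\pls)^{*} U_1 \;=\; U_1^{*}\, WZ^\pls\, U_1,
\end{equation*}
which no longer involves $\pm$, so the two cases collapse into one.

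Next I would substitute the reduced SVDs, using $X^\pls = V_1 \Sigma_1^{-1} U_1^{*}$ and $Z^\pls = \tilde V_1 \tilde\Sigma_1^{-1} \tilde U_1^{*}$. Because $U_1^{*} U_1 = I_r$, a direct computation gives
\begin{equation*}
U_1^{*}(YX^\pls)^{*} U_1 = \Sigma_1^{-1} V_1^{*} Y^{*} U_1,
\qquad
U_1^{*} W Z^\pls U_1 = U_1^{*} W \tilde V_1 \tilde\Sigma_1^{-1},
\end{equation*}
and here is precisely where the hypothesis $U_1 = \tilde U_1$ is used, since $\tilde U_1^{*} U_1 = I_r$ is required in the second simplification. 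So the problem is now to show
\begin{equation*}
\Sigma_1^{-1} V_1^{*} Y^{*} U_1 \;=\; U_1^{*} W \tilde V_1 \tilde\Sigma_1^{-1}.
\end{equation*}

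Finally, I would exploit the hypothesis $X^{*} W = Y^{*} Z$. Expanding both sides via the SVDs yields $V_1 \Sigma_1 U_1^{*} W = Y^{*} U_1 \tilde\Sigma_1 \tilde V_1^{*}$ (using $U_1 = \tilde U_1$ again). Left-multiplying by $V_1^{*}$ and right-multiplying by $\tilde V_1$, and using $V_1^{*} V_1 = \tilde V_1^{*} \tilde V_1 = I_r$, produces $\Sigma_1 U_1^{*} W \tilde V_1 = V_1^{*} Y^{*} U_1 \tilde\Sigma_1$. Conjugating by $\Sigma_1^{-1}$ on the left and $\tilde\Sigma_1^{-1}$ on the right (both invertible since they hold the nonzero singular values) gives exactly the needed identity.

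There is no real obstacle in this proof — it is essentially an accounting exercise with the SVDs. The only place where care is needed is in tracking which $U_1$ to substitute where: the hypothesis $U_1 = \tilde U_1$ is used twice, once to simplify $U_1^{*} WZ^\pls U_1$ and once to match the two sides of the expansion of $X^{*}W = Y^{*}Z$. Without this hypothesis, the identity would generally fail, so flagging that dependence clearly is the main expository point.
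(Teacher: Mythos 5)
Your proposal is correct and follows essentially the same route as the paper: a direct computation with the reduced SVDs that uses $X^*W=Y^*Z$ together with $U_1=\tilde U_1$ to convert $U_1^*WZ^\pls U_1$ into $U_1^*(YX^\pls)^*U_1$. Cancelling the common term $U_1^*YX^\pls U_1$ first is a minor reorganization of the paper's chain of equalities, not a different argument.
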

\begin{proof}
The proof follows using the fact that $U_1=\tilde U_1$ and $X^*W=Y^*Z$. In fact, we have 
\begin{eqnarray}
\begin{split}
			U_1^{*} \left( YX^\pls \pm WZ^\pls\right) U_1 &= U_1^{*}  Y X^\pls U_1 \pm \Sigma_1^{-1} V_1^* V_1 \Sigma_1 U_1^* WZ^\pls U_1 \\ \nonumber
			&= U_1^*  YX^\pls U_1 \pm \Sigma_1^{-1} V_1^* X^* W Z^\pls U_1  \quad \, (\because X^*=V_1 \Sigma_1 U_1^*) \label{t1}\\ 
			&= U_1^*  YX^\pls U_1 \pm \Sigma_1^{-1} V_1^* Y^*Z Z^\pls U_1  \quad ~~(\because X^*W=Y^*Z) \label{t2} \\
			&= U_1^*  YX^\pls U_1 \pm \Sigma_1^{-1} V_1^* Y^*U_1 U_1^* U_1  \quad \, (\because U_1=\tilde U_1,\,U_1 U_1^* =ZZ^\pls) \nonumber \\
			&= U_1^*  YX^\pls U_1 \pm \Sigma_1^{-1} V_1^* Y^* U_1   \nonumber \\
			&=U_1^*  YX^\pls U_1 \pm U_1^* U_1 \Sigma_1^{-1} V_1^* Y^* U_1 \quad ( \because U_1^* U_1=I_r)   \nonumber \\ 
			&=U_1^*  YX^\pls U_1 \pm U_1^* X^{\pls ^ *} Y^* U_1 \quad \quad \quad ~\, (\because X^{\pls ^ *}= U_1 \Sigma_1^{-1} V_1^* ) \nonumber \\
			&=U_1^{*} \left( YX^\pls \pm (YX^\pls)^*\right) U_1. \nonumber \\
\end{split}
\end{eqnarray}
\end{proof}	

\begin{lemma} \label{lemma: ineq_psd} {\rm \cite[P.57]{ciarlet1989introduction}}
	Let $A,B \in \C ^{n,n}$. If $B \succeq 0, A-B \succeq 0$, then ${\| A \|} \geq {\|B\|}$ where $\|\cdot\|$ is any unitarily invariant matrix norm.
\end{lemma}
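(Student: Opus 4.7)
The plan is to reduce the inequality $\|A\|\ge\|B\|$ to a coordinate-wise comparison of singular values and then invoke the fact that every unitarily invariant norm is a monotone symmetric gauge function of the singular value vector.

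First I would observe that the hypothesis already forces $A$ to be Hermitian positive semidefinite: since $A=(A-B)+B$ is the sum of two Hermitian PSD matrices, $A\succeq 0$. In particular both $A$ and $B$ are Hermitian PSD, so their singular values coincide with their eigenvalues, ordered as $\sigma_1(\cdot)\ge\sigma_2(\cdot)\ge\cdots\ge\sigma_n(\cdot)\ge 0$.

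Next I would apply the Weyl monotonicity principle to the relation $A\succeq B$. Using the Courant--Fischer min--max characterization
\begin{equation*}
\lambda_i(M)=\max_{\dim V=i}\;\min_{0\ne v\in V}\frac{v^*Mv}{v^*v}
\end{equation*}
for Hermitian $M$, the inequality $v^*Av\ge v^*Bv$ for every $v$ yields $\lambda_i(A)\ge\lambda_i(B)$ for all $i$. Combined with the previous step this gives $\sigma_i(A)\ge\sigma_i(B)\ge 0$ componentwise.

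Finally, I would use the von Neumann / Ky Fan characterization of unitarily invariant norms: any such norm $\|\cdot\|$ on $\C^{n,n}$ can be written as $\|X\|=\Phi\bigl(\sigma(X)\bigr)$ for a symmetric gauge function $\Phi$ on $\R^n$, and symmetric gauge functions are monotone on the nonnegative orthant. Applying this to the vectors $\sigma(B)\le\sigma(A)$ (entrywise, both nonnegative) gives $\|B\|=\Phi(\sigma(B))\le\Phi(\sigma(A))=\|A\|$, which is the desired bound. The only substantive step is Weyl monotonicity for Hermitian matrices; the rest is merely recording that unitarily invariant norms respect the partial order induced by entrywise comparison of singular values, so there is no genuine obstacle beyond assembling these standard ingredients.
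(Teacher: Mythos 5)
Your argument is correct and complete: $A=(A-B)+B\succeq 0$, Weyl/Courant--Fischer monotonicity gives $\lambda_i(A)\ge\lambda_i(B)\ge 0$ for each $i$, and the von Neumann representation of unitarily invariant norms by (monotone) symmetric gauge functions finishes the job. The paper itself offers no proof of this lemma -- it is quoted verbatim from the cited reference -- so there is no in-paper argument to compare against; your route is the standard textbook one and each ingredient you invoke (sum of PSD matrices is PSD, eigenvalue monotonicity for the Loewner order, monotonicity of symmetric gauge functions on the nonnegative orthant) is applied correctly.
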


\subsection{Mapping results}	

Here, we recall some mapping results from the literature that will be useful  in solving DSM problem for various structures. We start with a result from~\cite{sun1993backward} that solves the standard mapping problem with no structure on it.  

\begin{theorem}{\rm \cite{sun1993backward}} \label{map result: unstruct}
	Let $x,y \in \C^n \setminus \{0\}$ and define $\lS := \{ \D \in \C^{n,n} ~:~ \D x=y\}$. Then $\lS \neq \emptyset$ and 
\[ 
\lS = \{ y x^\pls + Z \mathcal P_x ~:~ Z \in \C^{n,n} \}.
\]
Moreover, 
$ \inf_{\D \in \lS} { \| \D \| }_F= \frac{\|y\|}{\|x\|} ,
$
where the infimum is attained by $\hat{\D} = yx^\pls$.
\end{theorem}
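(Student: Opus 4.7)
The plan is to verify the three claims in sequence, leaning on the explicit form $x^\pls = x^*/\|x\|^2$ (valid since $x\neq 0$) and the key identity $x^\pls x = 1$, which immediately yields $x^\pls \mathcal P_x = x^\pls(I - xx^\pls) = x^\pls - x^\pls = 0$ and $\mathcal P_x x = 0$.

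First, I would show non-emptiness and the inclusion ``$\supseteq$'' by a direct substitution: for any $Z\in\C^{n,n}$, $(yx^\pls + Z\mathcal P_x)x = y(x^\pls x) + Z(\mathcal P_x x) = y$. In particular $\lS\neq\emptyset$ (take $Z=0$). For the reverse inclusion, given any $\D\in\lS$, decompose the identity as $I = xx^\pls + \mathcal P_x$ and multiply on the right:
\[
\D = \D xx^\pls + \D \mathcal P_x = yx^\pls + \D\mathcal P_x,
\]
so $\D$ has the stated form with $Z=\D$.

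For the minimal-norm part, the main (and only nontrivial) step is to show that $yx^\pls$ and $Z\mathcal P_x$ are orthogonal with respect to the Frobenius inner product. Using $\langle A,B\rangle_F=\trace(B^*A)$ together with $x^\pls\mathcal P_x=0$,
\[
\langle yx^\pls,\,Z\mathcal P_x\rangle_F = \trace(\mathcal P_x Z^* y x^\pls) = \trace(x^\pls \mathcal P_x Z^* y) = 0.
\]
Hence $\|\D\|_F^2 = \|yx^\pls\|_F^2 + \|Z\mathcal P_x\|_F^2 \geq \|yx^\pls\|_F^2$, with equality iff $Z\mathcal P_x=0$. Since $yx^\pls$ is a rank-one matrix with nonzero factors, $\|yx^\pls\|_F = \|y\|\,\|x^\pls\| = \|y\|/\|x\|$, and the bound is attained by $\hat\D = yx^\pls$ (i.e.\ $Z=0$).

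The only potential pitfall is the Frobenius orthogonality computation; everything else reduces to the two identities $x^\pls x=1$ and $\mathcal P_x x=0$. No structural assumption is being imposed, so no further case analysis is required.
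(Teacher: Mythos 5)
Your proof is correct and complete: the identities $x^\pls x=1$, $\mathcal P_x x=0$, $x^\pls\mathcal P_x=0$, the decomposition $\D=\D xx^\pls+\D\mathcal P_x$, and the Frobenius orthogonality (using $\mathcal P_x^*=\mathcal P_x$ and cyclicity of the trace) are exactly the standard route to this result. The paper itself states this theorem as a quoted result from the literature and gives no proof, so there is nothing to compare against; your argument is the expected one and has no gaps.
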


We next recall a few mapping results from~\cite{adhikari2008backward} that give a characterization and minimal Frobenius norm solution for the Hermitian, skew-Hermitian, complex symmetric, and complex skew-symmetric mapping problems. 

\begin{theorem}{\rm \cite{adhikari2008backward}} \label{theorem: herm map}
	Let $x,y \in \C^n\setminus \{0\}$ and let $\lS := \{ \D \in \C^{n,n} ~: ~  \D^*=\D,\,\D x=y \}$. Then $\mathcal S \neq \emptyset$ if and only if $x^*y \in \R$. If the later condition holds, then 
\begin{equation}\label{thm:Hermmap}
\lS= \left\{ y x^\pls + (y x ^\pls) ^* - (x^\pls y)x x^\pls + \mathcal P_x H \mathcal P_x ~: ~H \in \C^{n,n},\, H^*=H\right \}.
\end{equation}
Moreover, $ \inf_{\D \in \lS } { \| \D \| }_F^2=   2 {\|yx^\pls\|}_F^2 - 
\trace( (x^\pls y)^* x x^\pls)$,
where the infimum is uniquely attained by $\hat{\D} = yx^\pls + (yx^\pls)^* - (x^\pls y) xx^\pls$ which is obtained by setting $H=0$ in~\eqref{thm:Hermmap}.
\end{theorem}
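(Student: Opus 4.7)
The plan is to establish the four assertions in order: necessity of the condition $x^*y\in\R$, construction of a particular Hermitian solution, characterization of all solutions, and identification of the minimal Frobenius norm solution.

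First I would prove necessity. If $\Delta\in\mathcal S$, then $x^*y=x^*\Delta x=(x^*\Delta x)^*=\overline{x^*y}$, using $\Delta^*=\Delta$, so $x^*y\in\R$. For sufficiency I would directly verify that the candidate $\hat\Delta:=yx^\pls+(yx^\pls)^*-(x^\pls y)xx^\pls$ lies in $\mathcal S$. Hermiticity is immediate for the first two terms; the last term is Hermitian iff the scalar $x^\pls y=(x^*y)/\|x\|^2$ is real, which holds by assumption. To check $\hat\Delta x=y$, use the vector identities $x^\pls x=1$ and $x^{\pls *}=x/\|x\|^2$, so $(yx^\pls)x=y$ and $(yx^\pls)^*x=(y^*x/\|x\|^2)x=(x^\pls y)x$ (here $y^*x=\overline{x^*y}=x^*y$), and $(x^\pls y)xx^\pls x=(x^\pls y)x$; the last two terms cancel and leave $y$.

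Next I would characterize $\mathcal S$. For any $\Delta\in\mathcal S$, the difference $K:=\Delta-\hat\Delta$ is Hermitian with $Kx=0$. Since $x\in\ker K$, the range of $K$ lies in $(\ker K)^\perp\subseteq(\Span x)^\perp$, whence $K=\mathcal P_xK$; combined with $Kx=0$, which gives $Kxx^\pls=0$ and hence $K=K\mathcal P_x$, we obtain $K=\mathcal P_xK\mathcal P_x$, so $K$ has the desired form with $H:=K$. Conversely, for any Hermitian $H$, the matrix $\mathcal P_xH\mathcal P_x$ is Hermitian (since $\mathcal P_x^*=\mathcal P_x$) and annihilates $x$ (since $\mathcal P_xx=0$), so adding it to $\hat\Delta$ keeps us in $\mathcal S$. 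This proves \eqref{thm:Hermmap}.

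For the minimal Frobenius norm statement, the key observation is the orthogonality $\langle\hat\Delta,\mathcal P_xH\mathcal P_x\rangle_F=\trace(\mathcal P_x\hat\Delta\mathcal P_xH)=0$, which reduces to showing $\mathcal P_x\hat\Delta\mathcal P_x=0$. This follows term-by-term: $x^\pls\mathcal P_x=x^\pls-x^\pls xx^\pls=0$ kills the $yx^\pls$ and $(yx^\pls)^*$ pieces, and $\mathcal P_xxx^\pls=0$ kills the last piece. Therefore $\|\Delta\|_F^2=\|\hat\Delta\|_F^2+\|\mathcal P_xH\mathcal P_x\|_F^2$, so the infimum is uniquely attained at $H=0$. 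The claimed value $\|\hat\Delta\|_F^2=2\|yx^\pls\|_F^2-\trace((x^\pls y)^*xx^\pls)$ then follows from a direct expansion of $\trace(\hat\Delta^2)$: the cross terms $\trace(yx^\pls(yx^\pls)^*)$ and its conjugate each contribute $\|yx^\pls\|_F^2$, while the terms involving $(x^\pls y)xx^\pls$ simplify using $x^\pls xx^\pls=x^\pls$ and $(xx^\pls)^2=xx^\pls$, and the identity $y^*x=x^*y$ (since $x^*y$ is real) makes the mixed contributions collapse into the single trace term.

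The only mildly delicate step is the expansion of $\|\hat\Delta\|_F^2$, since several cross terms must be shown to combine into the stated compact form; everything else is a direct verification using $x^\pls x=1$, $\mathcal P_xx=0$, and $x^\pls\mathcal P_x=0$.
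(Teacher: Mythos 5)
Your proof is sound and is the standard argument for this result; note that the paper does not prove Theorem~\ref{theorem: herm map} at all but simply quotes it from \cite{adhikari2008backward}, so there is no in-paper proof to compare against. The necessity step ($x^*y=x^*\D x\in\R$ for Hermitian $\D$), the verification that $\hat\D\in\lS$, the kernel/range argument showing that any Hermitian $K$ with $Kx=0$ satisfies $K=\mathcal P_xK\mathcal P_x$, and the orthogonality $\mathcal P_x\hat\D\mathcal P_x=0$ giving the Pythagorean splitting ${\|\D\|}_F^2={\|\hat\D\|}_F^2+{\|\mathcal P_xH\mathcal P_x\|}_F^2$ (hence unique attainment at $\hat\D$) are all correct and complete.

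The one point to be careful about is the final norm formula. Carrying out your expansion of $\trace(\hat\D^2)$ with $a:=x^\pls y\in\R$ gives ${\|\hat\D\|}_F^2=2{\|yx^\pls\|}_F^2-a^2$, i.e.\ $2{\|y\|}^2/{\|x\|}^2-{|x^*y|}^2/{\|x\|}^4$. The trace term as printed in the statement evaluates to $\trace\left((x^\pls y)^*xx^\pls\right)=\overline{x^\pls y}\,\trace(xx^\pls)=\overline{x^\pls y}$, which is linear rather than quadratic in $y$ and so cannot equal $a^2$ in general; the printed formula evidently carries a typo and should read something like $\trace\big(yx^\pls(yx^\pls)^*xx^\pls\big)$, in analogy with Theorems~\ref{theorem: Complex-sym map} and~\ref{theorem : 2.1 mehl2017}. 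Consequently your claim that the mixed contributions ``collapse into the single trace term'' as written cannot be literally correct; you should state the explicit value $2{\|yx^\pls\|}_F^2-{|x^\pls y|}^2$ instead. Your derivation is right; the printed target is what is off.
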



\begin{theorem}{\rm \cite{adhikari2008backward}} \label{theorem: skew herm map}
Let $x,y \in \C^n \setminus \{0\}$ and let $\lS := \{ \D \in \C^{n,n} ~: ~ \D x=y, \D^*=-\D \}$. Then $\mathcal S \neq \emptyset$ if and only if $x^*y \in i \R$. If the later condition holds, then 
\[
\lS= \{ y x^\pls - (y x ^\pls) ^* + (x^\pls y)x x^\pls + \mathcal P_x S \mathcal P_x ~: ~  S \in \C^{n,n},\,S^*=-S\}.
\]
Moreover, $\inf_{\D \in \lS} { \| \D \| }_F^2 =  2 {\|yx^\pls\|}_F^2 - \trace( (x^\pls y)^* x x^\pls)$,
where the infimum is uniquely attained by $\hat{\D} = yx^\pls - (yx^\pls)^* + (x^\pls y) xx^\pls$. 
\end{theorem}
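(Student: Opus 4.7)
The plan is to adapt the proof of Theorem~\ref{theorem: herm map} to the skew-Hermitian setting, exploiting the duality $\mathrm{SHerm}(n) = i \cdot \mathrm{Herm}(n)$. For the necessity of $x^*y \in i\R$, I would compute $x^*y = x^*\Delta x$ for any $\Delta \in \lS$, and use $\Delta^* = -\Delta$ to obtain $x^*\Delta x = -(x^*\Delta x)^* = -\overline{x^*y}$, which forces $x^*y + \overline{x^*y} = 0$.

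For the sufficiency and the characterization, assume $x^*y \in i\R$. First I would verify directly that the proposed $\hat\Delta$ satisfies $\hat\Delta x = y$ and $\hat\Delta^* = -\hat\Delta$, using the identities $x^\pls x = 1$, $(yx^\pls)^* x = \overline{x^\pls y}\, x$, and $(xx^\pls)^* = xx^\pls$, together with the relation $\overline{x^\pls y} = -x^\pls y$ coming from $x^*y \in i\R$. The characterization would then follow by the standard subtraction trick: for any $\Delta \in \lS$, set $T = \Delta - \hat\Delta$; then $T$ is skew-Hermitian and $Tx = 0$, so $T^*x = -Tx = 0$, whence $T \cdot xx^\pls = 0$ and $xx^\pls \cdot T = 0$, giving $T = \mathcal P_x T \mathcal P_x$. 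Setting $S = T$ produces a skew-Hermitian matrix of the required form, and the converse inclusion is immediate since any matrix $\hat\Delta + \mathcal P_x S \mathcal P_x$ with $S^* = -S$ is skew-Hermitian and sends $x$ to $y$ (because $\mathcal P_x x = 0$).

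For the minimal-Frobenius-norm part, I would exploit the orthogonality of $\hat\Delta$ and $\mathcal P_x S \mathcal P_x$ in the Frobenius inner product. The cleanest way to see this is to pass to an orthonormal basis of $\C^n$ whose first vector is $x/\|x\|$: in this basis $xx^\pls$ becomes the projector onto the first coordinate and $\mathcal P_x$ the projector onto its orthogonal complement, so $\hat\Delta$ is supported on the first row and first column (with a single purely imaginary scalar in the $(1,1)$ entry), whereas $\mathcal P_x S \mathcal P_x$ is supported on the complementary block. Consequently $\|\hat\Delta + \mathcal P_x S \mathcal P_x\|_F^2 = \|\hat\Delta\|_F^2 + \|\mathcal P_x S \mathcal P_x\|_F^2$, and the infimum is attained uniquely when $\mathcal P_x S \mathcal P_x = 0$. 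A direct trace calculation using cyclic invariance, $x^\pls x = 1$, and $x^\pls (x^\pls)^* = 1/\|x\|^2$ then delivers the stated value of $\|\hat\Delta\|_F^2$.

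The main obstacle I anticipate is the bookkeeping of the cross-terms in the trace computation for $\|\hat\Delta\|_F^2$; because of the sign flip $\overline{x^\pls y} = -x^\pls y$ peculiar to the skew-Hermitian case, several terms cancel in a slightly different pattern than in Theorem~\ref{theorem: herm map}, and care is needed to simplify the result to the claimed form $2\|yx^\pls\|_F^2 - \trace((x^\pls y)^* xx^\pls)$. Apart from this purely mechanical check, the argument runs parallel to the Hermitian case with signs adjusted throughout, and no new ideas are required beyond what is already used in Theorem~\ref{theorem: herm map}.
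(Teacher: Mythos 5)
The paper itself offers no proof of this statement --- it is recalled verbatim from \cite{adhikari2008backward} --- so there is nothing internal to compare against; your outline (necessity via $x^*\D x=-\overline{x^*\D x}$, the subtraction trick $T=\D-\hat\D$ with $T=\mathcal P_xT\mathcal P_x$, and Frobenius-orthogonality of the first-row-and-column block against the complementary block) is exactly the standard route and is structurally sound.

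However, the one step you wave through is the one that fails. You list the identities $(yx^\pls)^*x=\overline{x^\pls y}\,x$ and $\overline{x^\pls y}=-x^\pls y$ and assert that the ``direct verification'' of $\hat\D x=y$ goes through; plugging your own identities into the formula exactly as printed gives
$\hat\D x = y-(yx^\pls)^*x+(x^\pls y)xx^\pls x = y+(x^\pls y)x+(x^\pls y)x = y+2(x^\pls y)x,$
which is not $y$ unless $x^*y=0$. (Sanity check with $n=1$, $x=1$, $y=i$: the printed formula yields $\hat\D=3i$, not $i$.) The correct rank-one correction is $\overline{(x^\pls y)}\,xx^\pls=-(x^\pls y)xx^\pls$, i.e.\ the skew analogue of the Hermitian case is obtained by conjugating (hence negating) the scalar $x^\pls y$, not by flipping the sign in front of it. The same slip infects the norm: $\trace((x^\pls y)^*xx^\pls)=\overline{x^\pls y}$ is purely imaginary and cannot be subtracted from a squared norm; the correct quantity is $|x^\pls y|^2={|x^*y|^2}/{\|x\|^4}$. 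These are almost certainly typographical defects in the recalled statement rather than conceptual errors, but your proof cannot simultaneously claim to verify the printed formula and be correct --- you flagged ``sign bookkeeping'' as the anticipated obstacle and then did not perform it, and performing it is precisely what exposes (and fixes) the discrepancy. To complete the argument you should carry out the verification, state the corrected $\hat\D=yx^\pls-(yx^\pls)^*\mathcal P_x$ and the corrected norm $2\|yx^\pls\|_F^2-|x^\pls y|^2$, and note the correction explicitly.
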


		
		\begin{theorem}{\rm \cite{adhikari2008backward}} \label{theorem: Complex-sym map}
			Let $x,y \in \C^n\setminus \{0\}$ and let $\lS := \{ \D \in \C^{n,n} ~: ~  \D^T=\D,\,\D x=y \}$. Then $\mathcal S \neq \emptyset$ and
			\begin{equation}\label{thm:Complex-sym-map}
				\lS= \left\{ y x^\pls + (y x ^\pls) ^T -  (x x^\pls)^T y x^\pls + (\mathcal P_x)^T H \mathcal P_x ~: ~H \in \C^{n,n},\, H^T=H\right \}.
			\end{equation}
			Moreover, $ \inf_{\D \in \lS } { \| \D \| }_F^2=   2 {\|yx^\pls\|}_F^2 - 
			\trace( y x^\pls (yx^\pls)^* (x x^\pls)^T)$,
			where the infimum is uniquely attained by $\hat{\D} = y x^\pls + (y x ^\pls) ^T -  (x x^\pls)^T y x^\pls$ which is obtained by setting $H=0$ in~\eqref{thm:Complex-sym-map}.
		\end{theorem}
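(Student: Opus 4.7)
The plan is to verify the three claims---nonemptiness, characterization, and minimality---in sequence, mirroring the proof of the Hermitian case. The decisive observation is that the scalar identity $x^Ty = y^Tx$ holds unconditionally on $\C^n$, which is precisely what replaces the reality hypothesis $x^{*}y\in\R$ needed in the Hermitian setting; this is why no constraint on $x,y$ is required here.

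I would first verify that $\hat\Delta := yx^\pls + (yx^\pls)^T - (xx^\pls)^T yx^\pls$ lies in $\mathcal{S}$. Using $x^\pls x = 1$, a direct expansion gives $\hat\Delta x = y + (yx^\pls)^T x - (xx^\pls)^T y$; writing out the two outer products reduces each correction term to $\bar{x}(x^T y)/(x^{*}x)$, and they cancel. The symmetry $\hat\Delta^T = \hat\Delta$ similarly reduces to the identity $(yx^\pls)^T xx^\pls = (xx^\pls)^T yx^\pls$, which holds by the same scalar identity after an outer-product expansion.

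For the characterization, the inclusion $\supseteq$ follows because $(\mathcal{P}_x)^T H \mathcal{P}_x$ is symmetric whenever $H$ is and annihilates $x$ since $\mathcal{P}_x x = 0$. For $\subseteq$, given any $\Delta \in \mathcal{S}$, set $E := \Delta - \hat\Delta$; then $E x = 0$ and $E^T = E$. The first condition gives $E = E(I - xx^\pls) = E \mathcal{P}_x$, and the second then yields $E = E^T = (\mathcal{P}_x)^T E = (\mathcal{P}_x)^T E \mathcal{P}_x$, so choosing $H := E$ recovers $\Delta$ in the stated form.

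For the minimal-norm statement, I would expand
\[
\|\Delta\|_F^2 = \|\hat\Delta\|_F^2 + 2\,\real\,\trace\!\left(\hat\Delta^{*} (\mathcal{P}_x)^T H \mathcal{P}_x\right) + \|(\mathcal{P}_x)^T H \mathcal{P}_x\|_F^2,
\]
and argue that the cross term vanishes. The crucial identity is $x^\pls \mathcal{P}_x = 0$, which gives $y x^\pls \mathcal{P}_x = 0$; after cyclic permutation inside the trace, each of the three summands of $\hat\Delta$ contributes zero, so the cross term disappears and the minimum is attained uniquely at $H = 0$. The remaining task is to match $\|\hat\Delta\|_F^2$ to the stated expression $2\|yx^\pls\|_F^2 - \trace\!\left(yx^\pls (yx^\pls)^{*}(xx^\pls)^T\right)$; this requires expanding nine trace terms and collapsing them via cyclicity together with $x^\pls x x^\pls = x^\pls$ and $(xx^\pls)^{*} = xx^\pls$. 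I expect this bookkeeping to be the main obstacle: since $(xx^\pls)^T = \overline{xx^\pls} \ne xx^\pls$ in general, the complex-conjugate and Hermitian structures of the intermediate terms must be tracked carefully in order to cancel everything outside the two displayed contributions.
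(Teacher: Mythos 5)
The paper does not prove this statement at all: Theorem~\ref{theorem: Complex-sym map} is recalled verbatim from~\cite{adhikari2008backward} as a preliminary, so there is no in-paper proof to compare against. Your argument is nevertheless correct and is exactly the template the paper itself uses for the analogous results it does prove (e.g.\ Theorem~\ref{theorem HD}): verify the candidate $\hat\D$ directly, characterize the solution set via $E:=\D-\hat\D$ with $Ex=0$, $E^T=E$, hence $E=(\mathcal P_x)^TE\mathcal P_x$, and kill the cross term in the Frobenius expansion using $x^\pls\mathcal P_x=0$ together with its conjugate $\overline{x^\pls}\,\mathcal P_x^T=0$. Your key observation that $x^Ty=y^Tx$ replaces the reality condition $x^*y\in\R$ of the Hermitian case is the right explanation for the unconditional nonemptiness. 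The only piece you leave unexecuted is the nine-term expansion of ${\|\hat\D\|}_F^2$; it does close as you predict — writing $A=yx^\pls$, $P=xx^\pls$, the terms $\trace(A^*A)$ and $\trace(\bar AA^T)$ give $2{\|A\|}_F^2$, the three terms involving $P^T=\bar P$ collapse to $-\trace(AA^*P^T)$ via idempotence of $P^T$, and the remaining four cross terms each equal $\pm{|x^Ty|}^2/{\|x\|}^4$ and cancel in pairs (using $x^T\bar x={\|x\|}^2$) — so no gap remains.
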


		\begin{theorem}{\rm \cite{adhikari2008backward}} \label{theorem: Complex skew-sym map}
			Let $x,y \in \C^n\setminus \{0\}$ and let $\lS := \{ \D \in \C^{n,n} ~: ~  \D^T=-\D,\,\D x=y \}$. Then $\mathcal S \neq \emptyset$ if and only if $x^Ty =0$. If the later condition holds, then 
			\begin{equation}\label{thm:Complex skew-sym-map}
				\lS= \left\{ y x^\pls - (y x ^\pls) ^T +  (x x^\pls)^T y x^\pls + (\mathcal P_x)^T H \mathcal P_x ~: ~H \in \C^{n,n},\, H^T=-H\right \}.
			\end{equation}
			Moreover, $ \inf_{\D \in \lS } { \| \D \| }_F^2=   2 {\|yx^\pls\|}_F^2 - 
			\trace( y x^\pls (yx^\pls)^* (x x^\pls)^T)$,
			where the infimum is uniquely attained by $\hat{\D} = y x^\pls - (y x ^\pls) ^T +  (x x^\pls)^T y x^\pls$ which is obtained by setting $H=0$ in~\eqref{thm:Complex skew-sym-map}.
\end{theorem}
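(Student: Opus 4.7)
The plan is to mirror the argument used in Theorem~\ref{theorem: Complex-sym map} for the complex-symmetric case, with the modification that skew-symmetry forces the compatibility condition $x^{T}y=0$. Necessity is immediate: for $\D^{T}=-\D$ with $\D x=y$, we have $x^{T}y=x^{T}\D x=(x^{T}\D x)^{T}=x^{T}\D^{T}x=-x^{T}\D x$, so $x^{T}y=0$. For sufficiency, I would verify directly that $\hat\D := yx^{\pls} - (yx^{\pls})^{T} + (xx^{\pls})^{T} yx^{\pls}$ satisfies $\hat\D x=y$ and $\hat\D^{T}=-\hat\D$, using $x^{\pls}x=1$ and the fact that both $(yx^{\pls})^{T}x$ and $(xx^{\pls})^{T}y$ contain $y^{T}x$ as a scalar factor and hence vanish; skew-symmetry reduces to the identity $(yx^{\pls})^{T}(xx^{\pls})+(xx^{\pls})^{T}(yx^{\pls})=0$, which again follows from $x^{T}y=0$.

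For the characterization, given $\D\in\lS$ I would set $H:=\D-\hat\D$, so that $Hx=0$ and $H^{T}=-H$; then $H^{T}x=-Hx=0$ yields $x^{T}H=0$. Consequently $H\mathcal P_x = H - Hxx^{\pls} = H$ and $\mathcal P_x^{T}H = H$, so $H=\mathcal P_x^{T}H\mathcal P_x$ with $H$ skew-symmetric. Conversely, every matrix of the form $\hat\D+\mathcal P_x^{T}H\mathcal P_x$ with $H^{T}=-H$ lies in $\lS$, because $\mathcal P_x x=0$ preserves the mapping constraint and $(\mathcal P_x^{T}H\mathcal P_x)^{T}=\mathcal P_x^{T}H^{T}\mathcal P_x=-\mathcal P_x^{T}H\mathcal P_x$.

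For the minimum-norm claim, I would expand
\[
\|\D\|_{F}^{2}=\|\hat\D\|_{F}^{2}+\|\mathcal P_x^{T}H\mathcal P_x\|_{F}^{2}+2\real\,\trace(\hat\D^{*}\mathcal P_x^{T}H\mathcal P_x)
\]
and show that the cross term vanishes for every skew-symmetric $H$. By the cyclic property of the trace this equals $\trace(H\,\mathcal P_x\hat\D^{*}\mathcal P_x^{T})$, and taking conjugate transposes reduces the problem to verifying $\mathcal P_x^{T}\hat\D\mathcal P_x=0$. The identity $x^{\pls}\mathcal P_x=0$ kills the first and third summands of $\hat\D$ on the right, leaving $\hat\D\mathcal P_x=-(yx^{\pls})^{T}\mathcal P_x$; multiplying on the left by $\mathcal P_x^{T}$ then gives $-(yx^{\pls}\mathcal P_x)^{T}=0$. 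Hence $\|\D\|_{F}^{2}\ge\|\hat\D\|_{F}^{2}$ with equality precisely at $H=0$. The main obstacle is the delicate bookkeeping between transposition, complex conjugation, and Hermitian conjugation---the structural constraint is transposition-based while the Frobenius inner product is sesquilinear---so every projection identity from the Hermitian case must be replaced by its transpose-side analogue. Finally, the claimed infimum value follows by direct expansion of $\|\hat\D\|_{F}^{2}$; the trace term $\trace(yx^{\pls}(yx^{\pls})^{*}(xx^{\pls})^{T})$ in the formula collapses under $x^{T}y=0$, since it carries $y^{*}\bar x=\overline{y^{T}x}$ as a scalar factor.
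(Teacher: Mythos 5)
The paper does not actually prove this statement---it is recalled verbatim from \cite{adhikari2008backward} as a preliminary mapping result---so there is no internal proof to compare against. Your blind proof is correct and complete: the necessity computation $x^Ty=-x^Ty$, the verification that $\hat\D x=y$ and $\hat\D^T=-\hat\D$ (both hinging on $x^Ty=0$), the characterization via $H:=\D-\hat\D$ satisfying $H=\mathcal P_x^TH\mathcal P_x$, the vanishing of the cross term through $\mathcal P_x^T\hat\D\mathcal P_x=0$, and the observation that the trace term $\trace(yx^\pls(yx^\pls)^*(xx^\pls)^T)$ carries the scalar factor $\overline{y^Tx}=0$ all check out. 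The only cosmetic point is that equality in the norm bound holds precisely when $\mathcal P_x^TH\mathcal P_x=0$ rather than when $H=0$; this still yields uniqueness of the minimizing \emph{matrix} $\hat\D$, which is what the theorem asserts.
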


The next result from~\cite{MehMS17} solves the mapping problem of finding $\D \in \C^{n,n}$ for given vectors $x,y,z,,w \in \C^{n}$ such that $\D x=y$ and $\D^*z=w$.

\begin{theorem}{\rm \cite{MehMS17}} \label{theorem : 2.1 mehl2017}
	Let $x,w \in \C^{m} \setminus \{0\},y,z \in \C^n \setminus \{0\}$ and let \[\lS := \{ \D \in \C^{n,m} ~ : ~ \D x= y, \D ^*z=w\}.\] Then $\mathcal S \neq \emptyset$ if and only if  $x^*w=y^*z$. If the later condition holds, then 
	\[ 
	\lS = \{ yx^\pls + (wz^\pls)^* - (wz^\pls)^* x x^\pls + \mathcal P_z R \mathcal P_x ~ :~ R \in \C^{n,m} \}.
	\]
Moreover, $\inf_{\D \in \lS} {\|\D\|}_F^2 ={\|yx^\pls\|}_F^2 + {\|wz^\pls\|}_F^2 - \trace(wz^\pls (wz^\pls)^* x x^\pls)$,
where the infimum is uniquely attained by $\hat{\D}=  yx^\pls + (wz^\pls)^* - (wz^\pls)^* x x^\pls$.
\end{theorem}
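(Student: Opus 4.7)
The plan is to proceed in three stages: necessity of the compatibility condition, characterization of $\lS$, and the norm/minimizer computation.

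For necessity, if $\D\in\lS$, then $z^*y = z^*\D x = (\D^*z)^*x = w^*x$, so taking conjugates yields $y^*z = x^*w$. For sufficiency, I would take the candidate $\hat\D := yx^\pls+(wz^\pls)^*-(wz^\pls)^*xx^\pls$ and plug in. Since $x,z$ are nonzero vectors, $x^\pls x = z^\pls z = 1$ and $xx^\pls x = x$, which gives $\hat\D x = y + (wz^\pls)^*x - (wz^\pls)^*x = y$ immediately. For $\hat\D^*z$, a direct calculation yields $\hat\D^*z = (x^\pls)^*y^*z + w - (xx^\pls)^*w$, and replacing $y^*z$ by $x^*w$ via the hypothesis shows the first and third terms cancel, leaving $w$.

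For the characterization, the usual affine-space argument applies: $\D\in\lS$ iff $T := \D-\hat\D$ satisfies $Tx=0$ and $T^*z=0$. Since $\mathcal P_x = I-xx^\pls$ is the orthogonal projection onto $\Span(x)^\perp$, the condition $Tx=0$ is equivalent to $T=T\mathcal P_x$; similarly $T^*z=0$ is equivalent to $T=\mathcal P_z T$. Combining, $T=\mathcal P_z T\mathcal P_x$, so choosing $R:=T$ displays every solution in the stated form, and conversely any $\mathcal P_z R\mathcal P_x$ annihilates $x$ on the right and $z$ on the adjoint side.

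For the minimum norm, the key step is showing that $\hat\D$ and $\mathcal P_z R\mathcal P_x$ are Frobenius-orthogonal, so that $\|\D\|_F^2 = \|\hat\D\|_F^2 + \|\mathcal P_z R\mathcal P_x\|_F^2$ and uniqueness of the minimizer $\hat\D$ (at $R=0$) drops out. Using the cyclicity of trace, this reduces to proving $\mathcal P_z\hat\D\mathcal P_x=0$. A short computation gives $\hat\D\mathcal P_x = (wz^\pls)^*\mathcal P_x$ (the $yx^\pls$ piece dies against $\mathcal P_x$, and the correction $-(wz^\pls)^*xx^\pls$ exactly cancels the $(wz^\pls)^*xx^\pls$ produced), and then $\mathcal P_z(wz^\pls)^* = (wz^\pls - wz^\pls zz^\pls)^* = 0$ because $z^\pls zz^\pls = z^\pls$. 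Finally, to compute $\|\hat\D\|_F^2$, I would write $\hat\D = yx^\pls + (wz^\pls)^*\mathcal P_x$ and expand $\trace(\hat\D^*\hat\D)$: the diagonal terms give $\|yx^\pls\|_F^2$ and $\|wz^\pls\|_F^2-\trace(wz^\pls(wz^\pls)^*xx^\pls)$ (after using $\mathcal P_x^2=\mathcal P_x$), while the cross term vanishes because $(yx^\pls)^*(wz^\pls)^*$ is a scalar multiple of $xw^*$ and $\mathcal P_x x=0$.

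The only mildly subtle step is verifying the orthogonality $\mathcal P_z\hat\D\mathcal P_x=0$ cleanly; everything else is bookkeeping with the identities $x^\pls x=1$, $xx^\pls x = x$, $z^\pls z z^\pls = z^\pls$, and the hypothesis $x^*w=y^*z$.
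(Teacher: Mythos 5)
Your proof is correct. Note that the paper does not prove this statement at all --- it is quoted verbatim from the reference \cite{MehMS17} as a known mapping result --- so there is no in-paper argument to compare against; your three-stage argument (necessity via $z^*\D x=(\D^*z)^*x$, the affine characterization via $T=\mathcal P_z T\mathcal P_x$, and the Frobenius-orthogonality $\mathcal P_z\hat\D\mathcal P_x=0$ giving the Pythagorean splitting and the unique minimizer) is the standard route taken in that reference, and all the individual computations you sketch check out.
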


We close this subsection by stating two results that provide solutions to  the positive semidefinite mapping problem and dissipative mapping problem, respectively. 

\begin{theorem}{\rm \cite[Theorem 2.3]{MehMS16}} \label{lem:psdmap}
Let $x,y \in \C^n \setminus \{0\}$ and let
$\lS := \{ \D \in \C^{n,n} ~ : ~ \D x= y,\, \D =\D^* \succeq 0\}$.
Then $\lS\neq \emptyset$ if and only if $x^*y > 0$. If the later condition holds, then 
\[ 
\lS = \left \{ \frac{y y^*}{x^*y} + \mathcal P_x K \mathcal P_x ~ :~ K \in \C^{n,m},\, K \succeq 0 \right \}.
\]
Moreover, $ \inf_{\D \in \lS} {\|\D\|}_F^2 =  \frac{\|y\|_F^2}{x^*y}$ 
and infimum is uniquely attained by the rank one matrix $\hat{\D}=\frac{y y^*}{x^*y}$.
\end{theorem}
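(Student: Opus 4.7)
The plan is to anchor the analysis on the candidate $\hat\Delta=yy^*/(x^*y)$, parametrize $\lS$ as $\hat\Delta$ plus a PSD correction supported on $x^\perp$, and then read off the norm bound from the resulting PSD ordering. Necessity of $x^*y>0$ follows by premultiplying $\Delta x=y$ by $x^*$ to obtain $x^*y=x^*\Delta x\ge 0$, together with the standard observation that $x^*\Delta x=0$ for $\Delta\succeq 0$ forces $\Delta^{1/2}x=0$, which would give $y=\Delta x=0$ and contradict $y\ne 0$. For sufficiency, $\hat\Delta$ is Hermitian with unique nonzero eigenvalue $\|y\|^2/(x^*y)>0$, and $\hat\Delta x=y$ since $y^*x=\overline{x^*y}=x^*y$, so $\hat\Delta\in\lS$.

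For the characterization, I would take an arbitrary $\Delta\in\lS$, set $C:=\Delta-\hat\Delta$, and observe that $C=C^*$ with $Cx=0$; combined with $\mathcal P_x^2=\mathcal P_x=\mathcal P_x^*$, this immediately yields $C=\mathcal P_x C\mathcal P_x$. The substantive step is $C\succeq 0$, which I would prove by applying Cauchy--Schwarz to the PSD sesquilinear form $\langle u,v\rangle_\Delta:=u^*\Delta v$: since $\langle x,x\rangle_\Delta=x^*y$ and $\langle x,v\rangle_\Delta=y^*v$ for every $v\in\C^n$, the inequality $|\langle x,v\rangle_\Delta|^2\le\langle x,x\rangle_\Delta\,\langle v,v\rangle_\Delta$ rearranges to $v^*\hat\Delta v\le v^*\Delta v$, i.e.\ $C\succeq 0$. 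Setting $K=C$ then expresses $\Delta$ in the claimed form. The reverse inclusion is a routine check: for any $K\succeq 0$, the matrix $\hat\Delta+\mathcal P_x K\mathcal P_x$ is a sum of two PSDs, hence Hermitian and PSD, and it maps $x$ to $y$ because $\mathcal P_x x=0$.

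For the minimum-norm statement, the previous paragraph furnishes the PSD comparison $\Delta\succeq\hat\Delta\succeq 0$ for every $\Delta\in\lS$, so Lemma~\ref{lemma: ineq_psd} applied with the unitarily invariant Frobenius norm immediately gives $\|\Delta\|_F\ge\|\hat\Delta\|_F$. For uniqueness I would expand $\|\Delta\|_F^2=\|\hat\Delta\|_F^2+2\,\trace(\hat\Delta C)+\|C\|_F^2$ and use $\trace(\hat\Delta C)=\trace(\hat\Delta^{1/2}C\hat\Delta^{1/2})\ge 0$ to conclude that equality forces $C=0$ and hence $\Delta=\hat\Delta$. The main obstacle is precisely the PSD comparison $\Delta\succeq\hat\Delta$, which is not obvious a priori; the Cauchy--Schwarz route handles it in one line, but a more mechanical alternative would be to pass to an orthonormal basis $U=[x/\|x\|\ U_2]$ and apply Albert's lemma (Lemma~\ref{lem:psd}) directly to the Schur-complement block structure of $U^*\Delta U$, which produces the same parametrization at the cost of more bookkeeping.
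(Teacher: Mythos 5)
Your proof is correct. Note, however, that the paper does not prove this statement at all: it is imported verbatim from \cite[Theorem 2.3]{MehMS16}, so there is no in-paper argument to compare against. Your route (Cauchy--Schwarz for the semi-inner product $u^*\D v$ to get $\D\succeq\hat\D$, hence the parametrization $\D=\hat\D+\mathcal P_xK\mathcal P_x$ with $K\succeq 0$, then Lemma~\ref{lemma: ineq_psd} plus the trace expansion for minimality and uniqueness) is the standard one and all steps check out, including the observation that $Cx=0$ and $C=C^*$ force $C=\mathcal P_xC\mathcal P_x$. One small point worth flagging: your $\hat\D$ gives $\|\hat\D\|_F^2=\|y\|^4/(x^*y)^2$, whereas the statement as quoted asserts $\inf_{\D}\|\D\|_F^2=\|y\|_F^2/(x^*y)$; these agree only when $x^*y=\|y\|^2$, so the quoted formula appears to carry a typo (it should read $\inf_{\D}\|\D\|_F=\|y\|^2/(x^*y)$, equivalently $\inf_{\D}\|\D\|_F^2=\|y\|^4/(x^*y)^2$) --- your computation, not the quoted display, is the correct one.
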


\begin{theorem}{\rm \cite{BhaGS21}} \label{map result : MAIN BhaGS21a} 
	Let $x,y \in \C^{n}\setminus\{0\}$ and let  $\mathcal S:=\{\Delta \in \C^{n,n}~:~\Delta+\Delta^* \succeq 0,~\Delta x=y\}$.
 Then $\mathcal S\neq \emptyset$ if and only if  $\real{(x^*y)} \geq 0$.
 Moreover, if $\real{(x^*y)} >0$, then 
\begin{eqnarray}\label{eq:S_char 1}
		\mathcal S=\Big\{yx^\pls +{(yx^\pls)}^* \mathcal P_x +xx^\pls Z\mathcal P_x + \mathcal P_x K \mathcal P_x + \mathcal P_x G \mathcal P_x:~Z,K,G \in \C^{n,n}~\text{satisfy}~\eqref{eq:cond1 a} \Big\},
\end{eqnarray} 
		where
		\begin{eqnarray}\label{eq:cond1 a}
G^*=-G,\quad K\succeq 0, \quad \text{and}\quad K-\frac{1}{4\real{(x^*y)}}\left(2y+Z^*x\right)\left(2y+Z^*x\right)^*.
		\end{eqnarray}
Further,
\[
\inf_{\Delta \in \mathcal S} {\|\Delta\|}_F^2 \; = \; 2\frac{{\|y\|}^2}{{\|x\|}^2}-\frac{{|x^*y|}^2}{{\|x\|}^4},
\]
where the infimum is uniquely attained by  $\mathcal H:=yx^\pls -{(yx^\pls)}^*  \mathcal P_x$,
		which is obtained by setting $K=0$, $G=0$, and $Z=-2(yx^\pls)^*$ in~\eqref{eq:S_char 1}.
\end{theorem}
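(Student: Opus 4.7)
The plan is to divide the theorem into three parts: necessity of $\real(x^*y)\ge 0$, the set-characterization (both inclusions), and the minimum-norm computation.

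For necessity, if $\Delta\in\mathcal S$ then
\[
2\real(x^*y)\;=\;x^*\Delta x+(x^*\Delta x)^*\;=\;x^*(\Delta+\Delta^*)x\;\geq\;0,
\]
since $\Delta+\Delta^*\succeq 0$. This gives the stated necessary condition on $x,y$.

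For the characterization, I would work throughout in the orthogonal block decomposition $I_n=xx^\pls+\mathcal P_x$, relative to which every matrix splits into four blocks. For the \emph{forward} direction, take $\Delta$ of the displayed form with $Z,K,G$ as in \eqref{eq:cond1 a}. The identity $\Delta x=y$ is immediate from $x^\pls x=1$ and $\mathcal P_x x=0$ (only $yx^\pls$ survives). For $\Delta+\Delta^*$, direct expansion using $(xx^\pls)^*=xx^\pls$, $\mathcal P_x^*=\mathcal P_x$, $K=K^*$, $G^*=-G$ shows that the $(1,1)$-block equals $\tfrac{2\real(x^*y)}{\|x\|^4}xx^*$, the off-diagonal blocks are determined by $y$ and $Z$, and the $(2,2)$-block equals $2\mathcal P_x K\mathcal P_x$; applying Lemma~\ref{lem:psd} (Schur complement) to this $2\times 2$ block form reduces positive semidefiniteness to precisely the inequality on $K$ in \eqref{eq:cond1 a}. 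For the \emph{reverse} direction, given any $\Delta\in\mathcal S$ I would write $\Delta=xx^\pls\Delta xx^\pls+xx^\pls\Delta\mathcal P_x+\mathcal P_x\Delta xx^\pls+\mathcal P_x\Delta\mathcal P_x$; the constraint $\Delta x=y$ forces the first and third pieces into $yx^\pls$, the second piece defines the parameter $Z$ (after absorbing the contribution of $(yx^\pls)^*\mathcal P_x$), and the Hermitian/skew-Hermitian split of the last piece defines $K$ (via $\mathcal P_x K\mathcal P_x$) and $G$ (via $\mathcal P_x G\mathcal P_x$). Running Lemma~\ref{lem:psd} in the other direction on the Schur complement of $\Delta+\Delta^*$ recovers the PSD inequality on $K$.

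For the minimum Frobenius norm I would use $\|\Delta\|_F^2=\|\Delta_H\|_F^2+\|\Delta_S\|_F^2$. The parameter $G$ enters only $\Delta_S$ and is minimized at $G=0$. In the Hermitian part, the constraint $\mathcal P_x K\mathcal P_x\succeq\tfrac{1}{4\real(x^*y)}\mathcal P_x(2y+Z^*x)(2y+Z^*x)^*\mathcal P_x$, with both sides PSD, combined with Lemma~\ref{lemma: ineq_psd} forces $\|\mathcal P_x K\mathcal P_x\|_F$ to be minimized at equality. It remains to exploit the freedom in $Z$: choosing $Z^*x=-2y$ annihilates the right-hand side and permits $K=0$. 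The simplest $Z$ realizing this is $Z=-2(yx^\pls)^*$, and substituting $K=0$, $G=0$, $Z=-2(yx^\pls)^*$ into the parametrization collapses $\Delta$ to $\mathcal H=yx^\pls-(yx^\pls)^*\mathcal P_x$. Computing the Frobenius norm of $\mathcal H$ directly, using $\|yx^\pls\|_F^2=\|y\|^2/\|x\|^2$, the Frobenius-orthogonality $\langle yx^\pls,(yx^\pls)^*\mathcal P_x\rangle_F=0$ (which follows from $yx^\pls\mathcal P_x=0$), and $\|\mathcal P_x y\|^2=\|y\|^2-|x^*y|^2/\|x\|^2$, then yields $\|\mathcal H\|_F^2=2\|y\|^2/\|x\|^2-|x^*y|^2/\|x\|^4$.

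The technical heart of the proof is the reverse direction of the characterization, where one must confirm that the Schur-complement criterion of Lemma~\ref{lem:psd} applied to $\Delta+\Delta^*$ is equivalent to the condition on $K$ in \eqref{eq:cond1 a} for \emph{some} admissible $Z$; bookkeeping on $xx^\pls$ versus $\mathcal P_x$ contributions in the off-diagonal block is the delicate step. The minimum-norm computation is then mechanical once Lemma~\ref{lemma: ineq_psd} and the choice $Z^*x=-2y$ are identified; the main subtlety there is justifying that this particular $Z$ is optimal among all admissible ones.
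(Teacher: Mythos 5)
This theorem is imported from \cite{BhaGS21}; the paper contains no proof of it, so there is no in-paper argument to compare against line by line. Your treatment of the existence condition and of the characterization \eqref{eq:S_char 1} is the natural route and is sound: relative to $I_n=xx^\pls+\mathcal P_x$, the $(1,1)$ block of $\Delta+\Delta^*$ is $\tfrac{2\real(x^*y)}{\|x\|^4}xx^*$, the $(2,2)$ block is $2\mathcal P_xK\mathcal P_x$, the off-diagonal block is $\tfrac{1}{\|x\|^2}x\bigl(\mathcal P_x(2y+Z^*x)\bigr)^*$, and Lemma~\ref{lem:psd} turns $\Delta+\Delta^*\succeq 0$ into exactly the displayed inequality on $K$ (read with the missing ``$\succeq 0$'', as you correctly do).

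The gap is in the minimal-norm step, and it is not a deferrable subtlety: the parameter $Z$ enters the objective twice, once through the off-diagonal block $xx^\pls Z\mathcal P_x$ of $\Delta$ itself and once through the lower bound that the Schur-complement constraint imposes on $\|\mathcal P_xK\mathcal P_x\|_F$. Writing $u=\mathcal P_xZ^*x$, the variable part of $\|\Delta\|_F^2$ is
\[
\frac{\|\mathcal P_xy+u\|^2}{\|x\|^2}+\|\mathcal P_xK\mathcal P_x\|_F^2,
\qquad
\|\mathcal P_xK\mathcal P_x\|_F\;\ge\;\frac{\|\mathcal P_x(2y+u)\|^2}{4\real(x^*y)}
\]
by Lemma~\ref{lemma: ineq_psd}. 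Your choice $u=-2\mathcal P_xy$ annihilates the second term but costs $\|\mathcal P_xy\|^2/\|x\|^2$ in the first, whereas $u=-\mathcal P_xy$ annihilates the first term at the cost of $\|\mathcal P_xy\|^4/(16\real(x^*y)^2)$ in the second; neither choice dominates, and generically the optimum is at an interior point of the segment. Concretely, for $x=e_1$ and $y=e_1+e_2$ in $\C^2$, the matrix $\Delta=\left[\begin{smallmatrix}1&0\\1&1/4\end{smallmatrix}\right]$ satisfies $\Delta x=y$ and $\Delta+\Delta^*\succeq 0$ with $\|\Delta\|_F^2=33/16$, strictly below the value $3$ given by the displayed formula and attained by $\mathcal H$. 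So the assertion that $Z=-2(yx^\pls)^*$, $K=0$ is optimal cannot be justified as you propose; the joint minimization over $(Z,K)$ must actually be carried out, and you should revisit the precise hypotheses and statement in \cite{BhaGS21} before relying on the quoted infimum. A smaller point: the ``only if'' direction of $\mathcal S\neq\emptyset\iff\real(x^*y)\ge 0$ still needs a witness in the boundary case $\real(x^*y)=0$, which your characterization (valid only for $\real(x^*y)>0$) does not supply; the matrix $yx^\pls-(yx^\pls)^*\mathcal P_x$ serves there, since all blocks of $\Delta+\Delta^*$ then vanish.
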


\section{DSM problems with structures belonging to a Jordan or a Lie algebra} \label{sec3:J&Lalgebra}

In this section, we define the structures on $\D_1$ in a DSM problem that are associated with some orthosymmetric scalar product. Let $M \in \C^{n,n}$ be unitary such that $M$ is either symmetric or skew-symmetric or Hermitian or skew-Hermitian. Define the scalar product ${\langle{\cdot,\cdot\rangle}}_M:\C^n \times \C^n \longmapsto \C$ by 
\[
{\langle{x,y\rangle}}_M := \begin{cases}
y^TMx, & \text{ bilinear form}\\
y^*Mx, & ~\text{sesquilinear}.
\end{cases}
\]
Then the adjoint of a matrix $A \in \C^{n,n}$ with respect to the scalar product  ${\langle{\cdot,\cdot\rangle}}_M$ is denoted by $A^\star$ and defined by 
\[
A^\star=\begin{cases}
M^{-1}A^TM, & \text{ for bilinear form}\\
M^{-1}A^*M, & ~\text{for sesquilinear}.
\end{cases}
\]
We also have a Lie algebra $\mathbb L$ and a Jordan algebra $\mathbb J$, associated with ${\langle{\cdot,\cdot\rangle}}_M$ defined by 
\begin{equation*}
\mathbb L : = \left\{A \in \C^{n,n}: ~A^\star=-A\right\}\quad \text{and}\quad \mathbb J : = \left\{A \in \C^{n,n}: ~A^\star=A\right\},
\end{equation*}
respectively, see~\cite{MacMT08} for more details. We refer to~\cite[Table 2.1]{MacMT08} for some known structured matrices in some $\mathbb L$ or $\mathbb J$ associated with a scalar product. This include symmetric, skew-symmetric, pseudosymmetric, pseudoskew-symmetric, Hermitian, skew-Hermitian, pseudo-Hermitian, pseudoskew-Hermitian, etc.

If $\mathbb S \in \{\mathbb L ,\mathbb J \}$ and if we define $M\mathbb S := \{MA : ~ A \in \mathbb S \}$, then it is easy to check that 
\begin{equation}\label{eq:matrixequil}
A \in \mathbb S \Longleftrightarrow {MA} \in \{{\rm  Herm}(n),{\rm  SHerm}(n),{\rm  Sym}(n),{\rm SSym}(n)\}.
\end{equation}
In view of~\eqref{eq:matrixequil}, the following result shows that the doubly structured mapping problems with $\D_1 \in \{{\rm  Herm}(n),{\rm  SHerm}(n),{\rm  Sym}(n),{\rm SSym}(n)\}$ are prototypes of more general structured matrices belonging to Jordan and Lie algebras~\cite{MacMT08,AdhA16}.

\begin{theorem}\label{thm:LJreduction}
Consider a Lie algebra $\mathbb L$ and a Jordan algebra $\mathbb J$, associated with a scalar product ${\langle{\cdot,\cdot\rangle}}_M$ and let $\mathbb S \in \{\mathbb L ,\mathbb J \}$. Then for given vectors $x,w \in \C^{n+m}$ and $y,z \in \C^n$, $\mathcal S_d^{\mathbb S} \neq \emptyset$ if and only if $\mathcal S_d^{\mathbb S'} \neq \emptyset$ for some 
$\mathbb S' \in \{ {\rm  Herm}(n),{\rm  SHerm}(n),{\rm  Sym}(n),{\rm SSym}(n)\}$. 
Further,  $\hat \Delta$ is of minimal Frobenius norm in $ \mathcal S_d^{\mathbb S}$ if and only if $M \hat \Delta$ is of minimal Frobenius norm in $ \mathcal S_d^{\mathbb S'}$.
\end{theorem}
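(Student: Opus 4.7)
The plan is to reduce the problem to the already-addressed four classical structures via left multiplication by $M$. Concretely, I will introduce the map $\Phi:\C^{n,n+m}\to \C^{n,n+m}$ defined by $\Phi(\Delta)=M\Delta$. Since $M$ is unitary, $\Phi$ is a bijection with inverse $\tilde\Delta \mapsto M^*\tilde\Delta$, and it respects the block structure: if $\Delta=[\Delta_1~\Delta_2]$, then $\Phi(\Delta)=[M\Delta_1~M\Delta_2]$. Invoking~\eqref{eq:matrixequil}, $\Delta_1 \in \mathbb S$ holds if and only if $M\Delta_1 \in \mathbb S'$ for the corresponding $\mathbb S' \in \{{\rm Herm}(n),{\rm SHerm}(n),{\rm Sym}(n),{\rm SSym}(n)\}$. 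Thus $\Phi$ restricts to a bijection between the admissible block matrices for $\mathbb S$ and those for $\mathbb S'$.

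Next I will translate the two mapping conditions under $\Phi$. The primal condition $\Delta x=y$ is equivalent to $\Phi(\Delta)x=My$ by left-multiplying with $M$. For the adjoint condition, observe that $\Phi(\Delta)^*=(M\Delta)^*=\Delta^*M^*$; using $MM^*=I_n$ we see $\Delta^*z=w$ if and only if $\Phi(\Delta)^*(Mz)=w$. Therefore, setting $\tilde x=x$, $\tilde y=My$, $\tilde z=Mz$, $\tilde w=w$ gives
\[
\Phi\bigl(\mathcal S_d^{\mathbb S}\bigr)\;=\;\mathcal S_d^{\mathbb S'}\quad\text{(with data $\tilde x,\tilde y,\tilde z,\tilde w$),}
\]
which immediately yields the equivalence $\mathcal S_d^{\mathbb S}\neq\emptyset \Longleftrightarrow \mathcal S_d^{\mathbb S'}\neq\emptyset$.

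For the minimality statement, the key fact is the unitary invariance of the Frobenius norm: $\|\Phi(\Delta)\|_F=\|M\Delta\|_F=\|\Delta\|_F$. Hence $\Phi$ is a norm-preserving bijection between $\mathcal S_d^{\mathbb S}$ and $\mathcal S_d^{\mathbb S'}$, so it maps minimizers to minimizers. In particular, $\hat\Delta$ attains the minimum Frobenius norm in $\mathcal S_d^{\mathbb S}$ if and only if $M\hat\Delta$ attains the minimum in $\mathcal S_d^{\mathbb S'}$.

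I do not expect any substantive obstacle: once~\eqref{eq:matrixequil} is in hand, this is purely a bookkeeping result. The only points that demand a little care are (i) verifying that the correct vector $Mz$ (not $z$) has to be used in the transformed adjoint equation, a consequence of $M$ appearing on the \emph{left} in $\Phi$, and (ii) ensuring that the statement is uniform across the bilinear and sesquilinear cases, which is automatic because~\eqref{eq:matrixequil} already handles both via the table in~\cite{MacMT08} and the mapping conditions in Problem~\ref{def:dsmprob} always use the conjugate transpose $\Delta^*$.
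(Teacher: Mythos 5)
Your proof is correct and is exactly the intended argument: the paper states Theorem~\ref{thm:LJreduction} without giving a proof, relying on the equivalence~\eqref{eq:matrixequil}, and your norm-preserving bijection $\Delta\mapsto M\Delta$ (with the transformed data $x$, $My$, $Mz$, $w$) supplies precisely that reasoning. Your observation that the adjoint equation must be posed at $Mz$ rather than $z$ is the one genuinely delicate point, and you handle it correctly.
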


Given Theorem~\ref{thm:LJreduction}, the DSM problem with structures belonging to $\mathbb L$ or $\mathbb J$ can be solved by reducing it to the DSM problem for $\mathbb S' \in \{ {\rm  Herm}(n),{\rm  SHerm}(n),{\rm  Sym}(n),{\rm SSym}(n)\}$. Thus, in the following, we only consider the DSMs with structures $\mathbb S' \in \{ {\rm  Herm}(n),{\rm  SHerm}(n),{\rm  Sym}(n),{\rm SSym}(n)\}$.

\subsection{Doubly structured Hermitian mappings}

This section considers the doubly structured Hermitian mapping (DSHM) problem, i.e., when $\mathbb S={\rm Herm}(n)$ in Problem~\ref{def:dsmprob}. We have the following result that completely solves the existence and characterization problem for DSHMs and provides sufficient conditions for the minimal norm solution to the DSHM problem. 

\begin{theorem} \label{theorem HD}
	Given $x=[x_1^T~x_2^T]^T$ with $x_1 \in \C^{n}$ and $x_2 \in \C^m$, $y,z \in \C^n$, and $w=[w_1^T~w_2^T]^T$ with $w_1 \in \C^n$ and $w_2 \in \C^m$. Define 
\begin{equation} \label{def: double struct mapping set_H }
\mathcal S_d^{{\rm Herm}}  := \left \{ \D :~\D = [\D_1 ~ \D_2],\, \D_1\in \C^{n,n}, \, \D_2 \in \C^{n,m},\,  \D_1^* = \D_1,\, \D x=y, \,\D^*z=w \right \}.
	\end{equation}
	Then $\mathcal S_d^{{\rm Herm}} \neq \emptyset $ if and only if $x^*w=y^*z$ and $z^*w_1 \in \R$. If the later conditions hold true, then
		\begin{equation}\label{eq : char double struct_H}
			\mathcal S_d^{{\rm Herm}}  = \left \{ H + \wt H(K,R)  : ~ K\in \C ^{n,n},\, R \in \C^{n,m},\, K^*=K \right \},
		\end{equation}
		where $H=[H_1 ~ H_2]$ and $\wt H(K,R) = [\wt H_1(K) ~ \wt H_2(K,R) ]$ with $H_1,H_2,\wt H_1(K), \wt H_2(K,R)$ given by 
		\begin{eqnarray}
			H_1 &=& w_1 z^\pls + (w_1 z ^\pls) ^* - (z^\pls w_1)z z^\pls \label{val: H_1 double struct_H}, \\
			H_2 &=& yx_2^\pls- \left( w_1 z^\pls + (w_1 z ^\pls) ^* - (z^\pls w_1)z z^\pls\right)x_1  x_2^\pls + (w_2 z^\pls)^*\mathcal P_{x_2}   \label{val: H_2 double struct_H}, \\
			\wt H_1(K)&=& \mathcal P_z K \mathcal P_z \label{val: wt H_1 double struct_H}, \\
			\wt H_2(K,R) &=& \mathcal P_z R \mathcal P_{x_2} - \mathcal P_z K \mathcal P_z x_1 x_2^\pls \label{val: wt H_2 double struct_H},
		\end{eqnarray}
and
\begin{equation} \label{eq: minimal norm double struct_H}
 \inf_{\D \in \mathcal S_d^{{\rm Herm}}} {\|\D\|}_F^2 ~\geq~ {\| H_1\|}_F^2 + \inf_{K\in \C^{n,n}}  {\big\|H_2 + \wt H_2(K,0)\big \|}_F^2.	
\end{equation}  		
Moreover, if $x_1 = \alpha z$ for some nonzero $\alpha \in \C$, then equality holds in~\eqref{eq: minimal norm double struct_H} and we have
\begin{equation*} \label{eq: minimal norm double struct_H}
	 	\inf_{\D \in \mathcal S_d^{{\rm Herm}}} {\|\D\|}_F^2 ={\|H_1\|}_F^2 + {\|H_2\|}_F^2,
	 \end{equation*}  
where infimum is uniquely attained by the matrix $H$.
\end{theorem}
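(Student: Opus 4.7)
The plan is to reduce the doubly structured Hermitian mapping problem to two mapping subproblems already solved in the preliminaries. Writing $\D = [\D_1 \; \D_2]$ and splitting $x = [x_1^T \; x_2^T]^T$, $w = [w_1^T \; w_2^T]^T$ in the stated block form, the equations $\D x = y$ and $\D^* z = w$ decouple into (i) $\D_1 z = w_1$ with $\D_1^* = \D_1$, (ii) $\D_2^* z = w_2$, and (iii) $\D_1 x_1 + \D_2 x_2 = y$. Subproblem (i) is a pure Hermitian mapping problem for $\D_1$ solvable by Theorem~\ref{theorem: herm map}; once $\D_1$ is fixed, (ii)--(iii) become a two-sided (unstructured) mapping problem for $\D_2$ of exactly the shape handled by Theorem~\ref{theorem : 2.1 mehl2017}.

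Necessity of the existence conditions is immediate: $x^* w = x^* \D^* z = (\D x)^* z = y^* z$, and $z^* w_1 = z^* \D_1 z \in \R$ since $\D_1$ is Hermitian. For sufficiency, I first invoke Theorem~\ref{theorem: herm map} (using $z^* w_1 \in \R$) to obtain the general solution $\D_1 = H_1 + \wt H_1(K)$ with free $K^* = K$. Substituting into (iii) gives $\D_2 x_2 = y - \D_1 x_1$, and the compatibility condition $x_2^* w_2 = (y - \D_1 x_1)^* z$ required by Theorem~\ref{theorem : 2.1 mehl2017} reduces, upon using $\D_1^* z = w_1$, exactly to $x^* w = y^* z$ and is therefore \emph{independent} of the free parameter $K$. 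Applying Theorem~\ref{theorem : 2.1 mehl2017} then produces $\D_2 = H_2 + \wt H_2(K, R)$ in the stated form, simultaneously delivering existence and the characterization~\eqref{eq : char double struct_H}.

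For the lower bound I expand $\|\D\|_F^2 = \|\D_1\|_F^2 + \|\D_2\|_F^2$ and exploit orthogonality. A direct computation using $z^\pls \mathcal P_z = 0$ and $\mathcal P_z z = 0$ shows $\mathcal P_z H_1 \mathcal P_z = 0$, whence $\langle H_1, \wt H_1(K)\rangle_F = 0$ and $\|\D_1\|_F^2 = \|H_1\|_F^2 + \|\wt H_1(K)\|_F^2$. Splitting $\D_2 = \D_2(x_2 x_2^\pls) + \D_2 \mathcal P_{x_2}$ gives an orthogonal decomposition whose $\mathcal P_{x_2}$-block equals $(w_2 z^\pls)^* \mathcal P_{x_2} + \mathcal P_z R \mathcal P_{x_2}$; the two summands have column ranges in $\Span\{z\}$ and in its orthogonal complement, respectively, and are therefore Frobenius-orthogonal. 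Hence the $R$-dependence contributes a nonnegative term minimized at $R = 0$, giving $\inf_R \|\D\|_F^2 = \|H_1\|_F^2 + \|\wt H_1(K)\|_F^2 + \|H_2 + \wt H_2(K, 0)\|_F^2$. Dropping the nonnegative $\|\wt H_1(K)\|_F^2$ and taking the infimum over $K$ yields~\eqref{eq: minimal norm double struct_H}.

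Finally, in the special case $x_1 = \alpha z$, one has $\wt H_1(K) x_1 = \alpha \mathcal P_z K \mathcal P_z z = 0$, so $\wt H_2(K, 0) = -\mathcal P_z K \mathcal P_z x_1 x_2^\pls = 0$ for every $K$ and $H_2 + \wt H_2(K, 0) = H_2$ is $K$-independent. The infimum over $K$ of $\|\wt H_1(K)\|_F^2 + \|H_2\|_F^2$ is therefore $\|H_2\|_F^2$, attained uniquely at $K = 0$; combined with $R = 0$, the unique minimizer is $\hat \D = H$ with value $\|H_1\|_F^2 + \|H_2\|_F^2$. The main subtlety to watch is verifying that the compatibility condition for the $\D_2$-subproblem is genuinely independent of the free Hermitian parameter $K$ — this is what allows the two subproblems to decouple and what collapses the existence conditions to the single scalar equation $x^* w = y^* z$ together with the Hermitian mapping condition $z^* w_1 \in \R$.
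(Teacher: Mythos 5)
Your proposal is correct and follows essentially the same route as the paper's proof: decouple the problem into the Hermitian subproblem $\D_1 z = w_1$ (Theorem~\ref{theorem: herm map}) and the two-sided unstructured subproblem for $\D_2$ (Theorem~\ref{theorem : 2.1 mehl2017}), check that the compatibility condition $\tilde y^* z = x_2^* w_2$ is independent of $K$, and then minimize over $R$ and $K$ separately, with the case $x_1=\alpha z$ killing the $K$-dependence of $\wt H_2(K,0)$. The only cosmetic difference is that you justify the minimization steps by explicit Frobenius-orthogonality computations rather than by citing the minimal-norm clauses of the two auxiliary theorems, which yields the same bound (and note that the minimizer is unique as a matrix $H$, even though any $K$ with $\mathcal P_z K\mathcal P_z=0$ attains it).
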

\proof
Let us suppose that $\mathcal S_d^{{\rm Herm}} \neq \emptyset$. Then there exists $\D = [\D_1 ~ \D_2]$ with $\D_1\in \C^{n,n}$ and $\D_2 \in \C^{n,m}$ such  that  $\D_1^*= \D_1, \D x=y$, and $\D^* z=w$. This implies that $y^*z= (\D x)^* z = x^* \D^* z=x^* w$. Also, $z^* w_1= z^* \D_1^* z=z^* \D_1 z=(\D_1^* z)^*z=w_1^*z$, since $\D_1^*z=w_1$ and $\D_1^*=\D_1$. This implies that 
 $z^*w_1 \in \R $. Conversely, if $x^*w=y^*z, z^*w_1 \in \R $, then $H=[ H_1 ~ H_2] $ satisfies that  $H x=y, H^*z=w$, and $H_1^* = H_1$, which implies that $H \in \mathcal S_d^{{\rm Herm}}$.
	
Next, we prove~\eqref{eq : char double struct_H}. First suppose that 
$\D \in \mathcal S_d^{{\rm Herm}}$, i.e., $\D=[\D_1 ~ \D_2]$, such that $\D x = y, \D^* z = w$, and $\D_1 ^*=\D_1 $. This implies that 	
\begin{eqnarray}\label{val: eq1 dstruct_H} 
		\D_1 x_1 + \D_2 x_2 =y,  \quad\D_1 z = w_1  \quad \text{and}\quad
		\D_2^* z = w_2. 
\end{eqnarray}
Since $\Delta_1$ is a Hermitian matrix taking $z$ to $w_1$, from Theorem~\ref{theorem: herm map} $\D_1$ has the form
\begin{equation} \label{val: delta_1 map_H}
		\D_1 = w_1 z^\pls + (w_1 z ^\pls) ^* - (z^\pls w_1)z z^\pls + \mathcal P_z K \mathcal P_z 
\end{equation} 
for some Hermitian matrix $K \in \C^{n,n}$. By substituting $\D_1$ from~\eqref{val: delta_1 map_H} in~\eqref{val: eq1 dstruct_H}, we get	
\begin{eqnarray}\label{eq: delta2 stage 2_H} 	
		\D_2 x_2 = y- \left ( w_1 z^\pls + (w_1 z ^\pls) ^* - (z^\pls w_1)z z^\pls + \mathcal P_z K \mathcal P_z \right )x_1  \quad \text{and}\quad \D_2 ^* z = w_2,
\end{eqnarray} 
i.e., a mapping of the form $\D_2 x_2=\tilde y$ and $\D_2^*z=w_2$, where $\tilde y=y-  ( w_1 z^\pls + (w_1 z ^\pls) ^* - (z^\pls w_1)z z^\pls + \mathcal P_z K \mathcal P_z )x_1$. The vectors $x_2,\tilde y, z$, and $w_2$ satisfy 
\begin{eqnarray*}
\tilde y^*z&=&\left(y-  ( w_1 z^\pls + (w_1 z ^\pls) ^* - (z^\pls w_1)z z^\pls + \mathcal P_z K \mathcal P_z )x_1\right)^*z\\
	&=&y^*z- x_1^* w_1  \quad (\because (w_1 z^\pls + (w_1 z ^\pls) ^* - (z^\pls w_1)z z^\pls + \mathcal P_z K \mathcal P_z) z=\D_1z=w_1)\\
			&=& x_2^* w_2 \quad (\because x^*w= x_1^*w_1 + x_2^* w_2~\text{and}~x^*w=y^*z).
\end{eqnarray*}
Therefore, from Theorem~\ref{theorem : 2.1 mehl2017}, $\Delta_2$ can be written as 
\begin{equation} \label{val: delta2 map2unstruct_H}
		\D_2 = \tilde y x_2^\pls +  (w_2 z^\pls)^* - (w_2 z^\pls)^*x_2 x_2^\pls + \mathcal P_z R \mathcal P_{x_2},
	\end{equation}
for some $R \in \C^{n,m}$.	

Thus, in view of ~\eqref{val: delta_1 map_H} and~\eqref{val: delta2 map2unstruct_H},  we have 
\begin{eqnarray}
\left [\D_1 ~ \D_2\right ] &=& \big [ w_1 z^\pls + (w_1 z ^\pls) ^* - (z^\pls w_1)z z^\pls + \mathcal P_z K  \mathcal P_z ~~~ \tilde y x_2^\pls +  (w_2 z^\pls)^* - (w_2 z^\pls)^* x_2 x_2^\pls +  \mathcal P_z R  \mathcal P_{x_2}\big ]\nonumber \\
&=& \big[H_1+\tilde H_1(K)~~H_2+\tilde H_2(K,R)\big] \nonumber \\
&=& H + \wt H(K,R).
\end{eqnarray}
This proves ``$\subseteq$" in~\eqref{eq : char double struct_H}.
	
Conversely, let $[\Delta_1~\Delta_2]=[ H_1 + \wt H_1(K) ~~ H_2 + \wt H_2(K,R) ]$, where $H_1,\wt H_1(K),H_2$, and $\wt H_2(K,R)$ are defined by~\eqref{val: H_1 double struct_H}-\eqref{val: wt H_2 double struct_H} for some matrices $R \in \C^{n,m}$ and $K \in \C^{n,n}$ such that $K^*=K$. Then it is easy to check that $[\Delta_1~\Delta_2]x=y$ and 
$[\Delta_1~\Delta_2]^*z=w$ since $x^*w=y^*z$. Also $(H_1 + \wt H_1(K))^*= H_1 + \wt H_1(K)$ since  $z^* w_1 \in \R$ and $K^*=K$. 
Hence $[\Delta_1~\Delta_2] \in \mathcal S_d^{{\rm Herm}} $. This shows 
``$\supseteq$" in~\eqref{eq : char double struct_H}.

In view of~\eqref{eq : char double struct_H}, we have
\begin{eqnarray}
		\inf_{\D \in \mathcal S_d^{{\rm Herm}} } {\|\D\|}_F^2 &=& \inf_{K\in \C^{n,n},R\in \C^{n,m}, K^*=K} {\left \|H + \wt H(K,R)\right \|}_F^2 \nonumber \\ 
&=&  \inf_{K\in \C^{n,n},R\in \C^{n,m}, K^*=K}\left( { \big\| \big [ H_1 ~ H_2 \big]+ \big[ \wt H_1(K) ~ \wt H_2(K,R) \big]\big \|}_F^2 \right) \nonumber \\
&=& \inf_{K\in \C^{n,n},R\in \C^{n,m}, K^*=K}\left( { \big\| H_1 + \wt H_1(K)\big\|}_F^2 + {\big\| H_2 + \wt H_2(K,R)\big\|}_F^2 \right) \nonumber \\
&\geq & \inf_{K\in \C^{n,n}, K^*=K}{\big\| H_1 + \wt H_1(K)\big\|}_F^2  + \inf_{K\in \C^{n,n},R\in \C^{n,m}, K^*=K}  {\big\| H_2 + \wt H_2(K,R)\big\|}_F^2     \nonumber \\ \label {eq:firstineq}  \\
&=&{\| H_1\|}_F^2 + \inf_{K\in \C^{n,n},R\in \C^{n,m}, K^*=K} {\big\| H_2 + \wt H_2(K,R)\big\|}_F^2 \label {eq:secineq}  \\
&=& {\| H_1\|}_F^2 + \inf_{K\in \C^{n,n}, K^*=K} \Big( \inf_{R\in \C^{n,m}}  {\big \|H_2 + \wt H_2(K,R)\big\|}_F^2 \Big)     \nonumber \\
&=& {\| H_1\|}_F^2 + \inf_{K\in \C^{n,n}, K^*=K}  {\big\|H_2 + \wt H_2(K,0)\big \|}_F^2, \label{eq:thiineq} 
\end{eqnarray} 
where the first inequality in~\eqref{eq:firstineq} follows due to the fact that for any two real valued functions $f$ and $g$ defined on the same domain, $\inf(f+g)\geq \inf f+\inf g$. Also equality in~\eqref{eq:secineq} follows since the infimum in the first term is attained when $K=0$. In fact, for any $K\in \C^{n,n}$ such that $K^*=K$, we have $(H_1 + \wt H_1(K))z=w_1$, which implies from Theorem~\ref{theorem: herm map} that the minimum of ${\|H_1 + \wt H_1(K)\|}_F$ is attained when $K=0$. Further, for a fixed $K$ and for any $R \in \C^{n,m}$, $H_2 + \wt H_2(K,R)$ is a matrix satisfying $(H_2 + \wt H_2(K,R))x_2=\tilde y$ and $(H_2 + \wt H_2(K,R))^*z=w_2$. This implies from Theorem~\ref{theorem : 2.1 mehl2017} that for any fixed $K$, the minimum of  ${\|H_2 + \wt H_2(K,R)\|}_F$ over $R$ is attained when $R=0$, which yields~\eqref{eq:thiineq}. This proves~\eqref{eq: minimal norm double struct_H}.

Next suppose if $x_1=\alpha z$ for some nonzero $\alpha \in \C$, then $\wt H_2(K,0)=0$ for every $K \in \C^{n,n}$. This implies from~\eqref{eq:thiineq} that 
\begin{eqnarray}
\inf_{\D \in \mathcal S_d^{{\rm Herm}} } {\|\D\|}_F^2~\geq~ {\| H_1\|}_F^2+{\| H_2\|}_F^2\,= \, {\| H\|}_F^2,
\end{eqnarray}
and in this case the lower bound is attained since $H \in \mathcal S_d^{{\rm Herm}}$. This completes the proof. 
\eproof

\subsection{Doubly structured skew-Hermitian mappings}	

A result analogous to Theorem~\ref{theorem HD} can be obtained for doubly structured skew-Hermitian mappings (DSSHMs), i.e., when $\mathbb S={\rm SHerm}(n)$ in Problem~\ref{def:dsmprob}. In the following, we state the result for DSSHMs and skip its proof as it is similar to the proof of Theorem~\ref{theorem HD}.
\begin{theorem} \label{theorem_SHD}
	Given $x=[x_1^T~x_2^T]^T$ with $x_1 \in \C^{n}$ and $x_2 \in \C^m$, $y,z \in \C^n$, and $w=[w_1^T~w_2^T]^T$ with $w_1 \in \C^n$ and $w_2 \in \C^m$. Define 
\begin{equation*} \label{def: double struct mapping set_SH }
\mathcal S_d^{{\rm SHerm}}  := \left \{ \D :~\D = [\D_1 ~ \D_2],\, \D_1\in \C^{n,n}, \, \D_2 \in \C^{n,m},\,  \D_1^* = -\D_1,\, \D x=y, \,\D^*z=w \right \}.
	\end{equation*}
	Then $\mathcal S_d^{{\rm SHerm}} \neq \emptyset $ if and only if $x^*w=y^*z$ and $z^*w_1 \in i\R$. If the later conditions hold true, then
		\begin{equation*}\label{eq : char double struct_SH}
			\mathcal S_d^{{\rm SHerm}}  = \left \{ H + \wt H(K,R)  : ~ K\in \C ^{n,n},\, R \in \C^{n,m},\, K^*=-K \right \},
		\end{equation*}
		where $H=[H_1 ~ H_2]$ and $\wt H(K,R) = [\wt H_1(K) ~ \wt H_2(K,R) ]$ with $H_1,H_2,\wt H_1(K), \wt H_2(K,R)$ given by 
		\begin{eqnarray*}
			H_1 &=& -w_1 z^\pls + (w_1 z ^\pls) ^* + (z^\pls w_1)z z^\pls \label{val: H_1 double struct_SH}, \\
			H_2 &=& yx_2^\pls- \left( -w_1 z^\pls + (w_1 z ^\pls) ^* + (z^\pls w_1)z z^\pls\right)x_1  x_2^\pls + (w_2 z^\pls)^*\mathcal P_{x_2}  \label{val: H_2 double struct_SH}, \\
			\wt H_1(K)&=& P_z K P_z \label{val: wt H_1 double struct_SH}, \\
			\wt H_2(K,R) &=& P_z R P_{x_2} - P_z K P_z x_1 x_2^\pls \label{val: wt H_2 double struct_SH},
		\end{eqnarray*}
and
\begin{equation} \label{eq: minimal norm double struct_SH}
 \inf_{\D \in \mathcal S_d^{{\rm SHerm}}} {\|\D\|}_F^2 ~\geq~ {\| H_1\|}_F^2 + \inf_{K\in \C^{n,n}}  {\big\|H_2 + \wt H_2(K,0)\big \|}_F^2.	
\end{equation}  		
Moreover, if $x_1 = \alpha z$ for some nonzero $\alpha \in \C$, then equality holds in~\eqref{eq: minimal norm double struct_SH} and we have 
\begin{equation*} 
	 	\inf_{\D \in \mathcal S_d^{{\rm SHerm}}} {\|\D\|}_F^2 ={\|H_1\|}_F^2 + {\|H_2\|}_F^2,
	 \end{equation*}  
where infimum is uniquely attained by the matrix $H$.
\end{theorem}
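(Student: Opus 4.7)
The plan is to mirror the argument of Theorem~\ref{theorem HD} line for line, replacing every invocation of the Hermitian single-vector mapping result (Theorem~\ref{theorem: herm map}) with its skew-Hermitian analogue (Theorem~\ref{theorem: skew herm map}), and shifting the scalar compatibility condition on $z^*w_1$ from $\R$ to $i\R$. For necessity I would note that if $\D=[\D_1~\D_2]\in \mathcal S_d^{{\rm SHerm}}$, then $y^*z=(\D x)^*z=x^*\D^*z=x^*w$ exactly as in the Hermitian case, while $z^*w_1=z^*\D_1^*z=-z^*\D_1 z$; the scalar $z^*\D_1 z$ is purely imaginary because $(z^*\D_1 z)^*=z^*\D_1^* z=-z^*\D_1 z$, so $z^*w_1\in i\R$. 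Sufficiency reduces to verifying that the proposed $H=[H_1~H_2]$ satisfies $H_1^*=-H_1$ (using $z^*w_1\in i\R$) and $H^*z=w$ (using $x^*w=y^*z$), which is a direct computation parallel to the Hermitian verification.

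For the characterization, decompose the constraints as $\D_1 x_1+\D_2 x_2=y$, $\D_1^* z=w_1$ (equivalently $\D_1 z=-w_1$ since $\D_1^*=-\D_1$), and $\D_2^* z=w_2$. Theorem~\ref{theorem: skew herm map} applied to $\D_1 z=-w_1$ yields $\D_1=H_1+\mathcal P_z K\mathcal P_z$ for an arbitrary skew-Hermitian $K$. Substituting into the first block leaves a two-sided unstructured mapping problem for $\D_2$: $\D_2 x_2=\tilde y$ with $\tilde y:=y-\D_1 x_1$, together with $\D_2^* z=w_2$. The needed compatibility $\tilde y^*z=x_2^*w_2$ follows from $x^*w=y^*z$ split as $x_1^*w_1+x_2^*w_2=y^*z$ combined with $\D_1^*z=w_1$; Theorem~\ref{theorem : 2.1 mehl2017} then parameterizes $\D_2$ as $\tilde y x_2^\pls+(w_2 z^\pls)^*-(w_2 z^\pls)^* x_2 x_2^\pls+\mathcal P_z R\mathcal P_{x_2}$ for arbitrary $R\in \C^{n,m}$. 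Assembling the two blocks reproduces $\D=H+\wt H(K,R)$, and the reverse inclusion is a routine verification.

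For the Frobenius norm bound I would use the block-wise identity ${\|\D\|}_F^2={\|H_1+\wt H_1(K)\|}_F^2+{\|H_2+\wt H_2(K,R)\|}_F^2$ together with $\inf(f+g)\geq \inf f+\inf g$. The infimum over skew-Hermitian $K$ of the first term is attained at $K=0$ by the minimal-norm assertion of Theorem~\ref{theorem: skew herm map}, giving ${\|H_1\|}_F^2$. For each fixed $K$ the second term is the squared Frobenius norm of a solution of an unstructured two-sided mapping problem parameterized by $R$, whose minimum over $R$ is achieved at $R=0$ by Theorem~\ref{theorem : 2.1 mehl2017}, producing~\eqref{eq: minimal norm double struct_SH}. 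Finally, when $x_1=\alpha z$ with $\alpha\neq 0$ the identity $\mathcal P_z z=0$ forces $\wt H_2(K,0)=-\mathcal P_z K\mathcal P_z x_1 x_2^\pls=0$, so the inner infimum collapses to ${\|H_2\|}_F^2$ and the lower bound is attained by $H$ itself.

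I do not anticipate any substantive obstacle; the only item requiring care is the sign bookkeeping when converting $\D_1^* z=w_1$ into the equivalent $\D_1 z=-w_1$ that feeds Theorem~\ref{theorem: skew herm map}, and the corresponding replacement of $\R$ by $i\R$ in the scalar compatibility. Everything else—the coupling between the free parameters $K$ and $R$ via $\wt H_2$, the separation of variables in the Frobenius norm, and the role of the alignment condition $x_1=\alpha z$—is structurally identical to the Hermitian case.
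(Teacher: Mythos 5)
Your proposal is correct and matches the paper's intended argument: the paper omits the proof of this theorem, stating only that it follows the proof of Theorem~\ref{theorem HD} with Theorem~\ref{theorem: skew herm map} replacing Theorem~\ref{theorem: herm map}, which is exactly what you carry out (necessity of $z^*w_1\in i\R$, block decomposition into a skew-Hermitian map for $\D_1$ and a two-sided unstructured map for $\D_2$ via Theorem~\ref{theorem : 2.1 mehl2017}, separation of the Frobenius norm, and collapse of $\wt H_2(K,0)$ when $x_1=\alpha z$). The sign bookkeeping you flag is the only delicate point: the parameterization obtained from $\D_1 z=-w_1$ must come out as $H_1=-w_1z^\pls+(w_1z^\pls)^*+(z^\pls w_1)zz^\pls$, which one verifies directly satisfies $H_1z=-w_1$ using $\overline{z^*w_1}=-z^*w_1$, rather than by blind substitution into the displayed formula of Theorem~\ref{theorem: skew herm map}, whose last term as printed carries the opposite sign.
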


\subsection{Doubly structured complex symmetric/skew-symmetric mappings}

In this section, we consider the doubly structured symmetric mapping (DSSM) problem, i.e., when $\mathbb S={\rm Sym}(n)$ in Problem~\ref{def:dsmprob}. We have the following result for DSSMs, proof of which is kept in~\ref{app:proofcomsym}.

\begin{theorem} \label{theorem:DSSM}
	Given $x=[x_1^T~x_2^T]^T$ with $x_1 \in \C^{n}$ and $x_2 \in \C^m$, $y,z \in \C^n$, and $w=[w_1^T~w_2^T]^T$ with $w_1 \in \C^n$ and $w_2 \in \C^m$. Define 
	\begin{equation} \label{def: double struct mapping set_T }
		\mathcal S_d^{{\rm Sym}}  := \left \{ \D :~\D = [\D_1 ~ \D_2],\, \D_1\in \C^{n,n}, \, \D_2 \in \C^{n,m},\,  \D_1^T = \D_1,\, \D x=y, \,\D^*z=w \right \}.
	\end{equation}
	Then $\mathcal S_d^{{\rm Sym}} \neq \emptyset $ if and only if $x^*w=y^*z$. If the later conditions hold true, then
	\begin{equation}\label{eq : char double struct_T}
		\mathcal S_d^{{\rm Sym}}  = \left \{ H + \wt H(K,R)  : ~ K\in \C ^{n,n},\, R \in \C^{n,m},\, K^T=K \right \},
	\end{equation}
	where $H=[H_1 ~ H_2]$ and $\wt H(K,R) = [\wt H_1(K) ~ \wt H_2(K,R) ]$ with $H_1,H_2,\wt H_1(K), \wt H_2(K,R)$ given by 
	\begin{eqnarray}
		H_1 &=& \bar{w}_1 \bar{z}^\pls + (\bar{w}_1 \bar{z}^\pls) ^T - \bar{z}^{\pls^T} \bar{z}^T \bar{w}_1 \bar{z}^\pls \label{val: H_1 double struct_T}, \\
		H_2 &=& yx_2^\pls- \left(\bar{w}_1 \bar{z}^\pls + (\bar{w}_1 \bar{z}^\pls) ^T - \bar{z}^{\pls^T} \bar{z}^T \bar{w}_1 \bar{z}^\pls\right )x_1  x_2^\pls + (w_2 z^\pls)^*\mathcal P_{x_2}   \label{val: H_2 double struct_T}, \\
		\wt H_1(K)&=& \mathcal P_{\bar{z}}^T K \mathcal P_{\bar{z}} \label{val: wt H_1 double struct_T}, \\
		\wt H_2(K,R) &=& \mathcal P_z R \mathcal P_{x_2} - (\mathcal P_{\bar{z}})^T K \mathcal P_{\bar{z}} x_1 x_2^\pls \label{val: wt H_2 double struct_T},
	\end{eqnarray}
	and
	\begin{equation} \label{eq: minimal norm double struct_T}
		\inf_{\D \in \mathcal S_d^{{\rm Sym}}} {\|\D\|}_F^2 ~\geq~ {\| H_1\|}_F^2 + \inf_{K\in \C^{n,n}}  {\big\|H_2 + \wt H_2(K,0)\big \|}_F^2.	
	\end{equation}  		
	Moreover, if $x_1 = \alpha \bar z$ for some nonzero $\alpha \in \C$, then equality holds in~\eqref{eq: minimal norm double struct_T} and we have
	\begin{equation*} 
		 	\inf_{\D \in \mathcal S_d^{{\rm Sym}}} {\|\D\|}_F^2 ={\|H_1\|}_F^2 + {\|H_2\|}_F^2,
		 \end{equation*}  
where infimum is uniquely attained by the matrix $H$.
\end{theorem}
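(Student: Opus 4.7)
The plan is to follow the proof architecture of Theorem~\ref{theorem HD} verbatim, replacing the Hermitian mapping result (Theorem~\ref{theorem: herm map}) by the complex-symmetric one (Theorem~\ref{theorem: Complex-sym map}) at the step where we parametrise $\Delta_1$. The key observation is that $\Delta_1^T=\Delta_1$ implies $\Delta_1^*=\bar\Delta_1$, so the constraint $\Delta_1^* z = w_1$ is equivalent to $\Delta_1 \bar z = \bar w_1$. Thus $\Delta_1$ is a complex symmetric matrix sending $\bar z$ to $\bar w_1$, and Theorem~\ref{theorem: Complex-sym map} applies with the input pair $(\bar z,\bar w_1)$. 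Since that theorem imposes no compatibility hypothesis on its inputs, the only necessary condition that emerges is $x^* w = y^* z$, obtained exactly as before from $y^* z = (\Delta x)^* z = x^* \Delta^* z = x^* w$. Conversely, one verifies that the explicit $H$ defined by \eqref{val: H_1 double struct_T}--\eqref{val: H_2 double struct_T} satisfies $Hx=y$, $H^*z=w$, and $H_1^T=H_1$, so $\mathcal S_d^{{\rm Sym}}\neq\emptyset$.

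For the characterisation $(\subseteq)$ in~\eqref{eq : char double struct_T}, given $\Delta=[\Delta_1~\Delta_2]\in\mathcal{S}_d^{{\rm Sym}}$, I first parametrise $\Delta_1 = H_1 + \wt H_1(K)$ by invoking Theorem~\ref{theorem: Complex-sym map} on the conjugated mapping problem $\bar z\mapsto\bar w_1$, where $K \in \C^{n,n}$ is an arbitrary complex symmetric matrix. Substituting this parametrisation into $\Delta_1 x_1+\Delta_2 x_2 = y$ reduces the remaining task to the unstructured doubly-constrained problem $\Delta_2 x_2 = \tilde y$, $\Delta_2^* z = w_2$, where $\tilde y = y - (H_1+\wt H_1(K))x_1$. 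The hypothesis of Theorem~\ref{theorem : 2.1 mehl2017} is then checked by the same one-line computation as in Theorem~\ref{theorem HD}, namely $\tilde y^* z = y^* z - x_1^* \Delta_1^* z = x^* w - x_1^* w_1 = x_2^* w_2$. Applying Theorem~\ref{theorem : 2.1 mehl2017} then yields $\Delta_2 = H_2 + \wt H_2(K,R)$ for some $R \in \C^{n,m}$, producing the desired form. The reverse inclusion $(\supseteq)$ is a direct verification: any $\Delta$ of the displayed form satisfies $\Delta x = y$, $\Delta^* z = w$ (using $x^* w = y^* z$), and $\Delta_1^T = \Delta_1$ (using $K^T = K$ and the identity $\mathcal P_{\bar z}\bar z = 0$).

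The norm analysis proceeds exactly as in the Hermitian case: split columnwise as $\|\Delta\|_F^2 = \|\Delta_1\|_F^2 + \|\Delta_2\|_F^2$, apply the inequality $\inf(f+g) \geq \inf f + \inf g$, and invoke the minimal-norm conclusions of Theorems~\ref{theorem: Complex-sym map} and~\ref{theorem : 2.1 mehl2017} (attained at $K=0$ and $R=0$ respectively) to derive the lower bound~\eqref{eq: minimal norm double struct_T}. Under the additional assumption $x_1 = \alpha\bar z$, the coupling term $\wt H_2(K,0) = -(\mathcal P_{\bar z})^T K \mathcal P_{\bar z}\, x_1 x_2^\pls$ vanishes identically because $\mathcal P_{\bar z}\bar z = 0$. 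Consequently the two infima decouple and are simultaneously attained at $K=0$, $R=0$, giving equality in~\eqref{eq: minimal norm double struct_T} with $H$ the unique minimiser.

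The main obstacle, modest as it is, is the bookkeeping of complex conjugates: one must apply Theorem~\ref{theorem: Complex-sym map} to $(\bar z, \bar w_1)$ rather than to $(z, w_1)$, and accordingly recognise that the pseudoinverse and projection appearing in $H_1$ and $\wt H_1(K)$ are $\bar z^{\pls}$ and $\mathcal P_{\bar z} = I_n - \bar z\, \bar z^{\pls}$, not $z^{\pls}$ and $\mathcal P_z$. Once this substitution is tracked consistently, every subsequent step is a direct transcription of the Hermitian proof, which is why the authors may well opt to omit it in favour of a reference to the proof of Theorem~\ref{theorem HD}.
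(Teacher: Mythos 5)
Your proposal is correct and follows essentially the same route as the paper's proof in Appendix~A: both reduce the constraint $\Delta_1^T=\Delta_1$, $\Delta_1^*z=w_1$ to the complex-symmetric mapping problem $\Delta_1\bar z=\bar w_1$ solved by Theorem~\ref{theorem: Complex-sym map}, then handle $\Delta_2$ via Theorem~\ref{theorem : 2.1 mehl2017} and run the same norm-splitting argument, with $\mathcal P_{\bar z}x_1=0$ decoupling the infima when $x_1=\alpha\bar z$. Your explicit remark that the existence condition reduces to $x^*w=y^*z$ alone (since the complex-symmetric mapping theorem has no compatibility hypothesis) is in fact cleaner than the paper's converse step, which carries a vestigial ``$z^Tw_1\in\R$'' copied from the Hermitian case.
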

\proof See~\ref{app:proofcomsym}.
\eproof

A result analogous to Theorem~\ref{theorem:DSSM} can be obtained for doubly structured skew-symmetric mappings, i.e., when $\mathbb S={\rm SSym}(n)$ in Problem~\ref{def:dsmprob} as follows.

\begin{theorem} \label{theorem STD}
			Given $x=[x_1^T~x_2^T]^T$ with $x_1 \in \C^{n}$ and $x_2 \in \C^m$, $y,z \in \C^n$, and $w=[w_1^T~w_2^T]^T$ with $w_1 \in \C^n$ and $w_2 \in \C^m$. Define 
			\begin{equation} \label{def: double struct mapping set_ST }
				\mathcal S_d^{{\rm SSym}}  := \left \{ \D :~\D = [\D_1 ~ \D_2],\, \D_1\in \C^{n,n}, \, \D_2 \in \C^{n,m},\,  \D_1^T = -\D_1,\, \D x=y, \,\D^*z=w \right \}.
			\end{equation}
			Then $\mathcal S_d^{{\rm SSym}} \neq \emptyset $ if and only if $x^*w=y^*z$ and $z^Tw_1 =0$. If the later conditions hold true, then
			\begin{equation}\label{eq : char double struct_ST}
				\mathcal S_d^{{\rm SSym}}  = \left \{ H + \wt H(K,R)  : ~ K\in \C ^{n,n},\, R \in \C^{n,m},\, K^T=-K \right \},
			\end{equation}
			where $H=[H_1 ~ H_2]$ and $\wt H(K,R) = [\wt H_1(K) ~ \wt H_2(K,R) ]$ with $H_1,H_2,\wt H_1(K), \wt H_2(K,R)$ given by 
			\begin{eqnarray}
				H_1 &=& -\bar{w}_1 \bar{z}^\pls + (\bar{w}_1 \bar{z}^\pls) ^T + \bar{z}^{\pls^T} \bar{z}^T \bar{w}_1 \bar{z}^\pls \label{val: H_1 double struct_ST}, \\
				H_2 &=& yx_2^\pls- \left(-\bar{w}_1 \bar{z}^\pls + (\bar{w}_1 \bar{z}^\pls) ^T + \bar{z}^{\pls^T} \bar{z}^T \bar{w}_1 \bar{z}^\pls\right )x_1  x_2^\pls + (w_2 z^\pls)^*\mathcal P_{x_2}   \label{val: H_2 double struct_ST}, \\
				\wt H_1(K)&=& \mathcal P_{\bar{z}}^T K \mathcal P_{\bar{z}} \label{val: wt H_1 double struct_ST}, \\
				\wt H_2(K,R) &=& \mathcal P_z R \mathcal P_{x_2} - (\mathcal P_{\bar{z}})^T K \mathcal P_{\bar{z}} x_1 x_2^\pls \label{val: wt H_2 double struct_ST},
			\end{eqnarray}
			and
			\begin{equation} \label{eq: minimal norm double struct_ST}
				\inf_{\D \in \mathcal S_d^{{\rm SSym}}} {\|\D\|}_F^2 ~\geq~ {\| H_1\|}_F^2 + \inf_{K\in \C^{n,n}}  {\big\|H_2 + \wt H_2(K,0)\big \|}_F^2.	
			\end{equation}  		
			Moreover, if $x_1 = \alpha \bar z$ for some nonzero $\alpha \in \C$, then equality holds in~\eqref{eq: minimal norm double struct_ST} and  we have
\begin{equation*} 
		 	\inf_{\D \in \mathcal S_d^{{\rm SSym}}} {\|\D\|}_F^2 ={\|H_1\|}_F^2 + {\|H_2\|}_F^2,
		 \end{equation*}  
		where infimum is uniquely attained by the matrix $H$.
	\end{theorem}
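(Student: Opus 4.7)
\proof[Proof proposal for Theorem~\ref{theorem STD}]
My plan is to mirror the argument of Theorem~\ref{theorem HD} step by step, with the scalar product $\langle\cdot,\cdot\rangle_M$ for the Hermitian case replaced throughout by the complex bilinear form underlying the skew-symmetric structure, and with Theorem~\ref{theorem: herm map} replaced by Theorem~\ref{theorem: Complex skew-sym map}. Concretely, I will (i) derive the two necessary scalar conditions, (ii) parametrise $\Delta_1$ and $\Delta_2$ separately using the known mapping theorems, (iii) patch them together into the claimed form $H+\wt H(K,R)$, and (iv) transfer the Frobenius-norm lower bound from the two block pieces, showing that the bound is tight precisely when the cross term $\wt H_2(K,0)$ disappears.

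For necessity, assume $\Delta=[\Delta_1~\Delta_2]\in\mathcal S_d^{{\rm SSym}}$. Then $y^*z=(\Delta x)^*z=x^*\Delta^*z=x^*w$, exactly as in Theorem~\ref{theorem HD}. For $z^Tw_1=0$, I will use $\Delta_1^*z=w_1$ together with $\Delta_1^T=-\Delta_1$: since $\Delta_1^*=\overline{\Delta_1^T}=-\overline{\Delta_1}$, we get $z^Tw_1=-z^T\overline{\Delta_1}z$, and $z^T\overline{\Delta_1}z$ is equal to its own transpose $z^T\overline{\Delta_1}^{\,T}z=-z^T\overline{\Delta_1}z$, forcing $z^Tw_1=0$. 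Conversely, a direct verification (using $x^*w=x_1^*w_1+x_2^*w_2=y^*z$ and $z^Tw_1=0$) shows that the $H$ in the statement lies in $\mathcal S_d^{{\rm SSym}}$, establishing the ``if'' direction.

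For the characterisation~\eqref{eq : char double struct_ST}, given $\Delta\in\mathcal S_d^{{\rm SSym}}$ I split the conditions into $\Delta_1x_1+\Delta_2x_2=y$, $\Delta_1^*z=w_1$, $\Delta_2^*z=w_2$. Since $\Delta_1^T=-\Delta_1$, the equation $\Delta_1^*z=w_1$ is equivalent to $\Delta_1\bar z=-\bar w_1$, which is a standard complex skew-symmetric mapping problem in the vectors $\bar z,-\bar w_1$; the compatibility $(\bar z)^T(-\bar w_1)=-\overline{z^Tw_1}=0$ is exactly the necessary condition I just derived, and Theorem~\ref{theorem: Complex skew-sym map} yields
\[
\Delta_1=-\bar w_1\bar z^{\pls}+(\bar w_1\bar z^{\pls})^T+\bar z^{\pls\,T}\bar z^T\bar w_1\bar z^{\pls}+(\mathcal P_{\bar z})^TK\mathcal P_{\bar z}
\]
for some skew-symmetric $K$. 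Substituting this $\Delta_1$ into $\Delta_2x_2=y-\Delta_1x_1$ reduces the remaining data to a standard (unstructured, doubly sided) mapping for $\Delta_2$ with right-hand sides $\tilde y:=y-\Delta_1x_1$ and $w_2$; the compatibility $\tilde y^*z=x_2^*w_2$ follows from $x^*w=y^*z$ together with $\Delta_1z=w_1$ exactly as in the Hermitian proof. Theorem~\ref{theorem : 2.1 mehl2017} then gives $\Delta_2=\tilde y x_2^{\pls}+(w_2z^{\pls})^*-(w_2z^{\pls})^*x_2x_2^{\pls}+\mathcal P_zR\mathcal P_{x_2}$ for some $R\in\C^{n,m}$. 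Repackaging these two formulas yields the claimed decomposition $[\Delta_1~\Delta_2]=H+\wt H(K,R)$; the reverse inclusion is a straightforward verification using $x^*w=y^*z$ and $K^T=-K$.

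For the norm bound, I will follow the same block-separation argument that produced~\eqref{eq:firstineq}--\eqref{eq:thiineq}. Writing ${\|\Delta\|}_F^2={\|H_1+\wt H_1(K)\|}_F^2+{\|H_2+\wt H_2(K,R)\|}_F^2$ and taking the infimum over $(K,R)$, the first term is minimised at $K=0$ by the minimal-norm part of Theorem~\ref{theorem: Complex skew-sym map} (noting that $H_1+\wt H_1(K)$ is again a skew-symmetric solution of the same one-sided problem), and, for each fixed $K$, the second term is minimised over $R$ at $R=0$ by Theorem~\ref{theorem : 2.1 mehl2017}, giving~\eqref{eq: minimal norm double struct_ST}. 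Finally, under the hypothesis $x_1=\alpha\bar z$ we have $\mathcal P_{\bar z}x_1=0$, so $\wt H_2(K,0)=-(\mathcal P_{\bar z})^TK\mathcal P_{\bar z}x_1x_2^{\pls}=0$ for every admissible $K$, collapsing~\eqref{eq: minimal norm double struct_ST} to an equality attained uniquely at $H$. I expect the only delicate step is the conjugation bookkeeping in passing between $\Delta_1^*z=w_1$ and the skew-symmetric mapping $\Delta_1\bar z=-\bar w_1$; once that is settled, the rest of the proof is structurally identical to that of Theorem~\ref{theorem HD}.
\eproof
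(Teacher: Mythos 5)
Your proposal is correct and follows essentially the same route as the paper: the paper proves the symmetric case in detail in the appendix and obtains the skew-symmetric case by substituting Theorem~\ref{theorem: Complex skew-sym map} for Theorem~\ref{theorem: Complex-sym map}, which is precisely the reduction you carry out (converting $\Delta_1^*z=w_1$ to $\Delta_1\bar z=-\bar w_1$, invoking the skew-symmetric one-sided mapping theorem for $\Delta_1$, Theorem~\ref{theorem : 2.1 mehl2017} for $\Delta_2$, and the block-separation norm argument with $\mathcal P_{\bar z}x_1=0$ in the case $x_1=\alpha\bar z$). One cosmetic slip: the compatibility $\tilde y^*z=x_2^*w_2$ rests on $x_1^*\Delta_1^*z=x_1^*w_1$, i.e.\ on $\Delta_1^*z=w_1$, not on ``$\Delta_1 z=w_1$'' as you wrote (which need not hold since $\Delta_1$ is not Hermitian here); the correct identity is already among your displayed conditions, so the argument is unaffected.
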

	\proof The proof follows on the lines of the proof of Theorem~\ref{theorem:DSSM} (see~\ref{app:proofcomsym}) by using Theorem~\ref{theorem: Complex skew-sym map} in place of Theorem~\ref{theorem: Complex-sym map}.	
	\eproof

\section{Solution to the doubly structured semidefinite mapping problem}
\label{sec:dssmp}

This section considers the doubly structured positive semidefinite mapping (DSPSDM) problem, i.e.,  when $\mathbb S$ is the set of all $n \times n$ positive semidefinite matrices in Problem~\ref{def:dsmprob}. We first prove a lemma that will be needed in characterizing the set of all solutions to the DSPSD mapping problem.

\begin{lemma} \label{lemtracecondA}
	Let $A,B \in \C^{n,n}$ such that $B \succeq 0$. If $\lambda \in \Lambda(A)$ implies that $\real(\lambda) \leq 0$, then 
$\real{(\trace(A B))} \leq 0$.
\end{lemma}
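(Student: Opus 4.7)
The plan is to exploit the positive semidefinite structure of $B$ via its Hermitian square root. Since $B\succeq 0$, write $B=B^{1/2}B^{1/2}$; cyclicity of the trace then gives
\[
\trace(AB)\;=\;\trace\!\bigl(B^{1/2}AB^{1/2}\bigr),
\]
so the task reduces to showing $\real\bigl(\trace(M)\bigr)\leq 0$ for $M:=B^{1/2}AB^{1/2}$.

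Next I would bring in a Schur decomposition $A=UTU^*$, with $U$ unitary and $T$ upper triangular whose diagonal entries are the eigenvalues $\lambda_1,\dots,\lambda_n$ of $A$. Setting $\wt B:=U^*BU$, which is again positive semidefinite, unitary invariance of the trace yields
\[
\trace(AB)\;=\;\trace\!\bigl(T\wt B\bigr)\;=\;\sum_{i=1}^n\lambda_i\,\wt B_{ii}\;+\;\sum_{i<j}T_{ij}\,\wt B_{ji}.
\]
Taking real parts, the first sum is at most zero since $\real(\lambda_i)\leq 0$ by hypothesis and $\wt B_{ii}\geq 0$ as diagonal entries of a PSD matrix.

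The step I expect to be the main obstacle is controlling the strictly upper-triangular contribution $\sum_{i<j}T_{ij}\wt B_{ji}$, which is not governed by purely spectral information on $A$. It vanishes whenever $A$ is normal (so $T$ is diagonal); in particular when $A$ is Hermitian the hypothesis $\real(\lambda_i)\leq 0$ forces $A\preceq 0$, hence $B^{1/2}AB^{1/2}\preceq 0$ and the conclusion is immediate (with $\trace(AB)$ itself real). More generally the conclusion follows from a dissipativity-type strengthening $A+A^*\preceq 0$, since then
\[
\real\!\bigl(\trace(AB)\bigr)\;=\;\tfrac12\trace\!\bigl((A+A^*)B\bigr)\;=\;\tfrac12\trace\!\bigl(B^{1/2}(A+A^*)B^{1/2}\bigr)\;\leq\;0.
\]
I therefore expect the paper's proof either to invoke such additional structure on $A$ explicitly, or to rely on the way the lemma is ultimately applied in Section~\ref{sec:dssmp}, where the matrix playing the role of $A$ will arise from a (skew-)Hermitian construction that makes the Schur form diagonal and the off-diagonal obstacle disappears.
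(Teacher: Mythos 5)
Your diagnosis is exactly right, and the gap you flagged cannot be closed: the strictly upper\--triangular (equivalently, non\--normal) contribution is not just an obstacle to one proof strategy, it is a genuine obstruction to the statement itself. Lemma~\ref{lemtracecondA} as stated is false. Take
\[
A=\mat{cc}0&1\\0&0\rix,\qquad B=\mat{cc}1&1\\1&1\rix .
\]
Then $\Lambda(A)=\{0\}$, so every eigenvalue of $A$ has real part $\leq 0$, and $B\succeq 0$ with eigenvalues $0$ and $2$, yet $\trace(AB)=1>0$. So no proof of the lemma in this generality can exist, and your attempt is ``incomplete'' only because the claim is not true.

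For comparison, the paper's own proof diagonalizes $B$ as $U^*BU=D=\diag(d_1,\dots,d_n)$ and then asserts $\sum_j\real(\wt a_{jj})d_j\leq \max_j \wt a_{jj}\cdot\sum_j\real(\wt a_{jj})$, where $\wt a_{jj}$ are the diagonal entries of $U^*AU$. This inequality is invalid: the $\wt a_{jj}$ are not the eigenvalues of $A$ (only their sum is, via the trace), they may individually have positive real part, and reweighting by the nonnegative $d_j$ can then produce a positive total. In the counterexample above one gets $\wt a_{11}=\tfrac12$, $\wt a_{22}=-\tfrac12$, $d=(2,0)$, so the left side is $1$ while the right side is $0$. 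Your Schur\--form computation makes the same point more transparently: only $\sum_i\lambda_i\wt B_{ii}$ is controlled by the spectral hypothesis, and the cross terms are unconstrained.

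Your two proposed repairs are the correct ones: the conclusion does hold if $A$ is normal, and more generally whenever $A+A^*\preceq 0$, since then $\real\trace(AB)=\tfrac12\trace\bigl(B^{1/2}(A+A^*)B^{1/2}\bigr)\leq 0$. Be aware, however, that the application in Theorem~\ref{theorem PSDD} does not automatically supply this extra structure: there the lemma is invoked with $A=\mathcal M=\bigl(y-\frac{w_1^*x_1}{z^*w_1}w_1\bigr)x_1^*$, a rank\--one matrix that is not normal unless the two vectors are parallel, and the spectral hypothesis $\real(\lambda)\leq 0$ only constrains $\real\bigl(x_1^*(y-\frac{w_1^*x_1}{z^*w_1}w_1)\bigr)$, not the quantity $\real\bigl(x_1^*\mathcal P_zK\mathcal P_z(y-\frac{w_1^*x_1}{z^*w_1}w_1)\bigr)$ that actually needs to be nonpositive there. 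So the sufficient condition in Theorem~\ref{theorem PSDD} should be strengthened (e.g.\ to $\mathcal M+\mathcal M^*\preceq 0$) for the minimal\--norm argument to go through.
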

\proof  Let $\lambda_1,\ldots,\lambda_n$ be the eigenvalues of $A$. Then by assumption $\real{(\lambda_i)} \leq 0$ for all $i$. Also, $B \succeq  0$ implies that there exists a unitary matrix $U \in \C^{n,n}$ such that $U^*BU=D$, where $D=\text{diag}(d_1,\ldots,d_n)$ with $d_i \geq 0$ for all $i$. Thus we have
\begin{equation}
\text{trace}(AB)=\text{trace}(AUDU^*)=\text{trace}(U^*AUD)= \sum_{j=1}^n \widetilde{a}_{jj}d_j,
\end{equation}
where $U^*AU = (\widetilde{a}_{ij})$. This implies that 
\begin{eqnarray*}
\real{(\text{trace}(AB))}=\sum_{j=1}^n \real{(\widetilde{a}_{jj})}d_j 
\leq \max_{j} \widetilde{a}_{jj} \cdot \sum_{j=1}^n \real{(\widetilde{a}_{jj})} =  \max_{j} \widetilde{a}_{jj} \cdot \sum_{j=1}^n \real{\lambda_j} \leq 0, 
\end{eqnarray*}
since $A$ and $U^*AU$ are unitary similar and have the same eigenvalues, and 
$d_i\geq 0$ and $\real{(\lambda_i)}\leq 0$ for all $i=1,\ldots ,n$.
\eproof

We have the following result that completely solves the existence and characterization problem for DSPSDMs and provides sufficient conditions for the minimal norm solution to the DSPSDM problem.

\begin{theorem} \label{theorem PSDD}	
Given $x=[x_1^T~x_2^T]^T$ with $x_1 \in \C^{n}$ and $x_2 \in \C^m\setminus \{0\}$, $y,z \in \C^n$, and $w=[w_1^T~w_2^T]^T$ with $w_1 \in \C^n\setminus\{0\}$ and $w_2 \in \C^m\setminus \{0\}$. Define 
\begin{equation} \label{def: double struct mapping set_PSD }
\mathcal S_d^{{\rm PSD}}  := \left \{ \D :~\D = [\D_1 ~ \D_2],\, \D_1\in \C^{n,n}, \, \D_2 \in \C^{n,m},\,  \D_1 \succeq 0,\, \D x=y, \,\D^*z=w \right \}.
	\end{equation}
	Then $\mathcal S_d^{{\rm PSD}} \neq \emptyset $ if and only if $x^*w=y^*z$ and $z^*w_1 > 0$. If the later conditions hold true, then
\begin{equation}\label{eq : char double struct_PSD}
\mathcal S_d^{{\rm PSD}}  = \left \{ H + \wt H(K,R)  : ~ K\in \C ^{n,n},\, R \in \C^{n,m},\, K\succeq 0 \right \},
		\end{equation}	
		where $H=[H_1 ~ H_2]$ and $\wt H(K,R) = [\wt H_1(K) ~ \wt H_2(K,R) ]$ with $H_1,H_2,\wt H_1(K), \wt H_2(K,R)$ given by 
\begin{eqnarray}
			H_1 &=& \frac{w_1 w_1^*}{z^*w_1}  \label{val: H_1 double structPSD} \\
			H_2 &=& \frac{y x_2^*}{\|x_2\|^2}- \frac{w_1 w_1^* x_1 x_2^\dagger}{(z^*w_1)} + \frac{z w_2^*}{\|z\|^2} - \frac{(w_2^* x_2)z x_2^\dagger}{\|z\|^2 } \label{val: H_2 double structPSD} \\
			\wt H_1(K)&=& \mathcal P_z K \mathcal P_z \label{val: wt H_1 double structPSD} \\
			\wt H_2(K,R) &=& \mathcal P_z R \mathcal P_{x_2} - \mathcal P_z K \mathcal P_z x_1 x_2^\dagger\label{val: wt H_2 double structPSD},
\end{eqnarray}
and
\begin{equation} \label{eq: minimal norm double struct_PSD}
 \inf_{\D \in \mathcal S_d^{{\rm PSD}}} {\|\D\|}_F^2 ~\geq~ {\| H_1\|}_F^2 + \inf_{K\in \C^{n,n}}  {\big\|H_2 + \wt H_2(K,0)\big \|}_F^2.	
\end{equation}  	
Moreover, if $x_1 = \alpha z$ for some nonzero $\alpha \in \C$, or, if all eigenvalues of the matrix 
$\mathcal M:=y x_1^*-  \frac{ w_1^*x_1}{z^*w_1}w_1 x_1^* $ lie in the left half of the complex plane, i.e., $\lambda \in \Lambda (\mathcal M)$ implies that $\real{(\lambda)}\leq 0$, then we have 
\begin{equation*}
 \inf_{\D \in \mathcal S_d^{{\rm PSD}}} {\|\D\|}_F^2 ~=~ {\| H_1\|}_F^2 + {\| H_2\|}_F^2 ,
\end{equation*}  	
where the infimum is uniquely attained by the matrix $H$.
\end{theorem}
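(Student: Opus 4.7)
The plan is to mirror the strategy of Theorem~\ref{theorem HD}: partition $\Delta = [\Delta_1~\Delta_2]$ and peel off the constraints so that Theorem~\ref{lem:psdmap} parametrizes $\Delta_1$ (the only block carrying a structural constraint) and Theorem~\ref{theorem : 2.1 mehl2017} then parametrizes $\Delta_2$. Necessity of $x^*w = y^*z$ is immediate from $x^*w = x^*\Delta^*z = (\Delta x)^*z = y^*z$, and necessity of $z^*w_1 > 0$ follows because $z^*w_1 = z^*\Delta_1 z \geq 0$ with equality forcing $\Delta_1^{1/2}z = 0$, hence $w_1 = \Delta_1 z = 0$, contradicting $w_1 \neq 0$. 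For sufficiency and the characterization~\eqref{eq : char double struct_PSD}, Theorem~\ref{lem:psdmap} applied to $\Delta_1 z = w_1$, $\Delta_1 \succeq 0$ gives $\Delta_1 = H_1 + \mathcal{P}_z K \mathcal{P}_z$ with $K \succeq 0$; substituting into $\Delta_1 x_1 + \Delta_2 x_2 = y$ reduces the problem for $\Delta_2$ to $\Delta_2 x_2 = y - \Delta_1 x_1 =: \widetilde y$, $\Delta_2^* z = w_2$, whose compatibility $\widetilde y^* z = x_2^* w_2$ follows from $x^*w = y^*z$, and Theorem~\ref{theorem : 2.1 mehl2017} delivers $\Delta_2 = H_2 + \widetilde H_2(K,R)$. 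The reverse inclusion is a direct verification.

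For the norm bound, write $\|\Delta\|_F^2 = \|\Delta_1\|_F^2 + \|\Delta_2\|_F^2$. For fixed $K \succeq 0$, the $R$-infimum of $\|\Delta_2\|_F^2$ is attained at $R = 0$ by Theorem~\ref{theorem : 2.1 mehl2017}, and the $K$-infimum of $\|\Delta_1\|_F^2$ is attained at $K = 0$ by Theorem~\ref{lem:psdmap}; the sum-of-infima argument used in~\eqref{eq:firstineq}--\eqref{eq:thiineq} then gives
\[
\inf_{\Delta \in \mathcal S_d^{\rm PSD}} \|\Delta\|_F^2 \; \geq \; \|H_1\|_F^2 + \inf_{K \succeq 0} \|H_2 + \widetilde H_2(K,0)\|_F^2.
\]
The first equality case ($x_1 = \alpha z$) is immediate: $\mathcal P_z x_1 = \alpha \mathcal P_z z = 0$ kills $\widetilde H_2(K,0) = -\mathcal P_z K \mathcal P_z x_1 x_2^\pls$ for every $K$, collapsing the remaining infimum to $\|H_2\|_F^2$.

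For the spectral case, expand
\[
\|H_2 + \widetilde H_2(K,0)\|_F^2 = \|H_2\|_F^2 - 2\real\,\trace\bigl(H_2^* \mathcal P_z K \mathcal P_z x_1 x_2^\pls\bigr) + \|\mathcal P_z K \mathcal P_z x_1 x_2^\pls\|_F^2.
\]
Since the quadratic term is nonnegative, it suffices to prove $\real\,\trace(H_2^* \mathcal P_z K \mathcal P_z x_1 x_2^\pls) \leq 0$ for every $K \succeq 0$. Using cyclicity, this equals $\real\,\trace\bigl((x_1 x_2^\pls H_2^*) K'\bigr)$ with $K' := \mathcal P_z K \mathcal P_z \succeq 0$. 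Now exploiting $x_2^\pls \mathcal P_{x_2} = 0$, the Hermiticity $H_1^* = H_1$, and $H_1 x_1 = \frac{w_1^* x_1}{z^* w_1} w_1$ yields the rank-one identification $x_1 x_2^\pls H_2^* = \frac{1}{\|x_2\|^2}\,\mathcal M^*$. The eigenvalues of $\mathcal M^*$ are the conjugates of those of $\mathcal M$ and hence lie in the closed left half-plane by hypothesis, so Lemma~\ref{lemtracecondA} applied to $\mathcal M^*$ and $K'$ gives the desired $\real\,\trace(\mathcal M^* K') \leq 0$. Uniqueness of the minimizer $H$ in both equality cases then follows from the strict uniqueness clauses of Theorems~\ref{lem:psdmap} and~\ref{theorem : 2.1 mehl2017}: any $K \neq 0$ strictly increases $\|\Delta_1\|_F^2$, and once $K = 0$ is fixed, any $R \neq 0$ strictly increases $\|\Delta_2\|_F^2$. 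The main obstacle is the algebraic identification $x_1 x_2^\pls H_2^* = \|x_2\|^{-2}\,\mathcal M^*$: it must be carried out in just the right order so that the spectral hypothesis on $\mathcal M$ enters precisely through Lemma~\ref{lemtracecondA}.
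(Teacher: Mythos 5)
Your proposal is correct and follows essentially the same route as the paper: parametrize $\Delta_1$ via Theorem~\ref{lem:psdmap}, reduce $\Delta_2$ to Theorem~\ref{theorem : 2.1 mehl2017}, use the sum-of-infima argument for the lower bound, and handle the spectral equality case by expanding the cross term and invoking Lemma~\ref{lemtracecondA}. The only cosmetic difference is that you identify the cross term with $\|x_2\|^{-2}\mathcal M^*$ and apply the trace lemma to $\mathcal M^*$, whereas the paper works directly with $\mathcal M$; since $\real\trace(\mathcal M K')=\real\trace(\mathcal M^* K')$ for Hermitian $K'$ and the eigenvalue hypothesis is preserved under conjugate transposition, the two are equivalent.
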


\proof
Suppose that $\mathcal S_d^{{\rm PSD}} \neq \emptyset$ and let $\D \in \mathcal S_d^{{\rm PSD}} $. Then $\D = [\D_1 ~ \D_2]$ with $\D_1\in \C^{n,n}, \, \D_2 \in \C^{n,m}$ such  that $\D_1 \succeq 0$, $\D x=y$, and $\D^* z=w$. This implies that 
$y^*z= (\D x)^* z = x^* \D^* z=x^* w$. Also, $z^* w_1= z^* \D_1^* z=z^* \D_1 z \geq 0$, since  $\D_1^*z=w_1$ and $\D_1^*=\D_1 \succeq 0$. In fact, we have $z^* w_1 > 0$, since if $z^* w_1=0$, then this implies that $ z^* \D_1^* z=0$ and hence $w_1=\D_1z=0$, which is a contradiction as $w_1 \neq 0$.
Conversely, let $x^*w=y^*z$ and $z^*w_1>0$. Then it is easy to see that the matrix $H=[ H_1 ~ H_2 ] $ satisfies $H x=y, H^*z=w$. Also $H_1 \succeq 0$ for being a rank one symmetric matrix, which implies that $H \in \mathcal S_d^{{\rm PSD}}$.

Next, we prove~\eqref{eq : char double struct_PSD}. For this, let 
$\D \in \mathcal S_d^{{\rm PSD}}$, i.e., $\D=[\D_1 ~ \D_2]$ such that $\D x = y, \D^* z = w$, and  $\D_1 \succeq 0$.  This implies that 
\begin{equation}\label{eq:d1d2psd}
\D_1 x_1 + \D_2 x_2=y, \quad \D_1z=w_1\quad \text{and}\quad 
\D_2^* z = w_2.
\end{equation}
Since $\D_1 \succeq 0$ taking $z$ to $w_1$, from Theorem~\ref{lem:psdmap} $\D_1$ has the form
\begin{equation} \label{val:D1psdmap}
\D_1 = \frac{w_1 w_1^*}{z^*w_1} + \mathcal P_z K \mathcal P_z, 
\end{equation} 
for some positive semidefinite matrix $K \in \C^{n,n}$.	By substituting $\Delta_1$ from~\eqref{val:D1psdmap} in~\eqref{eq:d1d2psd}, we get 
\begin{equation}\label{eq: delta2 stage 2}
\D_2 x_2 =\tilde y \quad \text{and}\quad
\D_2 ^* z  = w_2,
 \end{equation} 
 where $\tilde y=y- \left ( \frac{w_1 w_1^*}{z^*w_1} + \mathcal P_z K \mathcal P_z \right )x_1 $. Again since $\tilde y^*z=x_2^*w_2$, in view of Theorem~\ref{theorem : 2.1 mehl2017} $\Delta_2$ has the form
\begin{equation} \label{val: delta2 map2unstruct }
	\D_2 = \tilde y x_2^\dagger + (w_2 z^\dagger)^*- (w_2 z^ 
\dagger)^*x_2x_2^\dagger +
\mathcal P_zR\mathcal P_{x_2},
\end{equation}
for some $R \in \C^{n,m}$. Thus in view of~\eqref{val:D1psdmap} and~\eqref{val: delta2 map2unstruct }, we have 
\begin{eqnarray}
[\D_1 ~\, \D_2] &=& \mat{cc} \frac{w_1 w_1^*}{z^*w_1} + \mathcal P_z K \mathcal P_z &
	\tilde y x_2^\dagger + (w_2 z^\dagger)^*- (w_2 z^\dagger)^*x_2x_2^\dagger +\mathcal P_zR\mathcal P_{x_2} \rix \nonumber \\
	&=& \mat{cc} \frac{w_1 w_1^*}{z^*w_1} & y x_2^\dagger- 
	(w_1^* x_1)w_1x_2^\dagger + (w_2 z^\dagger)^*- (w_2 z^\dagger)^*x_2x_2^\dagger \rix  \nonumber \\
	&+& \left[\mathcal P_z K \mathcal P_z ~~ \mathcal P_z R \mathcal P_{x_2} - \mathcal P_z K \mathcal P_z x_1 x_2^\dagger  \right]\nonumber \\
&=& \big[H_1 ~ H_2 \big ] + \big[\wt H_1(K) ~\, \wt H_2(K,R) \big] \nonumber \\
		&=& H + \wt H(K,R).
\end{eqnarray}
This proves ``$\subseteq$" in~\eqref{eq : char double struct_PSD}.

Conversely, let $x^*w=y^*z$ and $z^* w_1 >0 $ and consider $[\Delta_1~\Delta_2]=[ H_1 + \wt H_1(K) ~\, H_2 + \wt H_2(K,R) ]$, where $H_1, \wt H_1(K), H_2$, and $\wt H_2(K,R)$ given by~\eqref{val: H_1 double structPSD}-\eqref{val: wt H_2 double structPSD} for some matrices $R \in \C^{n,m}$ and $K \in \C^{n,n}$ such that $K\succeq 0$. Then, it is easy to verify that $[\Delta_1~\Delta_2]x=y$ and 
$[\Delta_1~\Delta_2]^*z=w$. Also, $\Delta_1=H_1 +\wt H_1(K) \succeq 0$, being the sum of two positive semidefinite matrices. This implies $[\Delta_1~\Delta_2] \in \mathcal S_d^{{\rm PSD}}$ and hence shows $``\supseteq"$ in~\eqref{eq : char double struct_PSD}. 

In view of~\eqref{eq : char double struct_PSD} and by following the arguments similar to the proof of~\eqref{eq: minimal norm double struct_H} in Theorem~\ref{theorem HD}, we have that
\begin{equation}\label{eq:thiineqpsd} 
\inf_{\D \in  \mathcal S_d^{{\rm PSD}} } {\|\D\|}_F^2 \geq 
{\| H_1\|}_F^2 + \inf_{K\in \C^{n,n}, K\succeq 0}  {\big\|H_2 + \wt H_2(K,0)\big \|}_F^2.
\end{equation}
Next we show that equality holds in~\eqref{eq:thiineqpsd}  for two cases. First suppose that $x_1=\alpha z$ for some nonzero $\alpha \in \C$. Then  $\mathcal P_zx_1 =0$ and thus $\wt H_2(K,0)=0$ for any $K \succeq 0$. This implies from~\eqref{eq:thiineqpsd}  that 
\[
\inf_{\D \in  \mathcal S_d^{{\rm PSD}} } {\|\D\|}_F^2\geq {\| H_1\|}_F^2 +{\| H_2\|}_F^2 ={\| H\|}_F^2, 
\]
and the lower bound is uniquely attained since $H \in \mathcal S_d^{{\rm PSD}}$. 
Now suppose that $\lambda \in \Lambda (\mathcal M)$, where $\mathcal M:=y x_1^*-  \frac{ w_1^*x_1}{z^*w_1}w_1 x_1^* $ implies that $\real{(\lambda)}\leq 0$. Then for any $K \in \C^{n,n}$ such that $K \succeq 0$, we have
\begin{eqnarray}
	&&{\|H_2 + \wt H_2 (K,0)\|}_F^2 \nonumber\\
&=& {\|H_2\|}_F^2 + {\|\wt H_2(K,0)\|}_F^2 + 2 \real\left(\trace \big(H_2 \wt H_2^*(K,0)\big)\right) \nonumber \\
&=& {\|H_2\|}_F^2 + {\|\wt H_2(K,0)\|}_F^2 + 2 \real\bigg(\trace \Big( \big(  \frac{y x_2^*}{\|x_2\|^2}- \frac{w_1 w_1^* x_1 x_2^*}{(z^*w_1)\|x_2\|^2} + \frac{z w_2^*}{\|z\|^2} - \frac{(w_2^* x_2)z x_2^*}{\|z\|^2 \|x_2\|^2} \big ) \big ( - \frac {\mathcal P_z K \mathcal P_z x_1 x_2^*}{\|x_2\|^2} \big )^*  \Big)\bigg) \nonumber \\
&=& {\|H_2\|}_F^2 + {\|\wt H_2(K,0)\|}_F^2 - 2 \real\bigg(\trace \Big(  \mathcal M  \frac{  \mathcal P_z K \mathcal P_z}{\|x_2\|^2}  \Big) \bigg) \label{condA_psd111} \\
&\geq& {\|H_2\|}_F^2 + {\|\wt H_2(K,0)\|}_F^2 \geq   {\|H_2\|}_F^2, \label{condA_psd1}
\end{eqnarray}
where~\eqref{condA_psd111} follows by repeated use of the identity 
$\text{trace}(AB)=\text{trace}(BA)$ for matrix $A,B \in \C^{n,n}$, and the fact that $\mathcal P_z z=0$. The last inequality~\eqref{condA_psd1} follows because of the fact that  $\real(\text{trace}(\mathcal M  \mathcal P_z K \mathcal P_z ))\leq 0$, this is due to Lemma~\ref{lemtracecondA}, since $\mathcal P_z K \mathcal P_z  \succeq 0$ as $K \succeq 0$, and by assumption that $\lambda \in \Lambda (\mathcal M)$ implies $\real{(\lambda)} \leq 0$.
This implies from~\eqref{condA_psd1}  that 
\begin{equation}\label{condA_psd2}
\inf_{K\in \C^{n,n}, K\succeq 0}  {\big\|H_2 + \wt H_2(K,0)\big \|}_F^2 \geq {\| H_2\|}_F^2. 
\end{equation}
Thus from~\eqref{eq:thiineqpsd} and~\eqref{condA_psd2}, we have that
\[
\inf_{\D \in  \mathcal S_d^{{\rm PSD}} } {\|\D\|}_F^2\geq {\| H_1\|}_F^2 +{\| H_2\|}_F^2 ={\| H\|}_F^2, 
\]
and  again the lower bound is uniquely attained since $H \in \mathcal S_d^{{\rm PSD}}$. This completes the proof. 
\eproof

\begin{remark}{\rm \label{rem:dsnsdm}
We note that although in Theorem~\ref{theorem PSDD}, we considered only the DSPSDM problem, there is a corresponding result for the doubly structured negative semidefinite mapping (DSNSDM) problem, i.e. when $\mathbb S$ is the set of $n \times n$ negative semidefinite matrices in Problem~\ref{def:dsmprob}. The corresponding result for DSNSDMs follows from Theorem~\ref{theorem PSDD} by replacing $w_1$ with $-w_1$ and $x_1$ with $-x_1$.
}
\end{remark}

\section{Solution to the doubly structured dissipative mapping problem}\label{sec:DSDMs}

Let ${\rm Diss}(n)$ denote the set of all $n \times n$ dissipative matrices, i.e., $A \in {\rm Diss}(n)$ implies that $A+A^* \succeq 0$.
In this section, we consider two types of doubly structured dissipative mapping (DSDM) problems: (i) for given vectors $x,w,y,z \in \C^n$, find $\Delta \in {\rm Diss}(n)$ such that $\Delta x=y$, and $\Delta^*z=w$. We call this mapping problem as \emph{Type-1 DSDM problem}; (ii) for given vectors $x,w \in \C^{n+m}$, and $y,z \in \C^n$, find $\Delta=[\Delta_1~\Delta_2]$ with $\Delta_1 \in {\rm Diss}(n)$ and $\Delta_2 \in \C^{n,m} $ such that  $\Delta x=y$ and  $\Delta^*z=w$. We call this mapping problem a \emph{Type-2 DSDM problem}.

\subsubsection{Type-1 doubly structured dissipative mappings}

In the following, we tackle the type-1 DSDM problem in a general case when $X,Y,Z$, and $W$ are matrices of size $n \times m$. The result provides a complete, unified, and explicit solution to the Type-1 DSDM problem when $X$ and $Z$ share the same range space.

\begin{theorem}\label{map result: NSPSD 4vec}
	Let $X, Y, Z, W \in \C^{n,m}$ with $\text{rank}(X)=\text{rank}(Z)=r$, and let $X=U \Sigma V^*,Z=\wt U \wt \Sigma \wt V^*$ be the singular value decompositions of $X$ and $Z$ with $U=\left[U_1 ~ U_2\right], \wt U=\left[\wt U_1 ~ \wt U_2\right]$, where $U_1,\wt U_1 \in \C^{n,r}$.
Define  $\mathcal S_{d_1}^{Diss}:=\{\D \in \C^{n,n}~:~\D + \D^* \succeq 0,~\D X=Y,\D^*Z=W\}$. Suppose that $U_1=\wt U_1$ and $\ker(U_1^*(YX^\pls + (YX^\pls)^*)U_1) \subseteq \ker(U_2^*(YX^\pls + WZ^\pls)U_1) $. Then $\mathcal S_{d_1}^{Diss} \neq \emptyset$ if and only if 
\begin{equation}\label{eq:necsufcond}
YX^\pls X=Y,\quad WZ^\pls Z=W,\quad X^*W=Y^*Z,\quad X^*Y+Y^*X \succeq 0.
\end{equation}
Moreover, if $\mathcal S_{d_1}^{Diss} \neq \emptyset$, then 
\begin{enumerate}
		\item {\rm Characterization:} 
\begin{eqnarray}\label{eq:S_char n}
	\mathcal S_{d_1}^{Diss}  = \left \{U \mat{cc}U_1^* YX^\pls U_1 & U_1^* (WZ^\pls)^*  U_2 \\ U_2^*YX^\pls U_1 &  U_2^*(K+G)U_2 \rix U^*:~K,G \in \C^{n,n}~\text{satisfy}~\eqref{eq:cond1 n} \right \},
\end{eqnarray} 
		where
		\begin{equation}\label{eq:cond1 n}
G^*=-G,\quad K\succeq 0,\quad K-U_2JU_2^* \succeq 0, 
		\end{equation}
		with $J= \frac{1}{2} U_2^*(YX^\pls + WZ^\pls){\left(YX^\pls + (YX^\pls)^* \right)}^{\pls}(YX^\pls + WZ^\pls)^*U_2$.
\item {\rm Minimal norm mapping:}~
		\begin{equation}\label{eq:minres n}
			\inf_{A \in \mathcal S_{d_1}^{Diss} } {\|A\|}_F^2 \; = \; {\|YX^\pls\|}_F^2 + {\|WZ^\pls\|}_F^2-{\rm trace}\left(WZ^\pls(WZ^\pls)^*XX^\pls   \right) + {\|J\|}_F^2,
		\end{equation}
		where the infimum is uniquely attained by the matrix 
		\begin{eqnarray}\label{eq: min norm 4vec}
				\mathcal H :=YX^\pls + (WZ^\pls)^* -{(WZ^\pls)}^* XX^\pls + \mathcal P_Z U_2 JU_2^* \mathcal P_X  
				=U \mat{cc} U_1^* YX^\pls U_1 & U_1^*(WZ^\pls)^* U_2 \\ U_2^* YX^\pls U_1 & J \rix U^* \nonumber
		\end{eqnarray}
		which is obtained by setting $K=U_2JU_2^*$ and $G=0$ in~\eqref{eq:S_char n}.
	\end{enumerate}
\end{theorem}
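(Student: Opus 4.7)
The plan is to reduce the problem to a block analysis in the orthonormal basis furnished by the SVDs of $X$ and $Z$, and then invoke Albert's lemma (Lemma~\ref{lem:psd}) to translate the dissipativity constraint on $\Delta$ into constraints on the free $(2,2)$-block of $U^*\Delta U$. Necessity of the four conditions is direct: if $\Delta \in \mathcal{S}_{d_1}^{\textrm{Diss}}$ then $Y = \Delta X = \Delta X X^\pls X = Y X^\pls X$ and similarly $W = W Z^\pls Z$; compatibility follows from $X^*W = X^*\Delta^*Z = (\Delta X)^*Z = Y^*Z$; and $X^*Y + Y^*X = X^*(\Delta + \Delta^*)X \succeq 0$ since $\Delta + \Delta^* \succeq 0$.

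For sufficiency and the characterization, I would work with the block decomposition $U^*\Delta U$ where $U = [U_1~U_2]$ as in the statement. The equation $\Delta X = Y$ combined with $X = U_1 \Sigma_1 V_1^*$ forces $\Delta U_1 = Y X^\pls U_1$, pinning down the first block column; analogously, $\Delta^* Z = W$ together with $Z = U_1 \tilde \Sigma_1 \tilde V_1^*$ (using $U_1 = \tilde U_1$) fixes the first block row via $U_1^*\Delta = U_1^*(W Z^\pls)^*$. Consistency of the two expressions for the $(1,1)$-block is exactly $U_1^* Y X^\pls U_1 = U_1^* (W Z^\pls)^* U_1$, which follows from Lemma~\ref{lem: mohit} applied to $X^*W = Y^* Z$. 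I would then parametrize the free $(2,2)$-block as $U_2^*(K+G) U_2$ with $K$ Hermitian PSD and $G$ skew-Hermitian, and invoke Lemma~\ref{lem:psd} on $U^*(\Delta + \Delta^*)U \succeq 0$: the positivity of the top-left block is supplied by Lemma~\ref{lem8} from $X^*Y + Y^*X \succeq 0$, the kernel inclusion is assumed in the hypothesis, and after rewriting the off-diagonal block as $U_2^*(YX^\pls + W Z^\pls) U_1$ by another application of Lemma~\ref{lem: mohit}, the Schur-complement condition collapses to $K - U_2 J U_2^* \succeq 0$.

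For the minimum Frobenius norm, unitary invariance gives $\|\Delta\|_F^2 = \|U^*\Delta U\|_F^2$, which splits across the four blocks. The three constrained blocks contribute $\|Y X^\pls\|_F^2 + \|W Z^\pls\|_F^2 - \textrm{trace}(W Z^\pls (W Z^\pls)^* X X^\pls)$ after using the observation that $YX^\pls U_2 = 0 = WZ^\pls U_2$, since both pseudoinverses are supported on the range of $U_1$. The $(2,2)$-block contribution $\|U_2^*(K+G) U_2\|_F^2$ splits orthogonally as $\|U_2^* K U_2\|_F^2 + \|U_2^* G U_2\|_F^2$ by the Hermitian/skew-Hermitian orthogonality in the Frobenius inner product, so $G = 0$ is optimal; then Lemma~\ref{lemma: ineq_psd} applied to the inequality $K \succeq U_2 J U_2^* \succeq 0$ forces the minimum at $K = U_2 J U_2^*$, contributing the $\|J\|_F^2$ term.

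The main obstacle is the Schur-complement reduction: one must carefully use Lemma~\ref{lem: mohit} twice (once to reconcile the two determinations of the $(1,1)$-block, once to rewrite the off-diagonal block of $U^*(\Delta + \Delta^*)U$) and then identify the pseudoinverse-sandwich $C B^\pls C^*$ with $2J$ in precisely the stated form. Uniqueness of the minimizer $\mathcal{H}$ also requires extra care, since the $(K,G)$ parametrization is redundant away from $U_2 U_2^* K U_2 U_2^*$ and $U_2 U_2^* G U_2 U_2^*$; the uniqueness should be recovered by rewriting the minimizer back in the $U$-basis and comparing with the stated closed form for $\mathcal{H}$.
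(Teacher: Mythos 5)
Your proposal is correct and follows essentially the same route as the paper: the paper also works in the $U$-basis, splits $U^*\Delta U$ into Hermitian and skew-Hermitian parts to pin down the three constrained blocks, reconciles them via Lemma~\ref{lem: mohit}, applies Lemma~\ref{lem:psd} and Lemma~\ref{lem8} with the kernel hypothesis to reduce the dissipativity constraint to $K\succeq 0$ and $K-U_2JU_2^*\succeq 0$, and then minimizes the norm blockwise using the Hermitian/skew-Hermitian orthogonality and Lemma~\ref{lemma: ineq_psd}. The only cosmetic difference is that you identify the first block column and row directly from $\Delta U_1=YX^\pls U_1$ and $U_1^*\Delta=U_1^*(WZ^\pls)^*$ rather than solving the four linear equations for $H_{11},S_{11},H_{12},S_{12}$ as the paper does, which amounts to the same computation.
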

\begin{proof}
	First suppose that  $\D \in \mathcal S_{d_1}^{Diss} $, i.e., $\D+\D^* \succeq 0$, $\D X=Y$, and $\D^* Z=W$. Then clearly 
	$YX^\pls X= \D X X^\pls  X= \D X=Y$ and  $WZ^\pls Z= \D^* Z Z^\pls  Z= \D^* Z=W$. Also, $	X^*W=X^*\D^*Z=(\D X)^*Z=Y^*Z$ and 
$ X^*Y+Y^*X= X^* \D X + X^* \D^* X= X^*(\D + \D ^*)X \succeq 0$.
Conversely, suppose that  $X,Y,Z$, and $W$ satisfy~\eqref{eq:necsufcond}. Then the matrix $\mathcal H$ defined in \eqref{eq: min norm 4vec} satisfies $\mathcal H X=Y$ and $\mathcal H^* Z=W$. Further, we have
	\begin{eqnarray*}
		\mathcal H +{\mathcal H}^*&=& U \mat{cc} U_1^* \left(YX^\pls+(YX^\pls)^*\right) U_1 & U_1^*\left((WZ^\pls)^*+(YX^\pls)^*\right) U_2 \\  U_2^*\left(WZ^\pls+YX^\pls\right) U_1 & 2J  \rix U^*,
	\end{eqnarray*}
with $J= \frac{1}{2} U_2^*(YX^\pls + WZ^\pls){\left(YX^\pls + (YX^\pls)^* \right)}^{\pls}((YX^\pls)^* + (WZ^\pls)^*)U_2$. 
Clearly $J=J^*$ and in fact $J \succeq 0$ from Lemma~\ref{lem8}, since $X^*Y+Y^*X \succeq 0$. Thus
in view of 
Lemma~\ref{lem:psd}, we have $\mathcal H +{\mathcal H}^* \succeq 0$, since from Lemma~\ref{lem8}
$U_1^* \left(YX^\pls+(YX^\pls)^*\right) U_1 \succeq 0$, by assumption 
 $\ker(U_1^*(YX^\pls + (YX^\pls)^*)U_1) \subseteq \ker(U_2^*(YX^\pls + WZ^\pls)U_1) $, and
$J\succeq 0$.  
This implies that $\mathcal H \in \mathcal S_{d_1}^{Diss}$.
	 
Next, we prove~\eqref{eq:S_char n}. First suppose that $\D \in \mathcal S_{d_1}^{Diss}$, i.e., $\D+\D^* \succeq 0$, $\D X=Y$, and $\D^* Z=W$.
Let $\Sigma=\mat{cc}\Sigma_1 & 0\\ 0 &0\rix, \wt \Sigma=\mat{cc}\wt \Sigma_1 & 0\\ 0 &0\rix, V=\mat{cc}V_1 & V_2 \rix,\wt V=\mat{cc}\wt V_1 & \wt V_2 \rix$, where $V_1, \wt V_1 \in \C^{m,r}$, $V_2,\wt V_2 \in \C^{m, m-r}$, and $\Sigma_1, \wt \Sigma_1 \in \C^{r,r}$ such that $X=U_1 \Sigma_1 V_1^*$ and $Z=\wt U_1 \wt \Sigma_1 \wt V_1^*$ become the reduced SVDs of $X$ and $Z$, respectively. Now consider
\begin{equation}\label{equation: delta N wt delta}
\wh{\D} = U^*\D U= \wh \D_H + \wh \D_S,
\end{equation} 
 where
	\[
	\wh \D_H=U^*\D_H U=\mat{cc}H_{11}&H_{12}\\H_{12}^* & H_{22}\rix\quad \text{and}\quad
	\wh \D_S=U^*\D_S U=\mat{cc}S_{11}&S_{12}\\-S_{12}^* & S_{22}\rix.
	\]
	Clearly, ${\|\D\|}_{F}={\|\wh \D \|}_{F}$, since Frobenius norm is unitarily invariant, and also $\D_H \succeq 0$ if and only if $\wh \D_H \succeq 0$.	As $\D X=Y$, we have $U^* \Delta U U^* X = U^*Y$ which implies that 
\begin{eqnarray}
			 &\wh \D \mat{c}U_1^*  \\U_2^* \rix X = \mat{c}U_1^*Y \\U_2^*Y \rix \Longrightarrow
			\mat{cc} H_{11}+S_{11} & H_{12}+S_{12}\\ H_{12}^*-S_{12}^* & H_{22}+S_{22}\rix \mat{c} \Sigma_1 V_1^* \\  0\rix=\mat{c} U_1^*Y \\  U_2^*Y \rix	.
\end{eqnarray}
This implies that
\begin{equation}\label{eq:h11}
		\left(H_{11}+S_{11} \right)\Sigma_1 V_1^*=U_1^*Y
\quad \text{and}\quad 
		\left(H_{12}^*-S_{12}^* \right)\Sigma_1 V_1^* =  U_2^* Y.
	\end{equation}
	Thus from~\eqref{eq:h11}, we have 
	\begin{equation}\label{h11+s11}
		H_{11} + S_{11}=U_1^* Y V_1 \Sigma_1^{-1}=U_1^* Y X^\pls U_1
	\end{equation}
	and
	\begin{equation}\label{h12*-s12*}
		H_{12}^*- S_{12}^* =  U_2^* Y V_1 \Sigma_1^{-1}=U_2^* YX^\pls U_1,
	\end{equation}
	since $X^\pls=V_1 \Sigma_1^{-1}U_1^*$ and $X^\pls U_1=V_1 \Sigma_1^{-1}$. Similarly, $\D^* Z=W$ implies that $U^*\D^* U U^* Z=U^*W$ and we have 
	\begin{eqnarray*}
 \wh \D^* \mat{c}  U_1^* \\  U_2^* \rix Z = \mat{c}  U_1^* W \\  U_2^* W \rix \Longrightarrow
		\mat{cc} H_{11}-S_{11} & H_{12}-S_{12}\\ H_{12}^*+S_{12}^* & H_{22}-S_{22}\rix \mat{c}\wt \Sigma_1 \wt V_1^* \\ 0 \rix=\mat{c} U_1^* W \\  U_2^* W \rix,
	\end{eqnarray*}
since $\wt U_1=U_1$ and $Z=U_1 \wt \Sigma_1 \wt V_1^*$. This implies that
\begin{equation*}
	\left(H_{11}-S_{11} \right) \wt \Sigma_1 \wt V_1^*=  U_1^* W \quad \text{and}\quad 
		\left(H_{12}^*+S_{12}^* \right) \wt \Sigma_1 \wt V_1^*= U_2^* W.
	\end{equation*}
Thus, we have 
\begin{equation}\label{h11-s11}
	H_{11}-S_{11}=U_1^* W \wt V_1 \wt \Sigma_1^{-1}=U_1^* WZ^\pls U_1
\end{equation}
and
\begin{equation}\label{h12*+s12*}
H_{12}^* + S_{12}^*= U_2^* W \wt V_1 \wt \Sigma_1^{-1}= U_2^* WZ^\pls U_1,
\end{equation}
since $Z^\pls=\wt V_1 \wt \Sigma_1^{-1}  U_1^*$ and $Z^\pls  U_1= \wt V_1  \wt \Sigma_1^{-1}$.
Thus from \eqref{h11+s11} and \eqref{h11-s11},
	\begin{eqnarray}\label{eq:h11form1}
		H_{11}= U_1^*\left(\frac{YX^\pls+WZ^\pls}{2}\right)U_1=U_1^*\left(\frac{YX^\pls+(YX^\pls)^*}{2}\right)U_1 
\end{eqnarray} 
and
\begin{eqnarray}\label{eq:h11form2}
		S_{11}= U_1^*\left(\frac{YX^\pls-WZ^\pls}{2}\right)U_1 = U_1^*\left(\frac{YX^\pls-(YX^\pls)^*}{2}\right)U_1,
	\end{eqnarray} 
where in~\eqref{eq:h11form1} and~\eqref{eq:h11form2}, we have used Lemma~\ref{lem: mohit}.
Similarly, from~\eqref{h12*-s12*} and~\eqref{h12*+s12*}
	\begin{equation}\label{eq:h11form}
		H_{12}^*= U_2^*\left(\frac{YX^\pls+WZ^\pls}{2}\right)U_1 \quad \text{and} \quad
		S_{12}^*= U_2^*\left(\frac{WZ^\pls - YX^\pls}{2}\right)U_1.
	\end{equation}
Note that since $X^*Y+Y^*X \succeq 0$, in view of Lemma~\ref{lem8} , we have that $H_{11} \succeq 0$. Therefore
\begin{equation}
		\wh \D = \mat{cc} U_1^* YX^\pls U_1 & U_1^*(WZ^\pls)^* U_2 \\  U_2^* YX^\pls U_1 & H_{22}+ S_{22} \rix, 
	\end{equation}
where $H_{22},S_{22} \in \C^{n-r,n-r}$ are such that $\wh \D_H \succeq 0$ and $\wh \D_S^*=-\wh \D_S$.
	That means, $S_{22}$ satisfies that $S_{22}^*=-S_{22}$, and in view of Lemma~\ref{lem:psd} $H_{22}$ satisfies the constraints  $H_{22}\succeq 0$ and $H_{22}- J \succeq 0$ with $J= \frac{1}{2} U_2^*(YX^\pls + WZ^\pls){\left(YX^\pls + (YX^\pls)^* \right)}^{\pls}(YX^\pls + WZ^\pls)^*U_2$.
Thus from~\eqref{equation: delta N wt delta}, we have
\begin{eqnarray}\label{eq:const}
		\D &=& \mat{cc}U_1 & U_2 \rix \mat{cc} U_1^* YX^\pls U_1 & U_1^*(WZ^\pls)^* U_2 \\  U_2^* YX^\pls U_1 & H_{22} + S_{22} \rix
		\mat{c}U_1^* \\ U_2^*\rix.
	\end{eqnarray}
By setting $K=U_2 H_{22} U_2^*$ and $G=U_2 S_{22} U_2^*$ in~\eqref{eq:const}, we obtain that 
\begin{equation}\label{eq:dandhatd}
		\D = U \mat{cc} U_1^* YX^\pls U_1 & U_1^*(WZ^\pls)^*U_2 \\ U_2^*  YX^\pls U_1 & U_2^* (K+G) U_2 \rix U^*,
	\end{equation}
where $G$ and $K$ satisfy the conditions~\eqref{eq:cond1 n}. 	This proves $``\subseteq"$ in~\eqref{eq:S_char n}.
	
For the other side inclusion in~\eqref{eq:S_char n}, let $A$ be any matrix of the form
\begin{equation*}
		A=U \mat{cc} U_1^* YX^\pls U_1 & U_1^*(WZ^\pls)^* U_2 \\ U_2^* YX^\pls U_1 & U_2^* (K+G) U_2 \rix U^*,
\end{equation*}
where $K$ and $G$ satisfy the conditions~\eqref{eq:cond1 n}. Then 
using the fact that $U_1 U_1^*+ U_2 U_2^*=U U^*=I_n$, $U_1 U_1^*=XX^\pls=ZZ^\pls$, $U_1^* U_1=I_r$, and $U_2^* U_2=I_{n-r}$, $A$ can be written as 
\begin{eqnarray}
A=YX^\pls + (WZ^\pls)^* -{(WZ^\pls)}^* XX^\pls + \mathcal P_Z (K+G) \mathcal P_X.
\end{eqnarray}
Clearly  $A$ satisfies that $AX=Y$ and $A^*Z=W$, since $YX^\pls X=Y$, $WZ^\pls Z=W$, $\mathcal P_X X=0$ and $\mathcal P_Z Z=0$. Further, in view of~\eqref{eq:cond1 n} and Lemma~\ref{lem:psd} we have that
\[
U^*(A+A^*)U=\mat{cc} U_1^* \left(YX^\pls+(YX^\pls)^* \right) U_1 & U_1^* \left(YX^\pls+WZ^\pls \right)^* U_2\\
	U_2^* \left(YX^\pls+WZ^\pls \right) U_1 & U_2^* (2 K) U_2 
	\rix \succeq 0,
	\]
since $ U_1^* \left(YX^\pls+(YX^\pls)^* \right) U_1 \succeq 0$ from Lemma~\ref{lem8} as $X^*Y+Y^*X \succeq 0$,  by assumption $\ker(U_1^*(YX^\pls + (YX^\pls)^*)U_1) \subseteq \ker(U_2^*(YX^\pls + WZ^\pls)U_1)$, and $U_2^*KU_2-J \succeq 0$, since $K$ satisfies that $K-U_2JU_2^* \succeq 0$.
This implies that $A+A^* \succeq 0$ and hence $A \in \mathcal S_{d_1}^{Diss}$. This proves $``\supseteq"$ in~\eqref{eq:S_char n}.

Suppose that $\mathcal S_{d_1}^{Diss}\neq  \emptyset$ and  let $\D \in \mathcal S_{d_1}^{Diss}$, then from~\eqref{eq:S_char n} we have that
\begin{eqnarray}
{\|\D\|}_F^2 &=&  \left \|\mat{cc}U_1^* YX^\pls U_1 & U_1^*(WZ^\pls)^*U_2\\ U_2^*YX^\pls U_1 & U_2^*(K+G)U_2  \rix \right \| _F^2 \nonumber\\ 
&=& \left \|  U^* YX^\pls U_1  \right \|_F^2 + {\|U_1^*(WZ^\pls)^*U_2\|}_F^2 + {\| U_2^*(K+G)U_2 \|}_F^2  \nonumber\\ 
&=& \left \|  YX^\pls U_1  \right \|_F^2 + {\|U_1^*(WZ^\pls)^*U_2\|}_F^2 + {\| U_2^*(K+G)U_2 \|}_F^2   \nonumber\\ 
		&=&  {\| YX^\pls \|}_F^2 + {\|U_1^*(WZ^\pls)^*U\|}_F^2 + {\| U_2^*(K+G)U_2 \|}_F^2  - {\| YX^\pls U_2  \|}_F^2 
		- {\|U_1^*(WZ^\pls)^* U_1\|}_F^2    \nonumber\\ 
		&=& {\| YX^\pls \|}_F^2 + {\|(WZ^\pls)^*\|}_F^2 + {\| U_2^*(K+G)U_2 \|}_F^2  - {\| YX^\pls U_2  \|}_F^2 
		- {\|U_1^*(WZ^\pls)^*U_1\|}_F^2 - {\|U_2^*(WZ^\pls)^*\|}_F^2  \nonumber\\ 
		&=& {\| YX^\pls \|}_F^2 + {\|(WZ^\pls)^*\|}_F^2 + {\| U_2^*(K+G)U_2 \|}_F^2   
		- {\|U_1^*(WZ^\pls)^*U_1\|}_F^2   \quad (\because X^\pls U_2=0, \,Z^\pls U_2=0)  \nonumber\\ 
	&=& {\| YX^\pls \|}_F^2 + {\|(WZ^\pls)^*\|}_F^2 + {\|U_2^*(K+G)U_2 \|}_F^2  - {\rm trace}(U_1^* WZ^\pls U_1 U_1^*(WZ^\pls)^*U_1)    \nonumber\\ 
		&=& {\| YX^\pls \|}_F^2 + {\|(WZ^\pls)^*\|}_F^2 + {\| U_2^*(K+G)U_2 \|}_F^2   
		- {\rm trace}(  WZ^\pls Z Z^\pls (WZ^\pls)^* X X^\pls) \quad (\because U_1 U_1^*= X X^\pls= Z Z^\pls )   \nonumber\\ 
		&=& {\| YX^\pls \|}_F^2 + {\|(WZ^\pls)^*\|}_F^2 + {\| U_2^*(K+G)U_2 \|}_F^2   
		- {\rm trace}(  WZ^\pls (WZ^\pls)^* X X^\pls)   \nonumber\\ 
		&=& {\| YX^\pls \|}_F^2 + {\|(WZ^\pls)^*\|}_F^2 + {\| U_2^*KU_2\|}_F^2 + {\|U_2^* G U_2 \|}_F^2   
		- {\rm trace}(  WZ^\pls (WZ^\pls)^* X X^\pls)  \nonumber
\end{eqnarray}
where the last equality follows as for any square matrix $A=A_H+A_S$ we have ${\|A\|}_F^2={\|A_H\|}_F^2+{\|A_S\|}_F^2$. This implies that 
\begin{eqnarray}
&&\inf_{\D \in \mathcal S_{d_1}^{Diss}}{\|\D\|}_F^2 \nonumber \\
&&=  \inf_{K,G \in \C^{n,n},G^*=-G, K-U_2JU_2^* \succeq 0}
 {\| YX^\pls \|}_F^2 + {\|(WZ^\pls)^*\|}_F^2 + {\| U_2^*KU_2\|}_F^2   + {\|U_2^* G U_2 \|}_F^2 
		- {\rm trace}(  WZ^\pls (WZ^\pls)^* X X^\pls) \nonumber \\  
		&&\geq \inf_{K \in \C^{n,n},K-U_2JU_2^* \succeq 0}
 {\| YX^\pls \|}_F^2 + {\|(WZ^\pls)^*\|}_F^2 + {\| U_2^*KU_2\|}_F^2   
		- {\rm trace}(  WZ^\pls (WZ^\pls)^* X X^\pls) \label{eq:type11} \\
		&& \geq  {\| YX^\pls \|}_F^2 + {\|(WZ^\pls)^*\|}_F^2 + {\| J\|}_F^2   - {\rm trace}(  WZ^\pls (WZ^\pls)^* X X^\pls), \label{eq:type12} \
\end{eqnarray}
where the first inequality in~\eqref{eq:type11} is obvious since for any $G \in \C^{n,n}$ ${\|U_2^* G U_2 \|}_F \geq 0$ and the second inequality is due to Lemma~\ref{lemma: ineq_psd}, since ${\|\cdot\|}_F$ is unitarily invariant and $J \succeq 0$ implies that for any $K \in \C^{n,n}$
 such that $K-U_2JU_2^*\succeq 0$ we have ${\|K\|}_F \geq {\|U_2JU_2^*\|}_F={\|J\|}_F$. Thus by setting $K=U_2JU_2^*$ and $G=0$, we obtain a unique matrix $\mathcal H$ that attains the lower bound in~\eqref{eq:type12}, i.e., 
	\begin{eqnarray*}
		\inf_{\D \in \mathcal S_{d_1}^{Diss}} {\|\D\|}_F^2= {\|\mathcal H\|}_F^2= {\|YX^\pls\|}_F^2 + {\|WZ^\pls\|}_F^2 - {\rm{trace}}(W Z^\pls(W Z^\pls)^*XX^\pls) + {\|J\|}_F^2. 
	\end{eqnarray*}
This competes the proof. 
\end{proof}

The vector case Type-1 DSDMs, i.e., when $m=1$ in Theorem~\ref{map result: NSPSD 4vec}, is particularly interesting. This is because  (i) the conditions on the free matrices in~\eqref{eq:S_char n} are more simplified,  and (ii) it will be used in computing the structured eigenpair backward errors for pencil $L(z)$ defined in~\eqref{eq:defpenciL}, see Section~\ref{sec:app_strbacerr}. For future reference, we state the vector case separately.

\begin{theorem}\label{map:type1veccase} 
	Let $x,y,w \in \C^{n}\setminus\{0\}$ and let  $z=\alpha x$ for some nonzero $\alpha \in \C$. Suppose that $\real{(x^*y)} \neq 0$. 
 Then $\mathcal S_{d_1}^{\rm Diss} \neq \emptyset$ if and only if  $x^*w=y^*z$ and  $\real{(x^*y)} > 0$.

 Moreover, if $\mathcal S_{d_1}^{Diss} \neq \emptyset$, then 
\begin{eqnarray}\label{eq:typevecchar1}
		\mathcal S_{d_1}^{\rm Diss}=\Big\{yx^\pls + (wz^\pls)^*-(wz^\pls)xx^\dagger + \mathcal P_x (K+G)\mathcal P_x:~ K,G \in \C^{n,n}~\text{satisfy}~\eqref{eq:cond1t1vec a} \Big\},
\end{eqnarray} 
		where
		\begin{eqnarray}\label{eq:cond1t1vec a}
G^*=-G,\quad K\succeq 0, \quad \text{and}\quad K-\mathcal P_x J \mathcal P_x \succeq 0,
		\end{eqnarray}
where $J= \frac{1}{4\real{(x^*y)}}\left(y+\frac{\bar \alpha}{{|\alpha|}^2}w\right)\left(y+\frac{\bar \alpha}{{|\alpha|}^2}w\right)^*$.
Further,
\[
\inf_{\Delta \in \mathcal S_{d_1}^{\rm Diss}} {\|\Delta\|}_F^2 \; = \; \frac{{\|y\|}^2}{{\|x\|}^2}-\frac{{\|w\|}^2}{{\|z\|}^2}-\frac{|w^*x|^2}{{\|x\|}^2{\|z\|}^2}+{\|J\|}_F^2,
\]
where the infimum is uniquely attained by  $\mathcal H:=yx^\pls +{(wz^\pls)}^*  \mathcal P_x + \mathcal P_x J \mathcal P_x$,
		which is obtained by setting $K=\mathcal P_x J \mathcal P_x$ and  $G=0$ in~\eqref{eq:typevecchar1}.
\end{theorem}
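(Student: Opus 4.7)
The strategy is to apply Theorem~\ref{map result: NSPSD 4vec} with $X=x$, $Y=y$, $Z=z$, $W=w$ viewed as $n\times 1$ matrices, and then simplify every quantity produced by that theorem using the assumption $z=\alpha x$.

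For the hypotheses of Theorem~\ref{map result: NSPSD 4vec}, I would use the reduced SVDs $x=(x/\|x\|)\cdot\|x\|\cdot 1$ and, since $z=\alpha x$, $z=(x/\|x\|)\cdot |\alpha|\|x\|\cdot(\alpha/|\alpha|)$, which makes $U_1=\tilde U_1=x/\|x\|$ as required. The scalar $U_1^*(YX^\pls+(YX^\pls)^*)U_1=2\real(x^*y)/\|x\|^2$ is nonzero because $\real(x^*y)\neq 0$, so its kernel is trivial and the kernel-containment hypothesis holds vacuously. The identities $YX^\pls X=Y$ and $WZ^\pls Z=W$ are automatic for any nonzero vectors, so the existence criterion of the general theorem reduces to $x^*w=y^*z$ together with $2\real(x^*y)\geq 0$; combining the latter with $\real(x^*y)\neq 0$ forces $\real(x^*y)>0$, giving the claimed existence statement.

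For the characterization, note that $z=\alpha x$ yields $\mathcal P_z=I-zz^\pls=I-xx^\pls=\mathcal P_x$ and $zz^\pls=xx^\pls$, so~\eqref{eq:S_char n} collapses directly to~\eqref{eq:typevecchar1}. The delicate step is to verify that the matrix-case constraint $K-U_2 J_{\mathrm{gen}}U_2^*\succeq 0$ coincides with $K-\mathcal P_x J\mathcal P_x\succeq 0$ for the closed-form $J$ of the statement. Setting $\tilde y:=y+(\bar\alpha/|\alpha|^2)w$ (equivalently $y+w/\alpha$), a direct computation gives $YX^\pls+WZ^\pls=\tilde y x^*/\|x\|^2$. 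The sandwiched $1\times 1$ Hermitian pseudoinverse is $(U_1^*(YX^\pls+(YX^\pls)^*)U_1)^\pls=\|x\|^2/(2\real(x^*y))$, and assembling these pieces via the Schur complement of Lemma~\ref{lem:psd} gives $J_{\mathrm{gen}}=U_2^*\tilde y\tilde y^*U_2/(4\real(x^*y))$. Conjugating by $U_2$ yields $U_2 J_{\mathrm{gen}}U_2^*=\mathcal P_x\tilde y\tilde y^*\mathcal P_x/(4\real(x^*y))=\mathcal P_x J\mathcal P_x$, as required.

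Finally, the minimal-norm formula follows by specializing the one in Theorem~\ref{map result: NSPSD 4vec}: standard pseudoinverse identities give $\|yx^\pls\|_F^2=\|y\|^2/\|x\|^2$, $\|wz^\pls\|_F^2=\|w\|^2/\|z\|^2$, and $\trace(wz^\pls(wz^\pls)^*xx^\pls)=|x^*w|^2/(\|x\|^2\|z\|^2)$, while the $\|J\|_F^2$ term comes from the previous paragraph. The infimum is attained by choosing $G=0$ and $K$ equal to the smallest matrix permitted by~\eqref{eq:cond1t1vec a} in the Loewner order, namely $K=\mathcal P_x J\mathcal P_x$, producing the claimed $\mathcal H=yx^\pls+(wz^\pls)^*\mathcal P_x+\mathcal P_x J\mathcal P_x$; uniqueness is inherited from Theorem~\ref{map result: NSPSD 4vec}. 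The main bookkeeping obstacle is keeping the $\alpha$-dependence of the SVD of $z$ from leaking into the final answer; this cancels cleanly via $\bar\alpha/|\alpha|^2=1/\alpha$, so the final expressions depend only on the data $x,y,z,w$ as displayed in the statement.
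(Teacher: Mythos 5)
Your proposal is correct and matches the paper's intended derivation: the paper states Theorem~\ref{map:type1veccase} as the $m=1$ specialization of Theorem~\ref{map result: NSPSD 4vec} without giving a separate proof, and your verification of the hypotheses ($U_1=\tilde U_1$ from $z=\alpha x$, the vacuously satisfied kernel condition from $\real(x^*y)\neq 0$) together with the Schur-complement computation $U_2 J_{\mathrm{gen}}U_2^*=\mathcal P_x\tilde y\tilde y^*\mathcal P_x/(4\real(x^*y))$ is exactly what that specialization requires. The only friction is with the paper's own displayed formulas rather than with your argument: specializing the general theorem as you do actually yields $+\|wz^\pls\|_F^2$ and $\|\mathcal P_x J\mathcal P_x\|_F^2$ where the stated vector-case infimum prints $-\|w\|^2/\|z\|^2$ and $\|J\|_F^2$, so your route reproduces the correct values and exposes these as typographical slips in the statement.
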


\begin{remark}{\rm \label{rem:type-1dsdm}
If we consider the DSM Problem~\ref{def:dsmprob} with $\D_1+\D_1 \preceq 0$, then the corresponding result follows from Theorem~\ref{map result: NSPSD 4vec} by replacing everywhere the positive semidefinite constraint ``$\succeq$" with the negative semidefinite constraint ``$\preceq$".
}
\end{remark}

\subsubsection{Type-2 doubly structured dissipative mappings}

In this section, we consider the Type-2 DSDM problem, i.e.  when $\mathbb S={\rm Diss}(n)$ in Problem~\ref{def:dsmprob}. We have the following result that completely solves the existence and characterization problem of the Type-2 DSDM problem and derives sufficient conditions for computing the minimal norm  solution to the Type-2 DSDM problem.

\begin{theorem} \label{thm:Type-2DSDM}
	Given $x=[x_1^T ~\, x_2^T ]^T$ with $x_1 \in \C ^{n}$ and $x_2 \in \C^m \setminus \{0\}$, $y,z \in \C^n$, and $w=[w_1^T ~\,  w_2^T ]^T$ with $w_1 \in \C ^{n}\setminus\{0\}$ and  $w_2 \in \C^m \setminus \{0\}$. Define
\begin{equation} \label{def: double struct mapping set NSPSD}
		\lS_{d_2}^{\rm Diss} := \left \{ \D~:~ [\D_1 ~\, \D_2],\, \D_1 \in \C^{n,n},\,\D_2 \in \C^{n,m},\, \D_1 + \D_1^* \succeq 0,\, \D x=y,\, \D^*z=w \right \}.
	\end{equation}
	Then $\lS_{d_2}^{\rm Diss} \neq \emptyset $ if and only if $x^*w=y^*z$ and $\real{(z^*w_1)} \geq 0$. If $x^*w=y^*z$ and $\real{(z^*w_1)} > 0$, then 
\begin{equation}\label{eq:chardiss2}
\lS_{d_2}^{\rm Diss} = \left \{ H + \wt H(Z,K,G,R)  :\,R \in \C^{n,m},\,K,G,Z \in \C^{n,n}~\text{satisfy}~\eqref{1cond vec}\right \},
\end{equation}
where 
\begin{eqnarray}
G^*=-G,\quad  K \succeq 0,\quad \text{and}\quad  K- \frac{1}{4\real{(z^*w_1)}} 
\left(2w_1  + Z^*z\right) \left(2w_1  + Z^*z\right) ^* \succeq 0,  \label{1cond vec}
\end{eqnarray}  
$H=[H_1 ~\, H_2 ]$ and $\wt H(Z,K,G,R) =[\wt H_1(Z,K,G) ~\,\wt H_2(Z,K,G,R) ]$ with $H_1$, $H_2$, $\wt H_1(Z,K,G)$, $\wt H_2(Z,K,G,R)$ given by 
\begin{eqnarray}
H_1 &=& (w_1 z^\pls)^* + \mathcal P_z w_1 z^\pls \label{val: H_1 double struct NSPSD} \\
H_2 &=& y x_2^\pls -  (w_1 z^\pls)^*x_1  x_2^\pls - \mathcal P_z w_1 z^\pls x_1  x_2^\pls + (w_2 z^\pls)^* - (w_2 z^\pls)^* x_2 x_2^\pls  \label{val: H_2 double struct NSPSD} \\
\wt H_1(Z,K,G)&=& \mathcal P_z Z^* z z^\pls + \mathcal P_z K \mathcal P_z - \mathcal P_zG \mathcal P_z \label{val: wt H_1 double struct NSPSD} \\
\wt H_2(Z,K,G,R) &=& -\mathcal P_z Z^* z z^\pls x_1 x_2^\pls - \mathcal P_z K \mathcal P_z x_1 x_2^\pls + \mathcal P_z G \mathcal P_z x_1 x_2^\pls + \mathcal P_z R \mathcal P_{x_2}  \label{val: wt H_2 double struct NSPSD},
\end{eqnarray}	
and 
\begin{equation}\label{eq:lbounddiss2}
\inf_{\Delta \in \lS_{d_2}^{\rm Diss} }
{\|\Delta\|}_F^2 \geq {\|\hat H_1\|}_F^2+ \inf_{Z,K,G \, \text{satisfying}~\eqref{1cond vec}}{\|H_2+\wt H_2(Z,K,G,0)\|}_F^2,
\end{equation}
 where 
 \begin{equation}\label{eq:diss2hatmaps}
 \hat{H}_1 := H_1 -  2 P_z w_1 z^\pls.  
 \end{equation}
 Moreover, 
if  $y= \beta z$ for some $ \beta \in \C$ and $z$ is orthogonal to $x_1$, then 
	\begin{equation}\label{eq:extremal_diss2}
\inf_{\Delta \in \lS_{d_2}^{\rm Diss} }
{\|\Delta\|}_F^2= {\| \hat{H}_1 \|}_F^2 + {\| \hat{H}_2 \|}_F^2,
		\end{equation}
where $\hat H_1$ is defined by~\eqref{eq:diss2hatmaps} and $\hat{H}_2 := y x_2^\pls -  (w_1 z^\pls)^*x_1  x_2^\pls + (w_2 z^\pls)^* - (w_2 z^\pls)^* x_2 x_2^\pls $, and the infimum in~\eqref{eq:extremal_diss2} is uniquely attained by the matrix $\hat{H}= [ \hat{H}_1 ~\, \hat{H}_2]$.	  
\end{theorem}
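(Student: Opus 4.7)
The plan is to reduce the doubly structured dissipative mapping problem to a combination of Theorem~\ref{map result : MAIN BhaGS21a} (which handles the $\Delta_1 + \Delta_1^* \succeq 0$ constraint together with $\Delta_1^* z = w_1$) and Theorem~\ref{theorem : 2.1 mehl2017} (which handles the unstructured two-vector mapping for $\Delta_2$), exactly in the spirit of Theorem~\ref{theorem HD}. For necessity, writing $\Delta x = y$ and $\Delta^*z=w$ blockwise gives $\Delta_1 x_1 + \Delta_2 x_2 = y$, $\Delta_1^* z = w_1$, $\Delta_2^* z = w_2$; then $y^*z = x^*w$ follows as usual, while $2\real(z^*w_1) = z^*(\Delta_1+\Delta_1^*)z \geq 0$ comes from the dissipativity of $\Delta_1$. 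For sufficiency under $\real(z^*w_1)>0$, I would set $\Gamma := \Delta_1^*$, note that $\Gamma z = w_1$ and $\Gamma + \Gamma^* \succeq 0$, and invoke Theorem~\ref{map result : MAIN BhaGS21a} with $(x,y)\leftarrow(z,w_1)$. Taking the conjugate transpose of that parametrization (using $\mathcal{P}_z^* = \mathcal{P}_z$) yields precisely $\Delta_1 = H_1 + \wt H_1(Z,K,G)$ with the constraints~\eqref{1cond vec}.

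With $\Delta_1$ in hand, the second row gives $\Delta_2 x_2 = \tilde y := y - \Delta_1 x_1$ and $\Delta_2^* z = w_2$. The compatibility $\tilde y^* z = x_2^* w_2$ follows from $x^*w = y^*z$ and $\Delta_1^* z = w_1$, so Theorem~\ref{theorem : 2.1 mehl2017} applied to $(x_2,\tilde y,z,w_2)$ produces $\Delta_2 = \tilde y x_2^\pls + (w_2 z^\pls)^* - (w_2 z^\pls)^* x_2 x_2^\pls + \mathcal{P}_z R \mathcal{P}_{x_2}$; substituting $\tilde y = y - (H_1+\wt H_1(Z,K,G))x_1$ and simplifying gives $\Delta_2 = H_2 + \wt H_2(Z,K,G,R)$, establishing the ``$\subseteq$'' inclusion of~\eqref{eq:chardiss2}. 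The reverse inclusion is a direct check: one verifies $\Delta x = y$, $\Delta^* z = w$, and $\Delta_1 + \Delta_1^* \succeq 0$ by the corresponding parts of Theorems~\ref{map result : MAIN BhaGS21a} and~\ref{theorem : 2.1 mehl2017}.

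The lower bound~\eqref{eq:lbounddiss2} follows from $\|\Delta\|_F^2 = \|\Delta_1\|_F^2 + \|\Delta_2\|_F^2$, splitting the infimum (using $\inf(f+g) \geq \inf f + \inf g$), observing that $\inf_{Z,K,G}\|\Delta_1\|_F^2 = \|\hat H_1\|_F^2$ by Theorem~\ref{map result : MAIN BhaGS21a}, and noting that for any fixed $(Z,K,G)$ the optimal $R$ is $R=0$ by Theorem~\ref{theorem : 2.1 mehl2017}. The identification $\hat H_1 = (w_1 z^\pls)^* - \mathcal{P}_z w_1 z^\pls = H_1 - 2\mathcal{P}_z w_1 z^\pls$ matches~\eqref{eq:diss2hatmaps}.

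The main obstacle is proving equality~\eqref{eq:extremal_diss2} under the two special hypotheses, and here is where I would exhibit a single admissible tuple achieving both infima simultaneously. The natural candidate is $Z = -2(w_1 z^\pls)^*$, $K=0$, $G=0$, $R=0$, which is feasible since $Z^*z = -2w_1$ makes the last constraint in~\eqref{1cond vec} reduce to $0 \succeq 0$. When $z \perp x_1$, i.e.\ $z^\pls x_1 = 0$, all terms containing $z^\pls x_1$ collapse, giving $\Delta_1 = \hat H_1$ and $\Delta_2 = \hat H_2$, so $\hat H \in \mathcal{S}_{d_2}^{\rm Diss}$. To show this is the minimum, I would write $H_2 + \wt H_2(Z,K,G,0) = \hat H_2 + \mathcal{P}_z(G-K)x_1 x_2^\pls$ under $z\perp x_1$, and use the hypothesis $y=\beta z$ to verify $\mathcal{P}_z \hat H_2 = 0$; indeed every surviving term of $\hat H_2$ is either $\beta z x_2^\pls$, $(w_1 z^\pls)^* x_1 x_2^\pls$, or a multiple of $(w_2 z^\pls)^*$, each killed on the left by $\mathcal{P}_z$ because $\mathcal{P}_z z = 0$ and $(w_j z^\pls)^* = z w_j^*/\|z\|^2$. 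Consequently the cross term $\real\,\trace(\hat H_2 (\mathcal{P}_z(G-K)x_1 x_2^\pls)^*) = \real\,\trace((G-K)^*\mathcal{P}_z \hat H_2 (x_2^\pls)^* x_1^*) = 0$, yielding $\|H_2 + \wt H_2(Z,K,G,0)\|_F^2 = \|\hat H_2\|_F^2 + \|\mathcal{P}_z(G-K)x_1 x_2^\pls\|_F^2 \geq \|\hat H_2\|_F^2$. Combined with the lower bound, this forces $\inf_{\Delta}\|\Delta\|_F^2 = \|\hat H_1\|_F^2 + \|\hat H_2\|_F^2$, uniquely attained by $\hat H$.
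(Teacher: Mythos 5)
Your proposal is correct and follows essentially the same route as the paper: reduce $\Delta_1$ to the dissipative mapping problem of Theorem~\ref{map result : MAIN BhaGS21a} applied to $\Delta_1^*$ with the pair $(z,w_1)$, solve for $\Delta_2$ via Theorem~\ref{theorem : 2.1 mehl2017}, split the infimum with the optimal choices $Z=-2(w_1z^\pls)^*$, $K=0$, $G=0$, $R=0$, and, in the special case $y=\beta z$ with $z\perp x_1$, kill the cross term by showing $\mathcal P_z\hat H_2=0$. The only cosmetic difference is that the paper establishes nonemptiness under $\real{(z^*w_1)}\ge 0$ by exhibiting $\hat H$ explicitly, whereas you obtain it from the existence part of Theorem~\ref{map result : MAIN BhaGS21a}; both cover the boundary case $\real{(z^*w_1)}=0$.
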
 
\proof
	First suppose that  $\lS_{d_2}^{\rm Diss}\neq \emptyset$ and let $\Delta \in \lS_{d_2}^{\rm Diss}$. Then $\D = [\D_1 ~\,\D_2]$ with $\Delta_1 \in \C^{n,n}$, $\Delta_2 \in \C^{n,m}$ such that $\D_1 + \D_1^* \succeq 0$, $\D x=y$, and $\D^* z=w$. This implies that $y^*z= (\D x)^* z = x^* \D^* z=x^* w$. Since $\Delta^*z=w$, we have $\D_1^*z=w_1$ which implies that 
 \[2\real{(z^*w_1)} =z^* w_1 + w_1^*z = z^* \D_1^* z + z^* \D_1 z= z^*(\D_1 + \D_1^*)z \geq 0,
 \]
  since $ \D_1+\D_1^* \succeq 0 $. 
Conversely, let $x^*w=y^*z$ and $\real{(z^*w_1)}  \geq 0$. Then it is easy to check that $\hat{H}=[\hat{H}_1 ~\, \hat{H}_2 ] $ with $\hat{H}_1 := H_1 -  2 P_z w_1 z^\pls$ and $\hat{H}_2 := H_2 + 2 P_z w_1 z^\pls x_1  x_2^\pls $ satisfies $\hat{H} x=y$ and $\hat{H}^*z=w$. Further,
\begin{eqnarray*}
\hat{H}_1 + \hat{H}_1^*&=&(w_1 z^\pls)^* - \mathcal P_z w_1 z^\pls + (w_1 z^\pls) -  (w_1 z^\pls)^*\mathcal P_z= (z^*w_1+w_1^*z)zz^\dagger \succeq 0,
\end{eqnarray*}
since $\real{(z^*w_1)} \geq 0$ and $zz^\dagger \succeq 0$ being a rank one symmetric matrix. This implies that $\hat H \in \lS_{d_2}^{\rm Diss}$.

Next we prove \eqref{eq:chardiss2}. For this, let $\D \in \lS_{d_2}^{\rm Diss}$, i.e., $\D=[\D_1 ~\, \D_2]$ such that $\D x = y$, $\D^* z = w$, and $\D_1 + \D_1^* \succeq 0$. This implies that 
\begin{equation}\label{eq:equimaps}
\D_1x_1+\D_2x_2=y,\quad \D_1^*z=w_1,\quad \text{and}\quad \D_2^*z=w_2.
\end{equation}
Since $\D_1+\D_1^* \succeq 0$ with $\D_1^*$ taking $z$ to $w_1$, from Theorem~\ref{map result : MAIN BhaGS21a} 
\begin{equation*}
		\D_1^* = w_1 z^\pls + (w_1 z^\pls)^* \mathcal P_z + z z^\pls Z \mathcal P_z + \mathcal P_z K\mathcal P_z + \mathcal P_z G \mathcal P_z,
	\end{equation*}
or, equivalently,
\begin{equation}\label{val:del_1diss2}
\D_1 = (w_1 z^\pls)^* + \mathcal P_z w_1 z^\pls + \mathcal P_z Z^* z z^\pls + \mathcal P_z K \mathcal P_z - \mathcal P_z G \mathcal P_z,
\end{equation}
for some $Z,K,G \in \C^{n,n}$ such that 
\[
G^*=-G,\quad   K \succeq 0,\quad \text{and} \quad  K- \frac{1}{4\real{(z^*w_1)}} 
\left(2w_1  + Z^*z\right) \left(2w_1  + Z^*z\right)^* \succeq 0.
\]
By substituting $\D_1$ from~\eqref{val:del_1diss2} in~\eqref{eq:equimaps}, we get
\begin{eqnarray}\label{eq: delta2 stage 2 NSPSD}
\D_2 x_2 = \tilde y   \quad \text{and}\quad  \D_2 ^* z = w_2,
\end{eqnarray} 
where $\tilde y=y-\D_1x_1=
y-  \left( (w_1 z^\pls)^* + P_z w_1 z^\pls + P_z Z^* z z^\pls + P_z K P_z - P_z G P_z  \right)x_1$. Again since $\tilde y^*z=x_2^*w_2$, in view of Theorem~\ref{theorem : 2.1 mehl2017} $\D_2$ has the form
\begin{eqnarray} \label{val:del_2diss2}
\D_2 = \tilde y x_2^\pls  +  (w_2 z^\pls)^* - (w_2 z^\pls)^* x_2 x_2^\pls + \mathcal P_z R \mathcal P_{x_2},
\end{eqnarray}
for some $R \in \C^{n,m}$. In view of~\eqref{val:del_1diss2} and~\eqref{val:del_2diss2}, we have
\begin{eqnarray}
[\D_1~\, \D_2]=[H_1~\,H_2]+[\wt H_1(Z,K,G)~\,\wt H_2(Z,K,G,R)]=H+\wt H(Z,K,G,R).
\end{eqnarray}
This proves ``$\subseteq$" in~\eqref{eq:chardiss2}.

Conversely, let $x^*w=y^*z$ and $\real{(z^* w_1)} \geq 0 $, consider $[\D_1~\,\D_2]=\big[H_1+\wt H_1(Z,K,G)~~H_2+\wt H_2(Z,K,G,R)\big]$, where 
$H_1$, $H_2$, $\wt H_1(Z,K,G)$, and $\wt H_2(Z,K,G,R)$ be 
given by~\eqref{val: H_1 double struct NSPSD}--\eqref{val: wt H_2 double struct NSPSD} 
for some matrices $R \in \C^{n,m}$ and $Z,K,G \in \C^{n,n}$ satisfying~\eqref{1cond vec}. Then a straight-forward calculation shows that $[\D_1~\,\D_2]x=y$ and $[\D_1~\,\D_2]^*z=w$. Also $\D_1+\D_1^* \succeq 0$. To see this, let $u_1=\frac{z}{\|z\|}$ and $U_2 \in \C^{n,n-1}$ be such that 
$U=[u_1~\,U_2]$ becomes unitary. This implies that 
\begin{eqnarray}
\D_1^*&=&(H_1+\wt H_1(Z,K,G))^*=w_1 z^\pls + (w_1 z^\pls)^* \mathcal P_z + z z^\pls Z \mathcal P_z + \mathcal P_z K\mathcal P_z + \mathcal P_z G \mathcal P_z \nonumber\\
&=& w_1 z^\pls + (w_1 z^\pls)^* U_2U_2^* + z z^\pls Z U_2U_2^*  + U_2U_2^*  KU_2U_2^*  + U_2U_2^*  G U_2U_2^* \nonumber \\
&=& (u_1u_1^*+U_2U_2^*)(w_1z^\dagger)u_1u_1^*+ u_1u_1^*(w_1 z^\pls)^* U_2U_2^* + u_1u_1^* Z U_2U_2^*  + U_2U_2^*  (K+ G) U_2U_2^* \nonumber \\
&=& U\mat{cc}u_1^*(w_1z^\pls)u_1 & u_1^*(w_1z^\pls)^*U_2 + u_1^*ZU_2 \\ U_2^*(w_1z^\pls)u_1 & U_2^*(K+G)U_2 
\rix U^* \nonumber \\
&=& U\mat{cc}z^\pls w_1 & \frac{w_1^*U_2}{\|z\|} + \frac{z^*ZU_2}{\|z\|} \\ \frac{U_2^*w_1}{\|z\|}  & U_2^*(K+G)U_2 
\rix U^*, \label{eq:proveD1nspsd1}
\end{eqnarray}
where we have used the fact that $UU^*=I_n$ and $u_1=\frac{z}{\|z\|}$.  Thus in view of Lemma~\ref{lem:psd} and~\eqref{eq:proveD1nspsd1}, we have that $\D_1+\D_1^* \succeq 0$, since $Z,K,G$ satisfy~\eqref{1cond vec}. This proves ``$\supseteq$" in~\eqref{eq:chardiss2}.

In view of~\eqref{eq:chardiss2}, we have
\begin{eqnarray}
\inf_{\D \in \lS_{d_2}^{\rm Diss}} {\|\D\|}_F^2&=& \inf_{R\in \C^{n,m}, K,G,Z \in \C^{n,n}~\text{satisfying}~\eqref{1cond vec}} {\|H + \wt H(Z,K,G,R)\|}_F^2\nonumber \\
&=& \inf_{R\in \C^{n,m}, K,G,Z \in \C^{n,n}~\text{satisfying}~\eqref{1cond vec}} \left({\left \|\big[H_1~\,H_2\big]+\big[\wt H_1(Z,K,G)~\,\wt H_2(Z,K,G,R)\big]\right\|}_F^2\right) \nonumber \\
&=& \inf_{R\in \C^{n,m}, K,G,Z \in \C^{n,n}~\text{satisfying}~\eqref{1cond vec}} \left({\left \|H_1+\wt H_1(Z,K,G)\right\|}_F^2 +{\left\|H_2+,\wt H_2(Z,K,G,R)\right\|}_F^2\right) \nonumber \\
&\geq & \inf_{ K,G,Z \in \C^{n,n}~\text{satisfying}~\eqref{1cond vec}} {\left \|H_1+\wt H_1(Z,K,G)\right\|}_F^2 \nonumber \\
&+&\inf_{R\in \C^{n,m}, K,G,Z \in \C^{n,n}~\text{satisfying}~\eqref{1cond vec}}{\left\|H_2+,\wt H_2(Z,K,G,R)\right\|}_F^2 
\label{eq:proofdissnorm1}\\
&= &  {\left \|H_1+\wt H_1(-2(w_1z^\dagger)^*,0,0)\right\|}_F^2 +\inf_{R\in \C^{n,m}, K,G,Z \in \C^{n,n}~\text{satisfying}~\eqref{1cond vec}}{\left\|H_2+,\wt H_2(Z,K,G,R)\right\|}_F^2\nonumber \\ \label{eq:proofdissnorm2}\\
&= &  {\left \|\hat H_1\right\|}_F^2 +\inf_{ K,G,Z \in \C^{n,n}~\text{satisfying}~\eqref{1cond vec}}\left(\inf_{R\in \C^{n,m}}{\left\|H_2+,\wt H_2(Z,K,G,R)\right\|}_F^2\right)\nonumber \\
&= &  {\left \|\hat H_1\right\|}_F^2 +\inf_{ K,G,Z \in \C^{n,n}~\text{satisfying}~\eqref{1cond vec}}{\left\|H_2+\wt H_2(Z,K,G,0)\right\|}_F^2, \label{eq:proofdissnorm3}
\end{eqnarray}
where the first inequality in~\eqref{eq:proofdissnorm1} follows due to the fact that for any two real valued functions $f$ and $g$ defined on the same domain, $\inf(f+g)\geq \inf f+\inf g$. Also equality in~\eqref{eq:proofdissnorm2} follows since the infimum in the first term is attained when $K=0$, $G=0$, and $Z=-2(w_1z^\dagger)^*$. In fact, for any $K,G,Z \in \C^{n,n}$ satisfying~\eqref{1cond vec}, the matrix $H_1+\wt H_1(Z,K,G)$ is a dissipative map taking $z$ to $w_1$, which implies from Theorem~\ref{map result : MAIN BhaGS21a} that the minimum of ${\|H_1+\wt H_1(Z,K,G)\|}_F$ is attained when $K=0$, $G=0$, and $Z=-2(w_1z^\dagger)^*$, i.e., for $\hat H_1=H_1+\wt H_1(-2(w_1z^\dagger)^*,0,0)=H_1 -  2 P_z w_1 z^\pls$ defined by~\eqref{eq:diss2hatmaps}. Similarly, for a fixed 
$K,G,Z \in \C^{n,n}$ satisfying~\eqref{1cond vec} and for any $R \in \C^{n,,m}$, the matrix $H_2+\wt H_2(Z,K,G,R)$ satisfies that $(H_2+\wt H_2(Z,K,G,R))x_2=\tilde y$ and $(H_2+\wt H_2(Z,K,G,R))^*z=w_2$. This implies from Theorem~\ref{theorem : 2.1 mehl2017} that for any fixed $Z,K,G$, the minimum of ${\|H_2+\wt H_2(Z,K,G,R)\|}_F$ over $R$ is attained when $R=0$. This justifies~\eqref{eq:proofdissnorm3} and hence proves~\eqref{eq:lbounddiss2}. 

Next we prove~\eqref{eq:extremal_diss2} under the assumption that $y=\beta z$ for some $\beta \in \C$ and $z$ is orthogonal to $x_1$. For this, let us first estimate the infimum in the right hand side of~\eqref{eq:proofdissnorm3}. For any $Z,K,G \in \C^{n,n}$ satisfying~\eqref{1cond vec}, we have 
\begin{eqnarray}
&&{\|H_2 + \wt H_2 (Z,K,G,0)\|}_F^2 \nonumber \\
	&&= {\|H_2 \|}_F^2 + {\| \wt H_2(Z,K,G,0)  \|}_F^2  +2\real\left (\trace\big((H_2 )(\wt H_2(Z,K,G,0) )^*\big)\right) \nonumber \\
	&&= {\|\hat H_2 \|}_F^2 + {\| \wt H_2(Z,K,G,0)  \| }_F^2  \nonumber \\
	&&-2\real\left(\trace\big((y x_2^\pls - (w_1 z^\pls)^* x_1 x_2^\pls + (w_2 z^\pls)^*P_{x_2} ) (x_1 x_2^\pls)^*( P_z K + P_z G )P_z\big)\right)\label{eq:proofdissnorm11}\\
	&&= {\|\hat H_2  \|}_F^2 + {\| \wt H_2(Z,K,G,0) \| }_F^2  \nonumber \\
	&&-2\real\left(\trace\big(P_z(y x_2^\pls - (w_1 z^\pls)^* x_1 x_2^\pls + (w_2 z^\pls)^*P_{x_2}) (x_1 x_2^\pls)^*( P_z K + P_z G) \big)\right) \nonumber \\
	&&= {\|\hat H_2 \|}_F^2 + {\| \wt H_2(Z,K,G,0)  \| }_F^2  \quad (\because P_z y=0,~\text{and}~P_z (z^\dagger)^*=0) \nonumber\\
	&&\geq {\|\hat H_2 \|}_F^2, \nonumber
\end{eqnarray}
where the equality in~\eqref{eq:proofdissnorm11} follows from~\eqref{val: wt H_2 double struct NSPSD} and~\eqref{val:  H_2 double struct NSPSD} using the fact that $z$ is orthogonal to $x_1$ and by setting $\hat{H}_2 := y x_2^\pls -  (w_1 z^\pls)^*x_1  x_2^\pls + (w_2 z^\pls)^* \mathcal P_{x_2} $.
This implies that 
\begin{equation}\label{eq:proofdissnorm22}
\inf_{ K,G,Z \in \C^{n,n}~\text{satisfying}~\eqref{1cond vec}}{\left\|H_2+\wt H_2(Z,K,G,0)\right\|}_F^2 \geq {\|\hat{H}_2\|}_F^2
\end{equation}
In view of~\eqref{eq:proofdissnorm3} and~\eqref{eq:proofdissnorm22}, when $y=\beta z$ for some $\beta \in \C$ and $z\perp x_1$, we have that 
\begin{equation}\label{eq:proofdissnorm33}
\inf_{\D \in \lS_{d_2}^{\rm Diss}} {\|\D\|}_F^2 ~\geq~ {\|\hat{H}_1\|}_F^2+{\|\hat{H}_2\|}_F^2.
\end{equation}
Notice that the lower bound in~\eqref{eq:proofdissnorm33} is uniquely attained for $\D=[\hat H_1~\,\hat H_2]$ which is obtained by taking $Z=-2(w_1 z^\pls)^*$, $K=0$, $G=0$, and $R=0$ in~\eqref{eq:chardiss2}. This completes the proof. 	
\eproof

\begin{remark}{\rm \label{rem:type-2dsdm}
A remark similar to Remark~\ref{rem:type-1dsdm} also holds for Type-2 DSDMs. 
}
\end{remark}

\section{DSM's in computing structured eigenpair backward errors of pencils $L(z)$ }\label{sec:app_strbacerr}

Consider the pencil $L(z)$ in the form~\eqref{eq:defpenciL} that arises in passivity analysis of port-Hamiltonian systems. In this section, we exploit the minimal-norm DSMs from Section~\ref{sec:DSDMs} to develop eigenpair backward error estimates under block- and symmetry-structure-preserving perturbations. These results extend the work done in~\cite{MehMS18}, where the pencil $L(z)$ was considered without semidefinite structure on the block $R$. 
Let us introduce the perturbation $\Delta_M+z \Delta_N$ of the pencil $L(z)=M+zN$, where 
\begin{equation}\label{eq:pertumn}
\Delta_M = \mat{ccc} 0 & \Delta_J -\Delta_R & \Delta_B \\ \Delta_J^*-\Delta_R^* & 0 & 0 \\ \Delta^*_B & 0 & 0 \rix \quad \text{and}\quad 
	\D_N = \mat{ccc} 0 & \D_E & 0 \\ -\D_E^* & 0 & 0 \\ 0 & 0 & 0 \rix,
\end{equation}	
for $\Delta_J,\Delta_R,\Delta_E \in \C^{n,n}$ and $\Delta_B \in \C^{n,m}$, that affect the blocks $J,R,E,B$ of $L(z)$. Thus motivated by~\cite{MehMS18}, we define various structured eigenpair backward errors of $L(z)$ for a given pair $(\lambda,u) \in \C \times \C^{2n+m}\setminus \{0\}$ as follows:
\begin{enumerate}
\item \emph{the block-structure-preserving} eigenpair backward error $\eta^{\mathcal B}(J,R,E,B,\lambda,u)$ of $L(z)$ with respect to  perturbations from the set
\begin{eqnarray}\label{eq:persetb}
\mathcal B(J,R,E,B):=&\big\{ \Delta_M+z\Delta_N:~\Delta_M,\Delta_N~ \text{defined\, by}~\eqref{eq:pertumn}~\text{for} \nonumber\\
&~\Delta_J,\Delta_R,\Delta_E \in \C^{n,n},~\Delta_B \in \C^{n,m}
\big\}
\end{eqnarray}
is defined by
\begin{eqnarray}\label{def:errblc}
\eta^{\mathcal B}(J,R,E,B,\lambda,u)=&\big \{ {\|\left[\Delta_M~\Delta_N\right]\|}_F:~ \left((M-\Delta_M)+\lambda (N-\Delta_N)\right)u=0, \nonumber \\
&\Delta_M+z\Delta_N \in \mathcal B(J,R,E,B)
\big \};
\end{eqnarray}
\item  \emph{the symmetry-structure-preserving} eigenpair backward error $\eta^{\mathcal S}(J,R,E,B,\lambda,u)$ of $L(z)$ with respect to  perturbations from the set
\begin{eqnarray}\label{eq:persets}
\mathcal S(J,R,E,B):=&\big\{ \Delta_M+z\Delta_N:~\Delta_M,\Delta_N~ \text{defined\, by}~\eqref{eq:pertumn}~\text{for}~\Delta_J,\Delta_R,\Delta_E \in \C^{n,n}, \nonumber\\
&~\Delta_B \in \C^{n,m},~\Delta_J^*=-\Delta_J, \Delta_R^*=\Delta_R,\Delta_E^*=\Delta_E
\big\}
\end{eqnarray}
is defined by
\begin{eqnarray}\label{def:errstr}
\eta^{\mathcal S}(J,R,E,B,\lambda,u)=&\big \{ {\|\left[\Delta_M~\Delta_N\right]\|}_F:~ \left((M-\Delta_M)+\lambda (N-\Delta_N)\right)u=0, \nonumber \\
&\Delta_M+z\Delta_N \in \mathcal S(J,R,E,B)
\big \};
\end{eqnarray}
\item  \emph{the semidefinite-structure-preserving} eigenpair backward error $\eta^{\mathcal S_d}(J,R,E,B,\lambda,u)$ of $L(z)$ with respect to  perturbations from the set
\begin{eqnarray}\label{eq:persetsd}
\mathcal S_d(J,R,E,B):=&\big\{ \Delta_M+z\Delta_N:~\Delta_M,\Delta_N~ \text{defined\, by}~\eqref{eq:pertumn}~\text{for}~\Delta_J,\Delta_R,\Delta_E \in \C^{n,n}, \nonumber\\
&~\Delta_B \in \C^{n,m},~\Delta_J^*=-\Delta_J, \Delta_R\succeq 0,\Delta_E^*=\Delta_E
\big\}
\end{eqnarray}
is defined by
\begin{eqnarray}\label{def:errpsd}
\eta^{\mathcal S_d}(J,R,E,B,\lambda,u)=&\big \{ {\|\left[\Delta_M~\Delta_N\right]\|}_F:~ \left((M-\Delta_M)+\lambda (N-\Delta_N)\right)u=0, \nonumber \\
&\Delta_M+z\Delta_N \in \mathcal S_d(J,R,E,B)
\big \}.
\end{eqnarray}
\end{enumerate}

We note that by choosing different perturbation sets in~\eqref{eq:persetb},~\eqref{eq:persets}, and~\eqref{eq:persetsd}, the corresponding backward errors can be defined by allowing perturbations only to specific blocks $J,R,E,B$ of $L(z)$. For example, if we chose 
$\mathcal B(J,R):=\mathcal B(J,R,0,0)$, where $\Delta_E=0$ and $\Delta_B=0$ in~\eqref{eq:persetb} to allow perturbation only in blocks $J$ and $R$ of $L(z)$, then the corresponding backward error is given by $\eta^{\mathcal B}(J,R,\lambda,u):=\eta^{\mathcal B}(J,R,0,0,\lambda,u)$. Similarly, the backward errors $\eta^{\mathcal S}(J,R,\lambda,u)$ and $\eta^{\mathcal S_d}(J,R,\lambda,u)$ can be defined by restricting the perturbation sets as $\mathcal S(J,R):=\mathcal S(J,R,0,0)$ and $\mathcal S_d(J,R):=\mathcal S_d(J,R,0,0)$, where $\Delta_E=0$ and $\Delta_B=0$ in~\eqref{eq:persets} and~\eqref{eq:persetsd}, respectively. 

The block- and symmetry-structure-preserving eigenpair backward errors 
$\eta^{\mathcal B}(\cdot,\cdot,\cdot,\cdot,\lambda,u)$ and  $\eta^{\mathcal S}(\cdot,\cdot,\cdot,\cdot,\lambda,u)$ were studied in~\cite{MehMS18} for different combinations of perturbation blocks $J,R,E$, and $B$ of $L(z)$. In the following, we obtain results only for the semidefinite-structure-preserving backward error $\eta^{\mathcal S_d}(J,R,E,B,\lambda,u)$. The backward errors for other combination of the blocks $J,R,E,B$ of $L(z)$ can be obtained analogously, see~\ref{sec:appe2}.

\begin{remark}\label{rem:appdsw}{\rm
Let $L(z)$ be a pencil in the form~\eqref{eq:defpenciL}, $\lambda \in i\R$, and 
$u=\big[u_1^T~u_2^T~u_3^T\big]^T$ be such that $u_1,u_2 \in \C^n$, and $u_3 \in \C^m$. Then for any $\Delta L(z)=\Delta_M + z \Delta_N$, where 
$\Delta_M$ and $\Delta_N$ are defined by~\eqref{eq:pertumn} for $\Delta_J,\Delta_R,\Delta_E \in \C^{n,n}$ and $\Delta_B \in \C^{n,m}$, we have
$\left(L(\lambda)-\Delta L (\lambda)\right)u=0$ if and only if 
\begin{eqnarray}
(\D_J - \D_R + \lm \D_E) u_2 + \D_B u_3	 &=& (J-R + \lm E) u_2 + B u_3 \nonumber \\
	(\D_J^*- \D_R^* - \lm \D_E^*) u_1  &=&  ((J-R)^* - \lm E^*)u_1 \nonumber \\
	\D_B^* u_1  &=& B^*u_1 + S u_3 \nonumber
\end{eqnarray}
if and only if 
\begin{eqnarray}
	\mat{cc}  \underbrace{\D_J -\D_R + \lm \D_E}_{=:\Delta_1} & \underbrace{\D_B}_{=:\Delta_2} \rix \underbrace{\mat{c} u_2 \\ u_3 \rix}_{=:x} &=& \underbrace{(J-R + \lm E) u_2 + B u_3}_{=:y} \label{dsmequi1} \\
	\mat{cc}  \D_J -\D_R + \lm \D_E& \D_B \rix ^* \underbrace{u_1}_{=:z} &=& \underbrace{\mat{c}-(J+R + \lm E)u_1  \\ B^*u_1 + S u_3 \rix}_{=:w}, \label{dsmequi2}
\end{eqnarray}
since $\lambda \in i\R$. Thus for any $\Delta L(z)=\Delta_M + z \Delta_N$  and $(\lambda,u) \in i\R \times \C^{2n+m}\setminus\{0\} $,  $\left(L(\lambda)-\Delta L (\lambda)\right)u=0$ is equivalent to solving the doubly structured mapping defined by~\eqref{dsmequi1}-\eqref{dsmequi2}, where the structure on $\Delta_1$ depends on the structures imposed on the perturbations $\Delta_J$, $\Delta_R$, and $\Delta_E$.
}
\end{remark}

In view of~\eqref{dsmequi1} and~\eqref{dsmequi2}, the following lemma is analogous to~\cite[Lemma 6.2]{MehMS18} that will be useful in preserving the semidefinite structure on $R$ in the backward error $\eta^{\mathcal S_d}(J,R,E,B,\lambda,u)$. 
\begin{lemma}\label{lem:JpsdREB}
	Let $L(z) $ be a pencil as in~\eqref{eq:defpenciL}, and let $\lm \in i \R$ and $u \in \C^{2n +m} \setminus \{ 0\}$. Partition $u= [u_1^T ~ u_2^T ~ u_3^T]^T$ such that $u_1,u_2 \in \C^{n}$ and $u_3 \in \C^m$, and let $x,y,z$ and $w$ be defined by~\eqref{dsmequi1} and~\eqref{dsmequi2}. Then the following statements are equivalent.
	\begin{enumerate}
\item There exists $\D_J,\D_R, \D_E \in \C^{n,n}$ and $\D_B \in \C^{n,m}$ such that $\D_J \in {\rm SHerm}(n)$, $\D_R \succeq 0$, and $\D_E \in {\rm Herm}(n)$ satisfying~\eqref{dsmequi1} and~\eqref{dsmequi2}.
		\item There exists $\D =[\D_1 ~ \D_2]$, $\D_1 \in \C^{n,n}$, $\D_2 \in \C^{n,m}$  such that $\D_1+ \D_1^* \preceq 0$, $\D x =y$, and  $\D^* z=w$.
		\item $u_3 = 0$.
	\end{enumerate}
	Moreover, we have 
\begin{eqnarray}
		&\inf \Big \{ {\left \|\big[\D_J ~ \D_R ~ \D_E ~ \D_B\big] \right \|}_F^2  : ~\D_J \in {\rm SHerm}(n),\D_R,  \D_E\in {\rm Herm}(n), \Delta_R\succeq 0,\\
		&~\D_B \in \C^{n,m}~\text{satisfying}~\eqref{dsmequi1} ~\text{and}~\eqref{dsmequi2} \Big \} \nonumber \\
		&=\inf \bigg \{  {\left \| \frac{\D_1 + \D_1^*}{2} \right \|}_F^2 + \frac{1}{1+|\lm|^2}{\left \| \frac{\D_1 - \D_1^*}{2} \right \|}_F^2 + {\|\D_2\|}_F^2 : ~ \D =\big[\D_1 ~ \D_2\big],~\D_1 \in \C^{n,n}, \\
		& \D_2 \in \C^{n,m},~\D_1+\D_1^* \preceq 0,~\D x =y, \D ^* z =w \bigg \} \nonumber.
	\end{eqnarray}
\end{lemma}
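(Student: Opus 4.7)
The plan is to establish the equivalence of the three conditions and then derive the infimum equality by an explicit minimization over all valid block decompositions of $[\D_1~\D_2]$.

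For the equivalence of (1) and (2), given admissible $\D_J, \D_R, \D_E, \D_B$ as in (1), I will set $\D_1 := \D_J - \D_R + \lambda \D_E$ and $\D_2 := \D_B$. Using $\D_J^* = -\D_J$, $\D_E^* = \D_E$, $\D_R^* = \D_R$, and $\bar\lambda = -\lambda$ (since $\lambda \in i\R$), a short computation yields $\D_1 + \D_1^* = -2\D_R \preceq 0$, while the mapping identities $\D x = y$ and $\D^* z = w$ are just rewritings of~\eqref{dsmequi1}--\eqref{dsmequi2}. Conversely, given $\D = [\D_1~\D_2]$ as in (2), I will define $\D_R := -\frac{1}{2}(\D_1+\D_1^*)$ (automatically Hermitian and, by hypothesis, positive semidefinite), $\D_B := \D_2$, and decompose the skew-Hermitian matrix $K := \frac{1}{2}(\D_1-\D_1^*)$ as $\D_J + \lambda \D_E$ with $\D_J \in {\rm SHerm}(n)$ and $\D_E \in {\rm Herm}(n)$; the simplest choice $\D_E = 0$, $\D_J = K$ works, establishing (1).

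For the equivalence with (3), any $\D$ satisfying $\D x = y$ and $\D^* z = w$ forces $x^* w = y^* z$. A direct calculation using $J^* = -J$, $R^* = R$, $E^* = E$, and $\bar\lambda = -\lambda$ gives
\[
x^* w - y^* z \;=\; u_3^* S u_3,
\]
and since $S \succ 0$ this compels $u_3 = 0$. Conversely, if $u_3 = 0$, then the choice $(\D_J, \D_R, \D_E, \D_B) = (J, R, E, B)$ satisfies~\eqref{dsmequi1}--\eqref{dsmequi2} together with the required structural constraints, so (1), and hence (2), holds. Alternatively, one can invoke the negative semidefinite analog of Theorem~\ref{thm:Type-2DSDM} noted in Remark~\ref{rem:type-2dsdm}, after checking $\real(z^* w_1) = -u_1^* R u_1 \leq 0$.

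For the infimum equality, observe that in the correspondence above, $\D_R$ and $\D_B$ are rigidly determined by $[\D_1~\D_2]$, contributing exactly $\|\frac{\D_1+\D_1^*}{2}\|_F^2 + \|\D_2\|_F^2$ to the squared Frobenius norm. The remaining freedom lives in the affine constraint $\D_J + \lambda \D_E = K$ over $(\D_J, \D_E) \in {\rm SHerm}(n) \times {\rm Herm}(n)$, and the heart of the proof is to show
\[
\min \bigl\{ \|\D_J\|_F^2 + \|\D_E\|_F^2 :~ \D_J^* = -\D_J,~ \D_E^* = \D_E,~ \D_J + \lambda \D_E = K \bigr\} \;=\; \frac{\|K\|_F^2}{1+|\lambda|^2}.
\]
The crucial observation is that $\bar\lambda K$ is Hermitian (since $\lambda \in i\R$ and $K$ is skew-Hermitian), which suggests the ansatz $\D_E^{\rm opt} := \bar\lambda K/(1+|\lambda|^2)$ and $\D_J^{\rm opt} := K/(1+|\lambda|^2)$. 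A direct check shows these have the required symmetries, satisfy the constraint, and give combined squared norm exactly $\|K\|_F^2/(1+|\lambda|^2)$. Strict convexity of the objective in the free Hermitian parameter $\D_E$ (with $\D_J = K - \lambda \D_E$ then forced) confirms this is the unique minimum. Taking infima over all admissible $[\D_1~\D_2]$ on both sides yields the equality. I expect the extraction of the $1/(1+|\lambda|^2)$ factor to be the main obstacle; once the Hermiticity of $\bar\lambda K$ is recognized, the rest is routine.
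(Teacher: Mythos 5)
Your proof is correct and follows essentially the same route as the argument the paper defers to (the analogue of \cite[Lemma 6.2]{MehMS18}): split $\Delta_1$ into its Hermitian part, which rigidly determines $\Delta_R=-\tfrac12(\Delta_1+\Delta_1^*)$, and its skew-Hermitian part $K=\Delta_J+\lambda\Delta_E$, then minimize $\|\Delta_J\|_F^2+\|\Delta_E\|_F^2$ subject to that affine constraint to extract the factor $1/(1+|\lambda|^2)$ via $\Delta_E=\bar\lambda K/(1+|\lambda|^2)$, $\Delta_J=K/(1+|\lambda|^2)$. Your computation $x^*w-y^*z=u_3^*Su_3$ together with $S\succ 0$ for the equivalence with $u_3=0$ is also the intended argument, so the proposal in fact supplies the details the paper leaves to the cited reference.
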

\proof The proof is similar to the proof of~\cite[Lemma 6.2]{MehMS18} due to Type-2 doubly structured dissipative mapping from Theorem~\ref{thm:Type-2DSDM}.
\eproof

\begin{theorem}\label{bacerrJpsdREB}
	Let $L(z)$ be a pencil as in~\eqref{eq:defpenciL}, let $\lm \in i\R \setminus \{0\}$ and $u \in \C^{2n+m} \setminus \{0\}$. Partition $u=[u_1^T~ u_2^T ~ u_3^T]^T$ such that $u_1,u_2 \in \C^n$, and $u_3 \in \C^n$. Set $\tilde y=  (J-R + \lm E) u_2$ and $w_1= -(J+R +\lm E)u_1$. Then $\eta^{\mathcal S_d}(J,R,E,B,\lm,u)$ is finite if and only if $u_3=0$. If the later condition holds and if $u_2$ satisfies that 
	$Ru_2 \neq 0$ and $u_2 =\alpha u_1$ for some nonzero $\alpha \in \C$, then 
\begin{equation}\label{eq:thmJpsdREB_1}
\frac{1}{1+ |\lm|^2} {\|H_1\|}_F^2 + {\|H_2\|}_F^2 \leq  
\left(\eta^{\mathcal S_d}(J,R,E,B,\lm,u)\right)^2 \leq {\|H_1\|}_F^2+{\|H_2\|}_F^2,
\end{equation}	
where 
\begin{equation}\label{eq:thmJpsdREB_2}
H_1=\tilde yu_2^\dagger + (w_1 u_1^\dagger)^*\mathcal P_{u_2}+\mathcal P_{u_2}J\mathcal P_{u_2} \quad \text{with}\quad J=\frac{1}{4\real{(u_2^*{\tilde y}})} \big(\tilde y+\frac{\alpha}{|\alpha|^2}w_1\big)\big(\tilde y+\frac{\alpha}{|\alpha|^2}w_1\big)^*
\end{equation}
and $H_2=u_1u_1^\dagger B$.
\end{theorem}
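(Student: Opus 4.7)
The plan is to translate the eigenpair equation $(L(\lambda)-\Delta L(\lambda))u=0$ into a doubly structured mapping problem using Remark~\ref{rem:appdsw}: setting $\Delta_1=\Delta_J-\Delta_R+\lambda\Delta_E$ and $\Delta_2=\Delta_B$, the equation is equivalent to~\eqref{dsmequi1}--\eqref{dsmequi2}. Because $\lambda\in i\R$, $\Delta_J$ is skew-Hermitian, and $\Delta_E$ is Hermitian, a short calculation yields $\Delta_1+\Delta_1^*=-2\Delta_R\preceq 0$, so the problem fits the Type-2 dissipative mapping setting with the ``$\preceq 0$'' variant (Remark~\ref{rem:type-2dsdm}). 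Lemma~\ref{lem:JpsdREB} then supplies two things: the equivalence between $\eta^{\mathcal S_d}(J,R,E,B,\lm,u)$ being finite and $u_3=0$ (which settles the first claim), and the key identity expressing the squared backward error as the infimum of the weighted functional
\[
\Phi(\Delta_1,\Delta_2):=\left\|\tfrac{\Delta_1+\Delta_1^*}{2}\right\|_F^2 + \tfrac{1}{1+|\lambda|^2}\left\|\tfrac{\Delta_1-\Delta_1^*}{2}\right\|_F^2 + \|\Delta_2\|_F^2
\]
over all admissible $\Delta_1,\Delta_2$.

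Next, I would exploit that $u_3=0$ decouples the mapping into $\Delta_1 u_2=\tilde y$, $\Delta_1^* u_1=w_1$ with $\Delta_1+\Delta_1^*\preceq 0$, together with the unconstrained equation $\Delta_2^* u_1=B^*u_1$. The latter is handled by Theorem~\ref{map result: unstruct}, whose minimizer is $\hat H_2=u_1u_1^\dagger B$. For $\Delta_1$, since $u_2=\alpha u_1$, the vector Type-1 DSDM result (Theorem~\ref{map:type1veccase} combined with Remark~\ref{rem:type-1dsdm}) applies with $x=u_2$, $y=\tilde y$, $z=u_1$, $w=w_1$. The compatibility $u_2^*w_1=\tilde y^*u_1$ follows from $\lambda\in i\R$ together with the (skew-)Hermitian structures of $J,R,E$, while $\real(u_2^*\tilde y)=-u_2^*Ru_2<0$ uses exactly the hypothesis $Ru_2\neq 0$ (combined with $R\succeq 0$). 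Applying Theorem~\ref{map:type1veccase} then delivers the closed-form minimizer $H_1$ of $\|\Delta_1\|_F$ in precisely the form~\eqref{eq:thmJpsdREB_2}.

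Finally, I combine these via $\Phi$. For the \emph{upper} bound, evaluate $\Phi$ at $\Delta_1=H_1$, $\Delta_2=\hat H_2$: using $\tfrac{1}{1+|\lambda|^2}\leq 1$ and the orthogonal decomposition $\|H_1\|_F^2=\|H_{1,H}\|_F^2+\|H_{1,S}\|_F^2$, one gets $\Phi(H_1,\hat H_2)\leq \|H_1\|_F^2+\|H_2\|_F^2$. For the \emph{lower} bound, apply $\tfrac{1}{1+|\lambda|^2}\leq 1$ to the Hermitian part of $\Delta_1$ as well, giving $\Phi(\Delta_1,\Delta_2)\geq \tfrac{1}{1+|\lambda|^2}\|\Delta_1\|_F^2+\|\Delta_2\|_F^2$; then the subadditivity of infimum and the individual minima $\inf\|\Delta_1\|_F^2=\|H_1\|_F^2$, $\inf\|\Delta_2\|_F^2=\|H_2\|_F^2$ yield $\inf\Phi\geq \tfrac{1}{1+|\lambda|^2}\|H_1\|_F^2+\|H_2\|_F^2$, proving~\eqref{eq:thmJpsdREB_1}.

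The step I expect to require the most care is verifying the compatibility hypotheses of Theorem~\ref{map:type1veccase} from the block structure of $L(z)$, and handling the ``$\preceq 0$'' sign convention consistently in the scalar $\alpha$ appearing in $J$ (the role-swap between $z=\alpha^{-1}x$ in the theorem we are proving versus $z=\tilde\alpha x$ in Theorem~\ref{map:type1veccase} must be tracked to match the prefactor $\alpha/|\alpha|^2$ in~\eqref{eq:thmJpsdREB_2}). The two-sided nature of~\eqref{eq:thmJpsdREB_1}, rather than an equality, is unavoidable here because $H_1$ minimizes $\|\Delta_1\|_F$ and not the weighted norm appearing in $\Phi$.
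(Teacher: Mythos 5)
Your proposal is correct and follows essentially the same route as the paper's proof: reduce via Remark~\ref{rem:appdsw} and Lemma~\ref{lem:JpsdREB} to the weighted functional, solve the two decoupled mapping problems with Theorem~\ref{map:type1veccase} (via Remark~\ref{rem:type-1dsdm}) and Theorem~\ref{map result: unstruct}, and sandwich the weighted norm between $\tfrac{1}{1+|\lambda|^2}\|\Delta_1\|_F^2+\|\Delta_2\|_F^2$ and $\|\Delta_1\|_F^2+\|\Delta_2\|_F^2$ using $\|\Delta_1\|_F^2=\|\Delta_{1,H}\|_F^2+\|\Delta_{1,S}\|_F^2$. Your verification that $\real(u_2^*\tilde y)=-u_2^*Ru_2<0$ under $Ru_2\neq 0$ is exactly the compatibility check the paper performs.
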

\proof In view of Remark~\ref{rem:appdsw} and Lemma~\ref{lem:JpsdREB}, we obtain that $\eta^{\mathcal S_d}(J,R,E,B,\lm,u)$ is finite if and only if $u_3= 0$. Thus by substituting $u_3=0$ in~\eqref{dsmequi1} and~\eqref{dsmequi2}, and using Lemma~\ref{lem:JpsdREB} in~\eqref{def:errpsd}, we have that
\begin{eqnarray}\label{eq:thmJpsdREB_3}
&\left(\eta^{\mathcal S_d}(J,R,E,B,\lm,u)\right)^2 = 
\inf\Big\{  {\left \| \frac{\D_1 + \D_1^*}{2} \right \|}_F^2 + \frac{1}{1+|\lm|^2}{\left \| \frac{\D_1 - \D_1^*}{2} \right \|}_F^2 + {\|\D_2\|}_F^2:~
\D_1 \in \C^{n,n}, \nonumber \\
		&  \hspace{2cm}\D_2 \in \C^{n,m},~\D_1+\D_1^* \preceq 0,~\D_1 u_2 =\tilde y, \D_1 ^* u_1 =w_1,~\Delta_2^*u_1=B^*u_1
\Big\}. 
\end{eqnarray}
If $u_2=\alpha u_1$ for some nonzero $\alpha \in \C$ and $u_2  \notin \text{ker}(R)$, then from Theorem~\ref{map result: NSPSD 4vec} and Remark~\ref{rem:type-1dsdm} there always exists a Type-1 doubly structured dissipative mapping $\Delta_1$ such that  $\D_1+\D_1^* \preceq 0$ and
\begin{equation}\label{eq:thmJpsdREB_3}
\D_1 u_2 =\tilde y,\quad \text{and}\quad \D_1 ^* u_1 =w_1.
\end{equation}
This is because of Theorem~\ref{map:type1veccase}  as the necessary and sufficient conditions $u_1^*w_1={\tilde y}^*u_2$ and $\real{(u_2^*\tilde y)}\leq 0$ for the existence of such a $\Delta_1$ are satisfied, since $R \succeq 0$, $J^*=-J$, $E^*=E$, and $\lambda \in i\R$. 
Further, the minimal Frobenius norm of such a $\Delta_1$ is attained by the unique matrix $H_1$ defined in~\eqref{eq:thmJpsdREB_2}. Similarly, from Theorem~\ref{map result: unstruct}, for any $u_1$ there always exists $\Delta_2 \in \C^{n,m}$ such that 
$\Delta_2^*u_1=B^*u_1$ and the minimal Frobenius norm of such a $\Delta_2$ is attained by $H_2:=u_1u_1^\dagger B$.

Next, observe that for any $\Delta_1 \in \C^{n,n}$, we have 
${\|\Delta_1\|}_F^2={\left\|\frac{\Delta_1+\Delta_1^*}{2}\right\|}_F^2+{\left\|\frac{\Delta_1-\Delta_1^*}{2}\right\|}_F^2$. This implies that for any $\Delta_1 \in \C^{n,n}$ and $\Delta_2 \in \C^{n,m}$, we have
\begin{equation}\label{eq:thmJpsdREB_4}
\frac{1}{1+|\lambda|^2}{\|\Delta_1\|}_F^2+{\|\Delta_2\|}_F^2\leq
{\left\|\frac{\Delta_1+\Delta_1^*}{2}\right\|}_F^2+\frac{1}{1+|\lambda|^2}{\left\|\frac{\Delta_1-\Delta_1^*}{2}\right\|}_F^2+ {\|\Delta_2\|}_F^2 \leq {\|\Delta_1\|}_F^2+{\|\Delta_2\|}_F^2.
\end{equation}
Thus by taking the infimum over all $\Delta_1\in \C^{n,n}$ and $\Delta_2 \in \C^{n,m}$ satisfying the mappings in the right hand side of~\eqref{eq:thmJpsdREB_3}, and by using the minimal Frobenius norm mappings $H_1$ and $H_2$, we obtain~\eqref{eq:thmJpsdREB_1}. This completes the proof.
\eproof

\subsection{Numerical experiments}

In Table~\ref{tab:results}, we present some numerical experiments to illustrate the results of this section. We generate a random pencil $L(z)$ of the form~\eqref{eq:defpenciL} with no eigenvalues on the imaginary axis and compare the various eigenpair backward errors for perturbations to all the blocks $J$, $R$, $E$, and $B$ of $L(z)$. The $\lambda$-values are chosen randomly on the imaginary axis, and $u \in \C^{2n+m}$ is chosen to satisfy the conditions of Theorem~\ref{bacerrJpsdREB}. 
The block structured backward error $\eta^{\mathcal B} (J,R,E,B,\lambda,u)$ and the symmetry structured eigenpair backward error $\eta^{\mathcal S} (J,R,E,B,\lambda,u)$ were obtained in~\cite[Theorem 6.3]{MehMS18}.
The semidefinite structure-preserving backward error $\eta^{\mathcal S_{d}} (J,R,E,B,\lambda,u)$ is obtained in Theorem~\ref{bacerrJpsdREB}. We observe that the eigenpair backward error is significantly larger when semidefinite structure-preserving perturbations are considered instead of  block structure-preserving ones or symmetry structure-preserving ones. 
The tightness of the lower and upper bounds for $\eta^{\mathcal S_{d}} (J,R,E,B,\lambda,u)$ depends on the value of $\lambda$, as shown in Theorem~\ref{bacerrJpsdREB}.

\begin{center}  
	\begin{table}[h!] 
		\begin{center} 
			\caption{Comparison of various  block-/symmetry-/semidefinite-structure-preserving eigenpair backward errors of $L(z)$ under perturbations to the blocks $J,R,E$, and $B$ of $L(z)$. Here, l.b.  and u.b. respectively stand for the terms lower and upper bound. } 
			\label{tab:results}
			\begin{tabular}{|c|c|c|c|c|c|} 
				\hline $\lm$  &  $\eta^{\mathcal B}$ & l.b. of $\eta^{\mathcal S}$ & u.b. of $\eta^{\mathcal S}$ & l.b. of $\eta^{\mathcal S_d}$ & u.b. of $\eta^{\mathcal S_d}$   \\  
  &\cite{MehMS18}   & \cite[Theorem 6.3]{MehMS18} &  \cite[Theorem 6.3]{MehMS18} &  Theorem~\ref{bacerrJpsdREB} &  Theorem~\ref{bacerrJpsdREB}  \\ \hline 
				$0.1380i$  & $23.9305$ & $28.2248$ & $28.4919$ & $30.6476$ & $30.9366$ \\ 
				\hline$0.5100i$  & $23.5909$ & $25.7498$ & $29.0013$ & $27.9347$ & $31.3382$ \\ 
				\hline$0.8950i$  & $23.0355$ & $22.2134$ & $30.7630$ & $24.0542$ & $32.2223$ \\ 
				\hline$1.0480i$  & $22.8160$ & $20.9155$ & $32.0553$ & $22.6284$ & $32.6987$ \\ 
				\hline$1.3210i$  & $22.4764$ & $18.9091$ & $35.5143$ & $20.4225$ & $33.7134$ \\ 
				\hline$1.9080i$ & $22.0725$ & $15.8640$ & $48.9597$ & $17.0707$ & $36.5371$ \\ 
				\hline$2.5080i$ &  $22.1087$ & $13.9975$ & $71.4942$ & $15.0184$ & $40.1794$ \\ 
				\hline\end{tabular} 
		\end{center} 
	\end{table} 
\end{center}

The eigenpair backward errors of $L(z)$ when only specific blocks in the pencil $L(z)$ are perturbed also follow similar lines and have been kept in~\ref{sec:appe2} for future reference. In Table~\ref{tab2:thmused}, we 
summarize the results for symmetry and semidefinite structure-preserving backward errors with respect to other combinations of the perturbation blocks $J$, $R$, $E$, and $B$ of $L(z)$. 
%
Table~\ref{tab2:thmused} also covers the cases of symmetry structure-preserving backward errors left open in~\cite{MehMS18} . 

	\begin{table}[h!]
		\begin{center} 
			\caption{An overview of the results for the symmetry- or semidefinite-structure-preserving eigenpair backward error when only specific blocks in the pencil $L(z)$ are perturbed.} 
			\label{tab2:thmused}
			\begin{tabular}{|c|c|c|} 
			 	\hline perturbation blocks & $\eta^{\mathcal S}(\cdot,\cdot,\cdot,\cdot,\lambda,u)$ & $\eta^{\mathcal S_d}(\cdot,\cdot,\cdot,\cdot,\lambda,u)$ \\
			 	\hline J and R  &\cite[Theorem 4.14]{MehMS18}& Theorem~\ref{bacerrJpsdR} \\
			 	\hline J and E & \cite[Theorem 4.6]{MehMS18}& \cite[Theorem 4.6]{MehMS18} \\
			 	\hline J and B & Theorem~\ref{bacerrJB}& Theorem~\ref{bacerrJB} \\
			 	\hline R and E & \cite[Theorem 4.10]{MehMS18}& Theorem~\ref{bacerrpsdRE} \\
			 	\hline R and B & Theorem~\ref{bacerrRB}& Theorem~\ref{bacerrpsdRB} \\
			 	\hline E and B & Theorem~\ref{bacerrEB}& Theorem~\ref{bacerrEB} \\
			 	\hline J,R and E &  \cite[Theorem 5.11]{MehMW18}&  Theorem~\ref{bacerrJpsdRE} \\
			 	\hline J,R and B & \cite[Theorem 5.4]{MehMS18} & Theorem~\ref{bacerrJpsdRB} \\
			 	\hline R,E and B &  \cite[Theorem 5.7]{MehMW18} &   Theorem~\ref{bacerrpsdREB} \\
			 	\hline J,E and B & Theorem~\ref{bacerrJEB} & Theorem~\ref{bacerrJEB}  \\
			 	\hline J,R,E and B &  \cite[Theorem 6.3]{MehMW18}&   Theorem~\ref{bacerrJpsdREB}\\  \hline
			\end{tabular}
			\end{center}
	\end{table}

\section*{References}	
\bibliographystyle{siam}
\bibliography{ab1}

\begin{thebibliography}{10}

\bibitem{adhikari2008backward}
{\sc Bibhas Adhikari}, {\em Backward Perturbation and Sensitivity analysis of
  Structured polynomial Eigenomial Eigenvalue Problem}, PhD thesis, Department
  of Mathematics, IIT Guwahati, Assam, India, 2008.

\bibitem{AdhA16}
{\sc Bibhas Adhikari and Rafikul Alam}, {\em Structured procrustes problem},
  Linear Algebra and its Applications, 490 (2016), pp.~145--161.

\bibitem{Alb69}
{\sc A.~Albert}, {\em Conditions for positive and nonnegative definiteness in
  terms of pseudoinverses}, SIAM Journal on Applied Mathematics, 17 (1969),
  pp.~434--440.

\bibitem{BhaGS21}
{\sc Mohit~Kumar Baghel, Nicolas Gillis, and Punit Sharma}, {\em
  Characterization of the dissipative mappings and their application to
  perturbations of dissipative-hamiltonian systems}, Numerical Linear Algebra
  with Applications,  (2021).

\bibitem{BeaMXZ18}
{\sc Christopher Beattie, Volker Mehrmann, Hongguo Xu, and Hans Zwart}, {\em
  Linear port-hamiltonian descriptor systems}, Mathematics of Control, Signals,
  and Systems, 30 (2018), p.~17.

\bibitem{BorKMS14}
{\sc Shreemayee Bora, Michael Karow, Christian Mehl, and Punit Sharma}, {\em
  Structured eigenvalue backward errors of matrix pencils and polynomials with
  hermitian and related structures}, SIAM Journal on Matrix Analysis and
  Applications, 35 (2014), pp.~453--475.

\bibitem{BorKMS15}
\leavevmode\vrule height 2pt depth -1.6pt width 23pt, {\em Structured
  eigenvalue backward errors of matrix pencils and polynomials with palindromic
  structures}, SIAM Journal on Matrix Analysis and Applications, 36 (2015),
  pp.~393--416.

\bibitem{ciarlet1989introduction}
{\sc Philippe~G Ciarlet, Philippe~Gaston Ciarlet, Bernadette Miara, and
  Jean-Marie Thomas}, {\em Introduction to numerical linear algebra and
  optimisation}, Cambridge University Press, 1989.

\bibitem{GilS18}
{\sc Nicolas Gillis and Punit Sharma}, {\em Finding the nearest positive-real
  system}, SIAM Journal on Numerical Analysis, 56 (2018), pp.~1022--1047.

\bibitem{KunM08}
{\sc P.~Kunkel and V.~Mehrmann}, {\em Optimal control for unstructured
  nonlinear differential-algebraic equations of arbitrary index}, Math. Control
  Signals Systems, 20 (2008), pp.~227--269.

\bibitem{MacMT08}
{\sc D~S. Mackey, N.~Mackey, and F.~Tisseur}, {\em Structured mapping problems
  for matrices associated with scalar products. part i: Lie and jordan
  algebras}, SIAM Journal on Matrix Analysis and Applications, 29 (2008),
  pp.~1389--1410.

\bibitem{MehMS16}
{\sc C.~Mehl, V.~Mehrmann, and P.~Sharma}, {\em Stability radii for linear
  {H}amiltonian systems with dissipation under structure-preserving
  perturbations}, SIAM Journal on Matrix Analysis and Applications, 37 (2016),
  pp.~1625--1654.

\bibitem{MehMS17}
\leavevmode\vrule height 2pt depth -1.6pt width 23pt, {\em Stability radii for
  real linear {H}amiltonian systems with perturbed dissipation}, BIT Numerical
  Mathematics, 57 (2017), pp.~811--843.

\bibitem{MehMS18}
{\sc Christian Mehl, Volker Mehrmann, and Punit Sharma}, {\em Structured
  eigenvalue/eigenvector backward errors of matrix pencils arising in optimal
  control}, Electronic Journal of Linear Algebra, 34 (2018).

\bibitem{MehMW18}
{\sc Christian Mehl, Volker Mehrmann, and Marek Wojtylak}, {\em Linear algebra
  properties of dissipative hamiltonian descriptor systems}, SIAM Journal on
  Matrix Analysis and Applications, 39 (2018), pp.~1489--1519.

\bibitem{Meh91}
{\sc V.~Mehrmann}, {\em The Autonomous Linear Quadratic Control Problem, Theory
  and Numerical Solution}, vol.~163 of Lecture Notes in Control and Inform.
  Sci., Springer-Verlag, Heidelberg, Jul 1991.

\bibitem{sun1993backward}
{\sc Ji-Guang Sun}, {\em Backward perturbation analysis of certain
  characteristic subspaces}, Numerische Mathematik, 65 (1993), pp.~357--382.

\bibitem{Sch13}
{\sc A.~J. {Van der}~Schaft}, {\em Port-{H}amiltonian differential-algebraic
  systems}, in Surveys in Differential-Algebraic Equations, Springer, 2013,
  pp.~173--226.

\end{thebibliography}

\appendix

\section{Proof of Theorem~\ref{theorem:DSSM}}	\label{app:proofcomsym}

\proof
Let us suppose that $\mathcal S_d^{{\rm Sym}} \neq \emptyset$. Then there exists $\D = [\D_1 ~ \D_2]$ with $\D_1\in \C^{n,n}$ and $\D_2 \in \C^{n,m}$ such  that  $\D_1^T= \D_1, \D x=y$, and $\D^* z=w$. This implies that $y^*z= (\D x)^* z = x^* \D^* z=x^* w$. Conversely, if $x^*w=y^*z, z^Tw_1 \in \R $, then $H=[ H_1 ~ H_2] $ satisfies that  $H x=y, H^*z=w$, and $H_1^T = H_1$, which implies that $H \in \mathcal S_d^{{\rm Sym}}$.

Next, we prove~\eqref{eq : char double struct_T}. First suppose that 
$\D \in \mathcal S_d^{{\rm Sym}}$, i.e., $\D=[\D_1 ~ \D_2]$, such that $\D x = y, \D^* z = w$, and $\D_1 ^T=\D_1 $. This implies that 	
\begin{eqnarray}\label{val: eq1 dstruct_T} 
\D_1 x_1 + \D_2 x_2 =y,  \quad \bar{\D}_1 z = w_1  \quad \text{and}\quad
\D_2^* z = w_2. 
\end{eqnarray}
Since $\Delta_1$ is a Complex-Symmetric matrix taking $\bar{z}$ to $\bar{w_1}$, from Theorem~\ref{theorem: Complex-sym map} $\D_1$ has the form
\begin{equation} \label{val: delta_1 map_T}
\D_1 = \bar{w}_1 \bar{z}^\pls + (\bar{w}_1 \bar{z}^\pls) ^T - \bar{z}^{\pls^T} \bar{z}^T \bar{w}_1 \bar{z}^\pls + (\mathcal P_{\bar{z}})^T K \mathcal P_{\bar{z}} 
\end{equation} 
for some Complex-symetric matrix $K \in \C^{n,n}$. By substituting $\D_1$ from~\eqref{val: delta_1 map_T} in~\eqref{val: eq1 dstruct_T}, we get	
\begin{eqnarray}\label{eq: delta2 stage 2_T} 	
\D_2 x_2 = y- \left ( \bar{w}_1 \bar{z}^\pls + (\bar{w}_1 \bar{z}^\pls) ^T - \bar{z}^{\pls^T} \bar{z}^T \bar{w}_1 \bar{z}^\pls + (\mathcal P_{\bar{z}})^T K \mathcal P_{\bar{z}} \right )x_1  \quad \text{and}\quad \D_2 ^* z = w_2,
\end{eqnarray} 
i.e., a mapping of the form $\D_2 x_2=\tilde y$ and $\D_2^*z=w_2$, where 
\[
\tilde y=\left(y-  ( \bar{w}_1 \bar{z}^\pls + (\bar{w}_1 \bar{z}^\pls) ^T - \bar{z}^{\pls^T} \bar{z}^T \bar{w}_1 \bar{z}^\pls + (\mathcal P_{\bar{z}})^T K \mathcal P_{\bar{z}} )x_1\right ).
\]
 The vectors $x_2,\tilde y, z$, and $w_2$ satisfy 
\begin{eqnarray*}
\tilde y^*z&=&\left(y-  ( \bar{w}_1 \bar{z}^\pls + (\bar{w}_1 \bar{z}^\pls) ^T - \bar{z}^{\pls^T} \bar{z}^T \bar{w}_1 \bar{z}^\pls + \mathcal P_{\bar{z}}^T K \mathcal P_{\bar{z}} )x_1\right)^*z\\
&=&\left(y^*-  x_1^*( {w}_1 {z}^\pls + ({w}_1 {z}^\pls) ^T - {z}^{\pls^T} {z}^T {w}_1 {z}^\pls + \mathcal P_{{z}}^T \bar K \mathcal P_{{z}} )^T\right)z\\
&=&\left(y^*-  x_1^*( {w}_1 {z}^\pls + ({w}_1 {z}^\pls) ^T - {z}^{\pls^T} {z}^T {w}_1 {z}^\pls + \mathcal P_{{z}}^T \bar K \mathcal P_{{z}} )\right)z \quad ( \because K^T=K)\\
&=&y^*z- x_1^* w_1  \quad (\because ({w}_1 {z}^\pls + ({w}_1 {z}^\pls) ^T - {z}^{\pls^T} {z}^T {w}_1 {z}^\pls + \mathcal P_{{z}}^T K \mathcal P_{{z}}) z=w_1 \\\
&=& x_2^* w_2 \quad (\because x^*w= x_1^*w_1 + x_2^* w_2~\text{and}~x^*w=y^*z).
\end{eqnarray*}
Therefore, from Theorem~\ref{theorem : 2.1 mehl2017}, $\Delta_2$ can be written as 
\begin{equation} \label{val: delta2 map2unstruct_T}
\D_2 = \tilde y x_2^\pls +  (w_2 z^\pls)^* - (w_2 z^\pls)^*x_2 x_2^\pls + \mathcal P_z R \mathcal P_{x_2},
\end{equation}
for some $R \in \C^{n,m}$.	

Thus, in view of ~\eqref{val: delta_1 map_T} and~\eqref{val: delta2 map2unstruct_T},  we have 
\begin{eqnarray}
\left [\D_1 ~ \D_2\right ] &=& \big [ \bar{w}_1 \bar{z}^\pls + (\bar{w}_1 \bar{z}^\pls) ^T - \bar{z}^{\pls^T} \bar{z}^T \bar{w}_1 \bar{z}^\pls + \mathcal P_{\bar{z}}^T K \mathcal P_{\bar{z}} ~~~ \tilde y x_2^\pls +  (w_2 z^\pls)^* - (w_2 z^\pls)^* x_2 x_2^\pls +  \mathcal P_z R  \mathcal P_{x_2}\big ]\nonumber \\
&=& \big[H_1+\tilde H_1(K)~~H_2+\tilde H_2(K,R)\big] \nonumber \\
&=& H + \wt H(K,R).
\end{eqnarray}
This proves ``$\subseteq$" in~\eqref{eq : char double struct_T}.

Conversely, let $[\Delta_1~\Delta_2]=[ H_1 + \wt H_1(K) ~~ H_2 + \wt H_2(K,R) ]$, where $H_1,\wt H_1(K),H_2$, and $\wt H_2(K,R)$ are defined by~\eqref{val: H_1 double struct_T}-\eqref{val: wt H_2 double struct_T} for some matrices $R \in \C^{n,m}$ and $K \in \C^{n,n}$ such that $K^T=K$. Then it is easy to check that $[\Delta_1~\Delta_2]x=y$ and 
$[\Delta_1~\Delta_2]^*z=w$ since $x^*w=y^*z$. Also $(H_1 + \wt H_1(K))^T= H_1 + \wt H_1(K)$ since $K^T=K$. 
Hence $[\Delta_1~\Delta_2] \in \mathcal S_d^{{\rm Sym}} $. This shows 
``$\supseteq$" in~\eqref{eq : char double struct_T}.

In view of~\eqref{eq : char double struct_T}, we have
\begin{eqnarray}
\inf_{\D \in \mathcal S_d^{{\rm Sym}} } {\|\D\|}_F^2 &=& \inf_{K\in \C^{n,n},R\in \C^{n,m}, K^T=K} {\left \|H + \wt H(K,R)\right \|}_F^2 \nonumber \\ 
&=&  \inf_{K\in \C^{n,n},R\in \C^{n,m}, K^T=K}\left( { \big\| \big [ H_1 ~ H_2 \big]+ \big[ \wt H_1(K) ~ \wt H_2(K,R) \big]\big \|}_F^2 \right) \nonumber \\
&=& \inf_{K\in \C^{n,n},R\in \C^{n,m}, K^T=K}\left( { \big\| H_1 + \wt H_1(K)\big\|}_F^2 + {\big\| H_2 + \wt H_2(K,R)\big\|}_F^2 \right) \nonumber \\
&\geq & \inf_{K\in \C^{n,n}, K^T=K}{\big\| H_1 + \wt H_1(K)\big\|}_F^2  + \inf_{K\in \C^{n,n},R\in \C^{n,m}, K^T=K}  {\big\| H_2 + \wt H_2(K,R)\big\|}_F^2     \nonumber \\ \label {eq:firstineq_T}  \\
&=&{\| H_1\|}_F^2 + \inf_{K\in \C^{n,n},R\in \C^{n,m}, K^T=K} {\big\| H_2 + \wt H_2(K,R)\big\|}_F^2 \label {eq:secineq_T}  \\
&=& {\| H_1\|}_F^2 + \inf_{K\in \C^{n,n}, K^T=K} \Big( \inf_{R\in \C^{n,m}}  {\big \|H_2 + \wt H_2(K,R)\big\|}_F^2 \Big)     \nonumber \\
&=& {\| H_1\|}_F^2 + \inf_{K\in \C^{n,n}, K^T=K}  {\big\|H_2 + \wt H_2(K,0)\big \|}_F^2, \label{eq:thiineq_T} 
\end{eqnarray} 
where the first inequality in~\eqref{eq:firstineq_T} follows due to the fact that for any two real valued functions $f$ and $g$ defined on the same domain, $\inf(f+g)\geq \inf f+\inf g$. Also equality in~\eqref{eq:secineq_T} follows since the infimum in the first term is attained when $K=0$. In fact, for any $K\in \C^{n,n}$ such that $K^T=K$, we have $(H_1 + \wt H_1(K))z=w_1$, which implies from 
Theorem~\ref{theorem: Complex-sym map} that the minimum of ${\|H_1 + \wt H_1(K)\|}_F$ is attained when $K=0$. Further, for a fixed $K$ and for any $R \in \C^{n,m}$, $H_2 + \wt H_2(K,R)$ is a matrix satisfying $(H_2 + \wt H_2(K,R))x_2=\tilde y$ and $(H_2 + \wt H_2(K,R))^*z=w_2$. This implies from Theorem~\ref{theorem : 2.1 mehl2017} that for any fixed $K$, the minimum of  ${\|H_2 + \wt H_2(K,R)\|}_F$ over $R$ is attained when $R=0$, which yields~\eqref{eq:thiineq_T}. This proves~\eqref{eq: minimal norm double struct_T}.

Next suppose if $x_1=\alpha z$ for some nonzero $\alpha \in \C$, then $\wt H_2(K,0)=0$ for every $K \in \C^{n,n}$. This implies from~\eqref{eq:thiineq_T} that 
\begin{eqnarray}
\inf_{\D \in \mathcal S_d^{{\rm Sym}} } {\|\D\|}_F^2~\geq~ {\| H_1\|}_F^2+{\| H_2\|}_F^2\,= \, {\| H\|}_F^2,
\end{eqnarray}
and in this case the lower bound is attained since $H \in \mathcal S_d^{{\rm Sym}}$. This completes the proof. 
\eproof

\section{Estimation of $\eta^{\mathcal S}(\cdot,\cdot,\cdot,\cdot,\lm,u)$ and $\eta^{\mathcal S_d}(\cdot,\cdot,\cdot,\cdot,\lm,u)$ when  perturbing any two/three of the blocks $J$, $R$, $E$ and $B$ of the pencil $L(z)$}	\label{sec:appe2}

Let $L(z)$ be a pencil of the form~\eqref{eq:defpenciL}, $\lm \in i\R$ and $u=\big[u_1^T~ u_2^T ~ u_3^T \big]^T$ with $u_1,u_2 \in \C^{n} \setminus\{0\}$ and $u_3 \in \C^{m}$. 

\subsection{Perturbing only $J$ and $R$}

Suppose that only $J$ and $R$ blocks of $L(z)$ are subject to perturbation. Then in view of~\eqref{def:errblc},~\eqref{def:errstr} and~\eqref{def:errpsd}, the corresponding  backward errors are denoted by 
$\eta^{\mathcal B}(J,R,\lm,u):= \eta^{\mathcal B}(J,R,0,0,\lm,u)$, $\eta^{\mathcal S}(J,R,\lm,u):= \eta^{\mathcal S}(J,R,0,0,\lm,u)$, and $\eta^{\mathcal S_d}(J,R,\lm,u):= \eta^{\mathcal S_d}(J,R,0,0,\lm,u)$.  In this case, the block-structured and   the symmetry-structured backward errors $\eta^{\mathcal B}(J,R,\lm,u)$ and $\eta^{\mathcal S}(J,R,\lm,u)$ were obtained in~\cite[Theorem 4.14]{MehMS18}. Thus,  we provide estimation only for the semidefinte-structured backward error $\eta^{\mathcal S_d}(J,R,\lm,u)$.
In view of \eqref{dsmequi1} and \eqref{dsmequi2} , when $\D_E=0,\D_B=0$ we obtain
\begin{eqnarray}
	(\D_J - \D_R) \underbrace{u_2}_{:=x} &=& \underbrace{(J-R+ \lm E)u_2 + B u_3}_{:=y} \label{dsmequiJR1}\\ 
	(\D_J - \D_R)^*\underbrace{u_1}_{:=z} &=& \underbrace{-(J+R+\lm E)u_1}_{:=w} \label{dsmequiJR2} \\ 
	 B^*u_1 + S u_3&=& 0. \label{dsmequiJR3}
\end{eqnarray}
This gives us the following lemma which is analogous to Lemma~\ref{lem:JpsdREB}.
\begin{lemma}\label{lem: JpsdR}
	Let $L(z) $ be a pencil as in~\eqref{eq:defpenciL}, and let $\lm \in i \R$ and $u \in \C^{2n +m} \setminus \{ 0\}$. Partition $u= [u_1^T ~ u_2^T ~ u_3^T]^T$ such that $u_1,u_2 \in \C^{n}$ and $u_3 \in \C^m$, and let $x,y,z$ and $w$ be defined as in~\eqref{dsmequiJR1} and~\eqref{dsmequiJR2}. If $z=\alpha x$, then the following statements are equivalent. 
	\begin{enumerate}
		\item There exists $\D_J,\D_R \in \C^{n,n}$ such that $\D_J \in {\rm SHerm}(n) , \D_R \succeq 0$ satisfying \eqref{dsmequiJR1} and \eqref{dsmequiJR2}.
		
		\item There exists $\D \in \C^{n,n}$ such that $\D + \D^* \preceq 0, \D x=y,\D^*z=w$.
		
		\item $u_3^*B^*u_1 = 0$.
	\end{enumerate}
	Moreover, we have 
	\begin{eqnarray}
		&& \inf \left \{ {\|[\D_J ~ \D_R]\|}_F^2:~ \D_J \in {\rm SHerm}(n),\, \D_R \in {\rm Herm}(n),\, \D_R \succeq 0, ~\text{satisfying}~ \eqref{dsmequiJR1} ~ \text{and}~ \eqref{dsmequiJR2} \right \} \nonumber \\	
		&& =~  \inf \left \{ {\|\D\|}_F^2 ~:~  \Delta \in \C^{n,n},\,\D+ \D^* \preceq 0,\,\D x= y,\,\D^*z=w \right \}. \nonumber
	\end{eqnarray}	
\end{lemma}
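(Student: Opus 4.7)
The plan has three pieces matching the structure of the lemma. For the equivalence (1)$\iff$(2), my approach is purely algebraic: given $\D_J \in {\rm SHerm}(n)$ and $\D_R \succeq 0$ satisfying \eqref{dsmequiJR1}--\eqref{dsmequiJR2}, I would set $\D = \D_J - \D_R$, observing that $\D + \D^* = -2\D_R \preceq 0$ and that the two mapping equations translate directly to $\D x = y$ and $\D^* z = w$. The converse uses the Hermitian/skew-Hermitian decomposition: $\D_J = (\D - \D^*)/2$, $\D_R = -(\D + \D^*)/2$, which recovers the structured pair from any $\D$ with $\D + \D^* \preceq 0$. This bijection does not rely on the hypothesis $z = \alpha x$.

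The heart of the proof is (2)$\iff$(3), for which I would invoke Theorem~\ref{map result: NSPSD 4vec} together with Remark~\ref{rem:type-1dsdm}: in the vector case with $z = \alpha x$, the set of $\D$ with $\D + \D^* \preceq 0$, $\D x = y$, $\D^* z = w$ is nonempty if and only if $x^* w = y^* z$ and $\real(x^* y) \leq 0$. The key algebraic step is to compute $y^* z - x^* w$ directly: expanding $y^* = u_2^*(J-R+\lm E)^* + u_3^* B^*$ and using $\lm \in i\R$ together with $J^* = -J$, $R^* = R$, $E^* = E$ (so $(J-R+\lm E)^* = -J-R-\lm E$), the contributions of $J$, $R$, and $\lm E$ cancel exactly against $x^* w = u_2^*(-J-R-\lm E)u_1$, leaving $y^* z - x^* w = u_3^* B^* u_1$. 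Hence the condition $x^*w = y^*z$ is equivalent to (3).

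Assuming $u_3^* B^* u_1 = 0$ and $u_1 = \alpha u_2$ with $\alpha \neq 0$, one deduces $u_2^* B u_3 = 0$; combined with $\real(u_2^* J u_2) = 0$ (by skew-Hermitianness) and $\real(\lm u_2^* E u_2) = 0$ ($E$ Hermitian, $\lm \in i\R$), the quantity $\real(x^* y) = \real(u_2^*(J-R+\lm E)u_2 + u_2^* B u_3)$ reduces to $-u_2^* R u_2 \leq 0$. Both necessary conditions for the negative dissipative mapping are then satisfied, so (3) implies (2); the converse is automatic because (2) forces $x^*w = y^*z$. The norm equality follows from the orthogonality $\|A\|_F^2 = \|A_H\|_F^2 + \|A_S\|_F^2$ (Hermitian and skew-Hermitian parts of $A$ are orthogonal in the Frobenius inner product) applied to $\D = \D_J - \D_R$, giving $\|[\D_J~\D_R]\|_F^2 = \|\D_J\|_F^2 + \|\D_R\|_F^2 = \|\D\|_F^2$ pointwise along the bijection, so the two infima coincide.

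The main obstacle I anticipate is the degenerate subcase $\real(x^* y) = 0$ (equivalently $Ru_2 = 0$ together with $u_2^* B u_3 = 0$), in which the vector-case Theorem~\ref{map:type1veccase} does not directly apply because it presumes $\real(x^*y) \neq 0$. I would handle this by falling back on the general matrix version Theorem~\ref{map result: NSPSD 4vec}, whose existence hypothesis is the non-strict condition $X^*Y+Y^*X \succeq 0$ (here replaced by $\preceq 0$ via Remark~\ref{rem:type-1dsdm}), and by verifying the kernel-containment assumption separately in this boundary case.
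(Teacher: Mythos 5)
Your proposal is correct and follows essentially the same route as the paper, whose proof is only a one-line reference to the analogous \cite[Lemma 4.13]{MehMS18} and to Theorem~\ref{map:type1veccase}: the bijection $\D=\D_J-\D_R$ with $\D_J=(\D-\D^*)/2$, $\D_R=-(\D+\D^*)/2$, the computation $y^*z-x^*w=u_3^*B^*u_1$, the reduction $\real(x^*y)=-u_2^*Ru_2\le 0$ using $z=\alpha x$, and the Frobenius-norm orthogonality of Hermitian and skew-Hermitian parts are exactly the ingredients the citation stands for. The boundary case $Ru_2=0$ that you flag is a genuine gap left open by the paper as well (there the kernel-containment hypothesis of Theorem~\ref{map result: NSPSD 4vec} forces $Bu_3=0$, not merely $u_2^*Bu_3=0$, so statement (3) need not imply statement (2)); note, however, that the application in Theorem~\ref{bacerrJpsdR} assumes $Ru_2\neq 0$, which excludes this case.
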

\proof The proof is analogous to \cite[Lemma 4.13]{MehMS18} due to Type-1 doubly structured dissipative mapping from Theorem~\ref{map:type1veccase}.
\eproof

\begin{theorem}\label{bacerrJpsdR}
	Let $L(z)$ be a pencil as in~\eqref{eq:defpenciL}, let $\lm \in i\R \setminus \{0\}$ and $u \in \C^{2n+m} \setminus \{0\}$. Partition $u=\big[u_1^T~ u_2^T ~ u_3^T \big]^T$ such that $u_1,u_2 \in \C^n$, and $u_3 \in \C^m$. Set $\tilde y=  (J-R + \lm E) u_2$ and $w_1= -(J+R +\lm E)u_1$. Let $u_2 =\alpha u_1$ for some nonzero $\alpha \in \C$. Then $\eta^{\mathcal S_d}(J,R,\lm,u)$ is finite if and only if $u_3=0$ and $B^*u_1=0$. If the later condition holds and if $u_2$ satisfies that 
	$Ru_2 \neq 0$, then 
	\begin{equation}\label{eq:thmJpsdR_1} 
		\eta^{\mathcal S_d}(J,R,\lm,u) = {\|H\|}_F,
	\end{equation}	
	where 
	\begin{equation*}\label{eq:thmJpsdR_2}
		H=\tilde yu_2^\dagger + (w_1 u_1^\dagger)^*\mathcal P_{u_2}+\mathcal P_{u_2}J\mathcal P_{u_2}, \quad \text{and}\quad 
		J=\frac{1}{4\real{(u_2^*{\tilde y}})} \big(\tilde y+\frac{\alpha}{|\alpha|^2}w_1\big)\big(\tilde y+\frac{\alpha}{|\alpha|^2}w_1\big)^*.
	\end{equation*}
\end{theorem}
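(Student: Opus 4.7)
The plan is to apply Remark~\ref{rem:appdsw} with $\D_E = 0$ and $\D_B = 0$, which reduces the perturbed eigenvalue equation to the coupled system~\eqref{dsmequiJR1}--\eqref{dsmequiJR3}. Then I use Lemma~\ref{lem: JpsdR} to replace the two-block mapping problem for $\D_J,\D_R$ by a single Type-1 doubly structured dissipative mapping $\D \in \C^{n,n}$ satisfying $\D u_2 = \tilde y$, $\D^* u_1 = w_1$, and $\D + \D^* \preceq 0$. Finally I invoke Theorem~\ref{map:type1veccase} (in its negative-semidefinite variant, cf.\ Remark~\ref{rem:type-1dsdm}) to obtain the explicit minimum. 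The equality in~\eqref{eq:thmJpsdR_1} (as opposed to the two-sided bound in Theorem~\ref{bacerrJpsdREB}) arises because here $\D_1 = \D_J - \D_R$ already decomposes as skew-Hermitian plus Hermitian without a $\lm\D_E$ term, so no mixing factor $1/(1+|\lm|^2)$ appears.

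\textbf{Finiteness.} Multiply~\eqref{dsmequiJR3} on the left by $u_3^*$ and combine with condition~(3) of Lemma~\ref{lem: JpsdR} ($u_3^*B^*u_1=0$) to obtain $u_3^*Su_3 = -u_3^*B^*u_1 = 0$. Since $S \succ 0$ by the standing hypothesis on $L(z)$, this forces $u_3 = 0$, and then~\eqref{dsmequiJR3} yields $B^*u_1 = 0$. The converse is immediate: if $u_3 = 0$ and $B^*u_1 = 0$, then~\eqref{dsmequiJR3} and condition~(3) of Lemma~\ref{lem: JpsdR} are both trivially satisfied.

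\textbf{The minimum.} With $u_3 = 0$ and $B^*u_1 = 0$, Lemma~\ref{lem: JpsdR} identifies $\eta^{\mathcal S_d}(J,R,\lm,u)^2$ with
\[
\inf\{\|\D\|_F^2 : \D \in \C^{n,n},\ \D + \D^* \preceq 0,\ \D u_2 = \tilde y,\ \D^* u_1 = w_1\}.
\]
The compatibility hypotheses of Theorem~\ref{map:type1veccase} hold: using $J^*=-J$, $R^*=R$, $E^*=E$, and $\bar\lm=-\lm$, a direct calculation gives $\tilde y^* u_1 = u_2^*(-J-R-\lm E)u_1 = u_2^* w_1$, so $y^*z = x^*w$. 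Moreover $u_2^*Ju_2$ and $\lm u_2^*Eu_2$ are purely imaginary, whence $\real{(u_2^*\tilde y)} = -u_2^* R u_2 < 0$ under the assumption $Ru_2 \neq 0$. This places the problem exactly in the negative-semidefinite variant of Theorem~\ref{map:type1veccase}. Substituting $x=u_2$, $y=\tilde y$, $z=u_1$, $w=w_1$ (with $z = \alpha^{-1}x$, so that the scalar $\bar\alpha/|\alpha|^2$ of Theorem~\ref{map:type1veccase} becomes $\alpha$) reproduces the displayed $H$ and $J$ verbatim, and the infimum is uniquely attained by $H$. Hence $\eta^{\mathcal S_d}(J,R,\lm,u) = \|H\|_F$.

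\textbf{Main obstacle.} The chief subtlety is carefully translating the sign conventions from the positive-semidefinite formulas in Theorem~\ref{map:type1veccase} to the negative-semidefinite setting required here, in particular tracking how the rank-one correction $J$ and its denominator $\real{(u_2^*\tilde y)}$ transform when $\succeq$ is replaced by $\preceq$. Once this is done, the rest of the argument reduces to routine algebraic substitution.
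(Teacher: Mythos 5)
Your proposal is correct and follows essentially the same route as the paper, whose own proof is just the citation of Lemma~\ref{lem: JpsdR} together with the (negative-semidefinite variant of the) Type-1 dissipative mapping result, Theorem~\ref{map:type1veccase}; your finiteness argument via $u_3^*Su_3=0$ and your verification of $\real(u_2^*\tilde y)=-u_2^*Ru_2<0$ are exactly the intended steps. One small caveat: tracking the substitution $z=\alpha^{-1}x$ as you do yields the scalar $\alpha$ in front of $w_1$, whereas the displayed formula for $J$ writes $\alpha/|\alpha|^2=1/\bar\alpha$ (these coincide only when $|\alpha|=1$), so the "reproduces verbatim" claim points to a typo in the stated formula rather than a gap in your argument.
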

\proof The proof is anologous to the proof of~\cite[Theorem 4.14]{MehMS18} due to Lemma~\ref{lem: JpsdR} and Theorem~\ref{map:type1veccase}.
\eproof


\subsection{Perturbing only $J $ and $B$}

In this section, suppose that only $J$ and $B$ blocks of $L(z)$ are subject to perturbation.  Then in view of~\eqref{def:errblc},~\eqref{def:errstr} and~\eqref{def:errpsd},  the  corresponding backward  errors are denoted by $\eta^{\mathcal B}(J,B,\lm,u):= \eta^{\mathcal B}(J,0,0,B,\lm,u),~\eta^{\mathcal S}(J,B,\lm,u):= \eta^{\mathcal S}(J,0,0,B,\lm,u)$, and $\eta^{\mathcal S_d}(J,B,\lm,u):= \eta^{\mathcal S_d}(J,0,0,B,\lm,u)$. 
Note that  the block-structured backward error $\eta^{\mathcal B}(J,B,\lm,u)$ was obtained in~\cite[Theorem 4.17]{MehMS18}, but the symmetry-structured backward error $\eta^{\mathcal S}(J,B,\lm,u)$ were not known  in~\cite{MehMS18} due to  unavailability  of the doubly  structured  skew-Hermitian mappings. Also note that $\eta^{\mathcal S_d}(J,B,\lm,u)=\eta^{\mathcal S}(J,B,\lm,u)$, because we are not perturbing $R$ and there  is no semidefinite structure  on $J$ and $B$. 
To estimate $\eta^{\mathcal S}(J,B,\lm,u)$ from~\eqref{dsmequi1} and~\eqref{dsmequi2}, when $\D_R=0$ and $\D_E=0$, we obtain
\begin{eqnarray}
	\mat{cc} \underbrace{\D_J }_{=:\Delta_1} & \underbrace{\D_B}_{=:\Delta_2} \rix \underbrace{\mat{c} u_2 \\ u_3 \rix}_{=:x} &=& \underbrace{(J-R + \lm E) u_2 + B u_3}_{=:y} \label{dsmequiJB1} \\
	\mat{cc}  \D_J& \D_B \rix ^* \underbrace{u_1}_{=:z} &=& \underbrace{\mat{c}-(J+R + \lm E)u_1=:w_1  \\ B^*u_1 + S u_3=:w_2 \rix}_{=:w}, \label{dsmequiJB2}
\end{eqnarray}
This leads to the following lemma.
\begin{lemma}\label{lem: JB}
	Let $L(z) $ be a pencil as in~\eqref{eq:defpenciL}, and let $\lm \in i \R$ and $u \in \C^{2n +m} \setminus \{ 0\}$. Partition $u= [u_1^T ~ u_2^T ~ u_3^T]^T$ such that $u_1,u_2 \in \C^{n}$ and $u_3 \in \C^m$, and let $x,y,z$ and $w$ be defined as in~\eqref{dsmequiJB1} and~\eqref{dsmequiJB2}. Then the following statements are equivalent. 
	\begin{enumerate}
		\item There exists $\D_J\in \C^{n,n}$ and $\D_B \in \C^{n,m}$ such that $\D_J \in {\rm SHerm}(n)$ satisfying~\eqref{dsmequiJB1} and~\eqref{dsmequiJB2}.
		\item $u_3 = 0$ and $R u_1=0$.
	\end{enumerate}
	\end{lemma}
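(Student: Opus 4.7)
The plan is to apply Theorem~\ref{theorem_SHD} directly to the doubly structured skew-Hermitian mapping formulated by~\eqref{dsmequiJB1}--\eqref{dsmequiJB2}. That theorem says a DSSHM $\Delta=[\Delta_J~\Delta_B]$ with $\Delta_J\in{\rm SHerm}(n)$ exists if and only if the two scalar compatibility conditions $x^*w=y^*z$ and $z^*w_1\in i\R$ hold. The whole proof therefore reduces to showing that, under the port-Hamiltonian structure of $L(z)$ and the assumption $\lambda\in i\R$, these two abstract conditions collapse to exactly $u_3=0$ and $Ru_1=0$.

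For the first condition, I would substitute $x=[u_2^T~u_3^T]^T$, $z=u_1$, $y=(J-R+\lambda E)u_2+Bu_3$, $w_1=-(J+R+\lambda E)u_1$, $w_2=B^*u_1+Su_3$, and expand both sides of $x^*w=y^*z$. Using $J^*=-J$, $R^*=R$, $E^*=E$, and $\bar\lambda=-\lambda$, the contributions $-u_2^*(J+R+\lambda E)u_1$ and $u_3^*B^*u_1$ appear in both $x^*w$ and $y^*z$ and hence cancel, leaving $x^*w-y^*z=u_3^*Su_3$. Since $S\succ 0$, this vanishes if and only if $u_3=0$.

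For the second condition, I would compute $z^*w_1=-u_1^*Ju_1-u_1^*Ru_1-\lambda u_1^*Eu_1$. The quantities $u_1^*Ju_1$ and $\lambda u_1^*Eu_1$ lie in $i\R$ (since $J\in{\rm SHerm}(n)$, $E\in{\rm Herm}(n)$, and $\lambda\in i\R$), while $u_1^*Ru_1\in\R$ because $R$ is Hermitian. Hence $\real(z^*w_1)=-u_1^*Ru_1$, so $z^*w_1\in i\R$ exactly when $u_1^*Ru_1=0$. Since $R\succeq 0$ admits a Hermitian square root $R^{1/2}$, this last equality reads $\|R^{1/2}u_1\|^2=0$, which is equivalent to $R^{1/2}u_1=0$ and thus to $Ru_1=0$.

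Combining both reductions with Theorem~\ref{theorem_SHD} yields the equivalence $(1)\Leftrightarrow(2)$. No step looks to present a genuine obstacle; the only delicate bookkeeping is verifying that the Hermitian/skew-Hermitian cancellations in $x^*w-y^*z$ really do leave only the $u_3^*Su_3$ term, after which the definiteness of $S$ and semidefiniteness of $R$ finish the argument.
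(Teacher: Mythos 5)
Your proposal is correct and follows exactly the route the paper takes: the paper's proof simply states that the lemma is immediate from Theorem~\ref{theorem_SHD} because $J^*=-J$, $R\succeq 0$, and $E^*=E$, and your computation showing $x^*w-y^*z=u_3^*Su_3$ and $\real(z^*w_1)=-u_1^*Ru_1$ is precisely the detail that one-line proof leaves to the reader. The reductions via $S\succ 0$ and the square root of $R\succeq 0$ are both sound.
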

	\proof The proof is immediate from Theorem~\ref{theorem_SHD} since 
	$J^*=-J$, $R\succeq 0$, and $E^*=E$.
	\eproof
	
	\begin{theorem}\label{bacerrJB}
		Let $L(z)$ be a pencil as in~\eqref{eq:defpenciL}, let $\lm \in i\R \setminus \{0\}$ and $u \in \C^{2n+m} \setminus \{0\}$. Partition $u=\big[u_1^T~ u_2^T ~ u_3^T \big]^T$ such that $u_1,u_2 \in \C^n$, and $u_3 \in \C^m$. Set $\tilde y=  (J-R + \lm E) u_2$ and $w_1= -(J+R +\lm E)u_1$, $X=\big[u_2 ~ u_1\big],Y=\big[\tilde{y} ~ -w_1 \big]$. Then $\eta^{\mathcal S_d}(J,B,\lm,u)$ is finite if and only if $u_3=0$ and $R u_1 = 0$. If the later condition holds and if $YX^\pls X=Y,~Y^*X=-X^*Y$  and if $u_2 =\alpha u_1$ for some nonzero $\alpha \in \C$, then 
		\begin{equation}\label{eq:thmJB_1}  
			\eta^{\mathcal S}(J,B,\lm,u) =\sqrt{ {\|H_1\|}_F^2+{\|H_2\|}_F^2},
		\end{equation}	
		where 
		\begin{equation}\label{eq:thmJB_2}
			H_1=YX^\pls - (YX^\pls)^* - X X^\pls Y X^\pls \quad \text{and}\quad H_2= u_1 u_1^\pls B.
		\end{equation}
	\end{theorem}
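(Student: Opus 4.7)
The plan is to combine Remark~\ref{rem:appdsw} with Lemma~\ref{lem: JB} to recast $\eta^{\mathcal S}(J,B,\lm,u)$ as a pair of decoupled mapping problems---one for $\D_J$ (skew-Hermitian) and one for $\D_B$ (unstructured)---and then apply Theorems~\ref{theorem: skew herm map} and~\ref{map result: unstruct} to each factor.

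First, specializing Remark~\ref{rem:appdsw} to $\D_R=0$ and $\D_E=0$, the residual equation $(L(\lm)-\Delta L(\lm))u=0$ is equivalent to~\eqref{dsmequiJB1}--\eqref{dsmequiJB2}, and Lemma~\ref{lem: JB} immediately gives the finiteness dichotomy: $\eta^{\mathcal S}(J,B,\lm,u)$ is finite if and only if $u_3=0$ and $R u_1=0$. Under these conditions the constraints collapse to $\D_J u_2=\tilde y$ together with $\D_J^* u_1=w_1$ (equivalently $\D_J u_1=-w_1$, using $\D_J^*=-\D_J$), and $\D_B^* u_1=B^* u_1$ (since $u_3=0$ removes the $B u_3$ and $S u_3$ tails). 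Because $\D_J$ and $\D_B$ no longer share any constraint,
\begin{equation*}
(\eta^{\mathcal S}(J,B,\lm,u))^2 \;=\; \inf_{\D_J}\|\D_J\|_F^2 \;+\; \inf_{\D_B}\|\D_B\|_F^2
\end{equation*}
over the respective feasible sets.

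For the $\D_J$ piece, the two vector conditions bundle into the single matrix equation $\D_J X=Y$. Since $u_2=\alpha u_1$, the matrix $X=[u_2~u_1]=u_1[\alpha~1]$ has rank one, and the hypothesis $YX^\pls X=Y$ forces $\tilde y=-\alpha w_1$, whence $Y=-w_1[\alpha~1]$. A direct computation then yields $YX^\pls=-w_1 u_1^\pls$ and $XX^\pls=u_1 u_1^\pls$, so the stated matrix expression for $H_1$ collapses to the explicit vector form of the unique minimum-Frobenius-norm skew-Hermitian map of Theorem~\ref{theorem: skew herm map} applied to $(x,y)=(u_1,-w_1)$; the feasibility condition $u_1^*(-w_1)\in i\R$ required there is precisely the $(2,2)$-entry of the hypothesis $Y^*X=-X^*Y$. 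The matrix identities $H_1^*=-H_1$ and $H_1 X=Y$ can also be verified directly from $Y^*X=-X^*Y$ and $YX^\pls X=Y$ by a reduced-SVD manipulation, giving $\inf_{\D_J}\|\D_J\|_F^2=\|H_1\|_F^2$.

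For the $\D_B$ piece, the sole constraint $\D_B^* u_1=B^* u_1$ is a standard unstructured mapping for $\D_B^*$, so Theorem~\ref{map result: unstruct} (in its obvious rectangular extension) gives the unique minimizer $\D_B=u_1 u_1^\pls B=H_2$ and $\inf_{\D_B}\|\D_B\|_F^2=\|H_2\|_F^2$. Combining the two infima delivers $\eta^{\mathcal S}(J,B,\lm,u)=\sqrt{\|H_1\|_F^2+\|H_2\|_F^2}$. The main delicacy is that the matrix form $H_1=YX^\pls-(YX^\pls)^*-XX^\pls YX^\pls$ looks like it demands a multi-vector skew-Hermitian mapping theorem going beyond Theorem~\ref{theorem: skew herm map}; the rank-one collapse forced by $u_2=\alpha u_1$ is precisely what lets us sidestep this and reuse the known single-vector case.
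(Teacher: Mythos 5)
Your proof is correct and follows the paper's argument in outline --- finiteness via Remark~\ref{rem:appdsw} and Lemma~\ref{lem: JB}, decoupling of the $\D_J$ and $\D_B$ constraints into independent infima, and Theorem~\ref{map result: unstruct} for the $\D_B$ block --- but you handle the skew-Hermitian block by a different key lemma. The paper applies the multi-vector skew-Hermitian mapping theorem \cite[Theorem~2.2.3]{adhikari2008backward} directly to $\D_J X=Y$ under the hypotheses $YX^\pls X=Y$ and $Y^*X=-X^*Y$; that theorem is cited but never restated in the preliminaries. You instead observe that $u_2=\alpha u_1$ makes $X=u_1[\alpha~\,1]$ rank one, that $YX^\pls X=Y$ then forces $\tilde y=-\alpha w_1$, and hence that the two columns of the constraint collapse to the single equation $\D_J u_1=-w_1$, which is covered by the single-vector Theorem~\ref{theorem: skew herm map} already recalled in the paper; your identification of the required feasibility condition $u_1^*(-w_1)\in i\R$ with the $(2,2)$ entry of $Y^*X=-X^*Y$ is right. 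Your route is more self-contained and explains why the extra hypotheses are not vacuous, at the cost of being tied to the rank-one situation, whereas the paper's citation would also cover linearly independent $u_1,u_2$. One caveat: your claimed collapse of $H_1=YX^\pls-(YX^\pls)^*-XX^\pls YX^\pls$ matches the \emph{correct} minimal skew-Hermitian map $yx^\pls-(yx^\pls)^*-(x^\pls y)xx^\pls$ (which does send $u_1$ to $-w_1$ when $u_1^*w_1\in i\R$), not the formula as printed in Theorem~\ref{theorem: skew herm map}, whose last term carries a $+$ sign and does not map $x$ to $y$; that is a sign typo in the recalled preliminary rather than a gap in your argument, but it should be flagged when invoking that theorem.
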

 	\proof In view of Remark~\ref{rem:appdsw} and Lemma~\ref{lem: JB}, we obtain that $\eta^{\mathcal S}(J,B,\lm,u)$ is finite if and only if $u_3=0$ and $R u_1 = 0$. Thus by using $u_3=0$ and $R u_1 = 0$ in \eqref{dsmequiJB1} and \eqref{dsmequiJB2}, and using Lemma~\ref{lem: JB} in the definition of $\eta^{\mathcal S}(J,B, \lambda,u)$ from \eqref{def:errstr}, we have that 
 	\begin{eqnarray}\label{eq:thmJB_3}
 		&\eta^{\mathcal S}(J,B,\lm,u) = 
 		\inf \Big \{ {\|\D\|}_F^2  ~: ~ \D =\big[\D_J ~ \D_B\big],\,\D_J \in \C^{n,n},\, \D_B \in \C^{n,m},\,\D_J^*=-\D_J, \nonumber \\
 		&\D_J u_2 =\tilde{y},\, \D_J ^* u_1 =w_1,\, \D_B^* u_1 = B^* u_1 \Big \} \nonumber \\ 
 		& = \inf \Big \{ {\|\D\|}_F^2  ~: ~ \D =\big[\D_J ~ \D_B\big],\,\D_J \in \C^{n,n},\,\D_B \in \C^{n,m},\,\D_J^*=-\D_J, \nonumber \\
 		& \D_J\big[u_2 ~ u_1\big]= \big[\tilde{y} ~ -w_1 \big],\, \D_B^* u_1 = B^* u_1 \Big \} \nonumber \\
 		& = \inf \Big \{ {\|\D\|}_F^2  ~: ~ \D =\big[\D_J ~ \D_B\big],\, \D_J \in \C^{n,n},\,\D_B \in \C^{n,m},\,\D_J^*=-\D_J, \nonumber \\
 		& \D_J X= Y, \,\D_B^* u_1 = B^* u_1 \Big \}.
 	\end{eqnarray}
 	If $Y X^\pls X=Y,~Y^*X=-X^*Y$ and $u_2 =\alpha u_1$ for some nonzero $\alpha \in \C$, then from~\cite[Theorem 2.2.3]{adhikari2008backward}, there always exists a skew-Hermitian mapping $\Delta_J$ such that  $\D_J^*=-\D_J$ and $\D_J X=Y$.
The minimal Frobenius norm of such a $\Delta_J$ is attained by the unique matrix $H_1$ defined in~\eqref{eq:thmJB_2}. Similarly, from Theorem~\ref{map result: unstruct}, for any $u_1$ there always exists $\Delta_B \in \C^{n,m}$ such that  $\Delta_B^*u_1=B^*u_1$ and the minimal Frobenius norm of such a $\Delta_B$ is attained by $H_2:=u_1u_1^\dagger B$. Using the minimal Frobenius norm mappings $H_1$ and $H_2$, we obtain~\eqref{eq:thmJB_1}. This completes the proof.
 	
 	
\subsection{Perturbing only $R$ and $B$}
 	
Here, suppose that only $R$ and $B$ blocks of $L(z)$ are subject to perturbation. Then in view of~\eqref{def:errblc},~\eqref{def:errstr} and~\eqref{def:errpsd}, the corresponding backward errors are denoted by $\eta^{\mathcal B}(R,B,\lm,u):= \eta^{\mathcal B}(0,R,0,B,\lm,u)$, $\eta^{\mathcal S}(R,B,\lm,u):= \eta^{\mathcal S}(0,R,0,B,\lm,u)$, and $\eta^{\mathcal S_d}(R,B,\lm,u):= \eta^{\mathcal S_d}(0,R,0,B,\lm,u)$. 
Note that the backward error $\eta^{\mathcal B}(R,B,\lm,u)$ was obtained in~\cite[Remark 4.18]{MehMS18}. In this section, we compute the eigenpair backward errors $\eta^{\mathcal S}(R,B,\lm,u)$ and $\eta^{\mathcal S_d}(R,B,\lm,u)$. 
 	
For this, observe from~\eqref{dsmequi1} and~\eqref{dsmequi2} that when $\D_J=0$ and  $\D_E=0$,  we have 
\begin{eqnarray}
\mat{cc} \underbrace{-\D_R }_{=:\Delta_1} & \underbrace{\D_B}_{=:\Delta_2} \rix \underbrace{\mat{c} u_2 \\ u_3 \rix}_{=:x} &=& \underbrace{(J-R + \lm E) u_2 + B u_3}_{=:y} \label{dsmequiRB1} \\
 		\mat{cc}  -\D_R& \D_B \rix ^* \underbrace{u_1}_{=:z} &=& \underbrace{\mat{c}-(J+R + \lm E)u_1=:w_1  \\ B^*u_1 + S u_3=:w_2 \rix}_{=:w}, \label{dsmequiRB2}
\end{eqnarray}
In view of Theorem~\ref{theorem HD}, there exists $\D_R\in \C^{n,n}$ and $\D_B \in \C^{n,m}$ such that $\D_R \in {\rm Herm}(n)$ satisfying~\eqref{dsmequiRB1} and~\eqref{dsmequiRB2} if and only if  $u_3 = 0$ and $u_1^*(J+\lambda E)u_1 =0$.  We have the following result for $\eta^{\mathcal S}(R,B,\lm,u)$.

\begin{theorem}\label{bacerrRB}
Let $L(z)$ be a pencil as in~\eqref{eq:defpenciL}, let $\lm \in i\R \setminus \{0\}$ and $u \in \C^{2n+m} \setminus \{0\}$. Partition $u=\big[u_1^T~ u_2^T ~ u_3^T \big]^T$ such that $u_1,u_2 \in \C^n$, and $u_3 \in \C^m$. Set $\tilde y=  (J-R + \lm E) u_2$ and $w_1= -(J+R +\lm E)u_1$, $X=\big[u_2 ~ u_1\big],Y=\big[\tilde{y} ~ w_1 \big]$. Then $\eta^{\mathcal S_d}(R,B,\lm,u)$ is finite if and only if $u_3=0$ and $u_1^*(J+\lambda E)u_1 =0$. If the later condition holds and if $YX^\pls X=Y,~Y^*X=X^*Y$ and if $u_2 =\alpha u_1$ for some nonzero $\alpha \in \C$, then 
 		\begin{equation}\label{eq:thmRB_1}  
 			\eta^{\mathcal S}(R,B,\lm,u) = \sqrt{{\|H_1\|}_F^2+{\|H_2\|}_F^2},
 		\end{equation}	
 		where 
 		\begin{equation}\label{eq:thmRB_2}
 			H_1=YX^\pls + (YX^\pls)^* - X X^\pls Y X^\pls \quad \text{and}\quad  H_2= u_1 u_1^\pls B.
 		\end{equation}
 	\end{theorem}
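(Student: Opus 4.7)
The plan parallels the proof of Theorem~\ref{bacerrJB}. First I would apply Remark~\ref{rem:appdsw} with $\Delta_J=0$ and $\Delta_E=0$, which rewrites $(L(\lambda)-\Delta L(\lambda))u=0$ as the doubly structured Hermitian mapping problem~\eqref{dsmequiRB1}--\eqref{dsmequiRB2} with $\Delta_1=-\Delta_R$ Hermitian and $\Delta_2=\Delta_B$ unstructured. Invoking the existence criterion of Theorem~\ref{theorem HD} then gives that $\eta^{\mathcal S}(R,B,\lm,u)$ is finite if and only if $x^*w=y^*z$ and $z^*w_1\in\R$, where $x,y,z,w$ are as in~\eqref{dsmequiRB1}--\eqref{dsmequiRB2}. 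A direct expansion using $J^*=-J$, $R^*=R$, $E^*=E$, $\lambda\in i\R$, and the fact that $S\succ 0$ shows that $x^*w=y^*z$ reduces to $u_3=0$ on the $Su_3$ component, and that $z^*w_1=-u_1^*(J+R+\lambda E)u_1\in\R$ further forces $u_1^*(J+\lambda E)u_1=0$ (since $u_1^*Ru_1\in\R$ automatically, while $u_1^*(J+\lambda E)u_1\in i\R$).

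Once the finiteness conditions hold, I would substitute $u_3=0$ into~\eqref{dsmequiRB1}--\eqref{dsmequiRB2} and use $\Delta_R^*=\Delta_R$ to recast the problem as
\begin{equation*}
\Delta_R\,[u_2~u_1] = -[\tilde y~w_1],\qquad \Delta_B^* u_1 = B^*u_1.
\end{equation*}
Since $\Delta_R$ and $\Delta_B$ are disjoint variables,
\begin{equation*}
\bigl(\eta^{\mathcal S}(R,B,\lm,u)\bigr)^2 \;=\; \inf\bigl\{\|\Delta_R\|_F^2:\Delta_R^*=\Delta_R,\ \Delta_R X=-Y\bigr\} \;+\; \inf\bigl\{\|\Delta_B\|_F^2:\Delta_B^*u_1=B^*u_1\bigr\}.
\end{equation*}
The second infimum is settled immediately by Theorem~\ref{map result: unstruct}, giving the unique minimiser $H_2=u_1 u_1^{\pls}B$.

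For the first infimum I would invoke the matrix extension of Theorem~\ref{theorem: herm map} (the Hermitian mapping theorem for several right-hand sides). The consistency hypotheses $YX^{\pls}X=Y$ and $Y^*X=X^*Y$ play the role of the scalar condition $x^*y\in\R$ and guarantee existence of a Hermitian $\Delta_R$ with $\Delta_R X=-Y$; together with $u_2=\alpha u_1$, which forces $\mathrm{rank}(X)=1$ and hence $XX^{\pls}=u_1 u_1^{\pls}$, the minimiser takes the explicit form
\begin{equation*}
-YX^{\pls}-(YX^{\pls})^* + XX^{\pls}YX^{\pls},
\end{equation*}
whose Frobenius norm coincides with $\|H_1\|_F$ as in~\eqref{eq:thmRB_2}. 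Summing the two contributions yields~\eqref{eq:thmRB_1}.

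The only delicate point is the matrix-valued Hermitian mapping lemma used in the last paragraph: Theorem~\ref{theorem: herm map} is stated for vectors, so one needs the straightforward multi-column generalisation (analogous to the one tacitly invoked in the proof of Theorem~\ref{bacerrJB} via \cite[Theorem 2.2.3]{adhikari2008backward}). Granted this, the rank-one collapse induced by $u_2=\alpha u_1$ makes the minimiser and its Frobenius norm explicit, and the remainder of the argument is a routine adaptation of the proof of Theorem~\ref{bacerrJB}.
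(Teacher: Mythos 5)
Your proposal follows essentially the same route as the paper: finiteness via Remark~\ref{rem:appdsw} and the existence criterion of Theorem~\ref{theorem HD} (reducing $x^*w=y^*z$ to $u_3=0$ through $S\succ 0$ and $z^*w_1\in\R$ to $u_1^*(J+\lambda E)u_1=0$), then splitting the infimum over the independent variables $\Delta_R$ and $\Delta_B$, solving the two-column Hermitian mapping $\Delta_R X=\pm Y$ by the matrix version of the Hermitian mapping theorem (the paper cites \cite[Theorem 2.2.3]{adhikari2008backward}, exactly the generalisation you flag), and solving $\Delta_B^*u_1=B^*u_1$ by Theorem~\ref{map result: unstruct}. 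The only cosmetic difference is your sign convention $\Delta_R X=-Y$ versus the paper's $\Delta_R X=Y$, which does not affect the Frobenius norm, so the argument matches the paper's proof.
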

\proof In view of Remark~\ref{rem:appdsw} and~\eqref{dsmequiRB1}-\eqref{dsmequiRB2},  we obtain that $\eta^{\mathcal S}(R,B,\lm,u)$ is finite if and only if $u_3=0$ and $u_1^*(J+\lambda E)u_1 =0$. Thus by using $u_3=0$~\eqref{dsmequiRB1} and \eqref{dsmequiRB2}, we have from~\eqref{def:errstr} that
 	\begin{eqnarray}\label{eq:thmRB_3}
 		&\eta^{\mathcal S}(R,B,\lm,u) = 
 		\inf \Big \{ {\|\D\|}_F^2  ~: ~ \D =\big[\D_R ~ \D_B\big],\,\D_R \in \C^{n,n},\, \D_B \in \C^{n,m},\,\D_R^*=\D_R, \nonumber \\
 		&\D_R u_2 =\tilde{y},\, \D_R ^* u_1 =w_1, \, \D_B^* u_1 = B^* u_1 \Big \} \nonumber \\ 
 		& = \inf \Big \{ {\|\D\|}_F^2  ~: ~ \D =\big[\D_R ~ \D_B\big],\,\D_R \in \C^{n,n},\,\D_B \in \C^{n,m},\,\D_R^*=\D_R, \nonumber \\
 		& \D_R\big[u_2 ~ u_1\big]= \big[\tilde{y} ~ w_1 \big], \,\D_B^* u_1 = B^* u_1 \Big \} \nonumber \\
 		& = \inf \Big \{ {\|\D\|}_F^2  ~: ~ \D =\big[\D_R ~ \D_B\big],\,\D_R \in \C^{n,n},\,\D_B \in \C^{n,m},\,\D_R^*=\D_R, \nonumber \\
 		& \D_R X= Y,\, \D_B^* u_1 = B^* u_1 \Big \}
 	\end{eqnarray}
 	If $Y X^\pls X=Y,~Y^*X=X^*Y$ and $u_2 =\alpha u_1$ for some nonzero $\alpha \in \C$, then from~\cite[Theorem 2.2.3]{adhikari2008backward}, there always exists a Hermitian mapping $\Delta_R$ such that  $\D_R^*=\D_R$ and $\D_R X=Y$. From~\cite[Theorem 2.2.3]{adhikari2008backward}, the minimal Frobenius norm of such a $\Delta_R$ is attained by the unique matrix $H_1$ defined in~\eqref{eq:thmRB_2}. Similarly, from Theorem~\ref{map result: unstruct}, for any $u_1$ there always exists $\Delta_B \in \C^{n,m}$ such that 
 	$\Delta_B^*u_1=B^*u_1$ and the minimal Frobenius norm of such a $\Delta_B$ is attained by $H_2:=u_1u_1^\dagger B$.
Using minimal Frobenius norm mappings $H_1$ and $H_2$, we obtain~\eqref{eq:thmRB_1}. This completes the proof.	
 	\eproof
 	
Next, we estimate the semidefinite structured backward error $\eta^{\mathcal S_d}(R,B,\lm,u) $. For this, we need the following lemma.
\begin{lemma}\label{lem: psdRB}
 		Let $L(z) $ be a pencil as in~\eqref{eq:defpenciL}, and let $\lm \in i \R$ and $u \in \C^{2n +m} \setminus \{ 0\}$. Partition $u= [u_1^T ~ u_2^T ~ u_3^T]^T$ such that $u_1,u_2 \in \C^{n} \setminus \{0\}$ and $u_3 \in \C^m$, and let $x,y,z$ and $w$ be defined as in~\eqref{dsmequiRB1} and~\eqref{dsmequiRB2}. Then the following statements are equivalent. 
 		\begin{enumerate}
 			\item There exists $\D_R\in \C^{n,n}$ and $\D_B \in \C^{n,m}$ such that $\D_R \succeq 0$ satisfying~\eqref{dsmequiRB1} and~\eqref{dsmequiRB2}.
 			\item There exists $\D =[\D_1 ~ \D_2]$, $\D_1 \in \C^{n,n}$, $\D_2 \in \C^{n,m}$  such that $\D_1 \preceq 0$, $\D x =y$, and  $\D^* z=w$.
 			\item $u_3 = 0$, $u_1^*(J+\lambda E)u_1=0$, and $Ru_1 \neq 0$.
 		\end{enumerate}
 		Moreover, we have 
 		\begin{eqnarray}
 			&\inf \Big \{ {\left \|\big[\D_R ~ \D_B\big] \right \|}_F^2  : ~\D_R \in {\rm Herm}(n)~\D_R \succeq 0 ~\D_B \in \C^{n,m}~\text{satisfying}~\eqref{dsmequiRB1} ~\text{and}~\eqref{dsmequiRB2} \Big \} \nonumber \\
 			&=\inf \bigg \{ {\|\D\|}_F^2  : ~ \D =\big[\D_1 ~ \D_2\big],~\D_1 \in \C^{n,n}, \D_2 \in \C^{n,m},~\D_1^*=\D_1 \preceq 0,~\D x =y, \D ^* z =w \bigg \} \nonumber.
 		\end{eqnarray}
 	\end{lemma}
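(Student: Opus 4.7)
The plan is to exploit the obvious bijection between the two parameter spaces by setting $\D_1 := -\D_R$ and $\D_2 := \D_B$. Under this substitution the constraint $\D_R \succeq 0$ is identical to $\D_1 \preceq 0$, the Frobenius norm is preserved since $\|[-\D_R~\D_B]\|_F^2 = \|\D_R\|_F^2 + \|\D_B\|_F^2 = \|[\D_1~\D_2]\|_F^2$, and the mapping equations~\eqref{dsmequiRB1}--\eqref{dsmequiRB2} translate verbatim into $\D x = y$ and $\D^* z = w$ with $x,y,z,w$ as defined. This immediately yields the equivalence (1)$\Leftrightarrow$(2) as well as the equality of the two infima in the ``moreover'' part, without any further work.

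For the equivalence with (3), I would invoke the doubly structured negative semidefinite mapping result, which is Theorem~\ref{theorem PSDD} combined with Remark~\ref{rem:dsnsdm}. It says that a DSM $[\D_1~\D_2]$ with $\D_1 \preceq 0$ satisfying $\D x = y$ and $\D^*z = w$ exists if and only if $x^*w = y^*z$ and $z^*w_1 < 0$. It then remains to substitute the concrete vectors from~\eqref{dsmequiRB1}--\eqref{dsmequiRB2} and simplify using $\lambda \in i\R$, $J^* = -J$, $R^*=R$, $E^*=E$ and $S^*=S \succ 0$. A short computation gives $y^*z - x^*w = -u_3^* S u_3$, so the condition $x^*w=y^*z$ forces $u_3 = 0$ because $S\succ 0$. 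With $u_3 = 0$, one has $z^*w_1 = -u_1^*(J+R+\lambda E)u_1$; since $u_1^*(J+\lambda E)u_1$ is purely imaginary (as $J$ is skew-Hermitian, $E$ is Hermitian and $\lambda \in i\R$) while $u_1^*R u_1$ is a nonnegative real number, the strict inequality $z^*w_1 < 0$ is equivalent to the two conditions $u_1^*(J+\lambda E)u_1 = 0$ and $u_1^*R u_1 > 0$; and because $R \succeq 0$, the latter is equivalent to $R u_1 \neq 0$. These are exactly the conditions in (3).

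The only point that requires a bit of care is the strict inequality in the existence condition coming from Theorem~\ref{theorem PSDD}: one must correctly extract $z^*w_1 < 0$ (rather than just $\leq 0$) in order to justify excluding the degenerate case $Ru_1 = 0$ in statement (3). Once this dichotomy between the purely imaginary and real parts of $u_1^*(J+R+\lambda E)u_1$ is recognized, all three statements are tied together and the norm equality in the ``moreover'' assertion is a free consequence of the isometric identification $\D_R \leftrightarrow -\D_1$, $\D_B \leftrightarrow \D_2$ already used in the first step.
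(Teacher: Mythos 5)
Your proof is correct and takes essentially the same route as the paper, whose entire argument is the one-line citation of the doubly structured semidefinite mapping result (Theorem~\ref{theorem PSDD} together with Remark~\ref{rem:dsnsdm}) applied through the sign change $\D_1=-\D_R$, $\D_2=\D_B$. You have merely made explicit the computations the paper leaves to the reader, namely $y^*z-x^*w=-u_3^*Su_3$ and the splitting of $z^*w_1=-u_1^*(J+R+\lm E)u_1$ into its purely imaginary part $-u_1^*(J+\lm E)u_1$ and its real part $-u_1^*Ru_1$, which correctly yields the three conditions in statement (3).
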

 	\proof The proof is immediate from the doubly  structured semidefinite mapping from Theorem~\ref{theorem PSDD}.
 	\eproof
 	
 	\begin{theorem}\label{bacerrpsdRB}
 		Let $L(z)$ be a pencil as in~\eqref{eq:defpenciL}, let $\lm \in i\R \setminus \{0\}$ and $u \in \C^{2n+m} \setminus \{0\}$. Partition $u=\big[u_1^T~ u_2^T ~ u_3^T \big]^T$ such that $u_1,u_2 \in \C^n$, and $u_3 \in \C^m$. Set $\tilde y=  (J-R + \lm E) u_2$ and $w_1= -(J+R +\lm E)u_1$, $X=\big[u_2 ~ u_1\big],Y=\big[\tilde{y} ~ w_1 \big]$. Then $\eta^{\mathcal S_d}(R,B,\lm,u)$ is finite if and only if $u_3 = 0$, $u_1^*(J+\lambda E)u_1=0$, and $Ru_1 \neq 0$. If the later condition holds and if $YX^\pls X=Y,~X^*Y \prec 0$ and if  $u_2 =\alpha u_1$ for some nonzero $\alpha \in \C$, then 
 		\begin{equation}\label{eq:thmpsdRB_1}  
 			\eta^{\mathcal S_d}(R,B,\lm,u) =\sqrt{ {\|H_1\|}_F^2+{\|H_2\|}_F^2},
 		\end{equation}	
 		where 
 		\begin{equation}\label{eq:thmpsdRB_2}
H_1=Y (Y^* X)^{-1} Y^* \quad \text{and}\quad H_2= u_1 u_1^\pls B.
 		\end{equation}
 	\end{theorem}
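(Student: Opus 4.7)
The plan is to mirror the proof of Theorem~\ref{bacerrJpsdREB}, first reducing the backward error to a doubly structured mapping problem via Remark~\ref{rem:appdsw} and Lemma~\ref{lem: psdRB}, and then decoupling the minimization into an NSD subproblem on $\Delta_R$ and an unstructured subproblem on $\Delta_B$.

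In the first step, I would specialize~\eqref{dsmequi1}-\eqref{dsmequi2} to $\Delta_J = 0$ and $\Delta_E = 0$, obtaining system~\eqref{dsmequiRB1}-\eqref{dsmequiRB2} with $\Delta_1 = -\Delta_R \preceq 0$ and $\Delta_2 = \Delta_B$. Lemma~\ref{lem: psdRB} then characterizes finiteness of $\eta^{\mathcal S_d}(R,B,\lambda,u)$ as the existence of $\Delta = [\Delta_1~\Delta_2]$ with $\Delta_1 \preceq 0$, $\Delta x = y$, and $\Delta^*z = w$. Computing $x^*w - y^*z = u_3^* S u_3$ and using $S \succ 0$ forces $u_3 = 0$; once $u_3 = 0$, the compatibility $z^*w_1 \in \mathbb{R}$ for a Hermitian map gives $u_1^*(J + \lambda E)u_1 = 0$; and since the NSD version of Theorem~\ref{theorem PSDD} (see Remark~\ref{rem:dsnsdm}) requires $z^*w_1 = -u_1^*Ru_1 < 0$ strictly, we also obtain $Ru_1 \neq 0$. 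Conversely, under these three conditions $\hat H := [H_1~H_2]$ with $H_1, H_2$ as in~\eqref{eq:thmpsdRB_2} is a feasible mapping.

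Next, setting $u_3 = 0$ in~\eqref{dsmequiRB1}-\eqref{dsmequiRB2} makes the constraint $\Delta_2^* u_1 = B^* u_1$ independent of $\Delta_1$, and using $\Delta_1 = \Delta_1^*$ (forced by $\Delta_1 \preceq 0$) the remaining two constraints on $\Delta_1$ combine into the single matrix equation $\Delta_1 X = Y$ with $X = [u_2~u_1]$ and $Y = [\tilde y~w_1]$. The minimization therefore splits:
\[
\eta^{\mathcal S_d}(R,B,\lambda,u)^2 = \inf_{\substack{\Delta_1 \preceq 0 \\ \Delta_1 X = Y}} \|\Delta_1\|_F^2 \;+\; \inf_{\Delta_2^* u_1 = B^* u_1} \|\Delta_2\|_F^2.
\]
Theorem~\ref{map result: unstruct} immediately gives the unique minimizer $H_2 = u_1 u_1^{\pls} B$ for the second infimum.

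The remaining task is to evaluate $\inf \{\|\Delta_1\|_F^2 : \Delta_1 \preceq 0,\ \Delta_1 X = Y\}$ under the hypotheses $YX^\pls X = Y$ and $X^*Y \prec 0$. I would invoke the matrix NSD analog of Theorem~\ref{lem:psdmap}, obtained by sign-flipping the data in the PSD matrix mapping problem (cf. Remark~\ref{rem:dsnsdm}): applying the PSD formula to $-\Delta_1 \succeq 0$ with $(-\Delta_1)X = -Y$, using $(-X^*Y)^{\pls} = -(X^*Y)^{\pls} = -(X^*Y)^{-1}$, and simplifying the signs produces the unique minimizer $H_1 = Y(Y^*X)^{-1}Y^*$; the assumption $X^*Y \prec 0$ ensures $H_1 \preceq 0$ and invertibility of $Y^*X = X^*Y$, and $YX^\pls X = Y$ ensures $H_1 X = Y$. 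Combining the two components gives the stated equality $\eta^{\mathcal S_d}(R,B,\lambda,u)^2 = \|H_1\|_F^2 + \|H_2\|_F^2$. I expect the main obstacle to be cleanly justifying the matrix PSD/NSD mapping formula, since Theorem~\ref{lem:psdmap} is only stated in the vector case; this step can be carried out along the lines of the SVD-based argument in the proof of Theorem~\ref{map result: NSPSD 4vec}.
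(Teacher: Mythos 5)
Your proposal is correct and follows essentially the same route as the paper: reduce via Remark~\ref{rem:appdsw} and Lemma~\ref{lem: psdRB} to the split problem $\Delta_1 X=Y$ with $\Delta_1\preceq 0$ and $\Delta_2^*u_1=B^*u_1$, then solve each piece by the semidefinite and unstructured mapping results. The only difference is that where you propose to derive the matrix-case negative semidefinite mapping formula $Y(Y^*X)^{-1}Y^*$ by sign-flipping the vector-case Theorem~\ref{lem:psdmap} and an SVD argument, the paper simply cites the matrix-case result directly from \cite[Theorem 2.2]{MehMS17}, so the obstacle you flag is handled by citation rather than proof.
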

 	\proof In view of Remark~\ref{rem:appdsw} and Lemma~\ref{lem: psdRB}, we obtain that $\eta^{\mathcal S_d}(R,B,\lm,u)$ is finite if and only if $u_3 = 0$, $u_1^*(J+\lambda E)u_1=0$, and $Ru_1 \neq 0$. Thus by using $u_3=0$ in \eqref{eq:thmRB_1}  and \eqref{eq:thmRB_2}, and using Lemma~\ref{lem: psdRB} in~\eqref{def:errstr}, we have that 
 	\begin{eqnarray}\label{eq:thmpsdRB_3}
 		&\eta^{\mathcal S_d}(R,B,\lm,u) = 
 		\inf \bigg \{ {\|\D\|}_F^2  ~: ~ \D =\big[\D_1 ~ \D_2\big],~\D_1 \in \C^{n,n}, \D_2 \in \C^{n,m},~\D_1^*=\D_1 \preceq 0, \nonumber \\
 		&\D_1 u_2 =\tilde{y}, \D_1 ^* u_1 =w_1, \D_2^* u_1 = B^* u_1 \bigg \} \nonumber \\ 
 		& = \inf \bigg \{ {\|\D\|}_F^2  ~: ~ \D =\big[\D_1 ~ \D_2\big],~\D_1 \in \C^{n,n},\D_2 \in \C^{n,m},~\D_1^*=\D_1 \preceq 0, \nonumber \\
 		& \D_1\big[u_2 ~ u_1\big]= \big[\tilde{y} ~ w_1 \big], \D_2^* u_1 = B^* u_1 \bigg \} \nonumber \\
 		& = \inf \bigg \{ {\|\D\|}_F^2  ~: ~ \D =\big[\D_1 ~ \D_2\big],~\D_1 \in \C^{n,n},~\D_2 \in \C^{n,m},~\D_1^*=\D_1 \preceq 0, \nonumber \\
 		& \D_1 X= Y, \D_2^* u_1 = B^* u_1 \bigg \}.
 	\end{eqnarray}
 	If $Y X^\pls X=Y,~X^*Y \prec 0$, and $u_2 =\alpha u_1$ for some nonzero $\alpha \in \C$, then from~\cite[Theorem 2.2]{MehMS17}, there always exists a negative definite mapping $\Delta_1$ such that  $\D_1^*=\D_1 \preceq 0$ and $\D_1 X=Y$. From~\cite[Theorem 2.2]{MehMS17} the minimal Frobenius norm of such a $\Delta_1$ is attained by the unique matrix $H_1$ defined in~\eqref{eq:thmpsdRB_2}. Similarly, from Theorem~\ref{map result: unstruct}, for any $u_1$ there always exists $\Delta_2 \in \C^{n,m}$ such that 
 	$\Delta_2^*u_1=B^*u_1$ and the minimal Frobenius norm of such a $\Delta_2$ is attained by $H_2:=u_1u_1^\dagger B$.
Thus using the minimal Frobenius norm mappings $H_1$ and $H_2$, we obtain~\eqref{eq:thmpsdRB_1}. 
 	\eproof
 	
%
 	\subsection{Perturbing only $E$ and $B$}

In this section, suppose that only $E$ and $B$ blocks of $L(z)$ are subject to perturbation. Then in view of~\eqref{def:errblc},~\eqref{def:errstr} and~\eqref{def:errpsd}, the corresponding  backward  errors are denoted by $\eta^{\mathcal B}(E,B,\lm,u):= \eta^{\mathcal B}(0,0,E,B,\lm,u)$, $\eta^{\mathcal S}(E,B,\lm,u):= \eta^{\mathcal S}(0,0,E,B,\lm,u)$, and $\eta^{\mathcal S_d}(E,B,\lm,u):= \eta^{\mathcal S_d}(0,0,E,B,\lm,u)$.
Again note that $\eta^{\mathcal B}(E,B,\lm,u)$ was obtained in~\cite[Theorem 4.19]{MehMS18}, and we have $\eta^{\mathcal S_d}(E,B,\lm,u)=\eta^{\mathcal S}(E,B,\lm,u)$ because we are not perturbing $R$ and there is no semidefinite structure on $E$ or $B$. 

 	In view of~\eqref{dsmequi1} and~\eqref{dsmequi2}, when $\D_J=0$ and $\D_R=0$, we have 
 	\begin{eqnarray}
 		\mat{cc} \underbrace{\lm \D_E }_{=:\Delta_1} & \underbrace{\D_B}_{=:\Delta_2} \rix \underbrace{\mat{c} u_2 \\ u_3 \rix}_{=:x} &=& \underbrace{(J-R + \lm E) u_2 + B u_3}_{=:y} \label{dsmequiEB1} \\
 		\mat{cc}  \lm \D_E& \D_B \rix ^* \underbrace{u_1}_{=:z} &=& \underbrace{\mat{c}-(J+R + \lm E)u_1=:w_1  \\ B^*u_1 + S u_3=:w_2 \rix}_{=:w}, \label{dsmequiEB2}
 	\end{eqnarray}
As $\lambda \in i\R$ and $\Delta_E^*=\Delta_E$, we have that $\Delta_1=\lambda \Delta_E$ is skew-Hermitian. Then a direct application of the doubly structured skew-Hermitian mapping from Theorem~\ref{theorem_SHD} yields the following emma.  	
\begin{lemma}\label{lem: EB}
 		Let $L(z) $ be a pencil as in~\eqref{eq:defpenciL}, and let $\lm \in i \R$ and $u \in \C^{2n +m} \setminus \{ 0\}$. Partition $u= [u_1^T ~ u_2^T ~ u_3^T]^T$ such that $u_1,u_2 \in \C^{n}$ and $u_3 \in \C^m$, and let $x,y,z$ and $w$ be defined as in~\eqref{dsmequiEB1} and~\eqref{dsmequiEB2}. Then the following statements are equivalent. 
 		\begin{enumerate}
 			\item There exists $\D_E\in \C^{n,n}$ and $\D_B \in \C^{n,m}$ such that $\D_E \in {\rm Herm}(n)$ satisfying~\eqref{dsmequiEB1} and~\eqref{dsmequiEB2}.
 			\item There exists $\D =[\D_1 ~ \D_2]$, $\D_1 \in \C^{n,n}$, $\D_2 \in \C^{n,m}$  such that $\D_1^*=-\D_1$, $\D x =y$, and  $\D^* z=w$.
 			\item $u_3 = 0$ and $R u_1=0$.
 		\end{enumerate}
 		Moreover, we have 
 		\begin{eqnarray}
 			&\inf \Big \{ {\left \|\big[\D_E ~ \D_B\big] \right \|}_F^2  : ~\D_E \in {\rm Herm}(n), ~\D_B \in \C^{n,m}~\text{satisfying}~\eqref{dsmequiEB1} ~\text{and}~\eqref{dsmequiEB2} \Big \} \nonumber \\
 			&=\inf \bigg \{ \frac{1}{|\lm|^2}{\|\D_1\|}_F^2 + {\|\D_2\|}_F^2 : ~ \D =\big[\D_1 ~ \D_2\big],~\D_1 \in \C^{n,n}, \D_2 \in \C^{n,m},~\D_1^*=-\D_1,~\D x =y, \D ^* z =w \bigg \} \nonumber.
 		\end{eqnarray}
 	\end{lemma}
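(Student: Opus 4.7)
The plan is to reduce the lemma to the doubly structured skew-Hermitian mapping problem solved in Theorem~\ref{theorem_SHD} via the change of variable $\Delta_1 := \lambda\,\Delta_E$ and $\Delta_2 := \Delta_B$. Since $\lambda \in i\R \setminus \{0\}$ and $\Delta_E^* = \Delta_E$, we have $\Delta_1^* = \bar\lambda\,\Delta_E = -\lambda\,\Delta_E = -\Delta_1$, and conversely any skew-Hermitian $\Delta_1$ yields a Hermitian $\Delta_E = \lambda^{-1}\Delta_1$. Under this bijective correspondence the constraints~\eqref{dsmequiEB1}--\eqref{dsmequiEB2} read precisely $[\Delta_1~\,\Delta_2]\,x = y$ and $[\Delta_1~\,\Delta_2]^* z = w$, i.e.\ the DSM conditions of Problem~\ref{def:dsmprob} with $\mathbb S = {\rm SHerm}(n)$. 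This establishes the equivalence (1) $\Leftrightarrow$ (2).

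For (2) $\Leftrightarrow$ (3), I would apply Theorem~\ref{theorem_SHD}, according to which such a $\Delta$ exists if and only if $x^* w = y^* z$ and $z^* w_1 \in i\R$. Using $J^* = -J$, $R^* = R \succeq 0$, $E^* = E$, $S^* = S \succ 0$ and $\bar\lambda = -\lambda$, direct substitution of the data from~\eqref{dsmequiEB1}--\eqref{dsmequiEB2} yields
\begin{equation*}
x^* w - y^* z \;=\; u_3^* S\, u_3, \qquad \real(z^* w_1) \;=\; -\,u_1^* R\, u_1.
\end{equation*}
Since $S$ is positive definite and $R$ is positive semidefinite, the first quantity vanishes iff $u_3 = 0$, and the second belongs to $i\R$ (i.e.\ has zero real part) iff $R u_1 = 0$, proving the equivalence.

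Finally, for the Frobenius norm identity, the scaling $\Delta_1 = \lambda\,\Delta_E$ gives ${\|\Delta_E\|}_F = |\lambda|^{-1}{\|\Delta_1\|}_F$ while $\Delta_2 = \Delta_B$ is untouched; hence
\[
{\big\|[\Delta_E~\,\Delta_B]\big\|}_F^{2} \;=\; \frac{1}{|\lambda|^2}{\|\Delta_1\|}_F^2 \;+\; {\|\Delta_2\|}_F^2,
\]
and taking the infimum over the two (bijective) admissible sets on either side gives the claimed equality. The only step that is not pure substitution is verifying the two scalar identities displayed above; this is routine bookkeeping but requires care with the signs arising from $\bar\lambda = -\lambda$ and from the mixed Hermitian/skew-Hermitian structure of the blocks of $L(z)$, and is essentially the only computational obstacle in the proof.
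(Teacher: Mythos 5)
Your proposal is correct and follows exactly the route the paper intends: the paper states the lemma as ``a direct application'' of the doubly structured skew-Hermitian mapping result (Theorem~\ref{theorem_SHD}) after observing that $\Delta_1=\lambda\Delta_E$ is skew-Hermitian, and your argument simply fills in the details of that reduction. Your two scalar identities $x^*w-y^*z=u_3^*Su_3$ and $\real(z^*w_1)=-u_1^*Ru_1$ check out, and the norm rescaling is the standard one.
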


 The following result provides bounds for the backward error $\eta^{\mathcal S}(E,B,\lm,u)$. 
 	\begin{theorem}\label{bacerrEB}
 		Let $L(z)$ be a pencil as in~\eqref{eq:defpenciL}, let $\lm \in i\R \setminus \{0\}$ and $u \in \C^{2n+m} \setminus \{0\}$. Partition $u=\big[u_1^T~ u_2^T ~ u_3^T \big]^T$ such that $u_1,u_2 \in \C^n$, and $u_3 \in \C^m$. Set $\tilde y=  (J-R + \lm E) u_2$ and $w_1= -(J+R +\lm E)u_1$, $X=\big[u_2 ~ u_1\big],Y=\big[\tilde{y} ~ -w_1 \big]$. Then $\eta^{\mathcal S}(E,B,\lm,u)$ is finite if and only if $u_3=0$ and $R u_1 = 0$. If the later conditions holds and if $YX^\pls X=Y,~Y^*X=-X^*Y$ and if $u_2 =\alpha u_1$ for some nonzero $\alpha \in \C$, then 
 		\begin{equation}\label{eq:thmEB_1}  
 			\eta^{\mathcal S}(E,B,\lm,u) = \sqrt{\frac{1}{|\lm|^2}{\|H_1\|}_F^2+{\|H_2\|}_F^2},
 		\end{equation}	
 		where 
 		\begin{equation}\label{eq:thmEB_2}
 			H_1=YX^\pls - (YX^\pls)^* - X X^\pls Y X^\pls \quad H_2= u_1 u_1^\pls B.
 		\end{equation}
 	\end{theorem}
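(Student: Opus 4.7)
The proof follows the template already laid out for Theorem~\ref{bacerrJB}. First I would apply Remark~\ref{rem:appdsw} together with Lemma~\ref{lem: EB} to conclude that $\eta^{\mathcal S}(E,B,\lambda,u)$ is finite if and only if $u_3=0$ and $Ru_1=0$; both conditions read off directly from the characterization in Lemma~\ref{lem: EB} of when the system~\eqref{dsmequiEB1}--\eqref{dsmequiEB2} is solvable with a Hermitian $\Delta_E$.

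Under these two necessary conditions, Lemma~\ref{lem: EB} converts the backward error to
\begin{equation*}
\eta^{\mathcal S}(E,B,\lambda,u)^2 = \inf\Big\{\tfrac{1}{|\lambda|^2}\|\Delta_1\|_F^2 + \|\Delta_2\|_F^2\Big\},
\end{equation*}
where the infimum runs over $\Delta_1\in\C^{n,n}$ and $\Delta_2\in\C^{n,m}$ satisfying $\Delta_1^*=-\Delta_1$, $\Delta_1 u_2 = \tilde y$, $\Delta_1^* u_1 = w_1$, and $\Delta_2^* u_1 = B^* u_1$. The weight $1/|\lambda|^2$ reflects the change of variable $\Delta_1 := \lambda\Delta_E$, for which $\Delta_E^*=\Delta_E$ together with $\lambda\in i\R$ forces $\Delta_1$ to be skew-Hermitian. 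Using skew-Hermiticity, the two column constraints $\Delta_1 u_2 = \tilde y$ and $\Delta_1 u_1 = -w_1$ are compactly rewritten as $\Delta_1 X = Y$ with $X=[u_2~u_1]$ and $Y=[\tilde y~-w_1]$.

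At this point the two minimizations decouple and are handled independently. For $\Delta_2$, Theorem~\ref{map result: unstruct} applied to $\Delta_2^* u_1 = B^* u_1$ produces the unique minimal-Frobenius-norm solution $H_2 = u_1 u_1^\pls B$. For $\Delta_1$, the assumptions $YX^\pls X=Y$ and $Y^*X=-X^*Y$ are exactly the matrix-version compatibility conditions in the skew-Hermitian mapping result~\cite[Theorem 2.2.3]{adhikari2008backward}, which then yields a skew-Hermitian $\Delta_1$ with $\Delta_1 X = Y$ whose minimal Frobenius norm is attained uniquely by the matrix $H_1$ given in~\eqref{eq:thmEB_2}. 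Summing the optimized contributions with the correct weights gives~\eqref{eq:thmEB_1}.

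The one delicate step is to make sure the minimal-norm selector $H_1$ is actually attainable with no residual freedom left; this is precisely where the hypothesis $u_2=\alpha u_1$ enters, collapsing the two-column mapping problem to an essentially vector-type one and eliminating the free skew-Hermitian tail term $\mathcal P_X S \mathcal P_X$ that would otherwise appear (in exact analogy with the role the same hypothesis plays in Theorem~\ref{bacerrJB} for the Hermitian mapping). Beyond that, the argument is a direct transcription of the proof of Theorem~\ref{bacerrJB}, with a skew-Hermitian mapping replacing the Hermitian one and with the overall $1/|\lambda|^2$ weight coming from the reparametrization $\Delta_1=\lambda\Delta_E$.
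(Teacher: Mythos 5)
Your proposal is correct and follows exactly the route the paper intends: the paper's own proof is a one-line deferral to Lemma~\ref{lem: EB} and the proof of Theorem~\ref{bacerrJB}, and your argument is precisely that proof transcribed to the skew-Hermitian setting with the $1/|\lambda|^2$ weight from the reparametrization $\Delta_1=\lambda\Delta_E$. No substantive differences.
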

 	\proof In view of Lemma~\ref{lem: EB}, the proof is similar to the proof of Theorem~\ref{bacerrJB}.
 	\eproof

 	\subsection{Perturbing only $R$ and $E$}

Here suppose that only $R$ and $E$ blocks of $L(z)$ are subject to perturbation. Then in view of~\eqref{def:errblc},~\eqref{def:errstr} and~\eqref{def:errpsd}, the corresponding backward errors are denoted by $\eta^{\mathcal B}(R,E,\lm,u):= \eta^{\mathcal B}(0,R,E,0,\lm,u)$, $\eta^{\mathcal S}(R,E,\lm,u):= \eta^{\mathcal S}(0,R,E,0,\lm,u)$, and $\eta^{\mathcal S_d}(R,E,\lm,u):= \eta^{\mathcal S_d}(0,R,E,0,\lm,u)$. We note that the backward errors $\eta^{\mathcal B}(R,E,\lm,u)$and $\eta^{\mathcal S}(R,E,\lm,u)$ were considered in~\cite[Theorem 4.10]{MehMS18}.  Thus, in this section, we consider only $\eta^{\mathcal S_d}(R,E,\lm,u)$. 
 	From~\eqref{dsmequi1} and~\eqref{dsmequi2}, when $\D_J=0$ and $\D_B=0$ we get 
 	\begin{eqnarray}
 		( - \D_R + \lm \D_E) \underbrace{u_2}_{:=x} &=& \underbrace{(J-R+ \lm E)u_2 + B u_3}_{:=y} \label{dsmequiRE1}\\
 		( - \D_R + \lm \D_E)^*\underbrace{u_1}_{:=z} &=& \underbrace{-(J+R+\lm E)u_1}_{:=w} \label{dsmequiRE2} \\
 		B^*u_1 + S u_3&=& 0. \label{dsmequiRE3}
 	\end{eqnarray}
 	This leads to the following lemma which will be useful in estimating 
 	$\eta^{\mathcal S_d}(R,E,\lm,u)$. 
\begin{lemma}\label{lem: psdRE}
 		Let $L(z) $ be a pencil as in~\eqref{eq:defpenciL}, and let $\lm \in i \R$ and $u \in \C^{2n +m} \setminus \{ 0\}$. Partition $u= [u_1^T ~ u_2^T ~ u_3^T]^T$ such that $u_1,u_2 \in \C^{n}$ and $u_3 \in \C^m$, and let $x,y,z$ and $w$ be defined as in~\eqref{dsmequiRE1} and~\eqref{dsmequiRE2}. If $z=\alpha x$ for some nonzero $\alpha \in \C$,  then the following statements are equivalent.
 		\begin{enumerate}
 			\item There exists $\D_R,\Delta_E \in \C^{n,n}$ such that $\D_E \in {\rm Herm}(n)$ and $ \D_R \succeq 0$ satisfying \eqref{dsmequiRE1} and \eqref{dsmequiRE2}.
 			\item There exists $\D \in \C^{n,n}$ such that $\D + \D^* \preceq 0$, $\D x=y$, $\D^*z=w$.
\item $u_3^* B^* u_1 = 0$.
 		\end{enumerate}
 		Moreover, we have
 		\begin{eqnarray*}
 			& \inf \left \{ {\left \|[ \D_R ~  \D_E]\right \|}_F^2: \D_E,\,\D_R \in {\rm Herm}(n),\, \D_R \succeq 0~\text{satisfying}~ \eqref{dsmequiRE1} ~ \text{and}~ \eqref{dsmequiRE2} \right \} \nonumber \\
 			& =  \inf \left  \{ {\left \|\frac{ \D + \D^*}{2}\right \|}_F^2 + \frac{1}{|\lm|^2} {\left \|\frac{ \D- \D^*}{2}\right \|}_F^2 ~:~\Delta \in \C^{n,n},\,  \D+ \D^* \preceq 0,\,\D x= y,\,\D^*z=w \right \}. \nonumber
 		\end{eqnarray*}
 	\end{lemma}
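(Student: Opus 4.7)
The plan is to reduce the doubly block-structured perturbation problem on the pair $(\Delta_R,\Delta_E)$ to a single Type-1 DSDM problem on one matrix $\Delta$. The substitution I would use is $\Delta := -\Delta_R + \lambda\Delta_E$. Since $\lambda\in i\R$ and both $\Delta_R,\Delta_E$ are Hermitian, one has $\Delta^* = -\Delta_R -\lambda\Delta_E$, hence
\[
\tfrac{\Delta+\Delta^*}{2}=-\Delta_R, \qquad \tfrac{\Delta-\Delta^*}{2}=\lambda\Delta_E.
\]
This bijectively maps pairs $(\Delta_R,\Delta_E)$ with $\Delta_E\in{\rm Herm}(n)$, $\Delta_R\succeq 0$ to matrices $\Delta\in\C^{n,n}$ with $\Delta+\Delta^*\preceq 0$. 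Moreover, under this substitution \eqref{dsmequiRE1}--\eqref{dsmequiRE2} become precisely $\Delta x=y$ and $\Delta^* z=w$, which gives (1)$\Leftrightarrow$(2) directly. The same splitting yields the norm identity $\|\Delta_R\|_F^2+\|\Delta_E\|_F^2=\bigl\|\tfrac{\Delta+\Delta^*}{2}\bigr\|_F^2+\tfrac{1}{|\lambda|^2}\bigl\|\tfrac{\Delta-\Delta^*}{2}\bigr\|_F^2$, so the two infima in the final claim coincide term by term.

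For the equivalence with (3), the hypothesis $z=\alpha x$ places us inside the scope of Theorem~\ref{map:type1veccase}, which (in its negative-semidefinite version from Remark~\ref{rem:type-1dsdm}) asserts that statement (2) holds iff $x^*w=y^*z$ and $\real{(x^*y)}\le 0$. I would verify both conditions by direct computation using $J^*=-J$, $R^*=R$, $E^*=E$, $\bar\lambda=-\lambda$. A short calculation gives
\[
y^*z-x^*w \;=\; u_3^* B^* u_1,
\]
so the first condition is exactly (3). For the second, expanding yields $\real{(x^*y)}=-u_2^*Ru_2+\real{(u_2^*Bu_3)}$; then using $u_1=\alpha u_2$, condition (3) forces $u_2^* B u_3=0$, whence $\real{(x^*y)}=-u_2^*Ru_2\le 0$ since $R\succeq 0$. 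This closes the loop (3)$\Rightarrow$(2), while (2)$\Rightarrow$(3) is immediate from the necessity of $x^*w=y^*z$.

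The main obstacle that needs care is the boundary case $Ru_2=0$, where $\real{(x^*y)}=0$, so one is at the edge of the admissible region for Theorem~\ref{map:type1veccase}; here one should either invoke the more general matrix-version Theorem~\ref{map result: NSPSD 4vec} (with its kernel/range hypothesis, which is vacuous in the vector case) or observe that the characterization~\eqref{eq:S_char n} still produces a valid $\Delta$. Once this boundary is handled, the rest is bookkeeping: the norm identity above transfers the infimum from the left-hand side to the right-hand side verbatim, completing the proof.
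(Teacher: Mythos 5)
Your reduction is exactly the one the paper intends (its proof is only a pointer to the analogous Lemma 4.9 of the reference together with Theorem~\ref{map:type1veccase}): the substitution $\D:=-\D_R+\lm\D_E$, the identities $\frac{\D+\D^*}{2}=-\D_R$, $\frac{\D-\D^*}{2}=\lm\D_E$ (valid because $\bar\lm=-\lm$ and $\lm\neq 0$), the resulting bijection between the two feasible sets, the norm identity, and the computations $y^*z-x^*w=u_3^*B^*u_1$ and $\real(x^*y)=-u_2^*Ru_2+\real(u_2^*Bu_3)$ are all correct. So the equivalence of (1) and (2), the equality of the two infima, and (2)$\Rightarrow$(3) are fine.

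The gap is in your treatment of the boundary case $Ru_2=0$, and it cannot be repaired the way you suggest. The kernel hypothesis of Theorem~\ref{map result: NSPSD 4vec} is \emph{not} vacuous in the vector case: when $\real(x^*y)=0$ the $(1,1)$ block $U_1^*(YX^\pls+(YX^\pls)^*)U_1$ is the zero scalar, so the inclusion of kernels forces $U_2^*(y+\tfrac{1}{\alpha}w)=0$, i.e.\ $y+\tfrac{1}{\alpha}w\in\Span\{u_2\}$, which in the present situation reads $Bu_3\in\Span\{u_2\}$ and hence (combined with $u_2^*Bu_3=0$) $Bu_3=0$. Indeed the implication (3)$\Rightarrow$(2) is genuinely false at this boundary: take $Ru_2=0$, $u_2^*Bu_3=0$ but $Bu_3\neq 0$. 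Then $\real(x^*y)=0$, so any $\D$ with $\D+\D^*\preceq 0$ and $\D x=y$ satisfies $x^*(\D+\D^*)x=2\real(x^*y)=0$, hence $(\D+\D^*)x=0$ and $\D^*z=\alpha\D^*x=-\alpha y=w-\alpha Bu_3\neq w$. So no such $\D$ exists although $u_3^*B^*u_1=0$. In other words, the statement as written needs an extra hypothesis such as $Ru_2\neq 0$ (equivalently $\real(x^*y)<0$, which is exactly what Theorem~\ref{bacerrpsdRE} assumes before invoking this lemma), or $Bu_3=0$; no argument, including an appeal to the matrix version of the dissipative mapping theorem, can close (3)$\Rightarrow$(2) without it. Apart from this point your proof is complete.
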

 	\proof The proof is analogous to \cite[Lemma 4.9]{MehMS18}, due to Type-1 doubly structured dissipative mapping from Theorem~\ref{map:type1veccase}.
 	\eproof
 	
\begin{theorem}\label{bacerrpsdRE}
Let $L(z)$ be a pencil as in~\eqref{eq:defpenciL}, let $\lm \in i\R \setminus \{0\}$ and $u \in \C^{2n+m} \setminus \{0\}$. Partition $u=\big[u_1^T~ u_2^T ~ u_3^T \big]^T$ such that $u_1,u_2 \in \C^n$, and $u_3 \in \C^m$. Set $\tilde y=  (J-R + \lm E) u_2$ and $w_1= -(J+R +\lm E)u_1$. If $u_2 =\alpha u_1$ for some nonzero $\alpha \in \C$, then $\eta^{\mathcal S_d}(R,E,\lm,u)$ is finite if and only if $u_3=0$ and $B^* u_1=0$. If the later condition holds and if $u_2$ satisfies that $Ru_2 \neq 0$, then 	
 		\begin{equation*}
 			\frac{\| H\|_F}{|\lm|} \leq \eta^{\mathcal S_d}(R,E,\lm,u) \leq \sqrt{ {\left \|\frac{ H + H^*}{2}\right \|}_F^2 + \frac{1}{|\lm|^2} {\left \|\frac{ H- H^*}{2}\right \|}_F^2},\quad \text{if} ~|\lambda| \geq 1,
 		\end{equation*}
 		and
 		\begin{equation*}
 			\| H\|_F \leq \eta^{\mathcal S_d}(R,E,\lm,u) \leq \sqrt{ {\left \|\frac{ H + H^*}{2}\right \|}_F^2 + \frac{1}{|\lm|^2} {\left \|\frac{ H- H^*}{2}\right \|}_F^2},\quad \text{if} ~|\lambda| \leq 1,
 		\end{equation*}
 		where
 		\begin{equation}\label{eq:thmpsdRE_2}
 			H=\tilde yu_2^\dagger + (w_1 u_1^\dagger)^*\mathcal P_{u_2}+\mathcal P_{u_2}J\mathcal P_{u_2} \quad \text{and}\quad J=\frac{1}{4\real{(u_2^*{\tilde y}})} \big(\tilde y+\frac{\alpha}{|\alpha|^2}w_1\big)\big(\tilde y+\frac{\alpha}{|\alpha|^2}w_1\big)^*.
 			\end{equation}
 	\end{theorem}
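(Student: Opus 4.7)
The plan is to reduce the backward error to a Type-1 doubly structured dissipative mapping (DSDM) problem via Lemma~\ref{lem: psdRE}, then sandwich the weighted Frobenius functional appearing in that lemma against the unweighted $\|\Delta\|_F^2$, whose minimizer over the DSDM feasible set is supplied by Theorem~\ref{map:type1veccase} (and its $\preceq 0$ analogue via Remark~\ref{rem:type-1dsdm}).

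First I would settle the finiteness assertion. The third block row of $(L(\lambda)-\Delta L(\lambda))u=0$ reads $B^*u_1 + Su_3 = 0$, since $\Delta_B = 0$. Combined with item~(3) of Lemma~\ref{lem: psdRE}, namely $u_3^*B^*u_1 = 0$, substituting $B^*u_1 = -Su_3$ yields $u_3^*Su_3 = 0$; since $S \succ 0$, this forces $u_3 = 0$, and then $B^*u_1 = 0$ follows. Conversely, these two conditions render the residual problem solvable by the DSDM characterization.

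Next, Lemma~\ref{lem: psdRE} gives
\[
(\eta^{\mathcal S_d}(R,E,\lambda,u))^2 = \inf\Big\{\Big\|\tfrac{\Delta+\Delta^*}{2}\Big\|_F^2 + \tfrac{1}{|\lambda|^2}\Big\|\tfrac{\Delta-\Delta^*}{2}\Big\|_F^2 : \Delta+\Delta^* \preceq 0,\ \Delta u_2 = \tilde y,\ \Delta^*u_1 = w_1\Big\}.
\]
Using the orthogonal split $\|\Delta\|_F^2 = \|\Delta_H\|_F^2 + \|\Delta_S\|_F^2$, I obtain the sandwich
\[
\min\{1,|\lambda|^{-2}\}\|\Delta\|_F^2 \;\leq\; \Big\|\tfrac{\Delta+\Delta^*}{2}\Big\|_F^2 + \tfrac{1}{|\lambda|^2}\Big\|\tfrac{\Delta-\Delta^*}{2}\Big\|_F^2 \;\leq\; \max\{1,|\lambda|^{-2}\}\|\Delta\|_F^2.
\]
Taking infima over the feasible set and invoking Theorem~\ref{map:type1veccase} together with Remark~\ref{rem:type-1dsdm}, which deliver $\inf\|\Delta\|_F^2 = \|H\|_F^2$ with $H$ as in~\eqref{eq:thmpsdRE_2}, produces the lower bound in the two regimes $|\lambda| \geq 1$ and $|\lambda| \leq 1$. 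The upper bound is obtained by simply evaluating the weighted functional at the specific feasible point $\Delta = H$, which is admissible because it is the minimizer of the corresponding unweighted problem.

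The main technical checkpoint is verifying the hypotheses of Theorem~\ref{map:type1veccase} under the $\preceq 0$ substitution at the data $(x,y,z,w) = (u_2,\tilde y, u_1, w_1)$: the compatibility identity $u_2^*w_1 = \tilde y^*u_1$ follows from $J^*=-J$, $R^*=R$, $E^*=E$, and $\bar\lambda = -\lambda$ by direct expansion; and the strict sign condition $\real(u_2^*\tilde y) < 0$ reduces, using that $u_2^*Ju_2$ and $\lambda u_2^*Eu_2$ are purely imaginary, to $-u_2^*Ru_2 < 0$, which is guaranteed by the hypothesis $Ru_2 \neq 0$ together with $R \succeq 0$.
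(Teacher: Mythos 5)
Your proposal is correct and follows the same route the paper intends: reduce via Lemma~\ref{lem: psdRE} to the weighted functional over the Type-1 dissipative doubly structured mapping set, sandwich that functional between $\min\{1,|\lm|^{-2}\}\|\Delta\|_F^2$ and the value at the feasible minimizer $H$ supplied by Theorem~\ref{map:type1veccase} (with the $\preceq 0$ variant of Remark~\ref{rem:type-1dsdm}). The paper only sketches this by citing the analogous argument in the reference; your write-up supplies the same steps explicitly, including the correct verification of the compatibility and sign conditions and the derivation of the finiteness criterion from $B^*u_1+Su_3=0$ together with $u_3^*B^*u_1=0$ and $S\succ 0$.
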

 	\proof The proof is anologous to the proof of~\cite[Theorem 4.10]{MehMS18} using Lemma~\ref{lem: psdRE} and Theorem~\ref{map:type1veccase}.
 	\eproof
 		\subsection{Perturbing only $J$, $R$ and $E$}
 	Suppose that the blocks $J$, $R$ and $E$ of $L(z)$ are subject to perturbation. Then in view of~\eqref{def:errblc},~\eqref{def:errstr} and~\eqref{def:errpsd}, the corresponding backward errors are denoted by  $\eta^{\mathcal B}(J,R,E,\lm,u):= \eta^{\mathcal B}(J,R,E,0,\lm,u)$, $\eta^{\mathcal S}(J,R,E,\lm,u):= \eta^{\mathcal S}(J,R,E,0,\lm,u)$, and $\eta^{\mathcal S_d}(J,R,E,\lm,u):= \eta^{\mathcal S_d}(J,R,E,0,\lm,u)$. The block- and symmetry-structured backward errors  $\eta^{\mathcal B}(J,R,E,\lm,u)$ and $\eta^{\mathcal S}(J,R,E,\lm,u)$ were obtained in ~\cite[Theorem~5.11]{MehMS18}. In this section, we focus on estimating the backward error $\eta^{\mathcal S_d}(J,R,E,\lm,u)$.
 	
From~\eqref{dsmequi1} and~\eqref{dsmequi2}, when $\D_B=0$ we have
\begin{eqnarray}
(\D_J - \D_R + \lm \D_E) \underbrace{u_2}_{:=x} &=& \underbrace{(J-R+ \lm E)u_2 + B u_3}_{:=y} \label{dsmequiJRE1}\\
(\D_J - \D_R + \lm \D_E)^*\underbrace{u_1}_{:=z} &=& \underbrace{-(J+R+\lm E)u_1}_{:=w} \label{dsmequiJRE2} \\
B^*u_1 + S u_3&=& 0. \label{dsmequiJRE3}
\end{eqnarray}
In view of~\eqref{dsmequiJRE1} and~\eqref{dsmequiJRE3}, we have the following lemma which is analogous to ~\ref{lem:JpsdREB}  for estimting 
$\eta^{\mathcal S_d}(J,R,E,\lm,u)$.

\begin{lemma}\label{lem: JpsdRE}
Let $L(z) $ be a pencil as in~\eqref{eq:defpenciL}, and let $\lm \in i \R$ and $u \in \C^{2n +m} \setminus \{ 0\}$. Partition $u= [u_1^T ~ u_2^T ~ u_3^T]^T$ such that $u_1,u_2 \in \C^{n}$ and $u_3 \in \C^m$, and let $x,y,z$ and $w$ be defined as in~\eqref{dsmequiJRE1} and~\eqref{dsmequiJRE2}. If $z=\alpha x$, then the following statements are equivalent.
\begin{enumerate}
\item There exists $\D_J,\D_R,\Delta_E \in \C^{n,n}$ such that $\D_J \in {\rm SHerm}(n) $, $\D_E \in {\rm Herm}(n)$, and $ \D_R \succeq 0$ satisfying \eqref{dsmequiJRE1} and \eqref{dsmequiJRE2}.
\item There exists $\D \in \C^{n,n}$ such that $\D + \D^* \preceq 0, \D x=y,\D^*z=w$.
\item $u_3^* B^* u_1 = 0$.
\end{enumerate}
Moreover, we have
\begin{eqnarray}
&& \inf \Big \{ {\|[\D_J ~ \D_R ~  \D_E]\|}_F^2: \D_J \in {\rm SHerm}(n), \D_E,~\D_R \in {\rm Herm}(n), \D_R \succeq 0, \nonumber \\
&& \hspace{5cm}~\text{satisfying}~ \eqref{dsmequiJRE1} ~ \text{and}~ \eqref{dsmequiJRE2} \Big \} \nonumber \\
&& =  \inf \left \{ {\left \|\frac{ \D + \D^*}{2}\right \|}_F^2 + \frac{1}{1+|\lm|^2} {\left \|\frac{ \D - \D^*}{2}\right \|}_F^2~:~ \Delta \in \C^{n,n},\, \D+ \D^* \preceq 0,\,\D x= y,\,\D^*z=w \right \}. \nonumber
\end{eqnarray}
\end{lemma}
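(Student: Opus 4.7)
The plan is to establish the chain of equivalences (1) $\Leftrightarrow$ (2) $\Leftrightarrow$ (3) through the bijective correspondence
\[
\Delta \;=\; \Delta_J - \Delta_R + \lambda \Delta_E,
\]
and then to derive the norm identity by an orthogonal split of $\Delta$ into its Hermitian and skew-Hermitian parts, followed by a small least-squares argument that optimally distributes the skew-Hermitian part between $\Delta_J$ and $\lambda \Delta_E$.

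For (1) $\Rightarrow$ (2), given $\Delta_J,\Delta_R,\Delta_E$ as in statement~(1), I would set $\Delta:=\Delta_J-\Delta_R+\lambda\Delta_E$. Because $\lambda\in i\R$, both $\Delta_J$ and $\lambda\Delta_E$ are skew-Hermitian, so $\Delta+\Delta^*=-2\Delta_R\preceq 0$, and~\eqref{dsmequiJRE1}--\eqref{dsmequiJRE2} yield $\Delta x=y$ and $\Delta^*z=w$. For (2) $\Rightarrow$ (3), statement~(2) says that $\Delta$ is a Type-1 doubly structured dissipative mapping in the sense of Theorem~\ref{map:type1veccase} (with $\succeq$ replaced by $\preceq$, cf.~Remark~\ref{rem:type-1dsdm}); since $z=\alpha x$, the existence condition reduces to $x^*w=y^*z$. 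Expanding $x,y,z,w$ from~\eqref{dsmequiJRE1}--\eqref{dsmequiJRE2} and using $J^*=-J$, $R=R^*$, $E=E^*$, and $\lambda\in i\R$, all terms involving $J$, $R$, and $E$ cancel and the compatibility collapses to $u_3^*B^*u_1=0$. For (3) $\Rightarrow$ (1), starting from $u_3^*B^*u_1=0$, Theorem~\ref{map:type1veccase} (with $\preceq$) produces a $\Delta\in\C^{n,n}$ with $\Delta+\Delta^*\preceq 0$, $\Delta x=y$, $\Delta^*z=w$. I would then recover the triple by $\Delta_R:=-(\Delta+\Delta^*)/2\succeq 0$, $\Delta_J:=(\Delta-\Delta^*)/2$, $\Delta_E:=0$; the hypotheses on $J^*=-J$, $R\succeq 0$, $E^*=E$ and $\lambda\in i\R$ guarantee the structural requirements and~\eqref{dsmequiJRE1}--\eqref{dsmequiJRE2}.

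For the norm equality, the Hermitian part of $\Delta=\Delta_J-\Delta_R+\lambda\Delta_E$ is $(\Delta+\Delta^*)/2=-\Delta_R$ (uniquely determined by $\Delta$), and the skew-Hermitian part satisfies $(\Delta-\Delta^*)/2=\Delta_J+\lambda\Delta_E$. For $\Delta$ fixed, I would therefore minimize ${\|\Delta_J\|}_F^2+{\|\Delta_E\|}_F^2$ over Hermitian $\Delta_E$ subject to $\Delta_J=(\Delta-\Delta^*)/2-\lambda\Delta_E$ (which is automatically skew-Hermitian since $\lambda\Delta_E$ is). Writing $\lambda=i\mu$ with $\mu\in\R$ and $(\Delta-\Delta^*)/2=iT$ with $T$ Hermitian turns this into the unconstrained Hermitian least-squares problem
\[
\min_{\Delta_E=\Delta_E^*} \; {\|T-\mu\Delta_E\|}_F^2+{\|\Delta_E\|}_F^2,
\]
whose minimizer $\Delta_E=\mu T/(1+\mu^2)$ yields minimum value $\|T\|_F^2/(1+\mu^2)=\|(\Delta-\Delta^*)/2\|_F^2/(1+|\lambda|^2)$. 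Adding ${\|\Delta_R\|}_F^2={\|(\Delta+\Delta^*)/2\|}_F^2$ produces the claimed weighted expression on the right-hand side, and the infimum over admissible $\Delta$ matches exactly after using the equivalence (1) $\Leftrightarrow$ (2).

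The main obstacle is the clean execution of the splitting argument under the Hermitian/skew-Hermitian constraints: the map $(\Delta_J,\Delta_E)\mapsto \Delta_J+\lambda\Delta_E$ is many-to-one, so for a given $\Delta$ there is a one-parameter family of admissible $(\Delta_J,\Delta_E)$, and one must verify that the unconstrained least-squares minimum is in fact attained inside the admissible structured class. Parameterizing through the Hermitian $T$ as above resolves this, since every Hermitian $\Delta_E$ produces a skew-Hermitian $\Delta_J$ automatically.
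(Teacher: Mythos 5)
Your argument is correct and follows essentially the same route the paper intends, since the paper's proof is only a citation of the analogous~\cite[Lemma~5.10]{MehMS18} combined with Theorem~\ref{map:type1veccase}: the substitution $\Delta=\Delta_J-\Delta_R+\lambda\Delta_E$, the forced identification $\Delta_R=-(\Delta+\Delta^*)/2$, and the Hermitian least-squares split of $(\Delta-\Delta^*)/2$ between $\Delta_J$ and $\lambda\Delta_E$ yielding the weight $1/(1+|\lambda|^2)$ are exactly the details being deferred to. The only point worth flagging is that in the direction $(3)\Rightarrow(2)$ the appeal to Theorem~\ref{map:type1veccase} requires $\real(x^*y)=-u_2^*Ru_2\neq 0$, a restriction that the lemma statement (and the paper's citation-style proof) also leaves implicit.
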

\proof The proof is analogous to \cite[Lemma 5.10]{MehMS18}, due to Type-1 doubly structured dissipative mapping from Theorem~\ref{map:type1veccase}.
\eproof

\begin{theorem}\label{bacerrJpsdRE}
Let $L(z)$ be a pencil as in~\eqref{eq:defpenciL}, let $\lm \in i\R \setminus \{0\}$ and $u \in \C^{2n+m} \setminus \{0\}$. Partition $u=\big[u_1^T~ u_2^T ~ u_3^T \big]^T$ such that $u_1,u_2 \in \C^n$, and $u_3 \in \C^m$. Set $\tilde y=  (J-R + \lm E) u_2$ and $w_1= -(J+R +\lm E)u_1$. If $u_2 =\alpha u_1$ for some nonzero $\alpha \in \C$, then $\eta^{\mathcal S_d}(J,R,\lm,u)$ is finite if and only if $u_3=0$ and $B^* u_1=0$. If the later condition holds and if $u_2$ satisfies that
$Ru_2 \neq 0$, then 	
\begin{equation*}
\frac{\| H\|_F}{\sqrt{1+|\lm|^2}} \leq \eta^{\mathcal S_d}(J,R,E,\lm,u) \leq \sqrt{{\left \|\frac{ H + H^*}{2}\right \|}_F^2 + \frac{1}{1+|\lm|^2} {\left \|\frac{ H -H^*}{2}\right \|}_F^2},
\end{equation*}
where
\begin{equation*}
H=\tilde yu_2^\dagger + (w_1 u_1^\dagger)^*\mathcal P_{u_2}+\mathcal P_{u_2}J\mathcal P_{u_2} \quad \text{and}\quad J=\frac{1}{4\real{(u_2^*{\tilde y}})} \big(\tilde y+\frac{\alpha}{|\alpha|^2}w_1\big)\big(\tilde y+\frac{\alpha}{|\alpha|^2}w_1\big)^*.
\end{equation*}
\end{theorem}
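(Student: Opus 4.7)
The plan is to mirror the proof of Theorem~\ref{bacerrJpsdREB}, specialized to the case where $\Delta_B = 0$ so there is no separate $H_2$ piece to track. Using Remark~\ref{rem:appdsw}, the residual equation $(L(\lambda) - \Delta L(\lambda))u = 0$ is equivalent to \eqref{dsmequiJRE1}--\eqref{dsmequiJRE3}. The first two are structured mapping constraints on the block $\Delta_J - \Delta_R + \lambda \Delta_E$, while \eqref{dsmequiJRE3} is the feasibility constraint $B^*u_1 + Su_3 = 0$ coming from $\Delta_B^* u_1 = 0$. Under the hypothesis $u_2 = \alpha u_1$ (which amounts to $z \in \operatorname{span}\{x\}$ in the notation of Lemma~\ref{lem: JpsdRE}), that lemma says the structured mapping system is solvable iff $u_3^* B^* u_1 = 0$. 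Combining this with \eqref{dsmequiJRE3} and left-multiplying by $u_3^*$ yields $u_3^* S u_3 = 0$, and $S \succ 0$ forces $u_3 = 0$, whence $B^* u_1 = 0$; the converse is immediate. This establishes the claimed finiteness iff.

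Under these finiteness conditions, Lemma~\ref{lem: JpsdRE} reduces the backward error to
\[
\eta^{\mathcal S_d}(J,R,E,\lambda,u)^2 = \inf_{\Delta}\left\{\left\|\tfrac{\Delta + \Delta^*}{2}\right\|_F^2 + \tfrac{1}{1 + |\lambda|^2}\left\|\tfrac{\Delta - \Delta^*}{2}\right\|_F^2\right\},
\]
where the infimum is over $\Delta \in \C^{n,n}$ with $\Delta + \Delta^* \preceq 0$, $\Delta u_2 = \tilde y$, and $\Delta^* u_1 = w_1$. The existence of such a $\Delta$ follows from $u_1^* w_1 = \tilde y^* u_2$ (using $J^* = -J$, $R^* = R$, $E^* = E$, $\lambda \in i\R$) and from $\real(u_2^* \tilde y) = -u_2^* R u_2 < 0$ whenever $R u_2 \neq 0$ (since $R \succeq 0$). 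By Theorem~\ref{map:type1veccase} together with the negative-semidefinite version noted in Remark~\ref{rem:type-1dsdm}, the unique $\|\cdot\|_F$-minimizer on this feasible set is precisely the matrix $H$ in the statement.

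The two-sided bound then drops out of the orthogonal decomposition $\|\Delta\|_F^2 = \|\tfrac{\Delta + \Delta^*}{2}\|_F^2 + \|\tfrac{\Delta - \Delta^*}{2}\|_F^2$. For any feasible $\Delta$,
\[
\left\|\tfrac{\Delta + \Delta^*}{2}\right\|_F^2 + \tfrac{1}{1 + |\lambda|^2}\left\|\tfrac{\Delta - \Delta^*}{2}\right\|_F^2 \;\geq\; \tfrac{1}{1 + |\lambda|^2}\|\Delta\|_F^2 \;\geq\; \tfrac{\|H\|_F^2}{1 + |\lambda|^2},
\]
and taking the infimum delivers the lower bound, while the upper bound is obtained by substituting $\Delta = H$ into the objective. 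The main bookkeeping obstacles are (i) carefully invoking the NSD variant of Theorem~\ref{map:type1veccase} via Remark~\ref{rem:type-1dsdm} with the correct sign conventions, and (ii) making the reduction $u_3^* B^* u_1 = 0 \Rightarrow u_3 = 0,\, B^* u_1 = 0$ explicit through $S \succ 0$; beyond these points the argument is a direct transcription of the proof of Theorem~\ref{bacerrJpsdREB}.
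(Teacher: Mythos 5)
Your argument is correct and is precisely the route the paper takes: the paper's proof is a one-line reference to the analogue of \cite[Theorem 5.11]{MehMS18} via Lemma~\ref{lem: JpsdRE} and Theorem~\ref{map:type1veccase}, and your write-up simply fills in those details — the reduction through Remark~\ref{rem:appdsw}, the finiteness argument forcing $u_3=0$ and $B^*u_1=0$ via $S\succ 0$, the feasibility checks $u_2^*w_1=\tilde y^*u_1$ and $\real(u_2^*\tilde y)=-u_2^*Ru_2<0$, the identification of the minimizer $H$ through the negative-semidefinite variant of Theorem~\ref{map:type1veccase} (Remark~\ref{rem:type-1dsdm}), and the sandwich inequality $\tfrac{1}{1+|\lm|^2}\|\D\|_F^2\le \|\tfrac{\D+\D^*}{2}\|_F^2+\tfrac{1}{1+|\lm|^2}\|\tfrac{\D-\D^*}{2}\|_F^2\le\|\D\|_F^2$. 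No gaps; this matches the intended proof exactly as in Theorem~\ref{bacerrJpsdREB}.
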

\proof The proof is anologous to the proof of \cite[Theorem~5.11]{MehMS18} due to Lemma~\ref{lem: JpsdRE} and Theorem~\ref{map:type1veccase}.
\eproof
\subsection{Perturbing only J,E and B}

In this section, suppose that the blocks $J$, $E$ and $B$ of $L(z)$ are subject to perturbation. Then in view of~\eqref{def:errblc},~\eqref{def:errstr} and~\eqref{def:errpsd}, the corresponding backward errors are denoted by $\eta^{\mathcal B}(J,E,B,\lm,u):= \eta^{\mathcal B}(J,0,E,B,\lm,u)$, $\eta^{\mathcal S}(J,E,B,\lm,u):= \eta^{\mathcal S}(J,0,E,B,\lm,u)$, and $\eta^{\mathcal S_d}(J,E,B,\lm,u):= \eta^{\mathcal S_d}(J,E,B,\lm,u)$. 
The backward error $\eta^{\mathcal B}(J,E,B,\lm,u)$ was given in~\cite[Remark 5.8]{MehMS18}, and  we have $\eta^{\mathcal S_d}(J,E,B,\lm,u)=\eta^{\mathcal S}(J,E,B,\lm,u)$ because there is no semidefinite structure on  $J$ or $E$ or $B$. Thus we focus on computing $\eta^{\mathcal S}(J,E,B,\lm,u)$. 

From~\eqref{dsmequi1} and~\eqref{dsmequi2}, when $\D_R=0$ we have 
\begin{eqnarray}
	\mat{cc} \underbrace{\D_J + \lm \D_E }_{=:\Delta_1} & \underbrace{\D_B}_{=:\Delta_2} \rix \underbrace{\mat{c} u_2 \\ u_3 \rix}_{=:x} &=& \underbrace{(J-R + \lm E) u_2 + B u_3}_{=:y} \label{dsmequiJEB1} \\
	\mat{cc}  \D_J + \lm \D_E& \D_B \rix ^* \underbrace{u_1}_{=:z} &=& \underbrace{\mat{c}-(J+R + \lm E)u_1=:w_1  \\ B^*u_1 + S u_3=:w_2 \rix}_{=:w}, \label{dsmequiJEB2}
\end{eqnarray}
Thus using doubly structured skew-Hermitian mapping from Theorem~\ref{theorem_SHD} in~\eqref{dsmequiJEB1} and~\eqref{dsmequiJEB2} gives the following lemma.
\begin{lemma}\label{lem: JEB}
	Let $L(z) $ be a pencil as in~\eqref{eq:defpenciL}, and let $\lm \in i \R$ and $u \in \C^{2n +m} \setminus \{ 0\}$. Partition $u= [u_1^T ~ u_2^T ~ u_3^T]^T$ such that $u_1,u_2 \in \C^{n}$ and $u_3 \in \C^m$, and let $x,y,z$ and $w$ be defined as in~\eqref{dsmequiJEB1} and~\eqref{dsmequiJEB2}. Then the following statements are equivalent. 
	\begin{enumerate}
		\item There exists $\D_J,~\D_E\in \C^{n,n}$ and $\D_B \in \C^{n,m}$ such that $\D_J \in {\rm SHerm}(n),~\D_E \in {\rm Herm}(n)$ satisfying~\eqref{dsmequiJEB1} and~\eqref{dsmequiJEB2}.
		\item There exists $\D =[\D_1 ~ \D_2]$, $\D_1 \in \C^{n,n}$, $\D_2 \in \C^{n,m}$  such that $\D_1^*=-\D_1$, $\D x =y$, and  $\D^* z=w$.
		\item $u_3 = 0$ and $R u_1=0$.
	\end{enumerate}
	Moreover, we have 
	\begin{eqnarray}
		&\inf \Big \{ {\left \|\big[\D_J ~ \D_E ~ \D_B\big] \right \|}_F^2  : ~\D_J \in {\rm SHerm}(n),~ \D_E \in {\rm Herm}(n), ~\D_B \in \C^{n,m}~\text{satisfying}~\eqref{dsmequiJEB1} ~\text{and}~\eqref{dsmequiJEB2} \Big \} \nonumber \\
		&=\inf \bigg \{ \frac{1}{1+|\lm|^2}{\|\D_1\|}_F^2 + {\|\D_2\|}_F^2 : ~ \D =\big[\D_1 ~ \D_2\big],~\D_1 \in \C^{n,n}, \D_2 \in \C^{n,m},~\D_1^*=-\D_1,~\D x =y, \D ^* z =w \bigg \} \nonumber.
	\end{eqnarray}
\end{lemma}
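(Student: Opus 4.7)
The plan is to mirror the strategy of Lemma~\ref{lem:JpsdREB}, replacing the Type-2 doubly structured dissipative mapping by the doubly structured skew-Hermitian mapping from Theorem~\ref{theorem_SHD}. The reduction (1)\,$\Leftrightarrow$\,(2) will be obtained by identifying $\D_1 := \D_J + \lambda \D_E$ and $\D_2 := \D_B$. Because $\lambda \in i\R$, we have $(\lambda \D_E)^* = \bar\lambda \D_E = -\lambda \D_E$, so $\D_1$ is skew-Hermitian whenever $\D_J$ is skew-Hermitian and $\D_E$ is Hermitian; the mapping equations~\eqref{dsmequiJEB1}--\eqref{dsmequiJEB2} translate verbatim into $\D x = y$ and $\D^* z = w$. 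For the converse, a skew-Hermitian $\D_1$ always admits decompositions of the required type (take, e.g., $\D_J = \D_1$, $\D_E = 0$), so the existence statements coincide.

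For (2)\,$\Leftrightarrow$\,(3), I would invoke Theorem~\ref{theorem_SHD} directly with $x=[u_2^T~u_3^T]^T$, $y=(J-R+\lambda E)u_2+Bu_3$, $z=u_1$, $w_1=-(J+R+\lambda E)u_1$, $w_2=B^*u_1+Su_3$. A short computation exploiting $J^*=-J$, $R^*=R$, $E^*=E$, and $\bar\lambda=-\lambda$ gives $y^*z - x^*w = -u_3^*Su_3$, so the condition $x^*w=y^*z$ is equivalent to $u_3^*Su_3=0$, and hence, since $S\succ 0$, to $u_3=0$. Similarly, $z^*w_1 = -u_1^*Ju_1 - u_1^*Ru_1 - \lambda\,u_1^*Eu_1$; the first and third terms lie in $i\R$ while the middle is real, so $z^*w_1\in i\R$ is equivalent to $u_1^*Ru_1=0$, which under $R\succeq 0$ is equivalent to $Ru_1=0$.

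For the equality of infima (assuming (3) holds so that both sets are nonempty), the correspondence $\D_B=\D_2$ is a bijection, but the assignment $(\D_J,\D_E)\mapsto \D_J+\lambda\D_E$ is many-to-one. The key step will therefore be the inner minimization: for a fixed skew-Hermitian $\D_1$, minimize $\|\D_J\|_F^2 + \|\D_E\|_F^2$ over skew-Hermitian $\D_J$ and Hermitian $\D_E$ with $\D_J + \lambda\D_E = \D_1$. Eliminating $\D_E = \lambda^{-1}(\D_1 - \D_J)$ (one checks $\D_E^*=\D_E$ holds automatically because $\overline{\lambda^{-1}}=-\lambda^{-1}$), the objective becomes $\|\D_J\|_F^2 + \tfrac{1}{|\lambda|^2}\|\D_1-\D_J\|_F^2$ on the real subspace of skew-Hermitian matrices, whose unique minimizer is $\D_J = \D_1/(1+|\lambda|^2)$ with minimal value $\|\D_1\|_F^2/(1+|\lambda|^2)$. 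Taking the infimum first over this inner split yields the right-hand side of the claimed identity.

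The main obstacle I anticipate is purely bookkeeping: one must verify that the proposed optimizer $\D_J = \D_1/(1+|\lambda|^2)$ indeed lies in the skew-Hermitian class and that the induced $\D_E$ is Hermitian, and that the completion of squares argument is valid on the real vector space of skew-Hermitian matrices rather than on $\C^{n,n}$ as a complex vector space. Once those checks are in place, the remaining manipulations are the same as in Lemma~\ref{lem:JpsdREB}, with the role of the dissipative mapping playing no part since there is no semidefinite constraint here.
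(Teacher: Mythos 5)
Your proposal is correct and follows essentially the same route as the paper, which simply defers to the analogous lemma in \cite{MehMS18} together with the doubly structured skew-Hermitian mapping of Theorem~\ref{theorem_SHD}: the identification $\D_1=\D_J+\lm\D_E$, the computations $x^*w-y^*z=u_3^*Su_3$ and $z^*w_1\in i\R\Leftrightarrow u_1^*Ru_1=0$, and the inner minimization giving the weight $\tfrac{1}{1+|\lm|^2}$ are exactly the intended argument. Your explicit verification that $\D_E=\lm^{-1}(\D_1-\D_J)$ is automatically Hermitian and that the minimizer $\D_J=\D_1/(1+|\lm|^2)$ stays in ${\rm SHerm}(n)$ supplies the details the paper leaves implicit.
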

\proof The proof is similar to~\cite[Lemma 5.6]{MehMS18} due to doubly structured skew Hermitian mapping from Theorem~\ref{theorem_SHD}.
\eproof

\begin{theorem}\label{bacerrJEB}
	Let $L(z)$ be a pencil as in~\eqref{eq:defpenciL}, let $\lm \in i\R \setminus \{0\}$ and $u \in \C^{2n+m} \setminus \{0\}$. Partition $u=\big[u_1^T~ u_2^T ~ u_3^T \big]^T$ such that $u_1,u_2 \in \C^n$, and $u_3 \in \C^m$. Set $\tilde y=  (J-R + \lm E) u_2$ and $w_1= -(J+R +\lm E)u_1$, $X=\big[u_2 ~ u_1\big],Y=\big[\tilde{y} ~ -w_1 \big]$. Then $\eta^{\mathcal S}(E,B,\lm,u)$ is finite if and only if $u_3=0$ and $R u_1 = 0$. If the later condition holds and if $YX^\pls X=Y,~Y^*X=-X^*Y$, if  $u_2 =\alpha u_1$ for some nonzero $\alpha \in \C$, then 
	\begin{equation}\label{eq:thmJEB_1}  
		\eta^{\mathcal S}(J,E,B,\lm,u) = \sqrt{\frac{1}{1+|\lm|^2}{\|H_1\|}_F^2+{\|H_2\|}_F^2},
	\end{equation}	
	where 
	\begin{equation}\label{eq:thmJEB_2}
		H_1=YX^\pls - (YX^\pls)^* - X X^\pls Y X^\pls \quad \text{and}\quad H_2= u_1 u_1^\pls B.
	\end{equation}
\end{theorem}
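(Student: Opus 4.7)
The plan is to follow the template set by Theorem~\ref{bacerrJB}, with the only new ingredient being the weight $\tfrac{1}{1+|\lambda|^2}$ on the $\Delta_1$-term that comes from freely redistributing a skew-Hermitian matrix $\Delta_1$ as $\Delta_J + \lambda \Delta_E$ with $\Delta_J$ skew-Hermitian and $\Delta_E$ Hermitian.  First I would use Remark~\ref{rem:appdsw} together with Lemma~\ref{lem: JEB} to settle the finiteness part: the set defining $\eta^{\mathcal S}(J,E,B,\lambda,u)$ is nonempty iff $u_3=0$ and $Ru_1=0$.  Observe that under these conditions $w_1 = -(J+R+\lambda E)u_1 = -(J+\lambda E)u_1$ since $Ru_1=0$.

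Next, assuming $u_3=0$ and $Ru_1=0$, Lemma~\ref{lem: JEB} rewrites the squared backward error as
\[
\inf \Bigl\{\tfrac{1}{1+|\lambda|^2}\|\Delta_1\|_F^2 + \|\Delta_2\|_F^2 \,:\, \Delta_1^* = -\Delta_1,\ \Delta_1 u_2 = \tilde y,\ \Delta_1^* u_1 = w_1,\ \Delta_2^* u_1 = B^* u_1 \Bigr\}.
\]
Using $\Delta_1^*=-\Delta_1$, the two conditions $\Delta_1 u_2=\tilde y$ and $\Delta_1^* u_1 = w_1$ collapse to the single matrix equation $\Delta_1 X = Y$ with $X=[u_2~u_1]$ and $Y=[\tilde y~-w_1]$.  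Because the constraints on $\Delta_1$ and $\Delta_2$ are decoupled and the two norms appear additively, the infimum splits into two independent minimization problems.

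For the $\Delta_1$ piece, the hypotheses $YX^\pls X=Y$ and $Y^*X=-X^*Y$ are precisely the compatibility conditions for the skew-Hermitian matrix-mapping problem, so~\cite[Theorem 2.2.3]{adhikari2008backward} (the matrix analogue of Theorem~\ref{theorem: skew herm map}) produces the unique minimal-norm solution $H_1 = YX^\pls - (YX^\pls)^* - XX^\pls YX^\pls$.  For the $\Delta_2$ piece, Theorem~\ref{map result: unstruct} applied to the adjoint equation $\Delta_2^* u_1 = B^* u_1$ gives the unique minimal-norm solution $H_2 = u_1 u_1^\pls B$.  Combining these yields~\eqref{eq:thmJEB_1}.

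The only non-routine ingredient is the weight $\tfrac{1}{1+|\lambda|^2}$ established in Lemma~\ref{lem: JEB}; this is where I expect the main (though still modest) obstacle to lie if one were to reprove the lemma from scratch.  It is obtained by writing $\lambda=i\mu$ with $\mu\in\R$, noting that $\lambda\Delta_E$ is skew-Hermitian, and then minimizing $\|\Delta_J\|_F^2+\|\Delta_E\|_F^2$ over all decompositions $\Delta_1=\Delta_J+\lambda\Delta_E$; a direct gradient calculation gives the optimizer $\Delta_J = \Delta_1/(1+|\lambda|^2)$, $\Delta_E = -\lambda\Delta_1/(1+|\lambda|^2)$ with optimal value $\tfrac{1}{1+|\lambda|^2}\|\Delta_1\|_F^2$.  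Since this step is already absorbed into the cited Lemma~\ref{lem: JEB}, the remainder of the argument is a direct invocation of the mapping theorems mentioned above.
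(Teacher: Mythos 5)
Your proposal matches the paper's proof essentially step for step: finiteness via Remark~\ref{rem:appdsw} and Lemma~\ref{lem: JEB}, reduction of the two vector constraints to the single skew-Hermitian matrix equation $\Delta_1 X = Y$, splitting of the decoupled infimum, and invocation of \cite[Theorem 2.2.3]{adhikari2008backward} for $H_1$ and Theorem~\ref{map result: unstruct} for $H_2$. Your added derivation of the weight $\tfrac{1}{1+|\lambda|^2}$ (the optimal redistribution $\Delta_J = \Delta_1/(1+|\lambda|^2)$, $\Delta_E = -\lambda\Delta_1/(1+|\lambda|^2)$) is correct but is already absorbed into the cited lemma, exactly as you note.
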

\proof In view of Remark~\ref{rem:appdsw} and Lemma~\ref{lem: JEB}, we have that $\eta^{\mathcal S}(J, E,B,\lm,u)$ is finite if and only if $u_3=0$ and $R u_1 = 0$. Thus by using $u_3=0$ in~\eqref{dsmequiJEB1} and~\eqref{dsmequiJEB2}, and using Lemma~\ref{lem: JEB} in~\eqref{def:errstr}, we have that 
\begin{eqnarray}\label{eq:thmJEB_3}
	&\eta^{\mathcal S}(J,E,B,\lm,u)^2 = 
	\inf \bigg \{ \frac{1}{1+|\lm|^2}{\|\D_1\|}_F^2 + {\|\D_2\|}_F^2 ~: ~\D_1 \in \C^{n,n}, \D_2 \in \C^{n,m},~\D_1^*=-\D_1, \nonumber \\
	&\D_1 u_2 =\tilde{y}, \D_1 ^* u_1 =w_1, \D_2^* u_1 = B^* u_1 \bigg \} \nonumber \\ 
	& = \inf \bigg \{ \frac{1}{1+|\lm|^2}{\|\D_1\|}_F^2 + {\|\D_2\|}_F^2  ~:~\D_1 \in \C^{n,n},\D_2 \in \C^{n,m},~\D_1^*=-\D_1, \nonumber \\
	& \D_1 \big[u_2 ~ u_1\big]= \big[\tilde{y} ~ -w_1 \big], \D_2^* u_1 = B^* u_1 \bigg \} \nonumber \\
	& = \inf \bigg \{ \frac{1}{1+|\lm|^2}{\|\D_1\|}_F^2 + {\|\D_2\|}_F^2  ~:~\D_1 \in \C^{n,n},~\D_2 \in \C^{n,m},~\D_1^*=-\D_1, \nonumber \\
	& \D_1 X= Y, \D_2^* u_1 = B^* u_1 \bigg \}.
\end{eqnarray}
If $Y X^\pls X=Y,~Y^*X=-X^*Y$ and $u_2 =\alpha u_1$ for some nonzero $\alpha \in \C$, then from~\cite[Theorem 2.2.3]{adhikari2008backward}, there always exists a skew-Hermitian mapping $\Delta_1$ such that  $\D_1^*=-\D_1$ and $\D_1 X=Y$. The minimal Frobenius norm of such a $\Delta_1$ is attained by the unique matrix $H_1$ defined in~\eqref{eq:thmJEB_2}. Similarly, from Theorem~\ref{map result: unstruct}, for any $u_1$ there always exists $\Delta_2 \in \C^{n,m}$ such that 
$\Delta_2^*u_1=B^*u_1$ and the minimal Frobenius norm of such a $\Delta_2$ is attained by $H_2:=u_1u_1^\dagger B$.
Thus using the minimal Frobenius norm mappings $H_1$ and $H_2$, we obtain~\eqref{eq:thmJEB_1}. This completes the proof.
\eproof
\subsection{Perturbing only R,E and B}
Here, suppose that the blocks $R$, $E$ and $B$ of $L(z)$ are subject to perturbation. Then in view of~\eqref{def:errblc},~\eqref{def:errstr} and~\eqref{def:errpsd}, the corresponding backward errors are denoted by 
 $\eta^{\mathcal B}(R,E,B,\lm,u):= \eta^{\mathcal B}(0,R,E,B,\lm,u)$, $\eta^{\mathcal S}(R,E,B,\lm,u):= \eta^{\mathcal S}(0,R,E,B,\lm,u)$, and $\eta^{\mathcal S_d}(R,E,B,\lm,u):= \eta^{\mathcal S_d}(0,R,E,B,\lm,u)$.
 The block and symmetry structured backward errors  $\eta^{\mathcal B}(R,E,B,\lm,u)$ and $\eta^{\mathcal S}(R,E,B,\lm,u)$ were obtained in~\cite[Theorem 5.7]{MehMS18}.  In this section we compute bounds for semidefinite structured backward error $\eta^{\mathcal S_d}(R,E,B,\lm,u)$.

For this, from~\eqref{dsmequi1} and~\eqref{dsmequi2}, when $\D_J=0$ we have 
\begin{eqnarray}
	\mat{cc} \underbrace{ - \D_R + \lm \D_E }_{=:\Delta_1} & \underbrace{\D_B}_{=:\Delta_2} \rix \underbrace{\mat{c} u_2 \\ u_3 \rix}_{=:x} &=& \underbrace{(J-R + \lm E) u_2 + B u_3}_{=:y} \label{dsmequiREB1} \\
	\mat{cc}   - \D_R + \lm \D_E& \D_B \rix ^* \underbrace{u_1}_{=:z} &=& \underbrace{\mat{c}-(J+R + \lm E)u_1=:w_1  \\ B^*u_1 + S u_3=:w_2 \rix}_{=:w}, \label{dsmequiREB2}.
\end{eqnarray} 
 The following lemma is analogous to~\cite[Lemma 6.2]{MehMS18} that will be useful in computing $\eta^{\mathcal S_d}(R,E,B,\lambda,u)$. 
\begin{lemma}\label{lem:psdREB}
	Let $L(z) $ be a pencil as in~\eqref{eq:defpenciL}, and let $\lm \in i \R$ and $u \in \C^{2n +m} \setminus \{ 0\}$. Partition $u= [u_1^T ~ u_2^T ~ u_3^T]^T$ such that $u_1,u_2 \in \C^{n}$ and $u_3 \in \C^m$, and let $x,y,z$ and $w$ be defined as in~\eqref{dsmequiREB1} and~\eqref{dsmequiREB2}. Then the following statements are equivalent.
	\begin{enumerate}
		\item There exists $\D_R, \D_E \in \C^{n,n}$ and $\D_B \in \C^{n,m}$ such that $\D_R \succeq 0$, and $\D_E \in {\rm Herm}(n)$ satisfying~\eqref{dsmequiREB1} and~\eqref{dsmequiREB2}.
		\item There exists $\D =[\D_1 ~ \D_2]$, $\D_1 \in \C^{n,n}$, $\D_2 \in \C^{n,m}$  such that $\D_1+ \D_1^* \preceq 0$, $\D x =y$, and  $\D^* z=w$.
		\item $u_3 = 0$.
	\end{enumerate}
	Moreover, we have 
	\begin{eqnarray*}
		&&\inf \left \{ {\left \|\big[\D_R ~ \D_E ~ \D_B\big] \right \|}_F^2  : \D_R,  \D_E\in {\rm Herm}(n), \D_R\succeq 0,\,\D_B \in \C^{n,m}~\text{satisfying}~\eqref{dsmequiREB1} ~\text{and}~\eqref{dsmequiREB2} \right \} \nonumber \\
		&&=\inf \bigg \{  {\left \| \frac{\D_1 + \D_1^*}{2} \right \|}_F^2 + \frac{1}{|\lm|^2}{\left \| \frac{\D_1 - \D_1^*}{2} \right \|}_F^2 + {\|\D_2\|}_F^2 : ~ \D =\big[\D_1 ~ \D_2\big],~\D_1 \in \C^{n,n},\,\D_2 \in \C^{n,m}, \\
		&& \hspace{6cm}\D_1+\D_1^* \preceq 0,~\D x =y, \D ^* z =w \bigg \} \nonumber.
	\end{eqnarray*}
\end{lemma}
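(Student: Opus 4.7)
\textbf{Proof proposal for Lemma~\ref{lem:psdREB}.} The plan is to mimic the argument of Lemma~\ref{lem:JpsdREB} and of \cite[Lemma 6.2]{MehMS18}, but using the Type-2 doubly structured dissipative mapping from Theorem~\ref{thm:Type-2DSDM} (with its negative-semidefinite variant from Remark~\ref{rem:type-2dsdm}) in place of the earlier Type-1 result. The central observation is that under the substitution
\[
\Delta_1 \,=\, -\Delta_R + \lambda \Delta_E, \qquad \Delta_2 \,=\, \Delta_B,
\]
the constraints~\eqref{dsmequiREB1}--\eqref{dsmequiREB2} become exactly $\Delta x = y$ and $\Delta^* z = w$, and conversely, given any $\Delta_1$ one recovers Hermitian matrices via
\[
\Delta_R = -\tfrac{\Delta_1 + \Delta_1^*}{2}, \qquad \Delta_E = \tfrac{\Delta_1 - \Delta_1^*}{2\lambda},
\]
both of which are Hermitian because $\lambda \in i\mathbb{R}$. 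Since $\lambda + \bar\lambda = 0$, the symmetry calculation yields $\Delta_1 + \Delta_1^* = -2\Delta_R$, so the constraint $\Delta_R \succeq 0$ is equivalent to $\Delta_1 + \Delta_1^* \preceq 0$. This directly gives the equivalence (1)$\Leftrightarrow$(2).

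For (2)$\Leftrightarrow$(3), I would apply Theorem~\ref{thm:Type-2DSDM} with sign flip (Remark~\ref{rem:type-2dsdm}), whose necessary and sufficient conditions read $x^* w = y^* z$ and $\real(z^* w_1) \le 0$. With $z = u_1$ and $w_1 = -(J+R+\lambda E)u_1$, and using $J^* = -J$, $E^* = E$, $\lambda \in i\mathbb{R}$, we get
\[
\real(z^* w_1) \,=\, -\,u_1^* R u_1 \,\le\, 0,
\]
so this condition always holds. A parallel computation with $x = [u_2^T~u_3^T]^T$ and $w = [w_1^T~(B^*u_1 + Su_3)^T]^T$ yields
\[
x^* w - y^* z \,=\, u_3^* S u_3,
\]
which vanishes if and only if $u_3 = 0$, because $S \succ 0$. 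This establishes (2)$\Leftrightarrow$(3).

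For the norm identity in the ``moreover'' part, I would exploit the orthogonal decomposition $\|\Delta_1\|_F^2 = \bigl\|\tfrac{\Delta_1 + \Delta_1^*}{2}\bigr\|_F^2 + \bigl\|\tfrac{\Delta_1 - \Delta_1^*}{2}\bigr\|_F^2$ together with the identifications
\[
\Delta_R = -\tfrac{\Delta_1 + \Delta_1^*}{2}, \qquad
\Delta_E = \tfrac{1}{\lambda}\tfrac{\Delta_1 - \Delta_1^*}{2}, \qquad
\Delta_B = \Delta_2,
\]
which give $\|\Delta_R\|_F^2 = \bigl\|\tfrac{\Delta_1 + \Delta_1^*}{2}\bigr\|_F^2$ and $\|\Delta_E\|_F^2 = \tfrac{1}{|\lambda|^2}\bigl\|\tfrac{\Delta_1 - \Delta_1^*}{2}\bigr\|_F^2$. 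Summing these and adding $\|\Delta_B\|_F^2 = \|\Delta_2\|_F^2$ produces the claimed objective on the right-hand side, and taking the infimum over the (now equivalent) feasible sets completes the argument.

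The main obstacle is essentially bookkeeping rather than analysis: one must be careful with the sign and the $\lambda$-scaling when splitting $\Delta_1$ into its Hermitian/skew-Hermitian parts, and with ensuring that the ``$\succeq$'' of Theorem~\ref{thm:Type-2DSDM} is correctly inverted to ``$\preceq$'' in line with Remark~\ref{rem:type-2dsdm}. Once those conversions are made, every step is a direct application of a previously established result, so no genuinely new technical difficulty appears.
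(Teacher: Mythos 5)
Your proposal is correct and follows exactly the route the paper intends: the paper's proof is a one-line reference to the analogous argument in \cite[Lemma 5.6]{MehMS18} combined with the Type-2 doubly structured dissipative mapping of Theorem~\ref{thm:Type-2DSDM}, and your write-up simply fills in those details (the bijection $\Delta_R=-\tfrac{\Delta_1+\Delta_1^*}{2}$, $\Delta_E=\tfrac{\Delta_1-\Delta_1^*}{2\lambda}$, the computations $\real(z^*w_1)=-u_1^*Ru_1\le 0$ and $x^*w-y^*z=u_3^*Su_3$, and the orthogonal splitting of the Frobenius norm). All the individual verifications check out.
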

\proof The proof is similar to the proof of~\cite[Lemma 5.6]{MehMS18} due to Type-2 doubly structured dissipative mapping from Theorem~\ref{thm:Type-2DSDM}.
\eproof

\begin{theorem}\label{bacerrpsdREB}
	Let $L(z)$ be a pencil as in~\eqref{eq:defpenciL}, let $\lm \in i\R \setminus \{0\}$ and $u \in \C^{2n+m} \setminus \{0\}$. Partition $u=[u_1^T~ u_2^T ~ u_3^T]^T$ such that $u_1,u_2 \in \C^n$, and $u_3 \in \C^m$. Set $\tilde y=  (J-R + \lm E) u_2$ and $w_1= -(J+R +\lm E)u_1$. Then $\eta^{\mathcal S_d}(J,R,B,\lm,u)$ is finite if and only if $u_3=0$. If the later condition holds and if $u_2$ satisfies that 
	$Ru_2 \neq 0$ and $u_2 =\alpha u_1$ for some nonzero $\alpha \in \C$, then 
	\begin{equation*}\label{eq:thmpsdREB_1}
		\sqrt{{\|H_1\|}_F^2 + {\|H_2\|}_F^2} \leq  
		\eta^{\mathcal S_d}(R,E,B,\lm,u) \leq \sqrt{{\left \| \frac{H_1 + H_1^*}{2} \right \|}_F^2 + \frac{1}{|\lm|^2}{\left \| \frac{H_1 - H_1^*}{2} \right \|}_F^2 + {\|H_2\|}_F^2},
	\end{equation*}	
	when $|\lambda| \leq 1$,
	and
\begin{equation*}\label{eq:thmpsdREB_1}
		\sqrt{\frac{1}{|\lambda|^2}{\|H_1\|}_F^2 + {\|H_2\|}_F^2} \leq  
		\eta^{\mathcal S_d}(R,E,B,\lm,u) \leq \sqrt{{\left \| \frac{H_1 + H_1^*}{2} \right \|}_F^2 + \frac{1}{|\lm|^2}{\left \| \frac{H_1 - H_1^*}{2} \right \|}_F^2 + {\|H_2\|}_F^2},
	\end{equation*}	
when $|\lambda| > 1$,
	where 
	\begin{equation*}\label{eq:thmpsdREB_2}
		H_1=\tilde yu_2^\dagger + (w_1 u_1^\dagger)^*\mathcal P_{u_2}+\mathcal P_{u_2}J\mathcal P_{u_2} \quad \text{with} \quad J=\frac{1}{4\real{(u_2^*{\tilde y}})} \big(\tilde y+\frac{\alpha}{|\alpha|^2}w_1\big)\big(\tilde y+\frac{\alpha}{|\alpha|^2}w_1\big)^*,
	\end{equation*}
	where 
and $H_2=u_1u_1^\dagger B$.
\end{theorem}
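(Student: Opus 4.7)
The plan is to mirror the template already used for Theorem~\ref{bacerrJpsdREB} with the appropriate modifications for the perturbation set $(R,E,B)$. Concretely, I will combine Remark~\ref{rem:appdsw}, Lemma~\ref{lem:psdREB}, the Type-1 doubly structured dissipative mapping of Theorem~\ref{map:type1veccase} (with the sign flip of Remark~\ref{rem:type-1dsdm}), and the unstructured mapping of Theorem~\ref{map result: unstruct}, closing with a weighted-Frobenius inequality split into the two cases $|\lambda|\le 1$ and $|\lambda|>1$.

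First, by Remark~\ref{rem:appdsw} specialized to $\Delta_J=0$, the constraint $(L(\lambda)-\Delta L(\lambda))u=0$ is equivalent to the DSM system~\eqref{dsmequiREB1}--\eqref{dsmequiREB2}, where $\Delta_1:=-\Delta_R+\lambda\Delta_E$ satisfies $\Delta_1+\Delta_1^*=-2\Delta_R\preceq 0$ since $\lambda\in i\mathbb R$ and $\Delta_E^*=\Delta_E$ force $\lambda\Delta_E$ to be skew-Hermitian. Lemma~\ref{lem:psdREB} then yields the finiteness criterion $u_3=0$ and simultaneously rewrites $\left(\eta^{\mathcal S_d}(R,E,B,\lambda,u)\right)^2$ as the infimum of ${\|\tfrac{\Delta_1+\Delta_1^*}{2}\|}_F^2+\tfrac{1}{|\lambda|^2}{\|\tfrac{\Delta_1-\Delta_1^*}{2}\|}_F^2+{\|\Delta_2\|}_F^2$ subject to $\Delta_1+\Delta_1^*\preceq 0$, $\Delta_1u_2=\tilde y$, $\Delta_1^*u_1=w_1$, and $\Delta_2^*u_1=B^*u_1$ (the latter because $w_2=B^*u_1$ when $u_3=0$). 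Since the $\Delta_1$-block and $\Delta_2$-block of the objective are independent subject to independent constraints, the infimum splits as a sum.

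For the $\Delta_2$-part, Theorem~\ref{map result: unstruct} applied to the map $\Delta_2^*u_1=B^*u_1$ gives the unique minimizer $\hat\Delta_2=u_1u_1^\pls B=H_2$. For the $\Delta_1$-part, I verify the hypotheses of Theorem~\ref{map:type1veccase} combined with Remark~\ref{rem:type-1dsdm}: the equality $u_2^*w_1=\tilde y^*u_1$ follows from $J^*=-J$, $R^*=R$, $E^*=E$, $\bar\lambda=-\lambda$; and the strict inequality $\real(u_2^*\tilde y)=-u_2^*Ru_2<0$ follows from $Ru_2\neq 0$ together with the imaginary values of $u_2^*Ju_2$ and $\lambda u_2^*Eu_2$. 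Since $u_2=\alpha u_1$, Theorem~\ref{map:type1veccase} in its $\Delta_1+\Delta_1^*\preceq 0$ version supplies existence and identifies the minimal-Frobenius-norm solution as exactly $H_1$ stated in the theorem.

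For the upper bound I substitute $\Delta_1=H_1$, $\Delta_2=H_2$ directly into the weighted objective. For the lower bound I use the identity ${\|\Delta_1\|}_F^2={\|\tfrac{\Delta_1+\Delta_1^*}{2}\|}_F^2+{\|\tfrac{\Delta_1-\Delta_1^*}{2}\|}_F^2$ together with the elementary observation that, for $a,b\ge 0$ and $c>0$, $a+cb\ge\min(1,c)\,(a+b)$. Taking $c=1/|\lambda|^2$ gives ${\|\tfrac{\Delta_1+\Delta_1^*}{2}\|}_F^2+\tfrac{1}{|\lambda|^2}{\|\tfrac{\Delta_1-\Delta_1^*}{2}\|}_F^2\ge\min(1,1/|\lambda|^2){\|\Delta_1\|}_F^2$, which is ${\|\Delta_1\|}_F^2$ when $|\lambda|\le 1$ and $\tfrac{1}{|\lambda|^2}{\|\Delta_1\|}_F^2$ when $|\lambda|>1$. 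Applying the minimum-norm bounds ${\|\Delta_1\|}_F\ge{\|H_1\|}_F$ and ${\|\Delta_2\|}_F\ge{\|H_2\|}_F$ yields the two lower bounds stated. The main obstacle is the verification step for Theorem~\ref{map:type1veccase}: checking that the structural relations $J^*=-J$, $R\succeq 0$, $E^*=E$, $\lambda\in i\mathbb R$ deliver the compatibility $u_2^*w_1=\tilde y^*u_1$ and the sign of $\real(u_2^*\tilde y)$; everything else is a mechanical combination of the cited lemmas.
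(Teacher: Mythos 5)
Your proposal is correct and follows exactly the route the paper intends: Remark~\ref{rem:appdsw} and Lemma~\ref{lem:psdREB} to reduce to the weighted doubly structured dissipative mapping problem, Theorem~\ref{map:type1veccase} (with the sign flip of Remark~\ref{rem:type-1dsdm}) and Theorem~\ref{map result: unstruct} for the two independent blocks, and the elementary $\min(1,1/|\lambda|^2)$ bound to split the two cases. The paper's own proof is a one-line reference to the analogous argument in the cited work, so your write-up simply supplies the details (including the verification of $u_2^*w_1=\tilde y^*u_1$ and $\real(u_2^*\tilde y)=-u_2^*Ru_2<0$) that the authors omit.
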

\proof In view of Remark~\ref{rem:appdsw} and Lemma~\ref{lem:psdREB}, the proof is similar to the proof of~\cite[Theorem 5.7]{MehMS18}.
\eproof
\subsection{Perturbing only J,R and B}

Finally, suppose that the blocks $J$, $R$ and $B$ of $L(z)$ are subject to perturbation. Then in view of~\eqref{def:errblc},~\eqref{def:errstr} and~\eqref{def:errpsd}, the corresponding backward errors are denoted by 
$\eta^{\mathcal B}(J,R,B,\lm,u):= \eta^{\mathcal B}(J,R,0,B,\lm,u)$, $\eta^{\mathcal S}(J,R,B,\lm,u):= \eta^{\mathcal S}(J,R,0,B,\lm,u)$, and $\eta^{\mathcal S_d}(J,R,B,\lm,u):= \eta^{\mathcal S_d}(J,R,0,B,\lm,u)$. 
Again note that the block and symmetry structured backward errors $\eta^{\mathcal B}(J,R,B,\lm,u)$ and $\eta^{\mathcal S}(J,R,B,\lm,u)$ were respectively obtained in~\cite[Theorem 5.3]{MehMS18} and ~\cite[Theorem 5.4]{MehMS18}. Thus, in this section, we focus only on computing the semidefinite structured backward error $\eta^{\mathcal S_d}(J,R,B,\lm,u)$. 

From~\eqref{dsmequi1} and~\eqref{dsmequi2}, when $\D_E=0$ we have
\begin{eqnarray}
	\mat{cc} \underbrace{\D_J - \D_R  }_{=:\Delta_1} & \underbrace{\D_B}_{=:\Delta_2} \rix \underbrace{\mat{c} u_2 \\ u_3 \rix}_{=:x} &=& \underbrace{(J-R + \lm E) u_2 + B u_3}_{=:y} \label{dsmequiJRB1} \\
	\mat{cc}  \D_J - \D_R & \D_B \rix ^* \underbrace{u_1}_{=:z} &=& \underbrace{\mat{c}-(J+R + \lm E)u_1=:w_1  \\ B^*u_1 + S u_3=:w_2 \rix}_{=:w}. \label{dsmequiJRB2}
\end{eqnarray} 
Then a direct use of Type-2 doubly structured  dissipative mapping from Theorem~\ref{thm:Type-2DSDM} in~\eqref{dsmequiJRB1} and~\eqref{dsmequiJRB2}, gives the following lemma.
\begin{lemma}\label{lem:JpsdRB}
	Let $L(z) $ be a pencil as in~\eqref{eq:defpenciL}, and let $\lm \in i \R$ and $u \in \C^{2n +m} \setminus \{ 0\}$. Partition $u= [u_1^T ~ u_2^T ~ u_3^T]^T$ such that $u_1,u_2 \in \C^{n}$ and $u_3 \in \C^m$, and let $x,y,z$ and $w$ be defined as in~\eqref{dsmequiJRB1} and~\eqref{dsmequiJRB2}. Then the following statements are equivalent.
	\begin{enumerate}
		\item There exists $\D_J,\D_R \in \C^{n,n}$ and $\D_B \in \C^{n,m}$ such that $\D_J \in {\rm SHerm}(n)$, $\D_R \succeq 0$ satisfying~\eqref{dsmequiJRB1} and~\eqref{dsmequiJRB2}.
		\item There exists $\D =[\D_1 ~ \D_2]$, $\D_1 \in \C^{n,n}$, $\D_2 \in \C^{n,m}$  such that $\D_1+ \D_1^* \preceq 0$, $\D x =y$, and  $\D^* z=w$.
		\item $u_3 = 0$.
	\end{enumerate}
	Moreover, we have 
	\begin{eqnarray*}
		&\inf \Big \{ {\left \|\big[\D_J ~ \D_R ~ \D_B\big] \right \|}_F^2  : ~\D_J \in {\rm SHerm}(n),\D_R \in {\rm Herm}(n), \D_R \succeq 0,\,\D_B \in \C^{n,m},\\
		&\text{satisfying}~\eqref{dsmequiJRB1}  ~ \text{and}~\eqref{dsmequiJRB2} \Big \} \nonumber \\
		&=\inf \Big \{   {\|\D_1\|}_F^2 + {\|\D_2\|}_F^2 : ~ \D =\big[\D_1 ~ \D_2\big],~\D_1 \in \C^{n,n}, \D_2 \in \C^{n,m},~\D_1+\D_1^* \preceq 0,\\
		& ~\D x =y, \D ^* z =w \Big \} \nonumber.
	\end{eqnarray*}
\end{lemma}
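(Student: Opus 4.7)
My plan is to establish this as a direct analogue of Lemma~\ref{lem:JpsdREB}, exploiting the bijection between the parametrization $(\Delta_J,\Delta_R,\Delta_B)$ satisfying the structural constraints in statement~(1) and the Type-2 doubly structured dissipative data $(\Delta_1,\Delta_2)$ in statement~(2). Concretely, I would set $\Delta_1=\Delta_J-\Delta_R$ and $\Delta_2=\Delta_B$ in one direction, and in the other direction recover $\Delta_J$ as the skew-Hermitian part of $\Delta_1$ and $\Delta_R$ as minus the Hermitian part of $\Delta_1$. Under this correspondence, $\Delta_1+\Delta_1^*=-2\Delta_R$, so $\Delta_R\succeq 0$ exactly matches $\Delta_1+\Delta_1^*\preceq 0$, and the system~\eqref{dsmequiJRB1}-\eqref{dsmequiJRB2} is literally the system $\Delta x=y$, $\Delta^*z=w$ with $\Delta=[\Delta_1~\Delta_2]$. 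This gives (1)$\Leftrightarrow$(2).

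For (2)$\Leftrightarrow$(3), the ``$\Leftarrow$'' direction reduces to invoking Theorem~\ref{thm:Type-2DSDM} (more precisely its ``$\preceq 0$'' variant from Remark~\ref{rem:type-2dsdm}); I would simply verify the two compatibility conditions of that theorem: $x^*w=y^*z$ follows from $J^*=-J$, $R=R^*$, $E^*=E$, and $\lambda\in i\R$ after substituting $u_3=0$, while $\real(z^*w_1)=-u_1^*Ru_1\leq 0$ follows from $R\succeq 0$. For ``$\Rightarrow$'', suppose $\Delta=[\Delta_1~\Delta_2]$ satisfies (2). Left-multiplying the first mapping equation $\Delta_1 u_2+\Delta_2 u_3=(J-R+\lambda E)u_2+Bu_3$ by $u_1^*$ and using $u_1^*\Delta_1=(\Delta_1^*u_1)^*=w_1^*=u_1^*(J-R+\lambda E)$ (here $\bar\lambda=-\lambda$) together with $u_1^*\Delta_2=(\Delta_2^*u_1)^*=w_2^*=u_1^*B+u_3^*S$ cancels all the ``data'' terms and leaves $u_3^*Su_3=0$; since $S\succ 0$, this forces $u_3=0$.

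For the norm identity, the key observation is that $\Delta_1=\Delta_J-\Delta_R$ is precisely the decomposition of $\Delta_1$ into its skew-Hermitian and (negative of its) Hermitian parts, and these two components are orthogonal in the Frobenius inner product: indeed, for Hermitian $H$ and skew-Hermitian $S$, $\trace(HS)$ is purely imaginary (since $\overline{\trace(HS)}=\trace(S^*H^*)=-\trace(SH)=-\trace(HS)$), so $\real\langle\Delta_J,-\Delta_R\rangle_F=0$. Therefore $\|\Delta_1\|_F^2=\|\Delta_J\|_F^2+\|\Delta_R\|_F^2$, and adding $\|\Delta_B\|_F^2=\|\Delta_2\|_F^2$ yields equality of the two infima taken over the corresponding bijective set of feasible matrices.

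The main obstacle here is really just bookkeeping: the equivalence (1)$\Leftrightarrow$(2) and the norm identity are structural and immediate once the Hermitian/skew-Hermitian orthogonality is recorded, while the nontrivial content in (2)$\Rightarrow$(3) is the scalar identity $u_3^*Su_3=0$ obtained by contracting against $u_1$ and using the positive definiteness of $S$; conversely, (3)$\Rightarrow$(2) is a mechanical verification of the hypotheses of Theorem~\ref{thm:Type-2DSDM}. No new estimates are required, since the existence is handed to us by the already established Type-2 DSDM result.
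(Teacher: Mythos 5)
Your proposal is correct and matches the argument the paper intends: the paper gives no written proof for this lemma, deferring to the Type-2 DSDM result (Theorem~\ref{thm:Type-2DSDM} with ``$\preceq$'' via Remark~\ref{rem:type-2dsdm}) and the analogue of \cite[Lemma~6.2]{MehMS18}, and your bijection $\Delta_1=\Delta_J-\Delta_R$, $\Delta_2=\Delta_B$ together with the Frobenius-orthogonality of Hermitian and skew-Hermitian parts and the contraction against $u_1$ yielding $u_3^*Su_3=0$ is exactly that argument spelled out. The only cosmetic caveat is that when $u_3=0$ one has $x_2=0$, so Theorem~\ref{thm:Type-2DSDM} does not literally apply as stated; existence in that case is anyway immediate since $(\D_J,\D_R,\D_B)=(J+\lm E,\,R,\,B)$ satisfies~\eqref{dsmequiJRB1} and~\eqref{dsmequiJRB2}.
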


\begin{theorem}\label{bacerrJpsdRB}
	Let $L(z)$ be a pencil as in~\eqref{eq:defpenciL}, let $\lm \in i\R \setminus \{0\}$ and $u \in \C^{2n+m} \setminus \{0\}$. Partition $u=[u_1^T~ u_2^T ~ u_3^T]^T$ such that $u_1,u_2 \in \C^n$, and $u_3 \in \C^m$. Set $\tilde y=  (J-R + \lm E) u_2$ and $w_1= -(J+R +\lm E)u_1$. Then $\eta^{\mathcal S_d}(J,R,B,\lm,u)$ is finite if and only if $u_3=0$. If the later condition holds and if $u_2$ satisfies that 
	$Ru_2 \neq 0$ and $u_2 =\alpha u_1$ for some nonzero $\alpha \in \C$, then 
	\begin{equation}\label{eq:thmJpsdRB_1} 
		\eta^{\mathcal S_d}(J,R,B,\lm,u) = \sqrt{{\|H_1\|}_F^2+{\|H_2\|}_F^2},
	\end{equation}	
	where 
	\begin{equation}\label{eq:thmJpsdRB_2}
		H_1=\tilde yu_2^\dagger + (w_1 u_1^\dagger)^*\mathcal P_{u_2}+\mathcal P_{u_2}J\mathcal P_{u_2} \quad \text{with}\quad  J=\frac{1}{4\real{(u_2^*{\tilde y}})} \big(\tilde y+\frac{\alpha}{|\alpha|^2}w_1\big)\big(\tilde y+\frac{\alpha}{|\alpha|^2}w_1\big)^*,
	\end{equation}
	and $H_2=u_1u_1^\dagger B$.
\end{theorem}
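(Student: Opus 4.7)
The plan is to unpack the backward error via the reduction machinery of the previous sections, split the three-block infimum into two decoupled one-block mapping problems, and finish by applying the appropriate structured and unstructured mapping theorems.

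First, by Remark~\ref{rem:appdsw} with $\Delta_E = 0$, setting $\Delta_1 := \Delta_J - \Delta_R$ and $\Delta_2 := \Delta_B$ recasts the eigenpair equation $(L(\lambda) - \Delta L(\lambda)) u = 0$ as~\eqref{dsmequiJRB1}--\eqref{dsmequiJRB2}. Lemma~\ref{lem:JpsdRB} then characterizes existence (and hence finiteness) by $u_3 = 0$ and, exploiting the Pythagorean identity $\|\Delta_J\|_F^2 + \|\Delta_R\|_F^2 = \|\Delta_1\|_F^2$ between the skew-Hermitian and Hermitian parts, collapses the backward error to
\begin{equation*}
\bigl(\eta^{\mathcal S_d}(J,R,B,\lambda,u)\bigr)^2 = \inf\bigl\{\|\Delta_1\|_F^2 + \|\Delta_2\|_F^2 : \Delta_1 + \Delta_1^* \preceq 0,\ \Delta_1 u_2 = \tilde y,\ \Delta_1^* u_1 = w_1,\ \Delta_2^* u_1 = B^* u_1\bigr\}.
\end{equation*}
Because the $\Delta_1$- and $\Delta_2$-constraints are independent, the infimum factors as a sum of two separate infima that may be minimized in isolation.

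For the $\Delta_1$-part I appeal to Theorem~\ref{map:type1veccase} in its negative-semidefinite form (Remark~\ref{rem:type-1dsdm}) with $(x,y,z,w) = (u_2, \tilde y, u_1, w_1)$. The required hypotheses check out: the collinearity $u_2 = \alpha u_1$ supplies the relation $z = \alpha^{-1} x$; the compatibility identity $u_2^* w_1 = \tilde y^* u_1$ reduces, using $J^* = -J$, $R^* = R$, $E^* = E$ and $\bar\lambda = -\lambda$, to the common value $-u_2^*(J + R + \lambda E) u_1$; and $\real(u_2^* \tilde y) = -u_2^* R u_2 < 0$ follows from $R \succeq 0$ combined with the hypothesis $R u_2 \neq 0$. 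The theorem then identifies the unique minimizer with the matrix $H_1$ in~\eqref{eq:thmJpsdRB_2}, and the value of this first infimum is $\|H_1\|_F^2$.

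For the $\Delta_2$-part the problem degenerates to the unstructured mapping $\Delta_2^* u_1 = B^* u_1$; Theorem~\ref{map result: unstruct} pins down the unique minimal-norm solution as $\Delta_2 = u_1 u_1^\dagger B = H_2$, with squared norm $\|H_2\|_F^2$. Summing the two independent minima yields~\eqref{eq:thmJpsdRB_1}. The only genuine piece of bookkeeping -- and the main potential stumbling block -- is correctly matching sign conventions between Theorem~\ref{map:type1veccase}, stated for $\Delta + \Delta^* \succeq 0$, and the constraint $\Delta_1 + \Delta_1^* = -2\Delta_R \preceq 0$ forced on us by $\Delta_R \succeq 0$; Remark~\ref{rem:type-1dsdm} absorbs this at the cost of flipping all semidefinite signs, after which the formula for $H_1$ emerges by direct substitution, with the $\alpha$ appearing in~\eqref{eq:thmJpsdRB_2} produced from the reciprocal relation $u_1 = \alpha^{-1} u_2$.
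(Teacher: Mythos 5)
Your proposal is correct and follows essentially the same route as the paper: Remark~\ref{rem:appdsw} and Lemma~\ref{lem:JpsdRB} reduce the problem to $\inf\{\|\Delta_1\|_F^2+\|\Delta_2\|_F^2\}$ with $\Delta_1+\Delta_1^*\preceq 0$, the two decoupled infima are then handled by Theorem~\ref{map:type1veccase} (in its negative-semidefinite form via Remark~\ref{rem:type-1dsdm}) and Theorem~\ref{map result: unstruct}, exactly as in the paper's reference to the proof of Theorem~\ref{bacerrJpsdREB}. Your verifications of the compatibility condition, of $\real(u_2^*\tilde y)=-u_2^*Ru_2<0$, and of the fact that the absence of an $E$-perturbation removes the $\frac{1}{1+|\lambda|^2}$ weighting (turning the bounds of Theorem~\ref{bacerrJpsdREB} into an equality) are all accurate.
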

\proof In view of Remark~\ref{rem:appdsw} and Lemma~\ref{lem:JpsdRB},
the proof is analogous to the proof of Theorem~\ref{bacerrJpsdREB} using Type-1 doubly structured dissipative mapping from Theorem~\ref{map:type1veccase} .
\eproof

\end{document}